\pgfplotsset{
cycle list/Set1-5,
cycle multiindex* list={
mark list*\nextlist
Set1-5\nextlist
},
}
\useunder{\uline}{\ul}{}
\pgfplotsset{
    discard if not/.style 2 args={
        x filter/.append code={
            \edef\tempa{\thisrow{#1}}
            \edef\tempb{#2}
            \ifx\tempa\tempb
            \else
                
            \fi
        }
    }
}
\pgfplotsset{compat=1.16}
\pgfplotsset{tick label style={font=\small},label style={font=\small},legend style={font=\small},}
\pgfplotsset{ width=.49\linewidth}
\definecolor{maroon}{cmyk}{0, 0.87, 0.68, 0.32}
\definecolor{halfgray}{gray}{0.55}
\definecolor{ipython_frame}{RGB}{207, 207, 207}
\definecolor{ipython_bg}{RGB}{247, 247, 247}
\definecolor{ipython_red}{RGB}{186, 33, 33}
\definecolor{ipython_green}{RGB}{0, 128, 0}
\definecolor{ipython_cyan}{RGB}{64, 128, 128}
\definecolor{ipython_purple}{RGB}{170, 34, 255}
\lstdefinelanguage{iPython}{
    %
    morekeywords=[2]{abs,all,any,basestring,bin,bool,bytearray,callable,chr,classmethod,cmp,compile,complex,delattr,dict,dir,divmod,enumerate,eval,execfile,file,filter,float,format,frozenset,getattr,globals,hasattr,hash,help,hex,id,input,int,isinstance,issubclass,iter,len,list,locals,long,map,max,memoryview,min,next,object,oct,open,ord,pow,property,range,raw_input,reduce,reload,repr,reversed,round,set,setattr,slice,sorted,staticmethod,str,sum,super,tuple,type,unichr,unicode,vars,xrange,zip,apply,buffer,coerce,intern},%
    sensitive=true,%
    morecomment=[l]\#,%
    morestring=[b]',%
    morestring=[b]",%
    morestring=[s]{'''}{'''},
    morestring=[s]{"""}{"""},
    morestring=[s]{r'}{'},
    morestring=[s]{r"}{"},%
    morestring=[s]{r'''}{'''},%
    morestring=[s]{r"""}{"""},%
    morestring=[s]{u'}{'},
    morestring=[s]{u"}{"},%
    morestring=[s]{u'''}{'''},%
    morestring=[s]{u"""}{"""},%
    %
    literate=
    {á}{{\'a}}1 {é}{{\'e}}1 {í}{{\'i}}1 {ó}{{\'o}}1 {ú}{{\'u}}1
    {Á}{{\'A}}1 {É}{{\'E}}1 {Í}{{\'I}}1 {Ó}{{\'O}}1 {Ú}{{\'U}}1
    {à}{{\`a}}1 {è}{{\`e}}1 {ì}{{\`i}}1 {ò}{{\`o}}1 {ù}{{\`u}}1
    {À}{{\`A}}1 {È}{{\'E}}1 {Ì}{{\`I}}1 {Ò}{{\`O}}1 {Ù}{{\`U}}1
    {ä}{{\"a}}1 {ë}{{\"e}}1 {ï}{{\"i}}1 {ö}{{\"o}}1 {ü}{{\"u}}1
    {Ä}{{\"A}}1 {Ë}{{\"E}}1 {Ï}{{\"I}}1 {Ö}{{\"O}}1 {Ü}{{\"U}}1
    {â}{{\^a}}1 {ê}{{\^e}}1 {î}{{\^i}}1 {ô}{{\^o}}1 {û}{{\^u}}1
    {Â}{{\^A}}1 {Ê}{{\^E}}1 {Î}{{\^I}}1 {Ô}{{\^O}}1 {Û}{{\^U}}1
    {œ}{{\oe}}1 {Œ}{{\OE}}1 {æ}{{\ae}}1 {Æ}{{\AE}}1 {ß}{{\ss}}1
    {ç}{{\c c}}1 {Ç}{{\c C}}1 {ø}{{\o}}1 {å}{{\r a}}1 {Å}{{\r A}}1
    {€}{{\EUR}}1 {£}{{\pounds}}1
    {^}{{{\color{ipython_purple}\^{}}}}1
    {=}{{{\color{ipython_purple}=}}}1
    {+}{{{\color{ipython_purple}+}}}1
    {*}{{{\color{ipython_purple}$^\ast$}}}1
    {/}{{{\color{ipython_purple}/}}}1
    {+=}{{{+=}}}1
    {-=}{{{-=}}}1
    {*=}{{{$^\ast$=}}}1
    {/=}{{{/=}}}1,
    literate=
    *{-}{{{\color{ipython_purple}-}}}1
     {?}{{{\color{ipython_purple}?}}}1,
    identifierstyle=\color{black}\ttfamily,
    commentstyle=\color{ipython_cyan}\ttfamily,
    stringstyle=\color{ipython_red}\ttfamily,
    keepspaces=true,
    showspaces=false,
    showstringspaces=false,
    rulecolor=\color{ipython_frame},
    framexleftmargin=0mm,
    numbers=left,
    numberstyle=\tiny\color{halfgray},
    numbersep=1mm,
    xleftmargin=1mm,
    basicstyle=\scriptsize,
    keywordstyle=\color{ipython_green}\ttfamily,
}
\lstdefinestyle{trefftzy}{
    language=iPython,
    emptylines=1,
    breaklines=true,
    basicstyle=\footnotesize\ttfamily\color{black},    
    moredelim=**[is][\color{teal}]{<}{>},
    moredelim=**[is][\color{purple}]{'}{'},
}
\newcommand{\inner}[1]{\langle #1 \rangle}
\newcommand{\innerr}[1]{\langle \rho#1 \rangle}
\newcommand{\innercsr}[1]{\langle c_s^2 \rho#1 \rangle}
\newcommand{\csr}{c_s^2 \rho}
\newcommand\restr[2]{{ \left.\kern-\nulldelimiterspace #1 \vphantom{\big|} \right|_{#2} }}
\newcommand{\dom}{\mathcal{O}} 
\newcommand{\nv}{\boldsymbol{\nu}}
\newcommand{\bflow}{\mathbf{b}}
\newcommand{\angvel}{\Omega}
\newcommand{\conv}{\partial_{\bflow}} 
\newcommand{\opd}{(\omega+i\conv+i\angvel\times)}
\newcommand{\dopd}{(\omega+i\ddiffb+i\angvel\times)}
\newcommand{\dopdSmall}{(\omega\!+\!i\ddiffb\!\!+\!i\angvel\times)}
\renewcommand{\u}{\mathbf{u}}
\renewcommand{\v}{\mathbf{v}}
\newcommand{\w}{\mathbf{w}}
\newcommand{\z}{\mathbf{z}}
\newcommand{\q}{\mathbf{q}} 
\renewcommand{\div}{\operatorname{div}}
\DeclareMathOperator{\hess}{Hess}
\DeclareMathOperator{\Id}{\mathrm{id}}
\DeclareMathOperator{\tr}{\mathrm{tr}} 
\DeclareMathOperator{\trF}{\mathrm{tr} \vert_{\Fn}}
\newcommand{\calT}{\mathcal{T}}
\newcommand{\IN}{\mathbb{N}}
\newcommand{\X}{\mathbb{X}}
\newcommand{\Xn}{\mathbb{X}_n}
\newcommand{\XT}{\X_{\mathcal{T}_n}}
\newcommand{\XF}{\X_{\mathcal{F}_n}}
\newcommand{\bH}{{\mathbf H}}
\newcommand{\bD}{\mathbf{D}}
\newcommand{\bRl}{\mathbf{R}^{\!l}}
\newcommand{\bW}{\mathbf{W}}
\newcommand{\bL}{\mathbf{L}}
\newcommand{\bu}{\mathbf{u}}
\newcommand{\bff}{\mathbf{f}}
\newcommand{\jump}[1]{[\![ #1 ]\!]}
\newcommand{\hdgjump}[1]{[\![\underline{#1} ]\!]}
\newcommand{\Tn}{\calT_n}
\newcommand{\N}{\mathbb{N}}
\newcommand{\seq}[1]{(#1_n)_{n \in \mathbb{N}}}
\newcommand{\spl}{\langle}
\newcommand{\spr}{\rangle}
\newcommand{\diffb}{\partial_{\bflow}}
\newcommand{\ddiffb}{\bD_{\bflow}^n}
\newcommand{\ddiv}{\div_{\nv}^n}
\newcommand{\Fn}{\mathcal{F}_n}
\newcommand{\Sn}{\mathcal{S}_n}
\newcommand{\jnorm}[2]{[ \! [\![\underline{#1}]\!] \! ]_{\partial \Tn,1\hspace*{-0.3mm}/2,\nv}^{#2}}
\newcommand{\matii}{\underline{\underline{m}}}
\renewcommand\Re{\operatorname{Re}}
\renewcommand\Im{\operatorname{Im}}
\newcommand{\divPqM}{( \div + P_{L^2_0} \q \cdot \! + M )}
\newcommand{\divPq}{( \div + P_{L^2_0} \q \cdot )}
\newcommand{\divpqM}{( \div + \pi_n^l P_{L^2_0} \q \cdot \! + M )}
\newcommand{\ddivpqMn}{(\ddiv + \pi_n^l P_{L^2_0} \q \cdot \! + M_n)}
\newcommand{\PVol}[1]{(#1)_{\tau}}
\newcommand{\PFac}[1]{(#1)_{F}}
\newcommand{\pinX}{\underline{\pi}_n}
\newcommand{\pinXd}{\underline{\tilde{\pi}}_n^d}
\newcommand{\h}{\mathfrak{h}}
\newcommand{\adiv}{a^{\div}}
\newcommand{\aconv}{a^{\diffb}}
\newcommand{\arem}{a^{r}}
\newcommand{\adivn}{\hyperref[eq:an:components:adivn]{a^{\div}_n}}
\newcommand{\aconvn}{\hyperref[eq:an:components:aconvn]{a^{\diffb}_n}}
\newcommand{\aremn}{\hyperref[eq:an:components:aremn]{a^{r}_n}}
\newcommand{\adivq}{a^{(\div+\q\cdot)}}
\newcommand{\aremq}{a^{(r-\q\cdot)}}
\newcommand{\Sol}{S_n}
\newcommand{\numQ}[1]{\num[
  fixed-exponent = 3,
  round-precision = 12,
  round-mode=figures,
  group-minimum-digits = 4,
  group-separator = {\,},
  drop-zero-decimal = true,
  table-format = 5.0]{#1}}
\newcolumntype{N}{>{\collectcell\numQ}r<{\endcollectcell}}
\definecolor{mhcol}{rgb}{0,0,128}
\definecolor{editscol}{rgb}{0.6, 0.2, 0.8}
\newcommand\converges{\hyperlink{converge}{converges}}
\newcommand\convergesl{\hyperlink{converge}{$\rightarrow$}}
\newcommand\convergence{\hyperlink{converge}{convergence}}
\newcommand\stable{\hyperlink{stable}{stable}}
\newcommand\stability{\hyperlink{stable}{stability}}
\newcommand\regular{\hyperlink{regular}{regular}}
\newcommand\regularity{\hyperlink{regular}{regularity}}
\newcommand\compactseq{\hyperlink{compactseq}{compact}}
\newcommand\compactop{\hyperlink{compactop}{compact}}
\newcommand\approximates{\hyperlink{approximate}{approximates}}
\newcommand{\weaklyTcoercive}{\hyperlink{weakTcoercivity}{weakly T-coercive}}
\newcommand{\DAS}{\hyperlink{DAS}{DAS}}
\newcommand{\DBconst}{\hyperlink{DBconst}{C_{(20)}}}
\newcommand{\AddDBconst}{C_{\text{dt},\pi}}
\newcommand{\BnI}{B_n^{(1)}}
\newcommand{\BnII}{B_n^{(2)}}
\newcommand{\KnI}{K_n^{(1)}}
\newcommand{\KnII}{K_n^{(2)}}
\newcommand{\anI}{a_n^{(1)}}
\newcommand{\anII}{a_n^{(2)}}
\definecolor{darkolivegreen}{rgb}{0.33, 0.42, 0.18}
\colorlet{v}{darkolivegreen}
\colorlet{w}{blue!60!black}
\colorlet{vv}{darkolivegreen}
\colorlet{ww}{blue!60!black}
\colorlet{vw}{orange!60!black}
\colorlet{wv}{violet!90!black}
\newcommand\colorv[1]{\textcolor{v}{#1}}
\newcommand\colorw[1]{\textcolor{w}{#1}}
\newcommand\colorvv[1]{\textcolor{vv}{#1}}
\newcommand\colorww[1]{\textcolor{ww}{#1}}
\newcommand\colorvw[1]{\textcolor{vw}{#1}}
\newcommand\colorwv[1]{\textcolor{wv}{#1}}
\newcommand\markI{\colorv{\text{(I)}}}
\newcommand\markII{\text{(II)}}
\newcommand\markIII{\colorw{\text{(III)}}}
\newcommand\markIV{\colorw{\text{(IV)}}}
\newcommand{\changed}[1]{{\color{black}{#1}}}
\theoremstyle{plain}
\newtheorem{theorem}{Theorem}
\newtheorem{lemma}[theorem]{Lemma}
\theoremstyle{definition}
\newtheorem{definition}[theorem]{Definition}
\newtheorem{remark}[theorem]{Remark}
\newtheorem{assumption}[theorem]{Assumption}
\begin{document}
\title[HDG discretizations for Galbrun's equation]{Hybrid discontinuous Galerkin discretizations for the damped time-harmonic Galbrun's equation}

\author{Martin Halla}\address{Institute for Applied and Numerical Mathematics, Karlsruhe Institute of Technology, Englerstra{\ss}e 2, 76131 Karlsruhe, Germany; \email{martin.halla@kit.edu}}
\author{Christoph Lehrenfeld}\address{Institute for Numerical and Applied Mathematics, University of Göttingen, Lotzestr. 16-18, 37083 Göttingen, Germany; \email{\{lehrenfeld, t.beeck\}@math.uni-goettingen.de}}
\author{Tim van Beeck}\sameaddress{2}

\date{\today}




%
\begin{abstract}
   In this article, we study the damped time-harmonic Galbrun's equation which models solar and stellar oscillations. We introduce and analyze hybrid discontinuous Galerkin discretizations (HDG) that are stable and optimally convergent for all polynomial degrees greater than or equal to one. The proposed methods are robust with respect to the drastic changes in the magnitude of the coefficients that naturally occur in stars. Our analysis is based on the concept of discrete approximation schemes and weak T-compatibility, which exploits the weakly T-coercive structure of the equation. Compared to the $H^1$-conforming discretization of [Halla, Lehrenfeld, Stocker, 2022], our method offers improved stability and robustness. Furthermore, it significantly reduces the computational costs compared to the $H(\div)$-conforming DG discretization of [Halla, 2023], which has similar stability properties. These advantages make the proposed HDG methods well-suited for astrophysical simulations.
\end{abstract}
%
%
\subjclass{65N12, 65N30}
\keywords{Galbrun's equation, stellar oscillations, HDG methods, (weak) T-coercivity, T-compatibility, discrete approximation schemes}

\maketitle
\section{Introduction}
Helioseismology studies the interior of the Sun through acoustic oscillations measured at the surface \cite{GBS10}. Reconstructing physical quantities in the interior, such as the density, the sound speed, or subsurface flows, requires solving a passive imagining problem.  To tackle this problem, approaches such as \emph{helioseismic holography} \cite{LB97,BH24} rely on an accurate and computationally efficient solution of the forward problem. To model solar and stellar oscillations, we consider the damped time-harmonic \emph{Galbrun's equation}: Find $\u : \dom \to \mathbb{C}^d$ such that 
\begin{subequations}\label{eq:Galbrun}
\begin{align}
   \begin{aligned}
      -\rho\opd^2\u 
      &- \nabla\left(\rho c_s^2\div \u\right)
      + (\div \u) \nabla p -\nabla(\nabla p\cdot \u)\\
      & + (\hess(p)-\rho\hess(\phi))\u 
      + \gamma \rho (-i \omega) \u
      = \bff \quad \text{ in } \dom,
   \end{aligned}& \\ \label{eq:GalbrunBc}
   \nv \cdot \u = 0 \quad \text{ on } \partial &\dom,
\end{align}
\end{subequations}
where $\dom \subset \mathbb{R}^d$, $d = 2,3$, is a bounded Lipschitz domain.
We denote by $\rho$ the density, by $c_s$ the sound speed, by $p$ the pressure, by $\phi$ the gravitational potential, and by $\bff$ the source term. Furthermore, $\angvel \in \mathbb{R}^d$ is the angular velocity of the frame of reference, $\omega$ is the frequency and $\bflow$ is the background velocity. The operator $\diffb \coloneqq \sum_{l = 1}^d \bflow_l \partial_{x_l}$ is the directional derivative in the direction of $\bflow$, $\hess(\cdot)$ is the Hessian, and by $\nv$, we denote the exterior unit normal vector on $\partial \dom$. The damping is modeled with the term $-i \gamma \rho \omega \u$, where $\gamma$ is a scalar damping coefficient. \changed{While \eqref{eq:Galbrun} is sometimes reduced to a scalar equation in helioseismology \cite{AA2017,  BFFGP20}, or convected Helmholtz equations \cite{BRT23} are considered instead, the full vectorial equation is necessary to capture key physical phenomena such as the emergence of $f$- and $g$-modes. Thus, it is highly relevant to study the full equations \eqref{eq:Galbrun}.

In this article, we focus on the boundary condition \eqref{eq:GalbrunBc}. Accounting for wave propagation in the atmosphere, however, requires radiating boundary conditions have to be considered, which have been investigated in \cite{BFFGP20,HLP2021,FHPG24} for the scalar problem and in \cite{BFFGP21,PFFBG24} for the vectorial problem.}

Galbrun's equation was first derived in \cite{Galbrun31} and is a linearization of the nonlinear Euler equations with the Lagrangian perturbation of displacement as unknown. Without the additional rotational terms as in \eqref{eq:Galbrun}, it is commonly applied in aeroacoustics \cite{90yrsGalbrun}. For the well-posed analysis in the time domain, we refer to \cite{HB21}. \changed{One approach in aeroacoustics is a stabilized formulation leveraging an additional transport equation for the vorticity, as analyzed in \cite{BMMPP12} for the time-harmonic setting.} 

In asteroseismology, the strong damping of waves allows for a more direct analysis as achieved in \cite{HH21}, where the well-posedness of problem \eqref{eq:Galbrun} has been shown assuming that the Mach number $\Vert c_s^{-1} \bflow \Vert_{\bL^\infty}$ of the background flow $\bflow$ is bounded suitably. The main ingredient of the proof is a generalized Helmholtz decomposition and a weak T-coercivity argument. Here, we call a problem weakly T-coercive if it is a compact perturbation of a T-coercive problem.
The T-coercivity technique \cite{BCS02,BCZ10,CC13} relies on the explicit construction of an operator that realizes the inf-sup condition.  
This approach has been successfully applied to a variety of problems, including Helmholtz-like problems \cite{Cia12,HN15,Halla21,vBZ24,FvBZ25} and problems with sign-changing coefficients \cite{BCZ10,DCC12,DCC14,HH024}. 

The key to develop stable discretizations of (weakly) T-coercive problems is to transfer the construction of the T-operator to the discrete level in a stable manner. In particular, the stability of the discretization is obtained when the constructions fulfill a \emph{T-compatibility} condition \cite{Halla21,HLS22H1}.

The construction and analysis of reliable finite element schemes for \eqref{eq:Galbrun} was initiated in \cite{HLS22H1}, where suitable $H^1$-conforming discretizations were considered, primarily to circumvent the challenges associated with analyzing non-conforming methods. Since the stability of the discrete divergence operator is essential for stable discretizations of \eqref{eq:Galbrun} \cite{AHLS22,HLS22H1}, $H^1$-conforming discretizations of \eqref{eq:Galbrun} face similar restrictions as those encountered for finite element discretizations for the Stokes problem. As with the Scott-Vogelius \cite{SV85} pair, the polynomial degree has to be sufficiently large (e.g., $k \ge 4$ in 2d and $k \ge 8$ in 3d) and/or special meshes (e.g., barycentric refinements) have to be used. In addition, the assumed bound on the Mach number lacked robustness with respect to changes in magnitude of the physical parameters.

The challenges posed by non-conforming discretizations were then overcome in \cite{H23Hdiv} for $H(\div)$-conforming and in \cite{Thesis_vB23} for fully discontinuous Galerkin (DG) finite elements. Notably, these schemes are stable for all polynomial degrees $k \ge 1$, and the assumed bound on the Mach number remains robust even in the presence of highly heterogeneous physical parameters. To achieve this, the directional derivative $\diffb$ is stabilized through a \emph{lifting operator} \cite{BR97,BMMPR00}, which ensures stability without the need to choose a suitable penalty parameter. 

However, these developments were primarily motivated by theoretical considerations, and they lack computational efficiency, in particular because the lifting operator drastically increases the computational costs in a DG  setting (see \cref{rem:numex:LiftingCosts}). Thus, we propose hybrid discontinuous Galerkin (HDG) discretizations of \eqref{eq:Galbrun} in the current work. The key idea of \emph{hybridization} \cite{CGL09,Leh10} is to introduce additional facet unknowns, which increases the total number of degrees of freedom but reduces the number of global couplings. Due to the resulting structure of the linear system, static condensation can be applied to eliminate the volume unknowns, leading to a significant reduction in the computational costs. Furthermore, in the hybrid setting, relying on a lifting operator to stabilize $\diffb$ is feasible, since it is a local operator. Our analysis extends the work of \cite{H23Hdiv,Thesis_vB23} to the hybrid setting, covering both the fully non-conforming and the $H(\div)$-conforming case. We show stability and quasi-optimality for all polynomial degrees $k \ge 1$ (\changed{where $k$ is the polynomial degree of the volume unknown}), and the required boundedness assumption on the Mach number is robust with respect to the physical parameters. 
Moreover, the proposed methods significantly reduce the computational costs, making them well-suited for large-scale, efficient, and accurate simulations of solar oscillations. \\

\textbf{Structure of paper.} In \cref{sec:framework}, we repeat the abstract framework which we use to analyze the proposed discretizations of \eqref{eq:Galbrun}. In particular, we recall the concepts of weak T-coercivity, discrete approximation schemes, and weak T-compatibility which provide sufficient criteria for the convergence of approximations. In \cref{sec:prelimContinuousWF}, we introduce the continuous weak formulation of \eqref{eq:Galbrun}. 
In \cref{sec:method}, we introduce hybrid discontinuous Galerkin methods for \eqref{eq:Galbrun} and show that the discretizations are discrete approximation schemes which allows us to apply the framework introduced in \cref{sec:framework}. Afterwards, we utilize the weak T-compatibility criteria to prove the stability and convergence of the proposed discretization in \cref{sec:convergenceAnalysis}  and conclude with numerical experiments in \cref{sec:numerics}.

\section{Abstract framework}\label{sec:framework}
This section recalls the abstract tools which we will use to analyze the proposed discretizations of \eqref{eq:Galbrun}. For more details and proofs we refer to \cite{HLS22H1,H23Hdiv,Thesis_vB23}. In \cref{subsec:weakTcoercivity}, we discuss the concept of weak T-coercivity which essentially asks for an operator to be a compact perturbation of a bijective operator, cf.~\cref{def:weakTcoercivity} for a precise definition. Afterwards, we study the approximation of weakly T-coercive operators in \cref{subsec:weakTcompatibility}. In particular, we introduce the (much broader) framework of \emph{discrete approximation schemes} and discuss sufficient conditions for the convergence of discrete approximations of weakly T-coercive operators.  

\subsection{Weak T-coercivity}\label{subsec:weakTcoercivity}
For two Hilbert spaces $(X,\spl \cdot, \cdot \spr_X)$ and $(Y,\spl \cdot, \cdot \spr_Y)$, we denote by $L(X,Y)$ the space of bounded linear operators from $X$ to $Y$. In particular, we set $L(X) \coloneqq L(X,X)$. Through the Riesz-isomorphism, there exists a one-to-one relation between bounded sesquilinear forms $a(\cdot,\cdot)$ on $X \times X$ and bounded linear operators $A \in L(X)$ via $\spl Au, u' \spr_X \coloneqq a(u,u')$ for all $u, u' \in X$. Thus, we discuss the following concepts for linear operators $A \in L(X)$, but also associate them with the corresponding sesquilinear form. 

Recall that an operator $A \in L(X)$ is called \emph{coercive} if it holds that
$$\inf_{u \in X \setminus \{ 0 \}} \frac{\vert \spl Au, u \spr_X \vert}{\Vert u \Vert^2_X} > 0.$$
This condition is equivalent, cf.~\cite[Lem.~C.58]{EG_FE2} to the existence of $\xi \in \mathbb{C}$, $\vert \xi \vert = 1$, such that 
\begin{equation} \label{eq:coercivity:C}
\inf_{u \in X \setminus \{ 0 \}} \frac{\Re( \xi \spl Au, u \spr_X)}{\Vert u \Vert^2_X} > 0 .
\end{equation}
The well-known Lax-Milgram lemma states that bounded coercive operators are bijective. More generally, a bounded operator $A \in L(X)$ is bijective if and only if the adjoint operator $A^{\ast} \in L(X)$ is injective and the \emph{inf-sup condition} holds:
\begin{equation*}
   \inf_{u \in X  \setminus \{ 0 \}} \sup_{v \in X  \setminus \{ 0 \}} \frac{\vert \spl Au, v \spr_X \vert}{\Vert u \Vert_X \Vert v \Vert_X} > 0.
\end{equation*}

Equivalently, we can prove \emph{T-coercivity} \cite{Cia12}, which asks for the existence of a bijective operator $T \in L(X)$ such that $T^\ast A$ (or $AT$) is coercive. We recall the following generalization of T-coercivity. 

\begin{definition}[Weak T-coercivity]\label{def:weakTcoercivity}
   \hypertarget{weakTcoercivity}We call an operator $A \in L(X)$ \emph{weakly T-coercive} if there exists a bijective operator $T \in L(X)$ and a compact operator $K \in L(X)$ such that $A T + K$ is coercive.
\end{definition}

In other words, an operator is \weaklyTcoercive~if it is a compact perturbation of a T-coercive, operator. Thus, \weaklyTcoercive~operators are Fredholm with index zero and therefore bijective if and only if they are injective.

\subsection{Discrete approximation schemes and weak T-compatibility}\label{subsec:weakTcompatibility}
We want to study the approximation of \weaklyTcoercive~operators in a general setting. To this end, we discuss the notion of \emph{weak T-compatibility} \cite{Halla21,HLS22H1} which is build upon the framework of \emph{discrete approximation schemes} \cite{Stummel_I,Vainikko}. For a more extensive review of these concepts, we refer to \cite[Chap. 2]{Thesis_vB23}. In the following, let $X$ be a Hilbert space and $\seq{X}$ be a sequence of finite dimensional Hilbert spaces, which are not necessarily subspaces of $X$. Instead, we assume that there exists a sequence of bounded linear operators $\seq{p}$, $p_n \in L(X,X_n)$, such that $\lim_{n \rightarrow \infty} \Vert p_n u \Vert_{X_n} = \Vert u \Vert_X$ for all $u \in X$. Finally, let $A \in L(X)$ be a bounded linear operator and $\seq{A}$, $A_n \in L(X_n)$, be a sequence of bounded linear operators. 

\begin{definition}\label{def:DAS}
   In the setting from above, we define the following concepts:
   \begin{enumerate}[label=(\roman*)]
      \item \hypertarget{converge}A sequence $\seq{u}$, $u_n \in X_n$, is said to \emph{converge} to $u \in X$, if $\lim_{n \rightarrow \infty} \Vert p_n u - u_n \Vert_{X_n} = 0$. 
      \item \hypertarget{compactseq}A sequence $\seq{u}$, $u_n \in X_n$, is called \emph{compact}, if for every subsequence $\IN' \subset \IN$ there exists a subsubsequence $\IN'' \subset \IN'$ and $u \in X$ such that $(u_n)_{n \in \IN''}$ converges to $u$.
      \item \hypertarget{approximate}A sequence of operators $\seq{A}$, $A_n \in L(X_n)$, \emph{approximates} (also called \emph{asymptotic consistency}) an operator $A \in L(X)$, if $\lim_{n \rightarrow \infty} \Vert A_n p_n u - p_n A u \Vert_{X_n} = 0$ for all $u \in X$.
      \item \hypertarget{compactop}A sequence of operators $\seq{A}$, $A_n \in L(X_n)$, is called \emph{compact}, if for every bounded sequence $\seq{u}$, $u_n \in X_n$, $\Vert u_n \Vert_{X_n} \le C$, the sequence $\seq{A_n u}$ is compact.
      \item \hypertarget{stable}A sequence of operators $\seq{A}$, $A_n \in L(X_n)$, is called \emph{stable}, if there exist constants $C > 0$, $n_0 > 0$, such that $A_n$ is invertible and $\Vert A_n^{-1} \Vert_{L(X_n)} \le C$ for all $n > n_0$. 
      \item \hypertarget{regular}A sequence of operators $\seq{A}$, $A_n \in L(X_n)$, is said to be \emph{regular}, if $\Vert u_n \Vert_{X_n} \le C$ and the compactness of $\seq{A_n u}$ imply the compactness of $\seq{u}$ itself.
   \end{enumerate} 
   \hypertarget{DAS}We call the triple $(X_n,p_n,A_n)$ a \emph{discrete approximation scheme} (DAS) of $(X,A)$ if we have that $\lim_{n \rightarrow \infty} \Vert p_n u \Vert_{X_n} = \Vert u \Vert_X$ for all $u \in X$ and $A_n$ approximates $A$. 
\end{definition}

A conforming Galerkin scheme $(X_n, p_n, A_n)$, where $X_n \subset X$ fulfills an approximation property, $p_n \in L(X,X_n)$ is the orthogonal projection onto $X_n$, and $A_n \coloneqq p_n A \vert_{X_n}$, is always a \DAS~of $(X,A)$. 
Our main goal is to show that the sequence of approximations $\seq{u}$, $u_n \in X_n$, \converges~to the continuous solution $u \in X$, so we are interested in the \stability~of the sequence $\seq{A}$. The following result shows that we can focus on the \regularity~of the sequence $\seq{A}$ instead. 

\begin{lemma}[Lem.~1 \& 2 of \cite{HLS22H1}]\label[lemma]{lem:RegularStable}
   Let $A \in L(X)$ be bijective and $(X_n,p_n, A_n)$ be a \DAS~of $(X,A)$. If $\seq{A}$ is \regular, then $\seq{A}$ is \stable. Further, if $u \in X$ solves $Au=f$ and $u_n \in X_n$ are solutions to $A_n u_n = f_n$ where $\lim_{n \rightarrow \infty} \Vert p_n f - f_n \Vert_{X_n} = 0$, then $\seq{u}$ \converges~to $u$.
\end{lemma}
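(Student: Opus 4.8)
The plan is to prove the two assertions in turn: the stability of $\seq{A}$ is the substantive claim, and the convergence statement then follows from it by a Lax-equivalence-type estimate. Throughout I would use the uniform bound $\sup_n \Vert A_n \Vert_{L(X_n)} < \infty$, which holds in the present setting since the $A_n$ are induced by a uniformly bounded family of sesquilinear forms.

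For stability I would argue by contradiction. If $\seq{A}$ is not \stable, then, taking $C = n_0 = k$ in the negated definition, there is a subsequence $\IN' \subset \IN$ along which each $A_n$ is either non-injective or invertible with $\Vert A_n^{-1} \Vert_{L(X_n)} \to \infty$. In both cases one extracts unit vectors $u_n \in X_n$, $\Vert u_n \Vert_{X_n} = 1$, with $\Vert A_n u_n \Vert_{X_n} \to 0$: for non-injective indices take $u_n \in \Ker A_n \setminus \{0\}$ normalized; otherwise take $u_n = A_n^{-1} w_n / \Vert A_n^{-1} w_n \Vert_{X_n}$ for unit data $w_n$ nearly realizing the inverse norm, so that $\Vert A_n u_n \Vert_{X_n} = \Vert A_n^{-1} w_n \Vert_{X_n}^{-1} \to 0$. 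Then $\seq{A_n u}$ converges to $0$, hence is \compactseq, while $\seq{u}$ is bounded; by the assumed \regularity~of $\seq{A}$, $\seq{u}$ is itself \compactseq. Passing to a subsubsequence $\IN'' \subset \IN'$ gives $u_n \to u \in X$ with $\Vert u \Vert_X = \lim \Vert u_n \Vert_{X_n} = 1$, so $u \neq 0$. To reach a contradiction I identify $u \in \Ker A$ via
\begin{equation*}
   \Vert p_n (A u) \Vert_{X_n} \le \Vert p_n A u - A_n p_n u \Vert_{X_n} + \Vert A_n \Vert_{L(X_n)} \Vert p_n u - u_n \Vert_{X_n} + \Vert A_n u_n \Vert_{X_n},
\end{equation*}
whose three terms vanish by asymptotic consistency, by $u_n \to u$ with the uniform bound on $\Vert A_n \Vert$, and by construction; since $\Vert p_n(Au) \Vert_{X_n} \to \Vert Au \Vert_X$ this forces $Au = 0$, contradicting $u \neq 0$ and the injectivity of the bijective $A$. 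Hence $\seq{A}$ is \stable.

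For convergence, stability provides $C, n_0$ with $\Vert A_n^{-1} \Vert_{L(X_n)} \le C$ for $n > n_0$. Using $A_n u_n = f_n$ and $f = Au$, I would write $u_n - p_n u = A_n^{-1}( f_n - A_n p_n u )$ and estimate
\begin{equation*}
   \Vert u_n - p_n u \Vert_{X_n} \le C \big( \Vert f_n - p_n f \Vert_{X_n} + \Vert p_n (A u) - A_n p_n u \Vert_{X_n} \big),
\end{equation*}
where the first term tends to $0$ by hypothesis and the second by asymptotic consistency; thus $\seq{u}$ \converges~to $u$.

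The main obstacle is the stability part, and specifically the passage to a limit across the varying spaces $X_n$: one must extract the unit near-null sequence correctly in both failure modes, then use \regularity~to upgrade boundedness to \convergence, after which asymptotic consistency together with $\sup_n \Vert A_n \Vert_{L(X_n)} < \infty$ pins the limit inside $\Ker A = \{0\}$. The convergence assertion is, by comparison, a routine stability-and-consistency estimate.
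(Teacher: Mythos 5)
The paper itself does not prove this lemma; it imports it from \cite{HLS22H1} (Lem.~1~\&~2), so your write-up can only be compared with the standard Stummel--Vainikko argument behind that citation --- and your proof is exactly that argument. Both halves are structurally sound: for stability, the contradiction setup (extracting unit near-null vectors in both failure modes, noting that $(A_nu_n)_n$ converges to $0$ and is therefore compact, upgrading boundedness of $(u_n)_n$ to compactness via regularity, and identifying the limit $u\neq 0$ as an element of $\Ker A$), and for convergence, the identity $u_n - p_n u = A_n^{-1}(f_n - A_np_nu)$ split into the data-consistency term $f_n - p_n f$ and the operator-consistency term $p_nAu - A_np_nu$, which needs no bound on $\Vert A_n\Vert$ at all.

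The one substantive point is the one you flagged yourself: the bound $\sup_n \Vert A_n \Vert_{L(X_n)} < \infty$, used to control $A_n(p_nu - u_n)$, is not among the hypotheses of the lemma as stated, since \cref{def:DAS}(iii) requires only the pointwise consistency $\Vert A_np_nu - p_nAu\Vert_{X_n}\to 0$. You should be aware that this bound cannot be dispensed with: the abstract statement is false without it. Take $X=\ell^2$, $X_n=\operatorname{span}\{e_1,\dots,e_n,f_n\}$ with one extra orthonormal direction $f_n$, $p_nu:=(u_1,\dots,u_n,0)$, $A=\operatorname{id}$, and $A_n(r+cf_n):=r+nc\,e_1$ for $r\in\operatorname{span}\{e_1,\dots,e_n\}$. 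Then $A_np_n=p_n$, so $(X_n,p_n,A_n)$ is a DAS of $(X,\operatorname{id})$; the sequence $(A_n)_n$ is regular (compact sequences are bounded, so compactness of $(A_nv_n)_n$ forces the $f_n$-coefficients $c_n$ of $v_n$ to satisfy $\vert nc_n\vert\le C$, hence $c_n\to0$; extracting a subsequence along which $nc_n$ converges, $r_n=A_nv_n-nc_n e_1$ inherits compactness from $(A_nv_n)_n$, and so $(v_n)_n$ is compact); yet no $A_n$ is injective, since $A_n(e_1-\tfrac1n f_n)=0$, so stability fails. Thus your uniform-boundedness assumption is a genuine additional hypothesis, not a removable convenience, and it is exactly where a hypothetical counterexample like the one above is excluded. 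It is harmless in context: in the conforming setting of \cite{HLS22H1} ($X_n\subset X$, $p_n$ the orthogonal projection) it is automatic from asymptotic consistency by Banach--Steinhaus, because $\Vert A_nv_n\Vert = \Vert A_np_nv_n\Vert\le \bigl(\sup_m\Vert A_mp_m\Vert_{L(X,X_m)}\bigr)\Vert v_n\Vert$; and for the HDG operators of this paper it follows, as you say, from the uniform boundedness of the forms $a_n$ with respect to $\Vert\cdot\Vert_{\Xn}$ (this is also consistent with \cref{thm:weakTcompatibility}, whose hypotheses force $\sup_n\Vert A_n\Vert<\infty$ through $A_nT_n=B_n+K_n$). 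But in the non-conforming abstract form stated here, the uniform bound should be recorded as an assumption, or the stronger Vainikko-type notion of operator convergence should be used; otherwise no proof of the stated lemma can exist.
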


The following theorem gives sufficient conditions for the \regularity~of approximations of \weaklyTcoercive~operators and therefore the \stability~of the approximation. It is the key motivation for the analysis presented in \cref{sec:convergenceAnalysis}. We note that if $A \in L(X)$ is \weaklyTcoercive~, then there exists a bijective operator $B \in L(X)$ and a compact operator $K \in L(X)$ such that $AT = B + K$. 

\begin{theorem}[Thm.~3 of \cite{HLS22H1}]\label{thm:weakTcompatibility}
   Assume that there exists a constant $C>0$, sequences $\seq{A}$, $\seq{T}$, $\seq{B}$, $\seq{K}$ and $B,T \in L(X)$ such that for each $n \in \IN$ it holds that $A_n,T_n,B_n,K_n \in L(X_n)$, $\Vert T_n \Vert_{L(X_n)}$, $\Vert T_n^{-1} \Vert_{L(X_n)}$, $\Vert B_n \Vert_{L(X_n)}$, $\Vert B_n^{-1} \Vert_{L(X_n)} \le C$, $B$ is bijective, $\seq{K}$ is \compactop~and 
   \begin{equation*}
      \lim_{n \rightarrow \infty} \Vert T_n p_n u - p_n T u \Vert_{X_n} = 0, \qquad \lim_{n \rightarrow \infty} \Vert B_n p_n u - p_n B u \Vert_{X_n} = 0, \qquad \forall u \in X,
   \end{equation*} 
   \begin{equation*}
      A_n T_n = B_n + K_n. 
   \end{equation*}
   Then the sequence $\seq{A}$ is \regular. 
\end{theorem}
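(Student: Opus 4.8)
The plan is to unwind the definition of \regularity~directly, using the factorization $A_n T_n = B_n + K_n$ as the main lever. I would start with an arbitrary sequence $\seq{u}$, $u_n \in X_n$, satisfying $\Vert u_n \Vert_{X_n} \le C$ and such that $\seq{A_n u}$ is \compactseq, and aim to show that $\seq{u}$ is itself \compactseq. The first move is to absorb the bijective $T_n$: since $\Vert T_n^{-1} \Vert_{L(X_n)} \le C$ uniformly, I set $w_n \coloneqq T_n^{-1} u_n$, so that $\seq{w}$ is bounded and $u_n = T_n w_n$. Applying the identity yields $A_n u_n = A_n T_n w_n = B_n w_n + K_n w_n$.

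Next I would isolate $\seq{B_n w}$. Because $\seq{K}$ is \compactop~and $\seq{w}$ is bounded, the sequence $\seq{K_n w}$ is \compactseq. Since $\seq{A_n u}$ is \compactseq~by assumption and \convergence~in a discrete approximation scheme is linear (if $a_n$ \converges~to $a$ and $b_n$ \converges~to $b$, then $a_n - b_n$ \converges~to $a - b$, using linearity of $p_n$), a standard double-subsequence extraction shows that the difference $B_n w_n = A_n u_n - K_n w_n$ defines a \compactseq~sequence $\seq{B_n w}$.

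The crux is then to invert this last step: to pass from the \compactnessseq~of $\seq{B_n w}$ back to that of $\seq{w}$, which is precisely the \regularity~of $\seq{B}$. Here I would exploit that $B$ is bijective together with the uniform bound $\Vert B_n^{-1} \Vert_{L(X_n)} \le C$. Given a subsequence, extract a subsubsequence $\IN''$ along which $\seq{B_n w}$ \converges~to some $g \in X$, set $w \coloneqq B^{-1} g$, and estimate, for $n \in \IN''$,
\[
\Vert w_n - p_n w \Vert_{X_n} \le C \Vert B_n w_n - B_n p_n w \Vert_{X_n} \le C \big( \Vert B_n w_n - p_n g \Vert_{X_n} + \Vert p_n B w - B_n p_n w \Vert_{X_n} \big),
\]
where both terms on the right vanish because $\seq{B_n w}$ \converges~to $g = Bw$ and because $\seq{B}$ \approximates~$B$. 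Hence $\seq{w}$ is \compactseq. Finally, since $\seq{T}$ \approximates~the bounded operator $T$ with $\Vert T_n \Vert_{L(X_n)} \le C$, the sequence $u_n = T_n w_n$, being the image of a \compactseq~sequence, is again \compactseq~(along the same subsubsequence $T_n w_n \convergesl Tw$), which is the desired conclusion.

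I expect the main obstacle to be the crux step above: correctly identifying the limit $w = B^{-1} g$ and controlling $\Vert w_n - p_n w \Vert_{X_n}$ purely through the discrete inverse bound and the asymptotic consistency of $\seq{B}$, since this is where the bijectivity of $B$ is essential and where the interplay between the connecting maps $p_n$ and the discrete operators must be handled carefully. The remaining bookkeeping --- boundedness of $\seq{w}$, linearity of discrete \convergence, and the nested subsequence extractions needed to combine two \compactseq~sequences --- is routine.
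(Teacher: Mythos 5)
Your proof is correct: each step --- transferring through $T_n^{-1}$ to get a bounded sequence $w_n$ with $A_n u_n = B_n w_n + K_n w_n$, peeling off the \compactseq~sequence $K_n w_n$ by a double subsequence extraction, recovering \compactnessseq~of $\seq{w}$ from the uniform bound on $B_n^{-1}$ combined with the \asymptoticconsistency~and bijectivity of $B$ (with the limit correctly identified as $w = B^{-1}g$), and mapping back through the uniformly bounded, asymptotically consistent $T_n$ --- is sound and complete. The paper itself only quotes this theorem from \cite{HLS22H1} without reproducing its proof, and your argument is essentially that proof, merely inlined into a single chain rather than split into separate lemmas (stable plus asymptotically consistent to a bijective limit implies regular; regularity is preserved under compact perturbation and under composition with stable sequences).
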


To summarize, \cref{thm:weakTcompatibility} yields the \stability~of a discrete approximation scheme, provided that we can transfer the \weaklyTcoercive~structure of the continuous operator $A \in L(X)$ to the discrete level in a \stable~manner. 

\section{Preliminaries \& continuous formulation}\label{sec:prelimContinuousWF}
\changed{In this section, we discuss notations and assumptions on the coefficients in \Cref{subsec:method:preliminaries} and introduce the continuous formulation of Galbrun's equation in \Cref{subsec:continuousFormulation}}.
\subsection{Preliminaries}\label{subsec:method:preliminaries}
For simplicity, we assume that $\dom \subset \mathbb{R}^d$ is a convex Lipschitz polyhedron and consider $\dom$ to be the default domain of all function spaces. Thus, we write for example $L^2 \coloneqq L^2(\dom)$. Further, we denote by $\spl \cdot, \cdot \spr$ the standard $L^2$-scalar product. For any space $X$ of scalar valued functions, we denote its vectorial version $\bm{X} \coloneqq [X]^d$ using the boldface notation. With abuse of notation, we also use the notation $\spl \cdot, \cdot \spr$ for the vector valued $\bL^2$-scalar product. Furthermore, we define the space 
\begin{equation}\label{eq:defH1nu0}
   \bH^1_{\nv 0} \coloneqq \{ \v \in \bH^1 : \v \cdot \nv = 0 \text{ on } \partial \dom \}.   
\end{equation}

Let $\omega \in \mathbb{R} \setminus \{ 0 \}$, $\Omega \in \mathbb{R}^d$, and $c_s, \rho \in W^{1,\infty}(\dom,\mathbb{R})$, $\gamma \in L^\infty(\dom,\mathbb{R})$ be measurable and bounded from above and below. For any function, we denote by $\underline{\cdot}$ and $\overline{\cdot}$ its minimal and maximal value in the domain under consideration. Thus, the boundedness assumptions on the coefficients translate to 
\begin{equation}\label{eq:CoeffsBounded}
   \underline{c_s} \le c_s(\bm{x}) \le \overline{c_s}, \quad \underline{\rho} \le \rho(\bm{x}) \le \overline{\rho}, \quad \underline{\gamma} \le \gamma(\bm{x}) \le \overline{\gamma} \quad \text{ for all } \bm{x} \in \dom, 
\end{equation}
for constants $\underline{c_s}, \overline{c_s}, \underline{\rho}, \overline{\rho}, \underline{\gamma}, \overline{\gamma} > 0$. 
Finally, let the background flow $\bflow \in \bm{W}^{1,\infty}(\dom,\mathbb{R}^d)$ be compactly supported in $\dom$.
We assume that the background flow conserves mass in the sense that $\div(\rho \bflow) = 0$. In particular, the former assumptions imply that $\div(\rho \bflow) \in L^2$ and $\bflow \cdot \nv = 0$ on $\partial \dom$.
Let the pressure and gravitational potential $p,\phi \in W^{2,\infty}(\dom,\mathbb{R})$.

Throughout the manuscript, we use the notation $A \lesssim B$, if there exists a constant $C > 0$ such that $A \le CB$, where the constant $C$ may be different at each occurrence. The constant $C$ may depend on the domain $\dom$ and the physical parameters, but not on the index $n \in \mathbb{N}$ and functions involved in $A$ and $B$. In particular, the constant $C$ is not allowed to depend on the ratio $\frac{\underline{c_s}^2 \underline{\rho}}{\overline{c_s}^2 \overline{\rho}}$. 

\subsection{Continuous weak formulation}\label{subsec:continuousFormulation}
The analysis of the continuous weak formulation of \eqref{eq:Galbrun} is presented in \cite{HH21}. We include a brief discussion here for referece throughout the article and refer to \cite{HH21} for further details. We define 
\begin{equation}
      \X \coloneqq \{ \u \in \bL^2 : \div \u \in L^2, \diffb \u \in \bL^2, \nv \cdot \u = 0 \text{ on } \partial \dom \}. 
\end{equation} 
Deviating from \cite{HH21}, we consider the associated inner product on $\X$ to be weighted:
\begin{equation*}
   \inner{\u, \u'}_{\X} \coloneqq \innercsr{\div \u, \div \u'} + \inner{\u, \u'} + \innerr{\diffb \u, \diffb \u'}.
\end{equation*}
Due to the smoothness assumptions $c_s, \rho \in W^{1,\infty}(\dom,\mathbb{R})$ and the boundedness assumptions \eqref{eq:CoeffsBounded}, the weighted inner product is equivalent to the canonical inner product on $\X$ and the proof that $\X$ is a Hilbert space follows with the same argumentation as in \cite[Lem.~2.1]{HH21}. The smoothness assumption $\bflow \in \bm{W}^{1,\infty}$ and the compactness of $\text{supp } \bflow \subset \dom$ ensure that the embedding $\bm{C}^\infty_0 \subset \X$ is dense \cite[Thm.~6]{HLS22H1}.

For $\u,\u' \in \X$, we define the following sesquilinear forms
\begin{subequations}\label{eq:a:components}
   \begin{align}
      \adiv(\u,\u') &\coloneqq \inner{\csr \div \u,\div \u'}_{L^2} + \inner{\div \u, \nabla p \cdot \u'}_{L^2} + \inner{\nabla p \cdot \u, \div \u'}_{L^2}, \\
      \aconv(\u,\u') &\coloneqq \inner{\rho \opd \u, \opd \u'}_{\bL^2}, \\
      \arem(\u,\u') &\coloneqq \inner{(\hess(p) - \rho \hess(\phi)) \u, \u'}_{\bL^2} - i \omega \inner{\gamma \rho \u, \u'}_{\bL^2}.
   \end{align}
\end{subequations}
Then, we define the sesquilinear form $a \colon \X \times \X \to \mathbb{C}$ by
\begin{equation}\label{eq:a}
      a(\u,\u') \coloneqq \adiv(\u,\u') - \aconv(\u,\u') + \arem(\u,\u')
\end{equation}
and denote by $A \in L(\X)$ the associated operator. Assuming mass conservation $\div(\rho \bflow) = 0$, the variational formulation of \eqref{eq:Galbrun} is given by
\begin{equation}\label{eq:cont:weakForm}
   \text{find } \u \in \X \text{ such that } a(\u,\u') = \spl \bff, \u' \spr \text{ for all } \u' \in \X, 
\end{equation}
cf.~\cite[Sec. 2.3]{HH21}. If the Mach number $\Vert c_s^{-1} \bflow \Vert_{\bL^\infty}$ is bounded suitably, it can be shown that the operator $A$ is \weaklyTcoercive~and injective such that problem \eqref{eq:cont:weakForm} is well-posed \cite[Thm.~3.11]{HH21}. Defining $\q \coloneqq c_s^{-2} \rho^{-1} \nabla p$ the sesquilinear form $a(\cdot,\cdot)$ can be written as 
\begin{align}
   a(\u,\u')\! = &\spl c_s^2 \rho (\div + \q \cdot) \u, (\div + \q \cdot) \u' \spr \!  - \! \spl \rho (\omega+\! i\conv \!+ \! i\angvel\times) \u, (\omega+ \! i\conv \!+ \! i\angvel\times)  \u' \spr \nonumber \\
   &+ \spl (\hess(p) - \rho \hess(\phi)-c_s^2\rho \q \otimes \q) \u, \u' \spr - i \omega \spl \gamma \rho \u, \u' \spr. \label{eq:aCont:q}
\end{align}
This representation will be useful for the discussion of the well-posedness of the continuous and the discrete problem in \cref{sec:convergenceAnalysis}. Similar to \eqref{eq:a:components}, we define 
\begin{subequations}\label{eq:aq:components}
   \begin{align}
      \adivq (\u,\u') &\coloneqq \inner{c_s^2 \rho (\div + \q \cdot) \u, (\div + \q \cdot) \u'} \\
      \aremq (\u,\u') &\coloneqq \inner{(\hess(p) - \rho \hess(\phi)-c_s^2\rho \q \otimes \q) \u, \u'} - i \omega \inner{\gamma \rho \u, \u'}, 
   \end{align}
\end{subequations}
such that $a(\u,\u') = \adivq(\u,\u') - \aconv(\u,\u') + \aremq(\u,\u')$. In particular, considering $a(\u,\u)$ for $\u \in \X \cap \ker \{ \div + \q \}$ reveals that the sesquilinear form $a(\cdot,\cdot)$ is not coercive.

\section{Hybrid discontinuous Galerkin discretizations}\label{sec:method}
In this section, we introduce the considered discretizations of \eqref{eq:Galbrun}. \changed{After introducing the discrete spaces in \cref{subsec:method:discretization}, we introduce the discrete weak formulation of Galbrun's equation in \cref{subsec:discreteWF}. In preparation for the analysis, we introduce projection operators in \cref{subsec:projectionOperators} and show in \cref{subsec:method:DAS} that the proposed discretization is indeed a \DAS, allowing us to apply the framework from \cref{sec:framework}}.

\subsection{Discrete spaces}\label{subsec:method:discretization}
Let $\seq{\mathcal{T}}$ be a sequence of shape regular, simplicial triangulations of the domain $\dom$.
Let $\mathcal{F}_n$ be the collection of all faces of the triangulation $\Tn$, and let $\partial \Tn$ be the collection of all element boundaries $\partial \tau$ of elements $\tau \in \Tn$. Notice the subtle difference between $\Fn$ and $\partial \Tn$; for instance, summing over all element boundaries counts each interior facet \emph{twice}.

For an element $\tau \in \Tn$ or a face $F \in \Fn$, we denote by $h_{\tau}$ and $h_{F}$ their diameters, respectively, and we set $h_{\partial \tau} = \max_{F \in \partial \tau} h_F$. For a unified presentation, we define a function $\h \vert_{\sigma} \coloneqq h_{\sigma}$, $\sigma \in \Sn$, where $\Sn \in \{ \Tn, \partial \Tn, \Fn \}$. Finally, let $h_n \coloneqq \max_{\tau \in \Tn} h_{\tau}$ be the maximal mesh size.

For a generic Hilbert space $\mathbb{S}$, we denote by $\mathbb{S}(\Tn)$ its broken version on $\Tn$. In particular, we denote by $\mathbb{P}^k(\Tn)$ and $\mathbb{P}^k(\Fn)$ the spaces of piecewise polynomials up to degree $k$ on $\Tn$ and $\Fn$.

On broken spaces $\mathbb{S}(\Sn)$, where $\Sn \in \{ \Tn, \partial \Tn, \Fn \}$, we use the abbreviations:
\begin{equation*}
   \spl \cdot, \cdot \spr_{\mathbb{S}(\Sn)} \coloneqq \sum_{\sigma \in \Sn} \spl \cdot, \cdot  \spr_{\mathbb{S}(\sigma)}, \qquad \Vert \cdot \Vert_{\mathbb{S}(\Sn)}^2 \coloneqq \sum_{\sigma \in \Sn} \Vert \cdot \Vert_{\mathbb{S}(\sigma)}^2.
\end{equation*}
In particular, we set $\spl \cdot, \cdot \spr_{\Sn} \coloneqq \spl \cdot, \cdot \spr_{L^2(\Sn)}$. With abuse of notation, we will also use this notation for the respective broken vector-valued scalar products, i.e.~with $L^2$ replaced by $\bL^2$. We introduce the discrete space   
\begin{equation*}
   \Xn \coloneqq \XT \times \XF,
\end{equation*} 
where $\XT$ and $\XF$ are discrete polynomial spaces defined on $\Tn$ and $\Fn$, respectively. The default choices are $\XT = [\mathbb{P}^k(\Tn)]^d$ and $\XF = [\mathbb{P}^k(\Fn)]^d$, $k \in \mathbb{N}$, yielding a fully non-conforming HDG discretization. However, the forthcoming analysis also covers different choices, for example $H(\div)$-conforming spaces, cf.~\cref{rem:HdivHDG}.
The purpose of the facet space is to enable static condensation, cf.~\cref{rem:staticcond}.

For functions $\u_n \in \Xn$, we write $\u_n = (\u_{\tau}, \u_{F})$, where $\u_{\tau} \in \XT$ is the volume and $\u_F \in \XF$ is the facet component of $\u_n$. On occasion, we make use of the projection operators onto the volume or facet components defined by $\PVol{\cdot} \colon \Xn \to \XT, (\u_{\tau},\u_{F}) \mapsto \u_{\tau}$ and $\PFac{\cdot} \colon \Xn \to \XF, (\u_{\tau},\u_{F}) \mapsto \u_{F}$.

We define the following HDG-jump operators element-wise on $\tau \in \Tn$
\begin{equation}
   \hdgjump{\u_n} \coloneqq  \u_{\tau} - \u_F, \quad \hdgjump{\u_n}_{\nv} \coloneqq \nv \cdot \hdgjump{\u_n}, \quad \hdgjump{\u_n}_{\bflow} \coloneqq (\bflow \cdot \nv) \hdgjump{\u_n}, 
\end{equation}
where we interpret $\u_{\tau}$ in a trace sense. Further, we define $\jump{\u_{\tau}}_F \coloneqq \u_{\tau} \vert_{\tau_1} - \u_{\tau} \vert_{\tau_2}$, $F \in \Fn$, $\tau_1,\tau_2 \in \Tn$, $\tau_1 \cap \tau_2 = F$, to be the usual DG-jump operator (distinguished by the absence of the underline) on $\XT$. Here, we assume a unique numbering of the aligned elements for each facet to fix the sign of the jump.

\changed{\begin{remark}[Hybridization as the enabler of static condensation]\label[remark]{rem:staticcond}
A principal purpose of hybridization is to restructure the coupling pattern of a DG formulation. In standard DG, interior (element) unknowns of neighboring elements couple directly through face terms. By introducing facet (trace) unknowns, the HDG scheme reroutes all inter-element communication through the mesh skeleton: interior unknowns couple only to unknowns on the same element and its facets, while global couplings are carried solely by the facet unknowns. This change in coupling structure enables the local elimination of interior element unknowns and leaves a global system posed only in terms of skeleton unknowns. The difference in coupling patterns is illustrated in \Cref{fig:DGvsHDG}.

\begin{figure}[!htbp]
   \centering
   \begin{subfigure}[b]{0.49\textwidth}
      \centering
      \includegraphics[width=0.8\textwidth]{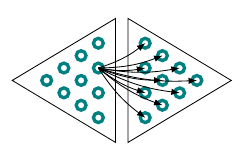}
      \subcaption{DG}
   \end{subfigure}
   \begin{subfigure}[b]{0.49\textwidth}
      \centering
      \includegraphics[width=0.8\textwidth]{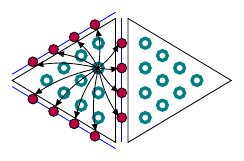}
      \subcaption{HDG}
   \end{subfigure}
   \caption{We compare the coupling of unknowns for DG and HDG methods. For illustration purposes, we depict the situation in the scalar setting with polynomial degree $k = 3$.}
   \label{fig:DGvsHDG}
\end{figure}

With the product space $\Xn = \XT \times \XF$ and the splitting $\u_n=(\u_\tau,\u_F)$ introduced above, the HDG bilinear form derived below yields a linear system of the block form
\[
\begin{bmatrix}
A_{\tau\tau} & A_{\tau F} \\
A_{F\tau} & A_{FF}
\end{bmatrix}
\begin{bmatrix}
\u_\tau \\ \u_F
\end{bmatrix}
=
\begin{bmatrix}
b_\tau \\ b_F
\end{bmatrix}.
\]
By construction, $A_{\tau\tau}$ is block-diagonal across elements (each block involves only the degrees of freedom on a single element), so that a Schur-complement strategy eliminating the element unknowns is possible and efficient. The dominant computational cost then lies in solving the reduced Schur-complement system for the skeleton unknowns; once this global system is solved, the element unknowns are recovered by independent local back-substitutions on each element. 

This two-stage procedure is referred to as \emph{static condensation}; see also \cite{CGL09,KirbySherwinCockburn2012} for further details on static condensation with HDG. It is well suited for high-performance computing because the local element solves are embarrassingly parallel and the reduced global system involves substantially fewer unknowns than the full DG system.
\end{remark}}

\begin{remark}[Alternative choices for $\XT$ and $\XF$]\label[remark]{rem:HdivHDG}
   As discussed above, we may choose $\XT$ as the Brezzi-Douglas-Marini (BDM) space \cite{BDM87} defined as 
   \begin{equation*}
      \XT \coloneqq \mathbb{BDM}_{k} \coloneqq \{ \u_{\tau} \in [\mathbb{P}^k(\Tn)]^d : \jump{\u_{\tau} \cdot \nv}_{F} = 0 \ \forall F \in \Fn \} \subset H(\div, \dom),
   \end{equation*}
   where we recall that $\jump{\cdot}_{F}$, $F \in \Fn$, denotes the usual DG-jump operator. To obtain an $H(\div)$-conforming HDG discretization \cite{LS16}, we set $\XF$ to be the space of tangential polynomials on the skeleton $\Fn$:
   \begin{equation*}
      \XF \coloneqq \mathbb{P}^{k,\text{tang}}(\Fn) =  \{ \u_F \in [\mathbb{P}^k(\Fn)]^d : P_{\nv} \u_F = 0 \}.  
   \end{equation*}
   For these choices, we redefine the HDG-jump operators as $\hdgjump{\u_n} \coloneqq P_{\nv}^{\perp} \u_{\tau} - \u_F$ such that $\hdgjump{\u_n}_{\nv} = 0$.
   To optimize the computational efficiency, we can use relaxed $H(\div)$-conforming spaces as introduced in \cite{LLS18,LLS19}. We define 
   \begin{equation*}
      \mathbb{BDM}_{k}^{-} \coloneqq \{ \u_\tau \in [\mathbb{P}^k(\Tn)]^d : \Pi_F^{k-1} \jump{\u_\tau \cdot \nv}_F = 0 \ \forall F \in \Fn \},
   \end{equation*}
   where $\Pi_F^{k-1}$ is the $L^2$-projection onto $[\mathcal{P}^{k-1}(F)]^d$. While functions in $\mathbb{BDM}_{k}^{-}$ are not normal-continuous in the highest order, this relaxation reduces the number of coupling degrees of freedoms improving the sparsity pattern of the system matrices.
   Note that in the case of (relaxed) $H(\div)$-conforming HDG discretizations the skeleton unknowns in the static condensation procedure are the facet unknowns in $\XF$ as well as the facet degrees of freedom in $\XT$ that are responsible for the normal-continuity.
\end{remark}

\subsection{Discrete weak formulation \& stabilization mechanisms}\label{subsec:discreteWF}

Let $l \in \N$. For all $\u_n \in \Xn$, we define the weighted lifting operators of degree $l$ as $\bRl \u_n \in [\mathbb{P}^l(\Tn)]^d$ and $R^l \u_n \in \mathbb{P}^l(\Tn)$ solving
\begin{subequations}
   \begin{align}
      \spl \rho \bRl \u_n, \bm{\psi}_n \spr_{\Tn} &= - \spl \rho \hdgjump{\u_n}_{\bflow}, \bm{\psi}_n \spr_{\bL^2(\partial \Tn)} \quad \text{ for all } \bm{\psi}_n \in [\mathbb{P}^l(\Tn)]^d, \label{eq:VecLift} \\
      \spl \csr R^l \u_n, \psi_n \spr_{\Tn} &= - \spl \csr \hdgjump{\u_n}_{\nv}, \psi_n \spr_{L^2(\partial \Tn)} \quad \text{ for all } \psi_n \in \mathbb{P}^l(\Tn). \label{eq:ScalLift}
   \end{align}
\end{subequations}
Due to the Cauchy-Schwarz and the discrete trace inequality, we have that
\begin{equation}\label{eq:liftingBoundedness}
   \begin{aligned}
      \Vert \rho^{1/2} \bRl\u_n \Vert_{\bL^2}^2 &\le C_{dt}^2 \Vert \rho^{1/2} \h^{-1/2} \hdgjump{\u_n}_{\bflow} \Vert^2_{\bL^2(\partial \Tn)},\\
      \Vert (\csr)^{1/2} R^l\u_n \Vert^2_{L^2} &\le C_{dt}^2  \Vert (\csr)^{1/2} \h^{-1/2} \hdgjump{\u_n}_{\nv}\Vert^2_{L^2(\partial \Tn)}.
   \end{aligned}
\end{equation}
The lifting operators allow us to define the following discrete versions of the differential operators $\ddiffb$ and $\ddiv$ on $\Xn$. For $\bu_n\in\Xn$ and $\tau \in \Tn$, we define
\begin{equation}\label{eq:discreteDiffs}
   (\ddiffb \u_n) \vert_{\tau} \coloneqq (\diffb \u_\tau)  + \bRl \u_n \quad \text{and} \quad (\ddiv \u_n) \vert_{\tau} \coloneqq (\div \u_{\tau})  + R^l\u_n. 
\end{equation}
These operators can be interpreted as distributional versions of their continuous counterpart on the broken polynomial space $\Xn$, cf.~\cite{BE08, BO09, DPE10}. To stabilize $\diffb$ on the discrete level, we replace $\diffb$ by $\ddiffb$ in the discrete sesquilinear form. This treatment stems from a Bassi-Rebay lifting technique \cite{BR97,BMMPR00} and enables us to obtain a stable method without an additional stabilization term. \changed{In particular, this technique allows us to avoid further assumptions on the magnitude of the Mach number. We note that the same technique is commonly used in hybrid high-order (HHO) methods \cite{DPEL14,DPE15,CEP20} to define reconstructions of differential operators.}

In contrast, the terms involving the divergence operator do not depend on the background flow $\bflow$. 
Thus, we use a classical symmetric interior penalty (SIP) technique for those terms and note that the discrete divergence operator is mainly introduced to obtain a unified notation with $\ddiffb$, for which the lifting technique is essential.
To be precise, we introduce the following discrete sesquilinear forms: 
\begin{subequations}\label{eq:an:components}
   \begin{align}
      \adivn(\u_n,\u_n') &\coloneqq \inner{\csr \ddiv \u_n, \ddiv \u_n'}_{\Tn} +  s_n(\u_n,\u_n') \label{eq:an:components:adivn}\\
      &\qquad + \inner{\ddiv \u_n, \nabla p \cdot \u_{\tau}'}_{\Tn} + \inner{\nabla p \cdot \u_{\tau}, \ddiv \u_n'}_{\Tn} \nonumber \\ 
      \aconvn(\u_n,\u_n') &\coloneqq \inner{\rho \dopd \u_n, \dopd \u_n'}_{\Tn}\label{eq:an:components:aconvn}\\
      \aremn(\u_n,\u_n') &\coloneqq \inner{ (\hess(p) - \rho \hess(\phi)) \u_{\tau}, \u_{\tau}'}_{\Tn} - i \omega \inner{ \gamma \rho \u_{\tau}, \u_{\tau}'}_{\Tn}
      \label{eq:an:components:aremn}
   \end{align}
\end{subequations}
where the stabilization term $s_n(\cdot,\cdot)$ is defined for $\alpha > 0$ as
\begin{equation*}
   s_n(\u_n,\u_n') \coloneqq -\spl c_s^2 \rho \alpha \h^{-1} \hdgjump{\u_n}_{\nv}, \hdgjump{\u_n'}_{\nv} \spr_{\partial \Tn} - \spl c_s^2 \rho R^l \u_n, R^l \u_n' \spr_{\Tn}.
\end{equation*}
The unusual minus in front of the first term is required to show stability in \cref{thm:weakTcompatibilityFulfilled}. In particular, the construction of the $T_n$ in \cref{subsec:CA:Tops} flips the sign in front of the normal jump, which makes the terms stemming from $s_n(\cdot,\cdot)$ positive.

Altogether, we define the discrete version of \eqref{eq:a} through
\begin{equation}\label{eq:an}
   a_n(\u_n,\u_n') \coloneqq \adivn(\u_n,\u_n') - \aconvn(\u_n,\u_n') + \aremn(\u_n,\u_n').
\end{equation}
We denote by $A_n \in L(\X_n)$ the operator associated to the sesquilinear form $a_n(\cdot,\cdot)$. The use of the discrete divergence operator $\ddiv$ in combination with the stabilization term $s_n(\cdot,\cdot)$ indeed yields a SIP formulation for $\adivn(\cdot,\cdot)$, since
\begin{align*}
   \innercsr{&\ddiv \u_n, \ddiv \u_n'}_{\Tn} \! + \! s_n(\u_n, \u_n') \\
   = &\innercsr{\div \u_{\tau}, \div \u_{\tau}}_{\Tn} - \innercsr{\hdgjump{\u_n}_{\nv}, \div \u_{\tau}'}_{\partial \Tn} \\
   -&\innercsr{\div \u_{\tau}, \hdgjump{\u_n'}_{\nv}}_{\partial \Tn} \! + \! \innercsr{\alpha \h^{-1} \hdgjump{\u_n}_{\nv}, \hdgjump{\u_n'}_{\nv}}_{\partial \Tn}.
\end{align*}
Altogether, we consider the discrete problem 
\begin{equation}\label{eq:discr:weakForm}
   \text{find } \u_n \in \X_n \text{ such that } a_n(\u_n,\u_n') = \spl \bff, \u_n' \spr \text{ for all } \u_n' \in \X_n.
\end{equation}

For functions $\u_n, \u_n' \in \Xn$, we define the following scalar product
\begin{equation}\label{def:scalarProductXn}
   \begin{aligned}
      \inner{\u_n, \u_n'}_{\Xn} \coloneqq &\innercsr{\ddiv \u_n, \ddiv \u_n'}_{\Tn} + \inner{\u_\tau, \u_\tau'}_{\Tn} \\
      &+ \innerr{\ddiffb \u_n, \ddiffb \u_n'}_{\Tn} + \innercsr{\h^{-1} \hdgjump{\u_n}_{\nv}, \hdgjump{\u_n'}_{\nv}}_{\partial \Tn},
   \end{aligned}
\end{equation}
and denote by $\Vert \cdot \Vert_{\Xn} = \sqrt{\spl \cdot, \cdot \spr_{\Xn}}$ the induced norm. The terms involving the normal jump $\hdgjump{\cdot}_{\nv}$ are added to control the terms of $a_n(\cdot,\cdot)$ arising from the SIP stabilization of the $\spl \div \cdot,\div \cdot \spr_{\Tn}$-term.

\begin{remark}[Stabilization in the setting of \cref{rem:HdivHDG}]
   If we choose an $H(\div)$-conforming HDG discretization as discussed in \cref{rem:HdivHDG}, we have that $R^l \u_n = 0$ for all $\u_n \in \Xn$, and therefore we have that $\ddiv = \div$ and $s_n(\cdot,\cdot) = 0$. For the choice of $\XT = \mathbb{BDM}_{k}^{-}$ and $\XF = \mathbb{P}^{k,\text{tang}}(\Fn)$, we still obtain $R^l \u_n = 0$ if the lifting degree satisfies $l \le k-1$. 
\end{remark}

\subsection{Projection operators}\label{subsec:projectionOperators}
In preparation, we define the following bounded interpolation operators. For $s > 1/2$, let 
\begin{itemize}
   \item $\pi_n^d : \bH^s \to [\mathbb{P}^k(\Tn)]^d \cap H(\div)$ be an $H(\div)$-conforming interpolation operator, 
   \item $\pi_n^l : L^2 \to \mathbb{P}^k(\Tn)$ be the (scalar) $L^2$-interpolation operator, 
   \item $\pi_n : \bH^s \to \XT$ be an interpolation operator into $\XT$, e.g.~the standard $\bL^2$-interpolation operator if $\XT = [\mathbb{P}^k(\Tn)]^d$ or an $H(\div)$-conforming interpolation operator if $\XT = [\mathbb{P}^k(\Tn)]^d \cap H(\div)$.
\end{itemize}
We assume that the interpolation operators $\pi_n^d$ and $\pi_n^l$ fulfill the commutation property $\div \pi_n^d = \pi_n^l \div$ and that the following estimates hold:
for all $\v \in \bH^r(\tau)$, $r \in [1,k+1]$, $m \in [0,r]$, $\tau \in \Tn$, we have that
\begin{subequations}\label{eq:pi:approximationEstimates}
   \begin{align}
      \vert \v - \tilde{\pi}_n \v \vert_{\bH^m(\tau)} &\le C_{\text{apr}} h_{\tau}^{r-m} \vert \v \vert_{\bH^r(\tau)}, \\
      \Vert \v - \tilde{\pi}_n \v \Vert_{\bL^2(\partial \tau)} &\le C_{\text{ab}} h_{\tau}^{r-1/2} \vert \v \vert_{\bH^r(\tau)},
   \end{align}
\end{subequations}
where $\tilde{\pi}_n \in \{ \pi_n^d, \pi_n \}$. For standard constructions satisfying these assumptions, we refer to \cite{EG_FE1}.

We extend the interpolation operator $\pi_n$ from the DG space $\XT$ to the HDG space $\Xn$ through 
\begin{equation*}
   \pinX : \bH^s \to \Xn, \u_{\tau} \mapsto (\pi_n \u_{\tau}, \trF (\pi_n \u_{\tau})),
\end{equation*}
where the trace operator $\trF$ of a discontinuous function is defined through averaging. For $\u_{\tau} \in \XT$, $F \in \Fn$ and $\tau_{1},\tau_{2} \in \Tn$ such that $\tau_{1} \cap \tau_{2} = F$, we set 
\begin{equation*}
   ( \trF \u_{\tau}) \vert_F \coloneqq \frac{1}{2} ( \tr \u_{\tau} \vert_{\tau_1} + \tr \u_{\tau} \vert_{\tau_2}).
\end{equation*}

We define a specific extension of the $H(\div)$-conforming interpolation operator $\pi_n^d$ to $\Xn$ for future use. For any vector-valued function $\u$, let $P_{\nv} \u \coloneqq \nv (\nv \cdot \u)$ denote the normal projection and $P_{\nv}^{\perp} \coloneqq \Id - P_{\nv}$ be the tangential projection. For $s \ge 0$, we set
\begin{equation}\label{eq:def:pinXd}
   \pinXd : \bH^{1+s} \to \XT \cap H(\div) \times \XF, \u \mapsto (\pi_n^d \u, P_{\nv}(\pi_n^d \u) + P_{\nv}^{\perp}(\pi_n^{\Fn}\u)),
\end{equation}
where $\pi_n^{\Fn}$ is the $L^2$-projection onto $\XF$. The properties of the interpolation operator $\pinXd$ are studied in \cref{lem:pinXd:properties}.

\subsection{Interpretation as DAS}\label{subsec:method:DAS}
To apply the abstract framework discussed in \cref{sec:framework}, we have to show that the proposed discretization can be interpreted as a \DAS.
As a first step, we have to define a suitable quasi projection $p_n \in L(\X,\Xn)$. 
Since the trace of functions in $\X$ is not necessarily well-defined, the evaluation of the discrete operator $\ddiffb$ and therefore the evaluation of $\spl \cdot, \cdot \spr_{\Xn}$ is not well-defined for functions in $\X$. Nevertheless, we want to define the quasi projection $p_n$ in the spirit of an orthogonal projection. 
Thus, for $\u \in \X$, we define $p_n \u \in \X_n$ to be the solution to
\begin{equation}\label{eq:def:pn}
   \inner{p_n \u, \u_n'}_{\X_n} \! = \! \innercsr{\div \u, \ddiv \u_n'}_{L^2} + \inner{\u, \u_{\tau}'}_{\bL^2} + \innerr{\diffb \u, \ddiffb \u_n'}_{\bL^2} \! \!
   \quad \forall \u_n' \in \Xn.
\end{equation}
If the function $\u$ has enough regularity to allow for a well-defined trace, for instance if $\u \in \X \cap \bH^{1}$, the discrete operators $\ddiv$ and $\ddiffb$ can be applied to the pair $\underline{\u} \coloneqq (\u, \tr \u)$ and thus $\underline{\u}$ may be plugged into $\spl \cdot, \cdot \spr_{\Xn}$, cf.~\eqref{def:scalarProductXn}. Then, \eqref{eq:def:pn} can be written as $\spl p_n \u, \u_n' \spr_{\Xn} = \spl \u, \u_n' \spr_{\Xn}$.

The Cauchy-Schwarz inequality yields that $p_n \in L(\X,\X_n)$ with $\Vert p_n \Vert_{L(\X,\X_n)} \le 1$. Furthermore, for all $\u_n' \in \X_n$ the following Galerkin orthogonality property holds:
\begin{equation}\label{eq:GalerkinOrthogonality}
   \begin{aligned}
      0 = &\innercsr{(\div \u - \ddiv p_n \u), \ddiv \u_n'}_{\Tn} + \inner{\u - \PVol{p_n \u}, \u_n'}_{\Tn} \\
      &+ \innerr{(\diffb \u - \ddiffb p_n \u), \ddiffb \u_n'}_{\Tn} - \innercsr{\h^{-1} \hdgjump{p_n \u}_{\nv}, \jump{\u_n'}_{\nv}}_{\partial \Tn}
   \end{aligned}
\end{equation}

As discussed above, the norm $\Vert \cdot \Vert_{\Xn}$ cannot be evaluated for functions in $\X$. To circumvent this issue, we introduce a distance function $d_n : \X \times \Xn \to \mathbb{R}^{+}_0$, 
\begin{equation}
   \begin{aligned}
      d_n(\u,\u_n')^2 \coloneqq &\Vert (\csr)^{1/2}(\div \u - \ddiv \u_n) \Vert_{L^2}^2 + \Vert \u - \u_{\tau} \Vert_{\bL^2}^2 \\
      &+ \Vert \rho^{1/2} (\diffb \u - \ddiffb \u_n) \Vert_{\bL^2}^2 + \jnorm{\u_h}{2}, 
   \end{aligned}
\end{equation}
where we define the following jump norm on $\Xn$:
\begin{equation}\label{eq:def:jumpNorm}
   \jnorm{\u_h}{2} \coloneqq \Vert (\csr)^{1/2} \h^{-1/2} \hdgjump{\u_n}_{\nv} \Vert_{L^2(\partial \Tn)}^2. 
\end{equation} 
If the trace is well-defined, e.g.~for $\u \in \X \cap \bH^{1}$, then we obtain $d_n(\u,\u_n) = \Vert \underline{\u} - \u_n \Vert_{\Xn}$ for $\underline{\u} = (\u, \tr \u)$. Furthermore, the distance function $d_n(\cdot,\cdot)$ fulfills the following triangle inequalities
\begin{equation}\label{eq:dn:triangle}
   d_n(\u,\u_n) \le d_n(\tilde{\u},\u_n) + \Vert \tilde{\u} - \u \Vert_{\X}, \quad d_n(\u,\u_n) \le d_n(\u,\tilde{\u}_n) + \Vert \tilde{\u}_n - \u_n \Vert_{\Xn}, 
\end{equation}
for all $\u, \tilde{\u} \in \X$ and $\u_n, \tilde{\u}_n \in \Xn$. 

To show that the triple $(\Xn,p_n,A_n)$ is a \DAS~of $(\X,A)$, we will show in \cref{lem:pnuToU} below that $\lim_{n \to \infty} \Vert p_n \u \Vert_{\Xn} = \Vert \u \Vert_{\X}$ for all $\u \in \X$ and in \cref{thm:AnToA} that $A_n$ \approximates~$A$. As a preparation, we proceed to analyze the projection operators defined in the previous section. We remark that most results follow with the same argumentation as in the $H(\div)$-conforming DG \cite{H23Hdiv} and the fully discontinuous DG \cite{Thesis_vB23} case.

\begin{lemma}\label[lemma]{lem:pinXd:properties}
   Let $\u \in \bH^{1+s}$, $0 \le s \le k$, and $\pinXd$ be defined by \eqref{eq:def:pinXd}. Then
   \begin{equation}\label{eq:pinXd:jumps}
      \hdgjump{\pinXd \u}_{\nv} = 0 \qquad \text{and} \qquad \hdgjump{\pinXd \u} = P_{\nv}^{\perp}(\pi_n^d \u - \pi_n^{\Fn}\u).
   \end{equation}
   Furthermore, there exists a constant $C > 0$ such that 
   \begin{equation}\label{eq:pinXd:boundedness}
	   \Vert \underline{\u} - \pinXd \u \Vert_{\Xn} \le C h^s_n \Vert \u \Vert_{\bH^{1+s}},
   \end{equation}
   where $\underline{\u} = (\u, \tr \u)$.
\end{lemma}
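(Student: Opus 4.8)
The plan is to obtain the jump identities \eqref{eq:pinXd:jumps} by a direct computation from the definition \eqref{eq:def:pinXd}, and then to reduce the norm bound \eqref{eq:pinXd:boundedness} to the interpolation estimates \eqref{eq:pi:approximationEstimates} and the lifting bound \eqref{eq:liftingBoundedness}, the decisive simplification being that the normal jump of the interpolant vanishes.

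\emph{Jump identities.} Writing out $\pinXd\u = (\pi_n^d\u,\,P_{\nv}(\pi_n^d\u)+P_{\nv}^{\perp}(\pi_n^{\Fn}\u))$ and using $\Id = P_{\nv}+P_{\nv}^{\perp}$, I compute
\[
\hdgjump{\pinXd\u} = \pi_n^d\u - P_{\nv}(\pi_n^d\u) - P_{\nv}^{\perp}(\pi_n^{\Fn}\u) = P_{\nv}^{\perp}(\pi_n^d\u) - P_{\nv}^{\perp}(\pi_n^{\Fn}\u) = P_{\nv}^{\perp}(\pi_n^d\u - \pi_n^{\Fn}\u),
\]
which is the second identity; applying $\nv\,\cdot$ and using $\nv\cdot P_{\nv}^{\perp}=0$ yields $\hdgjump{\pinXd\u}_{\nv}=0$.

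\emph{Reduction of the norm.} Set $\u_n \coloneqq \underline{\u}-\pinXd\u\in\Xn$ and expand $\Vert\u_n\Vert_{\Xn}^2$ into the four contributions of \eqref{def:scalarProductXn}. Since $\u\in\bH^{1+s}\subset\bH^1$ has a single-valued trace, the pair $\underline{\u}=(\u,\tr\u)$ has vanishing HDG-jump, so together with \eqref{eq:pinXd:jumps} one gets $\hdgjump{\u_n} = -P_{\nv}^{\perp}(\pi_n^d\u-\pi_n^{\Fn}\u)$ and, crucially, $\hdgjump{\u_n}_{\nv}=0$. This removes the normal-jump term of \eqref{def:scalarProductXn} entirely and forces $R^l\u_n=0$ through \eqref{eq:ScalLift}, so that $\ddiv\u_n=\div(\u-\pi_n^d\u)$ (consistent with the commutation $\div\pi_n^d=\pi_n^l\div$). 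The divergence term is then controlled by $\Vert(\csr)^{1/2}\div(\u-\pi_n^d\u)\Vert_{L^2(\tau)}\lesssim|\u-\pi_n^d\u|_{\bH^1(\tau)}$ and the volume term is $\Vert\u-\pi_n^d\u\Vert_{\bL^2(\tau)}$; both follow from \eqref{eq:pi:approximationEstimates} with $r=1+s$ (taking $m=1$ and $m=0$), contributing orders $h_n^{s}$ and $h_n^{1+s}$ after summation over $\Tn$, the bounded weights $\csr,\rho$ being absorbed into the constant.

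\emph{The convective term and the main obstacle.} The remaining contribution $\Vert\rho^{1/2}\ddiffb\u_n\Vert_{\bL^2(\Tn)}$ is where the work lies, with $\ddiffb\u_n=\diffb(\u-\pi_n^d\u)+\bRl\u_n$. The consistency part $\diffb(\u-\pi_n^d\u)$ is bounded by $\Vert\bflow\Vert_{\bL^\infty}|\u-\pi_n^d\u|_{\bH^1}$, hence by $h_n^{s}$ via \eqref{eq:pi:approximationEstimates}. For the lifting I would apply \eqref{eq:liftingBoundedness}, estimate $|\bflow\cdot\nv|\le\Vert\bflow\Vert_{\bL^\infty}$, and reduce everything to controlling $\Vert\h^{-1/2}\hdgjump{\u_n}\Vert_{\bL^2(\partial\Tn)}$, the genuinely delicate quantity, since the vanishing of the \emph{normal} jump gives no information on its tangential part. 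Splitting $\pi_n^d\u-\pi_n^{\Fn}\u=(\pi_n^d\u-\u)+(\u-\pi_n^{\Fn}\u)$, the first contribution is handled facewise by the trace estimate in \eqref{eq:pi:approximationEstimates}; for the second I would exploit that $\pi_n^{\Fn}$ is the $\bL^2$-projection onto $\XF$ and test its best-approximation property against the admissible competitor $\trF(\pi_n^d\u)$, which reduces it once more to the same trace estimate. Summing over $\partial\Tn$ and absorbing $\h^{-1/2}$ through shape regularity $h_{\partial\tau}\simeq h_\tau$ yields a bound of order $h_n^{s}\Vert\u\Vert_{\bH^{1+s}}$; collecting the four contributions proves \eqref{eq:pinXd:boundedness}. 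I expect the control of the tangential jump through the facet projection to be the only nonroutine step, the rest being bookkeeping with the interpolation and lifting estimates.
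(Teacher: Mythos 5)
Your proposal is correct and follows essentially the same route as the paper's proof: vanishing of the normal jump kills the scalar lifting and the jump term of the $\Xn$-norm, the volume error $\u-\pi_n^d\u$ is handled by the interpolation estimates \eqref{eq:pi:approximationEstimates}, and the vector lifting is bounded via \eqref{eq:liftingBoundedness} by the tangential jump $P_{\nv}^{\perp}(\pi_n^d\u-\pi_n^{\Fn}\u)$, which is split and controlled by the trace estimates. The only cosmetic differences are that you bound the divergence part directly through $\vert\u-\pi_n^d\u\vert_{\bH^1}$ where the paper invokes the commutation $\div\pi_n^d=\pi_n^l\div$, and that you make the approximation property of the facet projection $\pi_n^{\Fn}$ explicit via a best-approximation comparison with $\trF(\pi_n^d\u)$, which the paper leaves implicit.
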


\begin{proof}
   The properties \eqref{eq:pinXd:jumps} hold by definition of $\pinXd$. Since $\hdgjump{\pinXd \u}_{\nv} = 0$, it follows that $R^l \pinXd \u = 0$ and therefore by definition of the discrete divergence operator $\ddiv$, we have that $\ddiv \pinXd \u = \div \PVol{\pinXd \u}$. The triangle inequality yields that 
   \begin{equation*}
      \Vert \underline{\u} - \pinXd \u \Vert_{\Xn} \le \Vert \u - \PVol{\pinXd \u} \Vert_{\X(\Tn)} + \Vert \rho^{1/2} \bRl (\underline{\u} -\pinXd \u) \Vert_{\Tn}. 
   \end{equation*}
   For the first term, the commutation property $\div \pi_n^d = \pi_n^l \div$ and the approximation properties of $\pi_n^d$ and $\pi_n^l$ imply that $\Vert \u - \! \PVol{\pinXd \u} \! \Vert_{\X(\Tn)} \! \lesssim \! h^s_n \Vert \u \Vert_{\bH^{1+s}}$. 
   
   For the second term, we calculate with \eqref{eq:liftingBoundedness} and $\hdgjump{\underline{\u}} = 0$ that 
   \begin{equation}\label{eq:pinXd:RBoundedness}
      \begin{aligned}
         \Vert \rho^{1/2} \bRl (\underline{\u} - \pinXd \u) \Vert_{\Tn} &\lesssim \Vert \rho^{1/2} \h^{-1/2} \hdgjump{\pinXd \u}_{\bflow} \Vert_{\bL^2(\partial \Tn)} \\
		   &\lesssim \Vert \rho^{1/2} \h^{-1/2} (\bflow \cdot \nv) P_{\nv}^{\perp}(\pi_n^d \u - \pi_n^{\Fn}\u) \Vert_{\bL^2(\partial \Tn)}\\
		   &\lesssim \Vert \rho^{1/2} \h^{-1/2} (\bflow \cdot \nv)(\pi_n^d \u - \u) \Vert_{\bL^2(\partial \Tn)} \\
         &\quad + \Vert \rho^{1/2} \h^{-1/2} (\bflow \cdot \nv)(\pi_n^{\Fn} \u - \u) \Vert_{\bL^2(\partial \Tn)}\\
         &\lesssim h^s_n \Vert \u \Vert_{\bH^{1+s}},
      \end{aligned}
   \end{equation}
   where we use the definition of $\pinXd$ in the second line, the boundedness of $P_{\nv}^{\perp}$ in the third and the approximation properties of $\pi_n^d$ and $\pi_n^{\Fn}$ in the last line. Altogether, we conclude that there exists a constant $C > 0$ such that $\Vert \underline{\u} - \pinXd \u \Vert_{\Xn} \le C \Vert \u \Vert_{\bH^{1+s}}$ which proves \eqref{eq:pinXd:boundedness}.
\end{proof}

\begin{lemma}\label[lemma]{lem:pnBoundedBypinX}
   For all $\u \in \bH^{1}_{\nv 0}$, it holds that $d_n(\u,p_n \u) \le d_n(\u,\pinX \u)$.
\end{lemma}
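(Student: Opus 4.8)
The plan is to read the statement as a Galerkin-optimality (best-approximation) result: the defining relation \eqref{eq:def:pn} characterizes $p_n\u$ by a Galerkin condition, and the associated orthogonality residual is recorded precisely by \eqref{eq:GalerkinOrthogonality}. Consequently $p_n\u$ should realize the smallest $d_n$-distance to $\u$ among all elements of $\Xn$, and the claim then follows by comparing against the particular competitor $\pinX\u$. I note that the hypothesis $\u\in\bH^1_{\nv0}$ enters only to guarantee that $\pinX\u$ is well-defined (it requires $\bH^s$-regularity with $s>1/2$) and that $\u\in\X$; the mechanism itself is the abstract optimality of the projection.

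First I would introduce the error $e_n \coloneqq \pinX\u - p_n\u \in \Xn$ and expand $d_n(\u,\pinX\u)^2$ summand by summand. Using the linearity of $\ddiv$, $\ddiffb$, $\PVol{\cdot}$ and $\hdgjump{\cdot}_{\nv}$ on $\Xn$, I write
\begin{align*}
   \div\u - \ddiv\pinX\u &= (\div\u - \ddiv p_n\u) - \ddiv e_n, \\
   \u - \PVol{\pinX\u} &= (\u - \PVol{p_n\u}) - \PVol{e_n}, \\
   \diffb\u - \ddiffb\pinX\u &= (\diffb\u - \ddiffb p_n\u) - \ddiffb e_n, \\
   \hdgjump{\pinX\u}_{\nv} &= \hdgjump{p_n\u}_{\nv} + \hdgjump{e_n}_{\nv}.
\end{align*}
Expanding each (weighted) squared norm in the definition of $d_n$ then yields $d_n(\u,\pinX\u)^2 = d_n(\u,p_n\u)^2 + \Vert e_n\Vert_{\Xn}^2 + 2\Re(\mathcal{C})$, where $\mathcal{C}$ collects the four mixed products.

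The crucial step is to verify that $\mathcal{C}=0$. Gathering the cross terms gives
\begin{equation*}
\begin{aligned}
   \mathcal{C} = -\Big[ &\innercsr{\div\u - \ddiv p_n\u, \ddiv e_n}_{\Tn} + \inner{\u - \PVol{p_n\u}, \PVol{e_n}}_{\Tn} \\
   &+ \innerr{\diffb\u - \ddiffb p_n\u, \ddiffb e_n}_{\Tn} - \innercsr{\h^{-1}\hdgjump{p_n\u}_{\nv}, \hdgjump{e_n}_{\nv}}_{\partial\Tn} \Big],
\end{aligned}
\end{equation*}
and the bracketed expression is exactly the right-hand side of the Galerkin orthogonality \eqref{eq:GalerkinOrthogonality} tested with $\u_n' = e_n \in \Xn$, hence it vanishes. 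The only point requiring care is the sign bookkeeping: the SIP jump contribution enters $d_n$ and $\Vert\cdot\Vert_{\Xn}$ with a plus sign but appears in \eqref{eq:GalerkinOrthogonality} with a minus sign, so one must check that these align for the cancellation to be exact. With $\mathcal{C}=0$ we obtain $d_n(\u,\pinX\u)^2 = d_n(\u,p_n\u)^2 + \Vert e_n\Vert_{\Xn}^2 \ge d_n(\u,p_n\u)^2$, and taking nonnegative square roots proves the claim. I expect no serious obstacle here beyond this sign matching, since everything reduces to the Pythagorean identity furnished by the Galerkin orthogonality.
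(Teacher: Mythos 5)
Your proof is correct and rests on the same key ingredient as the paper's one-line proof, namely the Galerkin orthogonality \eqref{eq:GalerkinOrthogonality} tested with $e_n = \pinX \u - p_n \u$; the paper finishes with Cauchy--Schwarz, whereas you finish with the equivalent Pythagorean identity $d_n(\u,\pinX \u)^2 = d_n(\u,p_n\u)^2 + \Vert e_n \Vert_{\Xn}^2$, which simply drops a nonnegative term. The sign check you flag does indeed work out: the minus sign on the jump term in \eqref{eq:GalerkinOrthogonality} is exactly what cancels the plus sign produced by expanding the squared jump seminorm, since $\hdgjump{\pinX \u}_{\nv} = \hdgjump{p_n \u}_{\nv} + \hdgjump{e_n}_{\nv}$ while the other three components carry the opposite relative sign.
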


\begin{proof}
   Follows from an application of \eqref{eq:GalerkinOrthogonality} and the Cauchy-Schwarz inequality.
\end{proof}

\begin{lemma}\label[lemma]{lem:pinXRates}
   For all $\u \in \bH^1_{\nv 0} \cap \bH^{1+s}$, $0 < s \le k$, there exists a constant $C > 0$ independent of $h$ such that $d_n(\u,\pinX \u) \le C h_n^s \Vert \u \Vert_{\bH^{1+s}}$.
\end{lemma}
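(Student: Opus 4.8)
The plan is to set $\u_n \coloneqq \pinX \u = (\pi_n \u, \trF \pi_n \u)$, so that the volume component is $\PVol{\u_n} = \pi_n \u$, and to bound each of the four contributions to $d_n(\u, \pinX \u)^2$ separately. Using the definitions $\ddiv \u_n = \div(\pi_n \u) + R^l \u_n$ and $\ddiffb \u_n = \diffb(\pi_n \u) + \bRl \u_n$, I would split the divergence and directional-derivative terms into a \emph{consistency} part and a \emph{lifting} part, namely $\div \u - \ddiv \pinX \u = \div(\u - \pi_n \u) - R^l \pinX \u$ and $\diffb \u - \ddiffb \pinX \u = \diffb(\u - \pi_n \u) - \bRl \pinX \u$. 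The consistency parts together with the $\bL^2$-term $\Vert \u - \pi_n \u \Vert_{\bL^2}$ are then controlled directly by the volume approximation estimate \eqref{eq:pi:approximationEstimates} with $r = 1+s$: taking $m = 1$ gives $\Vert \div(\u - \pi_n \u)\Vert_{L^2} \lesssim \vert \u - \pi_n \u\vert_{\bH^1} \lesssim h_n^s \Vert \u \Vert_{\bH^{1+s}}$, and since $\bflow \in \bm{W}^{1,\infty}$ is bounded the same holds for $\Vert \diffb(\u - \pi_n \u)\Vert_{\bL^2}$; taking $m = 0$ bounds the $\bL^2$-term by $h_n^{1+s}\Vert\u\Vert_{\bH^{1+s}}$. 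Summing over elements and using the boundedness \eqref{eq:CoeffsBounded} of the weights $\csr$ and $\rho$ yields the desired rate for these three pieces.

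It remains to treat the two lifting parts and the jump-norm term $\jnorm{\pinX\u}{2}$. By the lifting bounds \eqref{eq:liftingBoundedness}, $\Vert \rho^{1/2} \bRl \pinX \u\Vert_{\bL^2} \lesssim \Vert \rho^{1/2}\h^{-1/2}\hdgjump{\pinX\u}_{\bflow}\Vert_{\bL^2(\partial\Tn)}$ and $\Vert (\csr)^{1/2} R^l \pinX \u\Vert_{L^2} \lesssim \Vert (\csr)^{1/2}\h^{-1/2}\hdgjump{\pinX\u}_{\nv}\Vert_{L^2(\partial\Tn)}$, the latter being exactly the jump norm. Since $\hdgjump{\pinX\u}_{\bflow} = (\bflow\cdot\nv)\hdgjump{\pinX\u}$, $\hdgjump{\pinX\u}_{\nv} = \nv\cdot\hdgjump{\pinX\u}$, and $\bflow, \csr, \rho$ are bounded, all three remaining quantities are controlled by a single estimate of $\Vert \h^{-1/2}\hdgjump{\pinX\u}\Vert_{\bL^2(\partial\Tn)}$.

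The heart of the argument is this jump estimate. On an interior facet $F = \tau_1 \cap \tau_2$, the averaging definition of $\trF$ gives, restricted to $\tau_1$, that $\hdgjump{\pinX\u} = \pi_n\u\vert_{\tau_1} - \trF\pi_n\u = \tfrac12\big(\pi_n\u\vert_{\tau_1} - \pi_n\u\vert_{\tau_2}\big) = \tfrac12\jump{\pi_n\u}_F$. Because $\u \in \bH^1$ has a single-valued trace on $F$, I would insert it and use the cancellation to write $\hdgjump{\pinX\u} = \tfrac12\big((\pi_n\u - \u)\vert_{\tau_1} - (\pi_n\u - \u)\vert_{\tau_2}\big)$. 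Applying the triangle inequality followed by the trace approximation estimate \eqref{eq:pi:approximationEstimates}, $\Vert \u - \pi_n\u\Vert_{\bL^2(\partial\tau)} \le C_{\text{ab}} h_\tau^{1/2+s}\vert\u\vert_{\bH^{1+s}(\tau)}$, converts the factor $\h^{-1/2}$ into $h_\tau^s$, so that $\Vert\h^{-1/2}\hdgjump{\pinX\u}\Vert_{\bL^2(\partial\tau)} \lesssim h_\tau^s\vert\u\vert_{\bH^{1+s}(\tau)}$. Boundary facets require no extra work: with the averaging trace interpreted single-sidedly there the jump vanishes, and in any case $\u\cdot\nv = 0$ on $\partial\dom$ by $\u\in\bH^1_{\nv 0}$. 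Summing over all elements, using shape regularity to bound the number of neighbours of each element, gives $\Vert\h^{-1/2}\hdgjump{\pinX\u}\Vert_{\bL^2(\partial\Tn)}\lesssim h_n^s\Vert\u\Vert_{\bH^{1+s}}$.

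Combining the four estimates yields $d_n(\u,\pinX\u) \le C h_n^s\Vert\u\Vert_{\bH^{1+s}}$. I expect the main obstacle to be the jump step of the third paragraph: one must recognise that the HDG jump of $\pinX\u$ across an interior facet reduces to one half of the DG jump of $\pi_n\u$, and then exploit the single-valued $\bH^1$-trace of $\u$ to rewrite this jump purely in terms of interpolation errors, at which point the trace estimate \eqref{eq:pi:approximationEstimates} applies. The remaining bookkeeping with the bounded weights and the lifting bounds \eqref{eq:liftingBoundedness} is then routine.
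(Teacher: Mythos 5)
Your proposal is correct and follows essentially the same route as the paper: the paper's proof is a one-line reference to the lifting boundedness \eqref{eq:liftingBoundedness} and the approximation properties \eqref{eq:pi:approximationEstimates} of $\pi_n$, which is precisely the argument you carry out in detail (consistency parts via the volume estimates, lifting parts reduced to the facet jump, and the jump controlled through the trace estimate after inserting the single-valued trace of $\u$). Your elaboration, including the observation that the HDG jump of $\pinX\u$ is half the DG jump of $\pi_n\u$, is a faithful and complete expansion of what the paper leaves implicit.
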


\begin{proof}
   Follows from the boundedness of the lifting operators, cf.~\eqref{eq:liftingBoundedness} and the approximation properties of $\pi_n$, cf.~\eqref{eq:pi:approximationEstimates}.
\end{proof}

\begin{lemma}\label[lemma]{lem:dnpnuToZero}
   For each $\u \in \X$, it holds that $\lim_{n \to \infty} d_n(\u,p_n \u) = 0$.
\end{lemma}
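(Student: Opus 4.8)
The plan is to argue by density, reducing the general statement to the case of smooth functions, for which quantitative convergence rates are already available from \cref{lem:pnBoundedBypinX} and \cref{lem:pinXRates}. The two structural facts that make this work are the density of $\bm{C}^\infty_0$ in $\X$ (noted in \cref{subsec:continuousFormulation}) and the linearity of $p_n$ together with the uniform bound $\Vert p_n \Vert_{L(\X,\Xn)} \le 1$; the latter is what lets us transfer an estimate from a smooth approximant to an arbitrary $\u \in \X$ even though both $p_n$ and $d_n$ depend on $n$.

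First I would treat smooth $\u$. Any $\v \in \bm{C}^\infty_0$ has compact support in $\dom$, hence $\v \cdot \nv = 0$ on $\partial \dom$ and $\v \in \bH^1_{\nv 0} \cap \bH^{1+s}$ for every $s \ge 0$; in particular we may fix any $s \in (0,k]$ (recall $k \ge 1$). Then \cref{lem:pnBoundedBypinX} gives $d_n(\v, p_n \v) \le d_n(\v, \pinX \v)$, and \cref{lem:pinXRates} gives $d_n(\v, \pinX \v) \le C h_n^s \Vert \v \Vert_{\bH^{1+s}}$. Since $h_n \to 0$, this already shows $d_n(\v, p_n \v) \to 0$ for every $\v \in \bm{C}^\infty_0$.

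For general $\u \in \X$ and $\eps > 0$, choose $\v \in \bm{C}^\infty_0$ with $\Vert \u - \v \Vert_{\X} < \eps/3$. Chaining the two triangle inequalities in \eqref{eq:dn:triangle} — first replacing the continuous argument $\u$ by $\v$ at cost $\Vert \u - \v \Vert_{\X}$, then replacing the discrete argument $p_n \u$ by $p_n \v$ at cost $\Vert p_n(\v - \u) \Vert_{\Xn}$ — yields
\begin{equation*}
   d_n(\u, p_n \u) \le d_n(\v, p_n \v) + \Vert \u - \v \Vert_{\X} + \Vert p_n(\v - \u) \Vert_{\Xn} \le d_n(\v, p_n \v) + 2 \Vert \u - \v \Vert_{\X},
\end{equation*}
where the last inequality uses the linearity of $p_n$ and $\Vert p_n \Vert_{L(\X,\Xn)} \le 1$. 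By the smooth case there is $n_0$ with $d_n(\v, p_n \v) < \eps/3$ for all $n > n_0$, whence $d_n(\u, p_n \u) < \eps$ for all $n > n_0$; as $\eps > 0$ was arbitrary, the claim follows.

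The only genuine subtlety I anticipate is precisely this middle step: because $d_n$ and $p_n$ both vary with $n$, one cannot pass to the limit directly inside $d_n(\u, p_n \u)$, nor naively invoke continuity of $p_n$. The resolution is to use the \emph{second} triangle inequality in \eqref{eq:dn:triangle} to swap $p_n \u$ for $p_n \v$ at the uniform cost $\Vert p_n \Vert_{L(\X,\Xn)} \, \Vert \u - \v \Vert_{\X} \le \Vert \u - \v \Vert_{\X}$, which cleanly decouples the ($n$-independent) density error from the mesh-dependent approximation of the fixed smooth function $\v$. Everything else reduces to the two cited lemmas and $h_n \to 0$.
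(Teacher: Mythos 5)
Your proposal is correct and follows essentially the same route as the paper's own proof: density of $\bm{C}^\infty_0$ in $\X$, the two triangle inequalities from \eqref{eq:dn:triangle} combined with $\Vert p_n \Vert_{L(\X,\Xn)} \le 1$ to reduce to a fixed smooth approximant, and then \cref{lem:pnBoundedBypinX} together with \cref{lem:pinXRates} to conclude. Your write-up merely makes explicit the details the paper leaves implicit (that $\bm{C}^\infty_0 \subset \bH^1_{\nv 0} \cap \bH^{1+s}$ so the cited lemmas apply, and the uniform operator bound on $p_n$), which is fine.
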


\begin{proof}
   Due to the density of $\bm{C}^\infty_0$ in $\X$ \cite[Thm.~6]{HLS22H1}, we can choose $\tilde{\u} \in \bm{C}^\infty_0$ such that $\Vert \u - \tilde{\u} \Vert_{\X} < \epsilon$ for any $\epsilon > 0$. Then, we can estimate with \eqref{eq:dn:triangle} that 
   \begin{equation*}
      d_n(\u,p_n \u) \le d_n(\tilde{\u},p_n \tilde{\u}) + \Vert \tilde{\u} - \u \Vert_{\X}  + \Vert p_n (\tilde{\u} - \u) \Vert_{\Xn} \le d_n(\tilde{\u},p_n \tilde{\u}) + 2 \epsilon.
   \end{equation*}
   Thus, the claim follows from the previous \cref{lem:pnBoundedBypinX} and \cref{lem:pinXRates}.
\end{proof}

\begin{lemma}\label[lemma]{lem:dnupinXdtoZero}
   For all $\u \in \bH^1_{\nv 0}$, it holds that 
   \begin{equation}
      \lim_{n \to \infty} d_n(\u,\pinX \u) = 0 \quad \text{and} \quad \lim_{n \to \infty} d_n(\u,\pinXd \u) = 0.
   \end{equation} 
\end{lemma}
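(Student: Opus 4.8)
The plan is to mirror the density argument of \cref{lem:dnpnuToZero}, with one essential modification. For $p_n$ the remainder term was controlled for free by $\Vert p_n\Vert_{L(\X,\Xn)}\le 1$; the operators $\pinX$ and $\pinXd$ are not a priori $\X\to\Xn$ bounded, so the crux will be to supply an $n$-uniform stability estimate that plays the same role. Throughout I use that for $\w\in\bH^1_{\nv 0}$ the trace is well-defined, that $\underline{\w}=(\w,\tr\w)$ satisfies $\hdgjump{\underline{\w}}_{\nv}=0$, and hence that $\Vert\underline{\w}\Vert_{\Xn}=\Vert\w\Vert_{\X}$ and $d_n(\w,\w_n)=\Vert\underline{\w}-\w_n\Vert_{\Xn}$.

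First I would establish the uniform stability bounds
\[
   d_n(\w,\pinX\w)\lesssim\Vert\w\Vert_{\bH^1},\qquad d_n(\w,\pinXd\w)\lesssim\Vert\w\Vert_{\bH^1}\qquad\text{for all }\w\in\bH^1_{\nv 0}.
\]
For $\pinXd$ this is precisely \cref{lem:pinXd:properties}/\eqref{eq:pinXd:boundedness} read at $s=0$. For $\pinX$ I would re-run the proof of \cref{lem:pinXRates} with $r=1$ (i.e.\ $s=0$): the volume and derivative contributions are bounded through \eqref{eq:pi:approximationEstimates} at $r=1$, the lifting terms $\bRl\pinX\w$ and $R^l\pinX\w$ through \eqref{eq:liftingBoundedness}, and the jump seminorm $\jnorm{\pinX\w}{2}$ is handled by writing $\hdgjump{\pinX\w}_{\nv}=\nv\cdot\bigl((\pi_n\w-\w)-(\trF\pi_n\w-\tr\w)\bigr)$, using $\nv\cdot(\w-\tr\w)=0$, so that its $\h^{-1/2}$-weighted face norm is dominated by $\Vert\h^{-1/2}(\pi_n\w-\w)\Vert_{\bL^2(\partial\Tn)}\lesssim\Vert\w\Vert_{\bH^1}$. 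This step is the main obstacle: in contrast with the rate lemma, at minimal regularity $s=0$ one only obtains boundedness (not smallness) of the lifting and jump terms, and one must verify that they do not blow up under the $\h^{-1/2}$ weighting.

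Finally I would run the density argument. Fix $\u\in\bH^1_{\nv 0}$ and $\epsilon>0$, and choose $\tilde{\u}\in\bH^1_{\nv 0}\cap\bH^{1+s}$ for some fixed $0<s\le k$ (for instance $\tilde{\u}\in\bH^2\cap\bH^1_{\nv 0}$, admissible since $k\ge 1$) with $\Vert\u-\tilde{\u}\Vert_{\bH^1}<\epsilon$; such regular fields are dense in $\bH^1_{\nv 0}$ for the convex Lipschitz domain $\dom$. Combining the two triangle inequalities \eqref{eq:dn:triangle} with the linearity of $\pinX$ gives
\[
   d_n(\u,\pinX\u)\le d_n(\tilde{\u},\pinX\tilde{\u})+\Vert\u-\tilde{\u}\Vert_{\X}+\Vert\pinX(\u-\tilde{\u})\Vert_{\Xn}.
\]
The first term tends to $0$ by \cref{lem:pinXRates}. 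For the last term I use $\Vert\pinX(\u-\tilde{\u})\Vert_{\Xn}\le d_n(\u-\tilde{\u},\pinX(\u-\tilde{\u}))+\Vert\u-\tilde{\u}\Vert_{\X}$, which is $\lesssim\Vert\u-\tilde{\u}\Vert_{\bH^1}<C\epsilon$ by the stability bound. Hence $\limsup_{n\to\infty}d_n(\u,\pinX\u)\le C\epsilon$, and letting $\epsilon\to 0$ yields the first limit. The second limit is proved identically, with \cref{lem:pinXd:properties} furnishing both the rate (for $\tilde{\u}$, at $s=1$) and the stability bound (for $\u-\tilde{\u}$, at $s=0$), so no separate analogue of the $\pinX$ jump-and-lifting computation is needed there.
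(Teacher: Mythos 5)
Your proposal follows essentially the same route as the paper: rate estimates for regular fields (via \cref{lem:pinXRates} and \cref{lem:pinXd:properties}) combined with a density argument over approximations $\tilde{\u}$ respecting the boundary condition $\nv \cdot \tilde{\u} = 0$. The only difference is one of explicitness: you spell out the $n$-uniform stability bounds $d_n(\w,\pinX \w)\lesssim \Vert \w \Vert_{\bH^1}$ and $d_n(\w,\pinXd \w)\lesssim \Vert \w \Vert_{\bH^1}$ (the $r=1$ jump-and-lifting computation, resp.\ \eqref{eq:pinXd:boundedness} at $s=0$) needed to control the remainder term $\Vert \pinX(\u-\tilde{\u})\Vert_{\Xn}$, which the paper leaves implicit by deferring, together with the density of boundary-condition-respecting regular fields, to the proof of \cite[Lem.~6]{H23Hdiv}.
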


\begin{proof}
   By construction of $\pinXd$ and the approximation properties of $\pi_n^d$, we have for $\u \in \bH^{1+s}$ that
   \begin{equation*}
	   d_n(\u,\pinXd \u) \lesssim \Vert \u - \pi_n^d \u \Vert_{\X(\Tn)} + \Vert \rho^{1/2} \bRl \pinXd \u \Vert_{\Tn} \lesssim h^s \Vert \u \Vert_{\bH^{1+s}}. 
   \end{equation*}
   \cref{lem:pinXRates} yields the same estimate for $\pinX$. 
   The proof of the claim then follows with similar density arguments as in the proof of \cref{lem:dnpnuToZero} with the additional technicality of constructing a smooth approximation that respects the boundary condition $\nv \cdot \tilde{\u} = 0$ on $\partial \dom$. For technical details, we refer to the proof of \cite[Lem.~6]{H23Hdiv}.
\end{proof}

\begin{lemma}\label[lemma]{lem:pnuToU}
   For all $\u \in \X$ it holds that $\lim_{n \rightarrow \infty} \Vert p_n \u \Vert_{\Xn} = \Vert \u \Vert_{\X}$.
\end{lemma}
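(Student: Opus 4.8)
The plan is to exploit the variational definition \eqref{eq:def:pn} of the quasi-projection by testing it against $p_n \u$ itself. Choosing $\u_n' = p_n \u$ in \eqref{eq:def:pn} gives the identity
\begin{equation*}
   \Vert p_n \u \Vert_{\Xn}^2 = \innercsr{\div \u, \ddiv p_n \u}_{L^2} + \inner{\u, \PVol{p_n \u}}_{\bL^2} + \innerr{\diffb \u, \ddiffb p_n \u}_{\bL^2},
\end{equation*}
which expresses the discrete norm entirely through the continuous data $\div \u$, $\u$, and $\diffb \u$. This is the decisive step, since $\Vert \cdot \Vert_{\Xn}$ cannot be evaluated directly on $\X$: the trace of a generic $\u \in \X$ is not well-defined, so there is no naive way to compare $\Vert p_n \u \Vert_{\Xn}$ with $\Vert \u \Vert_{\X}$.

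Subtracting this identity from $\Vert \u \Vert_{\X}^2 = \innercsr{\div \u, \div \u}_{L^2} + \inner{\u, \u}_{\bL^2} + \innerr{\diffb \u, \diffb \u}_{\bL^2}$ yields, by linearity in the second slot,
\begin{equation*}
   \Vert \u \Vert_{\X}^2 - \Vert p_n \u \Vert_{\Xn}^2 = \innercsr{\div \u, \div \u - \ddiv p_n \u}_{L^2} + \inner{\u, \u - \PVol{p_n \u}}_{\bL^2} + \innerr{\diffb \u, \diffb \u - \ddiffb p_n \u}_{\bL^2}.
\end{equation*}
I would then bound each term with the weighted Cauchy--Schwarz inequality and observe that the three second factors are precisely the first three contributions to the distance $d_n(\u, p_n \u)$. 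Writing $a_1,a_2,a_3$ for the factors assembling $\Vert \u \Vert_{\X}$ and $b_1,b_2,b_3$ for the corresponding factors in $d_n$, a discrete Cauchy--Schwarz inequality over the three summands gives
\begin{equation*}
   \big\vert \Vert \u \Vert_{\X}^2 - \Vert p_n \u \Vert_{\Xn}^2 \big\vert \le a_1 b_1 + a_2 b_2 + a_3 b_3 \le \Vert \u \Vert_{\X}\, d_n(\u, p_n \u),
\end{equation*}
where in the final step I use $a_1^2 + a_2^2 + a_3^2 = \Vert \u \Vert_{\X}^2$ together with $b_1^2 + b_2^2 + b_3^2 \le d_n(\u, p_n \u)^2$; the latter holds because $d_n$ additionally contains the nonnegative jump contribution $\jnorm{p_n \u}{2}$.

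Finally, \cref{lem:dnpnuToZero} supplies $d_n(\u, p_n \u) \to 0$ as $n \to \infty$ for every $\u \in \X$, so the right-hand side above vanishes in the limit and $\Vert p_n \u \Vert_{\Xn}^2 \to \Vert \u \Vert_{\X}^2$, which is the claim. I do not anticipate a genuine obstacle: the argument is an exact algebraic identity followed by Cauchy--Schwarz, and all the analytic work (the convergence $d_n(\u, p_n \u) \to 0$, which rests on the density of $\bm{C}^\infty_0$ in $\X$ and the approximation/lifting estimates) has already been absorbed into \cref{lem:dnpnuToZero}. The only point demanding care is conceptual rather than computational, namely that one must route the entire comparison through the variational characterization of $p_n$ and the distance $d_n$, precisely because $\Vert \cdot \Vert_{\Xn}$ is unavailable on $\X$.
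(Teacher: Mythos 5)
Your proposal is correct and follows essentially the same route as the paper's proof: test the defining relation \eqref{eq:def:pn} with $\u_n' = p_n\u$, rewrite the result as $\Vert \u \Vert_{\X}^2$ plus three difference terms, bound those by $\Vert \u \Vert_{\X}\, d_n(\u,p_n\u)$ via Cauchy--Schwarz, and conclude with \cref{lem:dnpnuToZero}. Your write-up is, if anything, slightly more explicit than the paper's (spelling out the discrete Cauchy--Schwarz over the three summands and noting that the nonnegative jump term in $d_n$ is harmless), but there is no substantive difference.
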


\begin{proof}
   With \eqref{eq:def:pn}, we compute  
   \begin{equation*}
      \begin{aligned}
         \Vert p_n \u \Vert_{\Xn}^2 &= \inner{p_n \u, p_n \u}_{\Xn} \\
         &= \innercsr{\div \u, \ddiv p_n \u}_{L^2} + \inner{\u, \PVol{p_n \u}}_{\bL^2} + \innerr{\diffb \u, \ddiffb p_n \u}_{\bL^2} \\
         &= \Vert \u \Vert_{\X} + \innercsr{\div \u, \ddiv p_n \u  - \div \u}_{L^2} + \inner{\u, \PVol{p_n \u} - \u}_{\bL^2} \\
         &\quad + \innerr{\diffb \u, \ddiffb p_n \u - \diffb \u}_{\bL^2}.
      \end{aligned} 
   \end{equation*}
   Since $\lim_{n \to \infty} d_n(\u,p_n \u) = 0$ by \cref{lem:dnpnuToZero}, the claim follows from the estimate 
   \begin{align*}
      \vert  &\innercsr{\div \u, \ddiv p_n \u  - \div \u}_{L^2} + \inner{\u, \PVol{p_n \u}  - \u}_{\bL^2} + \innerr{\diffb \u, \ddiffb p_n \u - \diffb \u}_{\bL^2} \vert \\
      \le &\Vert \u \Vert_{\X} d_n(\u,p_n \u). 
   \end{align*}
\end{proof}

Recall that $A_n \in L(\Xn)$ and $A \in L(\X)$ are the linear operators associated with the sesquilinear forms $a_n(\cdot,\cdot)$ defined by \eqref{eq:an} and $a(\cdot,\cdot)$ defined by \eqref{eq:a}. In preparation to show that $A_n$ \approximates~$A$, we prove the following compactness result.

\begin{lemma}\label[lemma]{lem:weakConvergence}
   Let $\seq{\u} \subset \Xn$ be such that $\sup_{n \in \N} \Vert \u_n \Vert_{\Xn} < \infty$. Then there exists $\u \in \X$ and a subsequence $\N' \subset \N$ such that $\PVol{\u_n} \overset{\bL^2}{\rightharpoonup} \u$, $\csr \ddiv \u_n \overset{L^2}{\rightharpoonup} \csr \div \u$ and $\rho \ddiffb \u_n \overset{\bL^2}{\rightharpoonup} \rho \diffb \u$, $n \in \N'$.
\end{lemma}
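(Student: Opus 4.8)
The plan is to combine Hilbert-space weak compactness with a distributional identification of the limits of the discrete differential operators. From the definition \eqref{def:scalarProductXn} of $\spl\cdot,\cdot\spr_{\Xn}$ together with the boundedness of $c_s,\rho$ from above and below, the uniform bound $\sup_n\Vert\u_n\Vert_{\Xn}<\infty$ controls $\Vert\PVol{\u_n}\Vert_{\bL^2}$, $\Vert\csr\ddiv\u_n\Vert_{L^2}$, $\Vert\rho\ddiffb\u_n\Vert_{\bL^2}$ and the jump seminorm $\jnorm{\u_n}{2}$ uniformly in $n$. Since these quantities live in the fixed ambient spaces $\bL^2$ and $L^2$, I would extract a single subsequence $\N'$ along which $\PVol{\u_n}\rightharpoonup\u$, $\csr\ddiv\u_n\rightharpoonup g$ and $\rho\ddiffb\u_n\rightharpoonup h$ weakly, for some $\u\in\bL^2$, $g\in L^2$, $h\in\bL^2$. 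It then remains to identify $g=\csr\div\u$ and $h=\rho\diffb\u$ and to verify $\u\in\X$, i.e.\ that $\div\u\in L^2$, $\diffb\u\in\bL^2$ and $\nv\cdot\u=0$ on $\partial\dom$.

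For the divergence, fix $\phi\in C^\infty(\overline{\dom})$ and evaluate $\spl\csr\ddiv\u_n,\phi\spr$. Using $\ddiv\u_n=\div\PVol{\u_n}+R^l\u_n$ elementwise and integrating the elementwise divergence by parts produces the volume term $-\spl\PVol{\u_n},\nabla(\csr\phi)\spr$ and the element-boundary terms $\sum_{\tau}\spl\csr\,\PVol{\u_n}\cdot\nv,\phi\spr_{\partial\tau}$. For the lifting contribution I would test \eqref{eq:ScalLift} against $\pi_n^l\phi$, so that $\spl\csr R^l\u_n,\pi_n^l\phi\spr_{\Tn}=-\spl\csr\hdgjump{\u_n}_{\nv},\pi_n^l\phi\spr_{\partial\Tn}$, and absorb the consistency error $\spl\csr R^l\u_n,\phi-\pi_n^l\phi\spr$ and the trace error $\spl\csr\hdgjump{\u_n}_{\nv},\phi-\pi_n^l\phi\spr_{\partial\Tn}$ into an $o(1)$ term via \eqref{eq:liftingBoundedness} and \eqref{eq:pi:approximationEstimates}. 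The crucial algebraic step is that the element-boundary terms and the HDG jump terms telescope into $\sum_{\tau}\spl\csr\,\u_F\cdot\nv,\phi\spr_{\partial\tau}$: since $\u_F$ is single valued on each facet and neighbouring outward normals are opposite, all interior-facet contributions cancel, while the boundary-facet contributions vanish owing to the homogeneous condition $\nv\cdot\u_F=0$ on $\partial\dom$ built into $\XF$ (automatic for the tangential space of \cref{rem:HdivHDG}). Hence $\spl\csr\ddiv\u_n,\phi\spr=-\spl\PVol{\u_n},\nabla(\csr\phi)\spr+o(1)$, and passing to the limit gives $\spl g,\phi\spr=-\spl\u,\nabla(\csr\phi)\spr$. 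Expanding $\nabla(\csr\phi)$ with the product rule shows $\csr\u\in H(\div)$ with $\div(\csr\u)=g+\u\cdot\nabla(\csr)\in L^2$; the $H(\div)$ product rule then yields $g=\csr\div\u$ and $\div\u\in L^2$, and comparing with Green's formula for $\phi$ not vanishing on $\partial\dom$ forces $\nv\cdot\u=0$.

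For the directional derivative I would argue analogously, the key simplification being the mass-conservation identity $\div(\rho\bflow)=0$, which rewrites the weighted derivative as a divergence, $\rho\,\diffb\u_\tau^{\,i}=\div(\rho\,\u_\tau^{\,i}\bflow)$ on each element and for each component $i$. Integrating this by parts and testing the vector lifting \eqref{eq:VecLift} against a polynomial projection of $\bm{\psi}\in\bm{C}^\infty_0$ reproduces the same cancellation mechanism, now with facet-unknown contributions weighted by $\rho(\bflow\cdot\nv)$, which cancel on interior facets and vanish near $\partial\dom$ because $\bflow$ is compactly supported in $\dom$. In the limit this gives $\spl h,\bm{\psi}\spr=-\spl\rho\,\u,\diffb\bm{\psi}\spr$, so that $h^{\,i}=\div(\rho\,\u^{\,i}\bflow)\in L^2$ and the same product-rule argument applied to $\rho\,\u^{\,i}\bflow$ yields $h=\rho\,\diffb\u$ with $\diffb\u\in\bL^2$. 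Collecting these facts shows $\u\in\X$ with the asserted weak limits.

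The main obstacle I anticipate is the careful handling of the lifting terms rather than the weak compactness itself: because $R^l$ and $\bRl$ are characterised only through polynomial test functions and carry the $W^{1,\infty}$-weights $\csr$ and $\rho$, one must insert the projections $\pi_n^l\phi$, $\pi_n\bm{\psi}$, verify that the boundary and lifting terms telescope exactly into the purely facet-unknown expressions that cancel, and check that every projection and consistency remainder is genuinely $o(1)$ using \eqref{eq:liftingBoundedness} and \eqref{eq:pi:approximationEstimates}. Once this bookkeeping is carried out, the identification of the distributional limits and of the boundary condition follows directly.
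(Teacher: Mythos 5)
Your overall strategy---weak compactness in the ambient $L^2$-spaces followed by a distributional identification of the limits via elementwise integration by parts, the defining equations of the liftings, and cancellation of the single-valued facet contributions---is the same as the paper's, and your treatment of the divergence is sound (you even supply the identification of the boundary condition $\nv\cdot\u=0$ through Green's formula, which the paper delegates to a reference). The gap is in the convection part, precisely where you claim every remainder is ``genuinely $o(1)$ using \eqref{eq:liftingBoundedness} and \eqref{eq:pi:approximationEstimates}''. After testing \eqref{eq:VecLift} with $\bm{\psi}_n$ and telescoping the facet-unknown terms, your bookkeeping leaves the two remainders
\begin{equation*}
\spl \rho\, \bRl \u_n, \bm{\psi}-\bm{\psi}_n \spr_{\Tn}
\qquad\text{and}\qquad
\spl \rho\, \hdgjump{\u_n}_{\bflow}, \bm{\psi}-\bm{\psi}_n \spr_{\partial\Tn}.
\end{equation*}
Neither can be handled the way you handle their scalar analogues: the norm \eqref{def:scalarProductXn} penalizes only the \emph{normal} jump $\hdgjump{\u_n}_{\nv}$, not the flow jump $\hdgjump{\u_n}_{\bflow}$, whose tangential content is invisible to $\Vert\cdot\Vert_{\Xn}$ (for instance, on facets where $\bflow$ vanishes the tangential part of $\u_F$ is completely unconstrained by the hypothesis $\sup_n \Vert\u_n\Vert_{\Xn}<\infty$). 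Moreover, \eqref{eq:liftingBoundedness} bounds $\bRl\u_n$ \emph{by} that uncontrolled flow jump, i.e.\ it goes in the wrong direction; only the combination $\ddiffb\u_n=\diffb\u_\tau+\bRl\u_n$ is controlled, not the two summands separately. So the claimed $o(1)$ bounds do not follow, and this is exactly the point where the scalar and vector cases are \emph{not} analogous.

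The repair is algebraic rather than analytic, and it is the core of the paper's proof \eqref{eq:weakConvergence:partialInt}: instead of estimating the two remainders, reapply the elementwise integration-by-parts identity to the continuous function $\bm{\psi}-\bm{\psi}_n$ (whose facet-unknown term again vanishes by single-valuedness and $\bflow\cdot\nv=0$ on $\partial\dom$), which gives the exact identity
\begin{equation*}
\innerr{\diffb \PVol{\u_n}, \bm{\psi}-\bm{\psi}_n}_{\Tn}
-\innerr{\hdgjump{\u_n}_{\bflow}, \bm{\psi}-\bm{\psi}_n}_{\partial\Tn}
= -\inner{\PVol{\u_n}, \rho\,\diffb(\bm{\psi}-\bm{\psi}_n)}_{\Tn},
\end{equation*}
and write the lifting remainder as
$\spl \rho \bRl\u_n,\bm{\psi}-\bm{\psi}_n\spr_{\Tn}
=\innerr{\ddiffb\u_n,\bm{\psi}-\bm{\psi}_n}_{\Tn}
-\innerr{\diffb\PVol{\u_n},\bm{\psi}-\bm{\psi}_n}_{\Tn}$.
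After this regrouping every surviving term pairs a quantity that \emph{is} controlled by the hypothesis ($\PVol{\u_n}$ or $\ddiffb\u_n$, both bounded in $\bL^2$) against a quantity that vanishes with $h_n$; no bound on $\bRl\u_n$, on $\hdgjump{\u_n}_{\bflow}$, or on $\u_F$ is ever needed. With this substitution your argument closes; without it, the convection step is unsupported, and no brute-force inverse estimate can substitute, since $\u_F$ simply does not appear in $\Vert\cdot\Vert_{\Xn}$ except through the normal jump and the liftings.
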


\begin{proof}
   We modify standard arguments from the DG-case, see e.g.~\cite{BE08, BO09, DPE10} and \cite{H23Hdiv}, to the HDG setting, see also \cite{KCR23}. \\ 
   By assumption, the sequences $\PVol{\u_n}$, $\rho \ddiffb \u_n$ and $\csr \ddiv \u_n$ are bounded in $\bL^2$ and $L^2$, respectively. Thus, there exist a subsequence $\N' \subset \N$ and elements $\u, \bm{g} \in \bL^2$, $q \in L^2$ such that $\PVol{\u_n} \overset{\bL^2}{\rightharpoonup} \u$, $\csr \ddiv \u_n \overset{L^2}{\rightharpoonup} q$ and $\rho \ddiffb \u_n \overset{\bL^2}{\rightharpoonup} \bm{g}$. It remains to show that $\bm{g} = \rho \diffb \u$ and $q = \csr \div \u$. We only show the former, as the latter follows with a similar argumentation for the scalar lifting operator $R^l$, see also \cite[Lem.~6.7]{Thesis_vB23}. Let $\bm{\psi} \in \bm{C}^\infty_0$ and $\bm{\psi}_n$ be the lowest order standard $\bH^1$-interpolant of $\bm{\psi}$ on $\Tn$. Then, we compute with element-wise partial integration
   \begin{equation}\label{eq:weakConvergence:partialInt}
      \begin{aligned}
         - \inner{&\PVol{\u_n}, \rho \diffb \bm{\psi}}_{\Tn}  \\
         &= \innerr{\diffb \PVol{\u_n}, \bm{\psi}}_{\Tn} - \innerr{(\bflow \cdot \nv) \PVol{\u_n}, \bm{\psi}}_{\partial \Tn} + \underbrace{\innerr{(\bflow \cdot \nv) \PFac{\u_n}, \bm{\psi}}_{\partial \Tn}}_{=0} \\[-0.25cm]
         &= \innerr{\diffb \PVol{\u_n}, \bm{\psi}}_{\Tn} - \innerr{\hdgjump{\u_n}_{\bflow}, \bm{\psi}}_{\partial \Tn} \\
         \overset{\eqref{eq:VecLift}}&{=} \innerr{\diffb \PVol{\u_n}, \bm{\psi}}_{\Tn} - \innerr{\hdgjump{\u_n}_{\bflow}, \bm{\psi} - \bm{\psi}_n}_{\partial \Tn} + \innerr{\bRl \u_n, \bm{\psi}_n}_{\Tn} \\
         \overset{\eqref{eq:discreteDiffs}}&{=} \innerr{\diffb \PVol{\u_n}, \bm{\psi} - \bm{\psi}_n}_{\Tn} - \innerr{\hdgjump{\u_n}_{\bflow}, \bm{\psi} - \bm{\psi}_n}_{\partial \Tn} + \innerr{\ddiffb \u_n, \bm{\psi}_n}_{\Tn} \\
         &= -\inner{\PVol{\u_n}, \rho \diffb  \left( \bm{\psi}-\bm{\psi}_n  \right)}_{\Tn} + \innerr{\ddiffb \u_n, \bm{\psi}_n}_{\Tn},
      \end{aligned}
   \end{equation}
   where we recall that $\div(\rho \bflow) = 0$ by assumption. Since $\Vert \bm{\psi} - \bm{\psi}_n \Vert_{\bH^1} \lesssim h_n \Vert \bm{\psi} \Vert_{\bH^2}$ and $\Vert \u_n \Vert_{\Xn} \lesssim 1$, it follows that $ \lim_{n \to \infty} \innerr{\ddiffb \u_n, \bm{\psi}_n}_{\Tn} = \lim_{n \to \infty} - \inner{\PVol{\u_n}, \rho \diffb \bm{\psi}_n}_{\Tn}$. Thus, we obtain
   \begin{align*}
      \inner{\bm{g}, \bm{\psi}} &= \lim_{n \rightarrow \infty} \innerr{\ddiffb \u_n, \bm{\psi}} = \lim_{n \to \infty} \left( \innerr{\ddiffb \u_n, \bm{\psi} - \bm{\psi_n}}_{\Tn} + \innerr{\ddiffb \u_n, \bm{\psi}_n}_{\Tn} \right) \\
      \overset{\eqref{eq:weakConvergence:partialInt}}&{=} \lim_{n \rightarrow \infty} - \inner{\PVol{\u_n}, \rho \diffb \bm{\psi}}_{\Tn} = - \inner{\u, \rho \diffb \bm{\psi}}_{\Tn}.
   \end{align*}
   Consequently, it holds that $\bm{g} = \rho \diffb \u$ and with similar arguments $q = \csr \div \u$. 
\end{proof} 

\begin{theorem}\label{thm:AnToA}
   The operator $A_n \in L(\Xn)$ \approximates~the operator $A \in L(\X)$, i.e.~for each $\u \in \X$, it holds that $$\lim_{n \to \infty} \Vert (A_n p_n - p_n A) \u \Vert_{\Xn} = 0.$$
\end{theorem}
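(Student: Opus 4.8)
The plan is to pass to the dual formulation and then reduce to smooth data. Writing $\Vert(A_np_n-p_nA)\u\Vert_{\Xn}=\sup_{\u_n'\neq0}|\spl(A_np_n-p_nA)\u,\u_n'\spr_{\Xn}|/\Vert\u_n'\Vert_{\Xn}$ and using the Riesz definitions of $A_n$ and $p_n$ (the latter via \eqref{eq:def:pn} applied to $A\u\in\X$),
\begin{equation*}
\spl (A_n p_n - p_n A)\u, \u_n'\spr_{\Xn} = a_n(p_n\u,\u_n') - \innercsr{\div A\u, \ddiv\u_n'} - \inner{A\u, \u_\tau'} - \innerr{\diffb A\u, \ddiffb\u_n'},
\end{equation*}
it suffices to bound this quantity uniformly on the unit ball of $\Xn$. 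First I would invoke the density of $\bm{C}^\infty_0$ in $\X$ \cite[Thm.~6]{HLS22H1} to pick, for given $\u$ and $\epsilon>0$, a smooth $\tilde\u$ with $\Vert\u-\tilde\u\Vert_\X<\epsilon$. With $\Vert p_n\Vert_{L(\X,\Xn)}\le1$, the uniform bound $\Vert A_n\Vert_{L(\Xn)}\le C$ (obtained by applying Cauchy--Schwarz and the lifting/trace estimates \eqref{eq:liftingBoundedness} term-by-term in \eqref{eq:an}), and the boundedness of $A$, the $(\u-\tilde\u)$-contributions are $\le C\epsilon$; thus it remains to prove $\Vert(A_np_n-p_nA)\tilde\u\Vert_{\Xn}\to0$ for smooth $\tilde\u$.

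For smooth $\tilde\u$ the trace exists, so $\underline{\tilde\u}=(\tilde\u,\tr\tilde\u)$ has vanishing HDG-jumps, whence $\bRl\underline{\tilde\u}=R^l\underline{\tilde\u}=0$, giving $\ddiv\underline{\tilde\u}=\div\tilde\u$, $\ddiffb\underline{\tilde\u}=\diffb\tilde\u$ and $s_n(\underline{\tilde\u},\cdot)=0$. I would then insert $\underline{\tilde\u}$ and split
\begin{equation*}
a_n(p_n\tilde\u,\u_n') - \spl p_n A\tilde\u,\u_n'\spr_{\Xn} = a_n(p_n\tilde\u - \underline{\tilde\u},\u_n') + \big( a_n(\underline{\tilde\u},\u_n') - \spl p_n A\tilde\u,\u_n'\spr_{\Xn} \big).
\end{equation*}
The first summand is $a_n(p_n\tilde\u-\underline{\tilde\u},\u_n')$, and by the uniform continuity of $a_n$ it is bounded by $C\,\Vert p_n\tilde\u-\underline{\tilde\u}\Vert_{\Xn}\Vert\u_n'\Vert_{\Xn}=C\,d_n(\tilde\u,p_n\tilde\u)\Vert\u_n'\Vert_{\Xn}$, which tends to zero uniformly on the unit ball by \cref{lem:dnpnuToZero}.

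The second summand is where the consistency of the scheme enters. Evaluating $a_n(\underline{\tilde\u},\u_n')$ with the above simplifications and regrouping the contributions according to their pairing with $\ddiv\u_n'$, $\u_\tau'$ and $\ddiffb\u_n'$, I expect to obtain, with $\q=c_s^{-2}\rho^{-1}\nabla p$,
\begin{equation*}
a_n(\underline{\tilde\u},\u_n') = \innercsr{(\div+\q\cdot)\tilde\u, \ddiv\u_n'} + \inner{G_0, \u_\tau'} + \innerr{i\opd\tilde\u, \ddiffb\u_n'}
\end{equation*}
for an explicit $G_0\in\bL^2$. The key observation is that the identical regrouping of the continuous form \eqref{eq:a} yields $a(\tilde\u,\v)=\innercsr{(\div+\q\cdot)\tilde\u,\div\v}+\inner{G_0,\v}+\innerr{i\opd\tilde\u,\diffb\v}$ with the \emph{same} coefficient functions. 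Subtracting the representation $a(\tilde\u,\v)=\spl A\tilde\u,\v\spr_\X=\innercsr{\div A\tilde\u,\div\v}+\inner{A\tilde\u,\v}+\innerr{\diffb A\tilde\u,\diffb\v}$ shows that the $L^2$-residuals $e_{\div}\coloneqq(\div+\q\cdot)\tilde\u-\div A\tilde\u$, $e_0\coloneqq G_0-A\tilde\u$ and $e_{\bflow}\coloneqq i\opd\tilde\u-\diffb A\tilde\u$ satisfy the orthogonality
\begin{equation*}
\innercsr{e_{\div}, \div\v} + \inner{e_0, \v} + \innerr{e_{\bflow}, \diffb\v} = 0 \qquad \forall\, \v \in \X.
\end{equation*}
Since $\spl p_nA\tilde\u,\u_n'\spr_{\Xn}$ is exactly $\innercsr{\div A\tilde\u,\ddiv\u_n'}+\inner{A\tilde\u,\u_\tau'}+\innerr{\diffb A\tilde\u,\ddiffb\u_n'}$ by \eqref{eq:def:pn}, the second summand collapses to $\innercsr{e_{\div},\ddiv\u_n'}+\inner{e_0,\u_\tau'}+\innerr{e_{\bflow},\ddiffb\u_n'}$.

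Finally I would show this last pairing tends to zero uniformly for $\Vert\u_n'\Vert_{\Xn}\le1$ by a compactness/contradiction argument: assuming the contrary yields a subsequence and unit-norm $\u_n'$ keeping it $\ge\delta>0$, along which \cref{lem:weakConvergence} provides a further subsequence with $\u_\tau'\overset{\bL^2}{\rightharpoonup}\u'$, $\csr\ddiv\u_n'\overset{L^2}{\rightharpoonup}\csr\div\u'$ and $\rho\ddiffb\u_n'\overset{\bL^2}{\rightharpoonup}\rho\diffb\u'$ for some $\u'\in\X$; passing to the limit and using the orthogonality with $\v=\u'$ forces the pairing to $0$, a contradiction. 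Together with the first summand this gives $\Vert(A_np_n-p_nA)\tilde\u\Vert_{\Xn}\to0$, and the density reduction finishes the proof. I expect the main obstacle to be precisely this last step: because $A\tilde\u$ need not be smooth, the residuals $e_{\div},e_0,e_{\bflow}$ are merely $L^2$-functions, so the pairing cannot be driven to zero by interpolation or elementwise integration by parts against the discrete reconstructions; it is the weak compactness of bounded sequences in $\Xn$, combined with the continuous orthogonality inherited from the definition of $A$, that secures the uniform limit.
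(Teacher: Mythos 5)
Your proposal is correct, and it rests on the same two pillars as the paper's proof — the dual characterization of the $\Xn$-norm together with the weak-compactness result of \cref{lem:weakConvergence}, plus the vanishing of $d_n(\cdot,p_n\cdot)$ from \cref{lem:dnpnuToZero} and the Riesz/Galerkin definition \eqref{eq:def:pn} of $p_n$ — but it organizes the consistency argument differently. The paper works directly with $\u \in \X$: it splits $a_n$ into $\adivn$, $\aremn$ (into which $\u$ can be inserted, since only the normal trace of an $\X$-function is needed and the corresponding jumps/liftings vanish) and $\aconvn$ (into which $\u$ cannot be inserted, and which is instead handled by adding and subtracting correction terms bounded by $d_n(\u,p_n\u)$); the limit along the subsubsequence is then identified term-by-term with $a(\u,\u') = \spl A\u,\u'\spr_{\X}$. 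You instead first reduce to smooth $\tilde\u \in \bm{C}^\infty_0$ via density and the uniform boundedness of $A_n$ (a step the paper never needs to state, though it is easy and your term-by-term justification is adequate), after which all HDG jumps and liftings of $\underline{\tilde\u}$ vanish, and you recast consistency as an $L^2$-residual orthogonality $\innercsr{e_{\div},\div\v} + \inner{e_0,\v} + \innerr{e_{\bflow},\diffb\v} = 0$ inherited from the Riesz representation of $A$, concluding by contradiction with \cref{lem:weakConvergence}. What each route buys: yours unifies the three form components into a single uniform argument and avoids the delicate case distinction about which traces of $\u \in \X$ exist; the paper's avoids the extra density layer (density enters both proofs anyway, but only through \cref{lem:dnpnuToZero}) and the explicit operator bound on $A_n$, and identifies the limit $a(\u,\u')$ more directly. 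Your closing observation — that the residuals are merely $L^2$-functions, so the final pairing cannot be killed by interpolation or elementwise integration by parts and genuinely requires the weak compactness of bounded $\Xn$-sequences — is exactly the right diagnosis and matches the mechanism the paper relies on.
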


\begin{proof}
   As $\Vert \u_n \Vert_{\Xn} = \sup_{\u_n \in \Xn, \Vert \u_n' \Vert_{\Xn} = 1} \vert \inner{\u_n,\u_n'}_{\Xn} \vert$, we can choose for any $\u \in \X$ a sequence $\seq{\u} \subset \Xn$ such that $\Vert \u_n \Vert_{\Xn} = 1$ and 
   \begin{equation*}
      \Vert (A_n p_n - p_n A) \u \Vert_{\Xn} \le \vert \spl (A_n p_n - p_n A) \u, \u_n \spr_{\Xn} \vert + 1/n.  
   \end{equation*}
   For any subsequence $\N' \subset \N$, we can choose a subsubsequence $\N'' \subset \N'$ as in \cref{lem:weakConvergence} such that for a $\u' \in \X$ it holds that  
   \begin{align*}
      \lim_{n \in \N''} \inner{p_n &A \u, \u_n}_{\Xn}\! \overset{\eqref{eq:def:pn}}= \! \lim_{n \in \N''} \left( \! \inner{\div A \u, \csr \ddiv \u_n}_{L^2} \! + \! \inner{A \u, \u_n}_{\bL^2} \! + \! \inner{\diffb A \u, \rho \ddiffb \u_n}_{\bL^2} \! \right) \\
      &= \inner{\div A \u, \csr \div \u'}_{L^2} + \inner{A \u, \u'}_{\bL^2} + \inner{\diffb A \u, \rho \diffb \u'}_{\bL^2} = \spl A \u, \u' \spr_{\X}.
   \end{align*}
   Furthermore, recalling \eqref{eq:an:components} and \eqref{eq:an}, we have that 
   \begin{equation*}
      \inner{A_n p_n \u, \u_n}_{\Xn} = a_n(p_n \u, \u_n) = \adivn(p_n \u,\u_n) - \aconvn(p_n \u, \u_n) + \aremn(p_n \u, \u_n)
   \end{equation*}
   As discussed before, the trace of functions in $\X$ is not well-defined, but the normal trace is. Thus, we can plug $\u \in \X$ into the forms $a_n^{y}(\cdot,\cdot)$, $y \in \{ \div,r\}$, but not into $\aconvn(\cdot,\cdot)$. For $y \in \{ \div,r\}$, we recall the definitions from \eqref{eq:a:components} and calculate
   \begin{equation*}
      \lim_{n \in \N''} a_n^y(p_n \u, \u_n) = \lim_{n \in \N''}\left( a_n^y(\u,\u_n)+ a_n^y(p_n \u - \u,\u_n) \right) = a^y(\u,\u'),
   \end{equation*} 
   where the last equality follows from $\vert a_n^y(p_n \u - \u,\u_n) \vert \lesssim d_n(\u,p_n \u)$, \cref{lem:dnpnuToZero} and \cref{lem:weakConvergence}. 
   For the remaining term $\aconvn(p_n \u, \u_n)$, we calculate 
   \begin{align*}
      \aconvn(p_n \u, \u_n) &= \inner{\rho \dopd p_n \u, \dopd \u_n}_{\Tn} \\
      &= \inner{\rho \opd \u, \dopd \u_n}_{\Tn} \\
      &\qquad + \inner{\rho(\omega + i\Omega \times)(\PVol{p_n \u} - \u), \dopd \u_n}_{\Tn} \\
      &\qquad + \inner{\rho(\ddiffb p_n \u - \diffb \u), \dopd \u_n}_{\Tn}
   \end{align*}
   While the first term converges to $\aconv(\u,\u')$ due to \cref{lem:weakConvergence}, the second and third terms are again bounded by $d_n(\u,p_n \u)$ and thus converge to zero by \cref{lem:dnpnuToZero}. 
   Altogether, we obtain that 
   \begin{equation*}
      \lim_{n \in \N''} \inner{A_n p_n \u,\u_n}_{\Xn} = \adiv(\u,\u') - \aconv(\u,\u') + \arem(\u,\u') = \inner{A \u, \u'}_{\X},
   \end{equation*}
   and therefore $\lim_{n \in \N''} \Vert (A_n p_n - p_n A ) \u \Vert_{\Xn} = 0$, which completes the proof.
\end{proof}

Thus, we have shown in \cref{lem:pnuToU} that $\lim_{n \to \infty} \Vert p_n \u \Vert_{\Xn} = \Vert \u \Vert_{\X}$ and in \cref{thm:AnToA} that $A_n$ \approximates~$A$. Consequently, we conclude that the triple $(\Xn,p_n,A_n)$ is a \DAS~of $(\X,A)$ which allows us to apply the results from \cref{thm:weakTcompatibility} to analyze the discrete problem \eqref{eq:discr:weakForm}.

\section{Convergence Analysis}\label{sec:convergenceAnalysis}
The main goal of this section is to show that the sequence of approximations $\seq{A}$ is \stable~and that the sequence of discrete solutions $\seq{\u}$ \converges~to the solution of the continuous problem with optimal order. To this end, we want to use \cref{thm:weakTcompatibility} to show that the sequence $\seq{A}$ is \regular~and apply \cref{lem:RegularStable} to obtain \stability~and \convergence. In \cref{subsec:CA:Tops}, we introduce T-operators $T$ and $T_n$ on the continuous and the discrete level. Afterwards, we show in \cref{subsec:CA:TnAnalysis} that $T_n$ satisfies the assumptions from \cref{thm:weakTcompatibility}. In \cref{subsec:CA:weakTcompatibility} we show that the remaining requirements from \cref{thm:weakTcompatibility} are satisfied and in \cref{subsec:CA:convergenceEstimates} we conclude the analysis of the discrete problem \eqref{eq:discr:weakForm}. The roadmap for the analysis is shown in \cref{fig:overview}.

\begin{figure}[!htbp]
   \begin{center}
       \scalebox{0.75}{\begin{tikzpicture}[
    line/.style={draw, -latex'},
    blockb/.style={rectangle, draw, fill=gray!20, text width=14em, text centered, minimum height=2.5em,rounded corners},
    blockc/.style={rectangle, draw, fill=gray!20, text width=11em, text centered, minimum height=2.5em,rounded corners},
    block/.style={rectangle, draw, fill=gray!20, text width=8em, text centered, minimum height=5.5em,rounded corners}
]

\node[blockb, fill=none] (TnT) {
Def. of $T_n$\&$T$  
(via \eqref{eq:Xdecomp:cont}\&\eqref{eq:Xdecomp:discr})
};

\node[blockb, fill=none,below=0.25cm of TnT] (TnStable) {
$\seq{T}$ \stable~  
(\cref{lem:TnStable})
};

\node[blockc, fill=none,right=1cm of $(TnT.south east)!0.5!(TnStable.north east)$] (TnApprox) {
    $T_n$ \approximates~$T$ \\[1ex]
(\cref{lem:TnToT})
};

\node[draw=none,fill=none,left=0.25cm of $(TnT.west)!0.5!(TnStable.west)$,rotate=90, text centered, anchor=south] (Secs1) {
    \!\!\!\!\!\!Sect. \ref{subsec:CA:Tops}\& \ref{subsec:CA:TnAnalysis}\!\!\!\!\!\!
};

\node[blockb, fill=none,below=2cm of TnStable] (DbEst) {
Estimate $\ddiffb$ 
(\cref{lem:ddiffbEstimate})
};

\node[blockb, fill=none,below=0.25cm of DbEst] (ddivEst) {
Estimate $\ddiv$  
(\cref{lem:ddivEstimate})
};

\node[blockc, fill=none,right=1cm of $(DbEst.south east)!0.5!(ddivEst.north east)$] (wT) {
\!\!\!$A_n T_n = B_n + K_n$ \!\!\!\\[0.1ex]
\!\!\!$\seq{B}$ \stable, \!\!\!\\[0.1ex]
\!\!\!$\seq{B}$ \approximates~$B$, \!\!\!\\[0.1ex]
\!\!\!$\seq{K}$ \compactop, \!\!\!\\[0.1ex] ~
\!\!\!(\cref{thm:weakTcompatibilityFulfilled}) \!\!\!
};


\node[draw=none,fill=none,left=0.25cm of $(DbEst.west)!0.5!(ddivEst.west)$,rotate=90, text centered, anchor=south] (Secs2) {
Section \ref{subsec:CA:weakTcompatibility}
};

\path (TnStable) -- (DbEst) node[midway] (help) {};

\node[blockc, fill=white, below=0cm of $(wT)!0.67!(TnApprox)$] (regular) {
    $\seq{A}$ \regular\\[1ex]
    (by \cref{thm:weakTcompatibility})
};

\node[block, fill=none, right=1.5cm of regular] (stable) {
$\seq{A}$ \stable \\[1ex]    
\& $\seq{\u}$\convergesl$\u$ \\[1ex] 
(\cref{thm:convergence})
};

\node[draw=none,fill=none,above=0.25cm of $(stable.north)$,rotate=0, text centered, anchor=south] (Secs3) {
    \cref{subsec:CA:convergenceEstimates}
};    


\node[draw=none,fill=none] at (Secs1 -| stable) (ghost1){
    \phantom{\cref{subsec:CA:convergenceEstimates}}
};

\begin{scope}[on background layer]
\end{scope}


\node[block,draw=none, text width=20em, text centered, minimum height=1em,fill=none,above=0.7cm of $(TnT.north)$, text centered, anchor=south, rotate=0, inner sep=0] (DAS) {\cref{subsec:method:DAS}: $(\Xn,p_n,A_n)$ is a \DAS~of $(\X,A)$};

\begin{scope}[on background layer]
\end{scope}

\draw[line] (regular) -- (stable) node[midway,above] {\cref{lem:RegularStable}};
\draw[line] (wT) -- (regular); 
\draw[line] (TnApprox) -- (regular);
\draw[line] (TnT) -- (TnApprox); 
\draw[line] (TnStable) -- (TnApprox); 
\draw[line] (DbEst) -- (wT);
\draw[line] (ddivEst) -- (wT);

\end{tikzpicture}}
   \end{center} 
   \caption{Roadmap for the analysis of the discrete problem \eqref{eq:discr:weakForm}.} \label{fig:overview}
\end{figure}

\subsection{Construction of $T$ and $T_n$}\label{subsec:CA:Tops}
Let us recall the construction of $T$ on the continuous level as considered in \cite{H23Hdiv}. In \cref{subsec:method:preliminaries}, we introduced $\q \coloneqq c_s^{-2} \rho^{-1} \nabla p$, yielding the reformulation \eqref{eq:aCont:q} of the sesquilinear form $a(\cdot,\cdot)$; recall also the definition of $\adivq(\cdot,\cdot)$ in \eqref{eq:aq:components}. 
Intuitively, the strategy to show that sesquilinear form $a(\cdot,\cdot)$ is \weaklyTcoercive~is to construct the operator $T$ to flip the sign in front of $\aconv(\cdot,\cdot)$ for elements in $\ker\{\adivq (\cdot,\cdot) \}$. 

To this end, we want to decompose the space $\X$ into a subspace associated with the perturbed divergence operator $(\div + \q \cdot)$ and its orthogonal complement. In essence, the following construction is a generalized Helmholtz decomposition, where we want to identify the kernel of $(\div + \q \cdot)$ instead of the kernel of $\div$. In particular, if the pressure $p$ is constant, we have that $\q = 0$ and we recover the classical Helmholtz decomposition. A similar (though less involved) argument is applied in \cite[Sec.~15.1]{SBH19} to the Helmholtz equation. 

Let $X_{\ast} := \{ u \in X \colon \spl u,1 \spr = 0 \}$ for any space $X \subset L^2$, with the special case $L^2_{0} \coloneqq L^2_\ast$.
For $\u \in H_0(\div) \coloneqq \{ \u \in H(\div) : \u \cdot \nv = 0 \text{ on } \partial \dom\}$, let $v \in H^2_{\ast}$ solve
\begin{equation}\label{eq:Xdecomp:cont}
   \begin{aligned}
      \divPqM \nabla v &= \divPqM \u  \text{ in } \dom,\\
      \nv \cdot \nabla v &= 0 \text{ on } \partial \dom,
   \end{aligned}
\end{equation}
where $P_{L^2_0}$ is the $L^2$-projection onto $L^2_0$ and $M$ is a suitable finite rank operator constructed below. The operator $M$ is necessary to ensure the well-posedness of the problem, since $\divPq \nabla$ might not be bijective.
It is, however, a compact perturbation of a bijective operator and therefore Fredholm with index zero. For any Fredholm operator with index zero, there exists a finite rank operator such that the sum of both operators is bijective, cf.~\cite[Thm.~5.3]{GGK90}. 

Since we exploit the specific structure of $M$ later on, we discuss an explicit construction of $M$. We set 
\begin{equation*}
   H^2_{\ast,\text{Neu}} \coloneqq \{ \phi \in H^2_{\ast}, \nv \cdot \nabla \phi = 0 \text{ on } \partial \dom \}, \quad \mathcal{N} \coloneqq \ker \left\{ \divPq \nabla \right\} \subset  H^2_{\ast,\text{Neu}}.
\end{equation*}
Let $L := \dim \mathcal{N}$ and $(\phi_{l})_{1 \le l \le L} \subset H^2_{\ast,\text{Neu}}$ be an orthonormal basis of $\mathcal{N}$ with respect to the inner product $\inner{\div \nabla \cdot, \div \nabla \cdot}$, which is equivalent to the canonical $H^2_{\ast,\text{Neu}}$-inner product since $\nv \cdot \nabla \phi = 0$ and $\inner{\phi,1} = 0$ for all $\phi \in H^2_{\ast,\text{Neu}}$. 

Let $(\psi_{l})_{1 \le l \le L} \subset H^2_{\ast,\text{Neu}}$ be an orthonormal basis of the $L^2_0$-orthogonal complement $\left( \divPq \nabla H^2_{\ast,\text{Neu}}\right)^{\perp}$. Then, we set 
\begin{equation}\label{eq:def:M}
   M := \sum_{l = 1}^L \psi_{l} \inner{\div \cdot, \div \nabla \phi_{l}}.
\end{equation}
By construction, $M$ can be applied to $H(\div)$-functions and is compact. Thus, the operator $\divPqM$ is a Fredholm operator with index zero, and hence it is bijective if and only if it is injective. However, the construction of $M$ ensures the injectivity of the operator and therefore the well-posedness of the problem \eqref{eq:Xdecomp:cont}. Thus, for any $\u \in \X \subset H_0(\div)$ there exists a unique $v \in H^2_{\ast}$ solving \eqref{eq:Xdecomp:cont}.

Thus, we can define a unique decomposition of $\v + \w = \u \in \X$ by setting $\v \coloneqq P_{V} \u \coloneqq \nabla v$ and $\w \coloneqq \u - \v$. In particular, this construction yields that $\divPq \w = -M \w$. The construction of $P_V : H_0(\div) \to \bH^1, \u \mapsto \nabla v$ allows us to use the compactness of the embedding $\bH^1 \hookrightarrow \bL^2$. 

If $\q = 0$, then $\div \nabla = \Delta$ is bijective and $M = 0$, so that we recover the standard Helmholtz decomposition into a gradient potential and a divergence-free part.

Further, we define a bijective operator $T \in L(\X)$ through $T \u \coloneqq \v - \w$. That $a(\cdot,\cdot)$ is indeed \weaklyTcoercive~with respect to this construction will be shown in \cref{thm:weakTcompatibilityFulfilled}. 

Now, we want to construct a similar decomposition of the discrete space $\Xn$. To account for the discontinuity of the discrete functions, we have to modify the right-hand side of \eqref{eq:Xdecomp:cont}. In particular, we replace the divergence operator and the operator $M$ with corresponding discrete counterparts.

For $\u_n \in \Xn$, let $\tilde{v} \in H^2_\ast$ be the solution to 
\begin{equation}\label{eq:Xdecomp:discr}
   \begin{aligned}
	   \divPqM \nabla \tilde{v} &= \ddivpqMn \u_n  \text{ in } \dom,\\  
      \nv \cdot \nabla \tilde{v} &= 0 \text{ on } \partial \dom,
   \end{aligned}
\end{equation}
where we interpret $\pi_n^l P_{L^2_0} \q \cdot \u_n = \pi_n^l P_{L^2_0} \q \cdot \PVol{\u_n}$ and define the operator $M_n$ similarly to \eqref{eq:def:M} as
\begin{equation}
   M_n \coloneqq \sum_{l = 1}^L \psi_{l} \inner{\ddiv \cdot, \div \nabla \phi_{l}}.
\end{equation}
Since we only changed the right-hand side of the problem, the well-posedness of the problem is not affected.
Then, we define the decomposition $\u_n = \v_n + \w_n$ where we choose $\v_n$ as the $H(\div)$-conforming HDG interpolation of $\nabla \tilde{v}$. To be precise, we recall the definition \eqref{eq:def:pinXd} of the projection operator $\pinXd$ and its properties studied in \cref{lem:pinXd:properties} and define  
\begin{equation}
   \v_n \coloneqq P_{V_n} \u_n \coloneqq \pinXd \nabla \tilde{v} = (\pi_n^d \nabla \tilde{v}, P_{\nv}(\pi_n^d \nabla \tilde{v}) + P_{\nv}^{\perp}(\pi_n^{\Fn}\nabla \tilde{v})), \quad \w_n \coloneqq \u_n - \v_n. 
\end{equation}

For later use, let $\Sol : \Xn \to \bH^1$, $\u_n \mapsto \nabla \tilde{v}$ be the solution operator of \eqref{eq:Xdecomp:discr} composed with $\nabla$. Then, we have that $P_{V_n} \u_n = \pinXd \Sol \u_n$.
Finally, we define the operator $T_n \colon \Xn \to \Xn$ through 
\begin{equation}\label{eq:def:Tn}
   T_n \u_n \coloneqq \v_n - \w_n,
   \quad \text{i.e., } T_n = 2P_{V_n} - \Id_{\Xn}.
\end{equation} 
Since \eqref{eq:Xdecomp:discr} is well-posed, we have the stability estimate 
\begin{equation}\label{eq:tildevStability}
   \Vert \Sol \u_n \Vert_{\bH^1} \lesssim \Vert \ddivpqMn \u_n \Vert_{L^2} \lesssim \Vert \u_n \Vert_{\Xn} \quad \text{ for all } \u_n \in \Xn.
\end{equation}
Furthermore, since $\text{ran}(\Sol) \subset \bH^1$, we can utilize the compact embedding $\bH^1 \hookrightarrow \bL^2$ to expose the weakly T-coercive structure of $A_n$ in \cref{thm:weakTcompatibilityFulfilled} below.

\begin{remark}[Alternative decomposition of $\Xn$]
   In the construction above, the normal jump is attributed to $\w_n$ and the $T_n$-operator flips its sign. Alternatively, we can isolate the normal jump through a suitably defined lifting operator, cf.~\cite{AHLS22,Thesis_vB23}. In this case, we would decompose $\u_n = \v_n + \w_n + \z_n$ with $\v_n$ as above and $\hdgjump{\u_n}_{\nv} = \hdgjump{\z_n}_{\nv}$. This construction is more natural, because since we associate $\w_n$ with the \emph{divergence free} part of the Helmholtz decomposition, we would expect the normal jump to be zero.  
   When defining $T_n$, we now have explicit control over the sign of the normal jump. The previous construction corresponds to $T_n \u_n \coloneqq \v_n - \w_n - \z_n$, but we could also define $T_n \u_n \coloneqq \v_n - \w_n + \z_n$. Note that for the latter construction, the stabilization term $s_n(\cdot,\cdot)$ would have to be redefined to have a positive sign in front of the normal contribution and in the forthcoming analysis, the stabilization parameter $\alpha$ would have to be chosen sufficiently large to ensure that $s_n(\cdot,\cdot)$ is positive definite. To avoid further technicalities, we do not further consider this alternative decomposition. 
\end{remark}

\subsection{Analysis of $T_n$}\label{subsec:CA:TnAnalysis}
We want to show that the sequence $\seq{T}$ is bounded, \stable, and \approximates~the operator $T$. By definition of $T_n$, we have that $T_n = 2P_{V_n} - \Id_{\Xn}$ and therefore we mainly have to focus on the properties of $P_{V_n}$.

\begin{lemma}\label[lemma]{lem:PVnBounded}
   There exists a constant $C > 0$ such that $\Vert T_n \Vert_{L(\Xn)} \le C$ for all $n \in \N$.
\end{lemma}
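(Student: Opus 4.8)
The plan is to reduce the bound on $T_n$ to a uniform bound on the projection $P_{V_n}$, and then to exploit the factorization $P_{V_n} = \pinXd \Sol$ together with the boundedness of $\pinXd$ from \cref{lem:pinXd:properties} and the stability of the solution operator $\Sol$ in \eqref{eq:tildevStability}. Since $T_n = 2 P_{V_n} - \Id_{\Xn}$ by \eqref{eq:def:Tn}, the triangle inequality gives $\Vert T_n \Vert_{L(\Xn)} \le 2 \Vert P_{V_n} \Vert_{L(\Xn)} + 1$, so it suffices to establish a uniform bound on $\Vert P_{V_n} \Vert_{L(\Xn)}$.

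To bound $P_{V_n}$, I would fix $\u_n \in \Xn$ and write $\bm{g} \coloneqq \Sol \u_n = \nabla \tilde{v} \in \bH^1$, so that $P_{V_n} \u_n = \pinXd \bm{g}$. Using the triangle inequality with the lifted pair $\underline{\bm{g}} = (\bm{g}, \tr \bm{g})$,
$$\Vert P_{V_n} \u_n \Vert_{\Xn} \le \Vert \underline{\bm{g}} - \pinXd \bm{g} \Vert_{\Xn} + \Vert \underline{\bm{g}} \Vert_{\Xn}.$$
The first term is controlled directly by \eqref{eq:pinXd:boundedness} evaluated at the lowest regularity $s = 0$, which gives $\Vert \underline{\bm{g}} - \pinXd \bm{g} \Vert_{\Xn} \le C \Vert \bm{g} \Vert_{\bH^1}$. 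For the second term, I would observe that since $\bm{g} \in \bH^1$ has a single-valued trace we have $\hdgjump{\underline{\bm{g}}} = 0$, so both lifting operators vanish and $\ddiffb \underline{\bm{g}} = \diffb \bm{g}$, $\ddiv \underline{\bm{g}} = \div \bm{g}$, $\hdgjump{\underline{\bm{g}}}_{\nv} = 0$; the definition of $\Vert \cdot \Vert_{\Xn}$ then yields $\Vert \underline{\bm{g}} \Vert_{\Xn} \lesssim \Vert \bm{g} \Vert_{\bH^1}$, where $\bflow \in \bm{W}^{1,\infty}$ is used to absorb the directional-derivative contribution.

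Combining the two estimates gives $\Vert P_{V_n} \u_n \Vert_{\Xn} \lesssim \Vert \Sol \u_n \Vert_{\bH^1}$, and the stability estimate \eqref{eq:tildevStability} for the solution operator of \eqref{eq:Xdecomp:discr} closes the chain via $\Vert \Sol \u_n \Vert_{\bH^1} \lesssim \Vert \u_n \Vert_{\Xn}$. Hence $\Vert P_{V_n} \Vert_{L(\Xn)} \le C$ uniformly in $n$, and therefore $\Vert T_n \Vert_{L(\Xn)} \le 2C + 1$, which is the claim.

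The step I expect to require the most care is the control of the interpolation contribution $\Vert \underline{\bm{g}} - \pinXd \bm{g} \Vert_{\Xn}$ uniformly in $n$: the estimate \eqref{eq:pinXd:boundedness} carries a factor $h_n^s$, and uniform boundedness relies precisely on admitting $s = 0$ so that $h_n^0 = 1$, while still retaining enough regularity of $\bm{g}$. This is exactly guaranteed by the construction, since $\Sol$ maps into $\bH^1$ regardless of the mesh. Everything else is a routine application of the triangle inequality and the already-established stability of $\Sol$.
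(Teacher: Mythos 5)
Your proposal is correct and follows essentially the same route as the paper's proof: reduce to a uniform bound on $P_{V_n}$, use the factorization $P_{V_n} = \pinXd \Sol$, bound $\Vert \pinXd \Sol \u_n \Vert_{\Xn} \lesssim \Vert \Sol \u_n \Vert_{\bH^1}$ via \cref{lem:pinXd:properties}, and close with the stability estimate \eqref{eq:tildevStability}. The only difference is that you spell out the triangle-inequality step (splitting off $\underline{\bm{g}} - \pinXd \bm{g}$ and invoking \eqref{eq:pinXd:boundedness} with $s=0$) that the paper leaves implicit in its citation of \cref{lem:pinXd:properties}.
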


\begin{proof}
   It suffices to show the statement for $P_{V_n}$. Since $P_{V_n} = \pinXd \Sol$ and $\Sol \u_n \in \bH^1$, we obtain with \cref{lem:pinXd:properties} and \eqref{eq:tildevStability} that  
   \begin{equation}
      \Vert P_{V_n} \u_n \Vert_{\Xn} = \Vert \pinXd \Sol \u_n \Vert_{\Xn} \lesssim \Vert \Sol \u_n \Vert_{\bH^1} \lesssim \Vert \u_n \Vert_{\Xn}. 
   \end{equation}
   Thus, there exists a constant $C > 0$ such that $\Vert P_{V_n} \Vert_{L(\Xn)} \le C$ for all $n \in \N$.
\end{proof}

For $\q = 0$, the projection $P_{V_n}$ is idempotent, that is $P_{V_n}^2 = P_{V_n}$. In the case where $\q \not = 0$, we can still show that $P_{V_n}$ is asymptotically idempotent. 
\begin{lemma}\label[lemma]{lem:PvIdempotent}
   Let $O_n \coloneqq P_{V_n} P_{V_n} - P_{V_n}$. Then $\lim_{n \to \infty} \Vert O_n \Vert_{L(\Xn)} = 0$. 
\end{lemma}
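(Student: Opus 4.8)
The plan is to exploit the factorization
$O_n = P_{V_n}^2 - P_{V_n} = \pinXd(\Sol\pinXd - \Id)\Sol$, which follows immediately from $P_{V_n} = \pinXd\Sol$. Fix $\u_n \in \Xn$ and write $\bm g \coloneqq \Sol\u_n = \nabla\tilde v$, where $\tilde v \in H^2_\ast$ solves \eqref{eq:Xdecomp:discr}. Then $O_n\u_n = \pinXd(\Sol\pinXd\bm g - \bm g)$, so by the boundedness of $\pinXd \colon \bH^1 \to \Xn$ established in the proof of \cref{lem:PVnBounded} it suffices to estimate $\Sol\pinXd\bm g - \bm g$ in $\bH^1$. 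Since $\Sol\pinXd\bm g = \nabla\tilde w$ with $\divPqM\nabla\tilde w = \ddivpqMn\pinXd\bm g$, the difference $\tilde z \coloneqq \tilde w - \tilde v \in H^2_\ast$ satisfies $\divPqM\nabla\tilde z = E_n$ with the consistency defect $E_n \coloneqq \ddivpqMn\pinXd\bm g - \divPqM\bm g$. Because $\divPqM\nabla$ is bijective by the construction of $M$ in \eqref{eq:def:M}, we have $\Vert\nabla\tilde z\Vert_{\bH^1} \lesssim \Vert E_n\Vert_{L^2}$, and the whole statement reduces to showing $\Vert E_n\Vert_{L^2} \le \epsilon_n\Vert\u_n\Vert_{\Xn}$ for some null sequence $\epsilon_n \to 0$.

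The main obstacle is the divergence contribution to $E_n$. Using $\hdgjump{\pinXd\bm g}_{\nv} = 0$, \cref{lem:pinXd:properties} gives $R^l\pinXd\bm g = 0$, and together with the commutation property $\div\pi_n^d = \pi_n^l\div$ one gets $\ddiv\pinXd\bm g = \pi_n^l\div\bm g = \pi_n^l\Delta\tilde v$, so this contribution equals $-(\Id - \pi_n^l)\Delta\tilde v$. Here lies the difficulty: the stability estimate \eqref{eq:tildevStability} only yields $\Vert\tilde v\Vert_{H^2} \lesssim \Vert\u_n\Vert_{\Xn}$, so $\Delta\tilde v$ has merely $L^2$-regularity and no interpolation rate for $(\Id - \pi_n^l)\Delta\tilde v$ is available \emph{uniformly} over the unit ball. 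I would overcome this by substituting the defining equation \eqref{eq:Xdecomp:discr} in the form $\Delta\tilde v = \ddivpqMn\u_n - P_{L^2_0}\q\cdot\nabla\tilde v - M\nabla\tilde v$ and applying $\Id - \pi_n^l$ to each piece. The piecewise-polynomial parts $\ddiv\u_n$ and $\pi_n^l P_{L^2_0}\q\cdot\PVol{\u_n}$ lie in the range of $\pi_n^l$ (using $l \le k$) and are annihilated; the parts $M_n\u_n$ and $M\nabla\tilde v$ live in the fixed finite-dimensional span of $(\psi_l)$, on which $\Id - \pi_n^l$ tends to zero with coefficients bounded by $\Vert\u_n\Vert_{\Xn}$; and $P_{L^2_0}\q\cdot\nabla\tilde v$ lies in $H^1$ (since $\q \in \bm W^{1,\infty}$ and $\nabla\tilde v \in \bH^1$), yielding an $h_n$-rate. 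This gives $\Vert(\Id - \pi_n^l)\Delta\tilde v\Vert_{L^2} \le \epsilon_n\Vert\u_n\Vert_{\Xn}$ with $\epsilon_n \to 0$.

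With this key bound in hand, the remaining two contributions to $E_n$ follow directly. The term involving $M_n$ versus $M$ equals $\sum_l \psi_l\inner{-(\Id - \pi_n^l)\Delta\tilde v, \div\nabla\phi_l}$ and is therefore controlled by $\Vert(\Id - \pi_n^l)\Delta\tilde v\Vert_{L^2}$, i.e.\ by the same $\epsilon_n$. The $\q$-contribution $\pi_n^l P_{L^2_0}\q\cdot\pi_n^d\bm g - P_{L^2_0}\q\cdot\bm g$ I would split as $\pi_n^l P_{L^2_0}\bigl(\q\cdot(\pi_n^d\bm g - \bm g)\bigr) + (\pi_n^l - \Id)P_{L^2_0}(\q\cdot\bm g)$; the first piece is bounded by $\Vert\pi_n^d\bm g - \bm g\Vert \lesssim h_n\Vert\bm g\Vert_{\bH^1}$ and the second, again using $\q\cdot\bm g \in H^1$, by $h_n\Vert\bm g\Vert_{\bH^1}$, both $\lesssim h_n\Vert\u_n\Vert_{\Xn}$.

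Collecting the three contributions gives $\Vert E_n\Vert_{L^2} \le \epsilon_n\Vert\u_n\Vert_{\Xn}$ for a null sequence $\epsilon_n$ depending only on $h_n$ and on the fixed, vanishing quantities $\Vert(\Id - \pi_n^l)\psi_l\Vert$ and $\Vert(\Id - \pi_n^l)\div\nabla\phi_l\Vert$. Combining this with $\Vert O_n\u_n\Vert_{\Xn} = \Vert\pinXd\nabla\tilde z\Vert_{\Xn} \lesssim \Vert\nabla\tilde z\Vert_{\bH^1} \lesssim \Vert E_n\Vert_{L^2}$ and taking the supremum over $\Vert\u_n\Vert_{\Xn} \le 1$ yields $\Vert O_n\Vert_{L(\Xn)} \lesssim \epsilon_n \to 0$, which is the claim.
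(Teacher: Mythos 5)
Your proposal is correct and follows essentially the same route as the paper: writing $\w_n := (\Id_{\Xn} - P_{V_n})\u_n$, one has $\Sol\w_n = -\nabla\tilde z$, so your consistency defect $E_n = \divPqM\nabla\tilde z$ coincides exactly with the paper's right-hand side $\tilde O_n\u_n$ in \eqref{eq:tildeV2}, and both arguments reduce the claim, via the stability of the auxiliary problem and the commutation property $\div\pi_n^d = \pi_n^l\div$, to showing that this defect vanishes in operator norm by substituting the defining equation \eqref{eq:Xdecomp:discr} to annihilate the piecewise-polynomial parts. The only (harmless) difference is in how the remainder terms are killed: you derive explicit $h_n$-rates using $\q \in \bm{W}^{1,\infty}$ and $(\Id - \pi_n^l)\psi_l \to 0$ for the finitely many $\psi_l$, whereas the paper invokes pointwise convergence of $\pi_n^l$ combined with the compactness/finite rank of $M\Sol$, $M_n\Sol$, and $P_{L^2_0}\q\cdot\Sol$.
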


\begin{proof}
   Let $\u_n \in \Xn$ and $\w_n \coloneqq (\Id_{\Xn} - P_{V_n}) \u_n$. Since $P_{V_n} = \pinXd \Sol$, \cref{lem:pinXd:properties} implies that $\Vert (P_{V_n} P_{V_n} - P_{V_n}) \u_n \Vert_{\Xn} \lesssim \Vert \Sol \w_n \Vert_{\bH^1}$. By construction of $P_{V_n}$, we have that $\hdgjump{P_{V_n} \u_n}_{\nv} = 0$ and $\ddiv P_{V_n} \u_n = \div \PVol{P_{V_n} \u_n} = \div \pi_n^d \Sol \u_n$, and therefore $\Sol(P_{V_n} \u_n) \in \bH^1_{\nv 0}$ solves 
   \begin{equation*}
      \divPqM \Sol (P_{V_n} \u_n) \overset{\eqref{eq:Xdecomp:discr}}{=} \divpqM \pi_n^d (\Sol \u_n).
   \end{equation*}
   Thus, we calculate that 
   \begin{equation}\label{eq:tildeV2}
      \begin{aligned}
         &\hspace*{-0.35cm}\divPqM \Sol \w_n \\
         \overset{\eqref{eq:Xdecomp:discr}}&{=} \divPqM \Sol \u_n-  \divpqM \pi_n^d (\Sol \u_n) \\
         &= \div \left(\Id_{\X} - \pi_n^d\right) \Sol \u_n \! + \! (P_{L^2_0} \q \cdot - \pi_n^l P_{L^2_0} \q \cdot \pi_n^d) \Sol \u_n \! + \! M \left(\Id_{\X} - \pi_n^d\right) \Sol \u_n \\ 
         &= \div \left(\Id_{\X} - \pi_n^d\right) \Sol \u_n + \left(\Id_{L^2_0} - \pi_n^l \right) P_{L^2_0} \q \cdot \Sol \u_n \\
         &\qquad + \pi_n^l P_{L^2_0} \q \cdot \left( \Id_{\X} - \pi_n^d \right) \Sol \u_n + M \left(\Id_{\X} - \pi_n^d\right) \Sol \u_n\\
         &= ( \Id_{L^2_0} - \pi_n^l )\divPq \Sol \u_n +  \pi_n^l P_{L^2_0} \q \cdot \left( \Id_{\X} - \pi_n^d \right) \Sol \u_n \\
         &\qquad + M  \left( \Id_{\X} - \pi_n^d \right) \Sol \u_n \eqqcolon -\tilde{O}_n \u_n,
      \end{aligned}
   \end{equation}
   where we use the commutation property $\div \pi_n^d = \pi_n^l \div$ in the last step. Consequently, $\Sol \w_n$ solves \eqref{eq:Xdecomp:discr} with right-hand side $-\tilde{O}_n \u_n$ and the stability estimate \eqref{eq:tildevStability} implies that $\Vert \Sol \w_n \Vert_{\bH^1} \lesssim \Vert \tilde{O}_n \u_n \Vert_{\Xn}$. We note that the minus in front of $\tilde{O}_n$ is purely for notational convenience in later calculations. 
 
   It remains to show that $\lim_{n \to \infty} \Vert \tilde{O}_n \Vert_{L(\Xn,L^2_0)} = 0$. Due to \eqref{eq:Xdecomp:discr}, we have that 
   \begin{equation}\label{eq:divPQnablaCalcs}
      \begin{aligned}
         \divPq \Sol \u_n &= \ddivpqMn \u_n - M \Sol \u_n, \\
		 \pi_n^l \divPq \Sol \u_n &= \ddivpqMn \u_n - \pi_n^l M \Sol \u_n \\
         &\quad+ (\pi_n^l - \Id_{L^2_0}) M_n \Sol \u_n.
      \end{aligned}
   \end{equation}
   Because $\pi_n^l$ converges to $\Id_{L^2_0}$ pointwise and the operators $M\Sol$ and $M_n \Sol$ are compact, it follows that
   \begin{align*}
      &\Vert (\Id_{L^2_0} - \pi_n^l )\divPq \Sol \Vert_{L(\Xn,L^2_0)} \\
      &\lesssim \Vert (\Id_{L^2_0} - \pi_n^l) M \Sol \Vert_{L(\Xn,L^2_0)} + \Vert (\pi_n^l - \Id_{L^2_0}) M_n \Sol \Vert_{L(\Xn,L^2_0)} \overset{n \to \infty}{\to} 0. 
   \end{align*}
   Furthermore, by construction of the operator $M$, the commutation property $\div \pi_n^d = \pi_n^l \div$, and \eqref{eq:divPQnablaCalcs} we have that
   \begin{align*}
      \Vert  M &( \Id_{\X} - \pi_n^d ) \Sol \Vert_{L^2(\Xn,L^2_0)} \lesssim \Vert \div \left( \Id_{\X} - \pi_n^d \right) \Sol \Vert_{L^2(\Xn,L^2_0)} \\
      &\lesssim \Vert (\Id_{L^2_0} - \pi_n^l )\divPq \Sol \Vert_{L(\Xn,L^2_0)} + \Vert (\Id_{L^2_0} - \pi_n^l) P_{L^2_0} \q \cdot \Sol \Vert_{L(\Xn,L^2_0)},
   \end{align*}
   where the second term converges to zero as well, since $\pi_n^l$ converges to $\Id_{L^2_0}$ pointwise and $P_{L^2_0} \q \cdot \Sol$ is compact. Finally, we estimate with \eqref{eq:pi:approximationEstimates}
   \begin{equation*}
      \Vert \pi_n^l P_{L^2_0} \q \cdot ( \Id_{\X} - \pi_n^d ) \Sol \Vert_{L(\Xn,L^2_0)} \lesssim \Vert ( \Id_{\X} - \pi_n^d ) \Sol \Vert_{L(\Xn,\bL^2)} \lesssim h_n \Vert \Sol \Vert_{L(\Xn,\bH^1)} \overset{n \to \infty}{\to} 0.
   \end{equation*}
   Altogether, we obtain that $\lim_{n \to \infty} \Vert \tilde{O}_n \Vert_{L(\Xn,L^2_0)} = 0$ and thus 
   \begin{equation*}
   \Vert ( P_{V_n} P_{V_n} - P_{V_n}) \u_n \Vert_{\Xn} \lesssim \Vert \Sol \w_n \Vert_{\bH^1} \stackrel{\eqref{eq:tildevStability}}{\lesssim} \Vert \tilde{O}_n \u_n \Vert_{\Xn} \lesssim \Vert \tilde{O}_n \Vert_{L(\Xn,L^2_0)} \Vert \u_n \Vert_{\Xn} \overset{n \to \infty}{\to} 0. 
   \end{equation*}
\end{proof}

The calculations in \eqref{eq:tildeV2} yield
\begin{equation}\label{eq:divwnEquals}
   \begin{aligned}
      (\ddiv \! + \pi_n^l P_{L^2_0} \q \cdot ) \w_n  \! \! \overset{\eqref{eq:Xdecomp:discr}}{=} \! (\div \! + \! P_{L^2_0} \q \cdot + M ) S_n \w_n \! - \! M_n \w_n \!\!\overset{\eqref{eq:tildeV2}}{=} \!\! -M_n \w_n \! - \! \tilde{O}_n \u_n
   \end{aligned}
\end{equation}
In particular, if $\q = 0$, then $M_n = 0$ and $\tilde{O}_n = 0$, so $\ddiv \w_n = 0$ and we recover the standard Helmholtz decomposition on the discrete level. From \eqref{eq:divwnEquals}, we observe that even in the case where $\q \not = 0$ the discrete perturbed divergence of $\w_n$ consists of the compact operator $M_n$ and $\tilde{O}_n$ which can be absorbed in the compact part of the \weaklyTcoercive~structure.

\begin{lemma}\label[lemma]{lem:TnStable}
   There exists an index $n_0 > 0$ and $C > 0$ such that $\Vert T_n^{-1} \Vert_{L(\Xn)} \le C$ for all $n > n_0$.
\end{lemma}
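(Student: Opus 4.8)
The plan is to exploit the fact that $T_n$ is an \emph{approximate involution}. Since $T_n = 2 P_{V_n} - \Id_{\Xn}$, a direct expansion gives
\begin{equation*}
   T_n^2 = 4 P_{V_n}^2 - 4 P_{V_n} + \Id_{\Xn} = \Id_{\Xn} + 4 O_n,
\end{equation*}
where $O_n = P_{V_n} P_{V_n} - P_{V_n}$ is precisely the operator controlled in \cref{lem:PvIdempotent}. Thus the square of $T_n$ differs from the identity only by a term that vanishes in operator norm. This reduces the invertibility of $T_n$ to a perturbation-of-the-identity argument for $T_n^2$.

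By \cref{lem:PvIdempotent}, there exists an index $n_0 > 0$ such that $4 \Vert O_n \Vert_{L(\Xn)} \le 1/2$ for all $n > n_0$. For such $n$, the operator $T_n^2 = \Id_{\Xn} + 4 O_n$ is invertible by a Neumann series, with the uniform bound
\begin{equation*}
   \Vert (T_n^2)^{-1} \Vert_{L(\Xn)} \le \left( 1 - 4 \Vert O_n \Vert_{L(\Xn)} \right)^{-1} \le 2.
\end{equation*}
Since $\Xn$ is finite-dimensional, the invertibility of $T_n^2$ forces $T_n$ to be injective and hence bijective. Because $T_n$ commutes with $T_n^2$, and therefore with $(T_n^2)^{-1}$, one obtains the identity $T_n^{-1} = T_n (T_n^2)^{-1}$, whence
\begin{equation*}
   \Vert T_n^{-1} \Vert_{L(\Xn)} \le \Vert T_n \Vert_{L(\Xn)} \, \Vert (T_n^2)^{-1} \Vert_{L(\Xn)} \le 2 \Vert T_n \Vert_{L(\Xn)}.
\end{equation*}
The right-hand side is uniformly bounded by \cref{lem:PVnBounded}, which yields the claimed uniform bound on $\Vert T_n^{-1} \Vert_{L(\Xn)}$ for all $n > n_0$.

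The substantive work has already been performed in \cref{lem:PvIdempotent}, where the asymptotic idempotency of $P_{V_n}$ was established through the analysis of $\tilde{O}_n$ and the stability estimate \eqref{eq:tildevStability}. Given that result, the only point to observe here is the algebraic identity $T_n^2 = \Id_{\Xn} + 4 O_n$, after which \stability~follows from a standard Neumann series together with the finite dimensionality of $\Xn$. I therefore do not expect any genuine obstacle in this step; it is essentially a bookkeeping argument assembling the two preceding lemmas.
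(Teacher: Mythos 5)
Your proposal is correct and follows essentially the same route as the paper's proof: both rest on the identity $T_n^2 = \Id_{\Xn} + 4 O_n$, invoke \cref{lem:PvIdempotent} to make $O_n$ small, invert $\Id_{\Xn} + 4 O_n$ by a Neumann series, and then write $T_n^{-1}$ as the product of $(T_n^2)^{-1}$ with $T_n$, whose uniform bound comes from \cref{lem:PVnBounded}. Your version even makes explicit a small step the paper leaves implicit (that invertibility of $T_n^2$ yields bijectivity of $T_n$ in finite dimensions), so there is nothing to correct.
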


\begin{proof}
   We have that $T_n T_n = 4 P_{V_n} P_{V_n} - 4 P_{V_n} + \Id_{\X} = \Id_{\X} + 4 O_n$ with $O_n$ as defined in \cref{lem:PvIdempotent}. Since $\lim_{n \to \infty} \Vert O_n \Vert_{L(\Xn)} = 0$, there exists $n_0 > 0$ such that $\Vert 4 O_n \Vert_{L(\Xn)} < 1$ for all $n > n_0$ and thus there exists $C > 0$ such that $\Vert (\Id_{\X} + 4 O_n)^{-1} \Vert_{L(\Xn)} \le C$ for all $n > n_0$. Writing
   \begin{equation*}
      (\Id_{\X} + 4 O_n)^{-1} T_n = (T_n T_n)^{-1} T_n = T_n^{-1} ,
   \end{equation*}
   we conclude that $\Vert T_n^{-1} \Vert_{L(\Xn)} \le C \Vert T_n \Vert_{L(\Xn)}$ for all $n > n_0$, which proves the claim. 
\end{proof}

The next lemma shows that $T_n$ indeed \approximates~the operator $T$.

\begin{lemma}\label[lemma]{lem:TnToT}
   For each $\u \in \X$, it holds that $\lim_{n \to \infty} \Vert (T_n p_n - p_n T) \u \Vert_{\Xn} = 0$. 
\end{lemma}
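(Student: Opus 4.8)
The plan is to exploit the affine structure of the $T$-operators. By \eqref{eq:def:Tn} we have $T_n = 2P_{V_n} - \Id_{\Xn}$, and analogously $T = 2P_{V} - \Id_{\X}$ on the continuous level. Since the identity parts cancel against $p_n$, a direct computation gives
\begin{equation*}
(T_n p_n - p_n T)\u = 2\,(P_{V_n} p_n - p_n P_{V})\u,
\end{equation*}
so it suffices to prove $\lim_{n\to\infty}\Vert (P_{V_n} p_n - p_n P_{V})\u\Vert_{\Xn} = 0$ for every $\u\in\X$. Recall that $P_{V}\u = \nabla v$ with $v$ solving \eqref{eq:Xdecomp:cont}; in particular $P_{V}\u\in\bH^1_{\nv 0}$ because of the Neumann condition $\nv\cdot\nabla v = 0$, while $P_{V_n}p_n\u = \pinXd \Sol p_n\u$.

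The central step is to show that the discrete potential converges to the continuous one, i.e.\ $\Sol p_n\u \to P_{V}\u$ in $\bH^1$. Here I would use that both $\Sol p_n \u$ and $P_{V}\u$ solve a problem with the \emph{same} bijective left-hand side $\divPqM\nabla$, differing only in the right-hand side; by well-posedness of \eqref{eq:Xdecomp:cont} (equivalently the stability estimate \eqref{eq:tildevStability}) applied to the difference,
\begin{equation*}
\Vert \Sol p_n\u - P_{V}\u\Vert_{\bH^1} \lesssim \Vert \ddivpqMn p_n\u - \divPqM\u\Vert_{L^2}.
\end{equation*}
It then remains to check that the right-hand side tends to $0$, which I would do termwise. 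The divergence part $\ddiv p_n\u - \div\u$ vanishes in $L^2$ because $d_n(\u,p_n\u)\to 0$ (\cref{lem:dnpnuToZero}) controls exactly this quantity; the convective part $\pi_n^l P_{L^2_0}\q\cdot\PVol{p_n\u} - P_{L^2_0}\q\cdot\u$ vanishes since $\PVol{p_n\u}\to\u$ in $\bL^2$ (again from $d_n\to0$), $\q\in\bL^\infty$, $P_{L^2_0}$ is bounded, and $\pi_n^l$ is uniformly bounded and converges pointwise to the identity; finally the finite-rank part $M_n p_n\u - M\u = \sum_{l}\psi_l\inner{\ddiv p_n\u-\div\u,\div\nabla\phi_l}$ vanishes by the already-established convergence $\ddiv p_n\u\to\div\u$.

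With $\Sol p_n\u\to P_{V}\u$ in hand, I would close the argument by a triangle inequality,
\begin{equation*}
\Vert P_{V_n}p_n\u - p_n P_{V}\u\Vert_{\Xn} \le \Vert\pinXd(\Sol p_n\u - P_{V}\u)\Vert_{\Xn} + \Vert\pinXd P_{V}\u - p_n P_{V}\u\Vert_{\Xn}.
\end{equation*}
The first term is bounded by $\Vert \Sol p_n\u - P_{V}\u\Vert_{\bH^1}\to 0$ using the uniform boundedness of $\pinXd\colon\bH^1\to\Xn$ (cf.\ the proof of \cref{lem:PVnBounded}). For the second term, since $g:=P_{V}\u\in\bH^1_{\nv 0}$ has a well-defined trace, I would use the identity $d_n(g,\cdot) = \Vert\underline{g}-\cdot\Vert_{\Xn}$ to obtain
\begin{equation*}
\Vert\pinXd g - p_n g\Vert_{\Xn} \le d_n(g,\pinXd g) + d_n(g,p_n g),
\end{equation*}
where both summands vanish: $d_n(g,\pinXd g)\to 0$ by \cref{lem:dnupinXdtoZero}, while $d_n(g,p_n g)\le d_n(g,\pinX g)\to 0$ by \cref{lem:pnBoundedBypinX} together with \cref{lem:dnupinXdtoZero}.

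The main obstacle is the second step, namely transferring convergence through the two solution operators. Its resolution hinges on the observation that the continuous and discrete decompositions share the identical left-hand operator $\divPqM\nabla$, so that the entire discretization error is funnelled into the $L^2$-discrepancy of the right-hand sides; this discrepancy is precisely what the distance function $d_n$ (via \cref{lem:dnpnuToZero}) and the pointwise convergence of $\pi_n^l$ are designed to control.
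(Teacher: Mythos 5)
Your proposal is correct and follows essentially the same route as the paper's proof: both reduce $T_n$ versus $T$ to $P_{V_n}$ versus $P_V$, exploit that $\Sol p_n\u$ and $P_V\u$ solve \eqref{eq:Xdecomp:cont} with the same operator $\divPqM\nabla$ so that the $\bH^1$-error is controlled by the $L^2$-discrepancy of the right-hand sides (split into the divergence, $\q$-, and $M$-parts, handled via $d_n(\u,p_n\u)\to 0$, pointwise convergence of $\pi_n^l$, and the structure of $M_n$), and then close with the uniform $\bH^1\to\Xn$ boundedness of $\pinXd$ together with \cref{lem:dnupinXdtoZero} and \cref{lem:pnBoundedBypinX}. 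The only differences are cosmetic regroupings of the triangle inequalities and that you make the stability estimate for the difference problem explicit where the paper leaves it implicit.
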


\begin{proof}
   As before, we only have to show the statement for $P_{V_n}$. First, we estimate 
   \begin{equation*}
      \Vert (P_{V_n} p_n - p_n P_{V}) \u \Vert_{\Xn} \le d_n(P_V \u, p_n P_{V} \u) + d_n(P_V \u, P_{V_n} p_n \u),
   \end{equation*}
   and note that the first term converges to zero by \cref{lem:dnpnuToZero}. By definition, we have that $P_{V_n} p_n \u = \pinXd \Sol p_n \u$ and estimate for the second term
   \begin{equation}\label{eq:PVuPVnpnuToZero}
      d_n(P_{V} \u, P_{V_n} p_n \u) \overset{\eqref{eq:dn:triangle}}{\le} d_n(P_{V} \u, \pinXd P_V \u) + \Vert \pinXd (P_{V} \u - \Sol p_n \u) \Vert_{\Xn}. 
   \end{equation}
   Since $P_V \u \in \bH^1_{\nv 0}$, the first term converges to zero by \cref{lem:dnupinXdtoZero}. For the second term, we estimate 
   \begin{align*}
      \Vert \pinXd (&P_{V} \u - \Sol p_n \u) \Vert_{\Xn} \lesssim \Vert P_{V} \u - \Sol p_n \u \Vert_{\bH^1} \\
      &\lesssim \underbrace{\Vert \div \u - \ddiv p_n \u \Vert_{\Tn}}_{\text{(I)}} + \underbrace{\Vert P_{L^2_0} \q \cdot \u - \pi_n^l P_{L^2_0} \q \cdot \PVol{p_n \u} \Vert_{\Tn}}_{\text{(II)}} +\underbrace{ \Vert M \u - M_n p_n \u \Vert_{\Tn}}_{\text{(III)}}. 
   \end{align*} 
   By definition of $d_n(\cdot,\cdot)$, we have that $\text{(I)} \lesssim d_n(\u,p_n \u)$. We further estimate 
   \begin{align*}
      \text{(II)} &\lesssim \Vert (P_{L^2_0} \q \cdot - \pi_n^l P_{L^2_0} \q \cdot) \u \Vert_{L^2} + \Vert \pi_n^l P_{L^2_0} \q \cdot (\u - \PVol{p_n \u}) \Vert_{\Tn} \\
      &\lesssim \Vert (P_{L^2_0} \q \cdot - \pi_n^l P_{L^2_0} \q \cdot) \u \Vert_{L^2} + d_n(\u,p_n \u).
   \end{align*}
   By construction of $M$ and $M_n$ it holds that $\text{(III)}  \lesssim \Vert \div \u - \ddiv p_n \u \Vert_{\Tn} \lesssim d_n(\u,p_n \u)$. 

   Altogether, we obtain 
   \begin{align}
      \Vert \pinXd (P_{V} \u - \Sol p_n \u) \Vert_{\Xn} & \lesssim \Vert (P_{V} \u - \Sol p_n \u) \Vert_{\Xn} \nonumber \\ & \lesssim d_n(\u,p_n \u) + \Vert (P_{L^2_0} \q \cdot - \pi_n^l P_{L^2_0} \q \cdot) \u \Vert_{L^2} \overset{n \to \infty}{\to} 0, \label{eq:PVSnpn:conv}
   \end{align}
   where the first term converges to zero by \cref{lem:dnupinXdtoZero} and the second term converges to zero due to the pointwise convergence of $\pi_n^l$ to $\Id_{L^2_0}$, which proves the claim.
\end{proof}

\subsection{Weak T-compatibility}\label{subsec:CA:weakTcompatibility}
In the previous section, we have defined and analyzed the operators $T$ and $T_n$. To prepare for the application of \cref{thm:weakTcompatibility} in \cref{subsec:CA:convergenceEstimates}, we have to construct a characterization $A_n T_n = B_n + K_n$ that satisfies the conditions from \cref{thm:weakTcompatibility}. Before we do so in \cref{thm:weakTcompatibilityFulfilled}, we introduce the following notation and prove some auxiliary results. 

For $\u \in \bH^1_{\nv 0}$, we define the weighted $\bH^1$-seminorm through 
\begin{equation}
   \vert \u \vert^2_{\bH^1_{\csr}} \coloneqq \Vert (\csr)^{1/2} \nabla \u \Vert_{(L^2)^{d \times d}}^2.
\end{equation}

We show that the construction of $P_{V_n} \u_n := \pinXd \nabla \tilde{v}$, where $\tilde{v} \in H^2_{\ast,\text{Neu}}$ solves \eqref{eq:Xdecomp:discr}, allows us to bound the norm of the differential operators $\ddiffb$ and $\ddiv$ suitably. These estimates are crucial to show that the conditions from \cref{thm:weakTcompatibility} are satisfied.

\begin{lemma}\label[lemma]{lem:ddiffbEstimate}
   There exists constant $\DBconst > 0$ and $n_0 \in \N$ such that for all $\u_n \in \Xn$ and $n > n_0$, it holds that
   \begin{equation} 
      \Vert \rho^{1/2} \ddiffb (P_{V_n} \u_n) \Vert^2_{\Tn} \le \hypertarget{DBconst} \DBconst \Vert c_s^{-1} \bflow \Vert_{\bL^\infty}^2 \vert \Sol \u_n \vert_{\bH^1_{c_s^2\rho}}^2.
   \end{equation}
   In particular, $\DBconst = \AddDBconst (1 + O(h_n^2))$, where $\AddDBconst > 0$ depends on $C_{\text{dt}}$, the projection $\pi_n^d$, and the constant from \eqref{eq:pinXd:RBoundedness}.
\end{lemma}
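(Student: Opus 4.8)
The plan is to use that, by construction, $P_{V_n}\u_n=\pinXd\w$ is the $H(\div)$-conforming HDG interpolant of the field $\w\coloneqq\Sol\u_n=\nabla\tilde v\in\bH^1_{\nv 0}$, so that $\PVol{P_{V_n}\u_n}=\pi_n^d\w$ and, by \cref{lem:pinXd:properties}, $\hdgjump{\pinXd\w}_{\nv}=0$ with $\hdgjump{\pinXd\w}=P_{\nv}^{\perp}(\pi_n^d\w-\pi_n^{\Fn}\w)$. Writing $\underline{\w}=(\w,\tr\w)$ and noting that $\hdgjump{\underline{\w}}=0$ (hence $\bRl\underline{\w}=0$ and $\ddiffb\underline{\w}=\diffb\w$), I would decompose, using \eqref{eq:discreteDiffs},
$$\ddiffb(P_{V_n}\u_n)=\diffb\w+\diffb(\pi_n^d\w-\w)+\bRl(\pinXd\w-\underline{\w}).$$
The first summand is the exact directional derivative and carries the leading constant, while the remaining two are interpolation and lifting errors.

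The common mechanism for all three terms is the pointwise bound $\lvert\bflow\rvert=c_s\lvert c_s^{-1}\bflow\rvert\le c_s\Vert c_s^{-1}\bflow\Vert_{\bL^\infty}$, which extracts the Mach factor and simultaneously converts the $\rho$-weight on the left into the $\csr$-weight appearing in $\vert\cdot\vert_{\bH^1_{\csr}}$. Thus $\Vert\rho^{1/2}\diffb\w\Vert_{\Tn}\le\Vert c_s^{-1}\bflow\Vert_{\bL^\infty}\vert\Sol\u_n\vert_{\bH^1_{\csr}}$ with constant one, and $\Vert\rho^{1/2}\diffb(\pi_n^d\w-\w)\Vert_{\Tn}\le\Vert c_s^{-1}\bflow\Vert_{\bL^\infty}\vert\pi_n^d\w-\w\vert_{\bH^1_{\csr}(\Tn)}$ is controlled by the approximation estimates \eqref{eq:pi:approximationEstimates} of $\pi_n^d$. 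For the lifting error I would invoke the boundedness \eqref{eq:liftingBoundedness} and then reproduce the computation of \eqref{eq:pinXd:RBoundedness}, but keeping the Mach factor explicit: since $\hdgjump{\pinXd\w-\underline{\w}}=P_{\nv}^{\perp}(\pi_n^d\w-\pi_n^{\Fn}\w)$, splitting into $(\pi_n^d\w-\w)$ and $(\w-\pi_n^{\Fn}\w)$ and using the boundary trace and approximation estimates bounds $\Vert\rho^{1/2}\bRl(\pinXd\w-\underline{\w})\Vert_{\Tn}$ by $C_{dt}$ times the constant of \eqref{eq:pinXd:RBoundedness} times $\Vert c_s^{-1}\bflow\Vert_{\bL^\infty}\vert\Sol\u_n\vert_{\bH^1_{\csr}}$. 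Collecting the three bounds via the triangle inequality and squaring then yields the claim, with $\AddDBconst$ assembled from $C_{dt}$, the $\pi_n^d$-approximation constants, and the constant of \eqref{eq:pinXd:RBoundedness}.

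The main obstacle is extracting the refined multiplicative form $\DBconst=\AddDBconst(1+O(h_n^2))$ rather than a plain $O(1)$ prefactor: a crude triangle-inequality bound of the two error summands against $\vert\Sol\u_n\vert_{\bH^1_{\csr}}$ only produces an $O(1)$ constant, so isolating the leading contribution $\diffb\w$ and showing that the interpolation and lifting errors enter at order $h_n^2$ requires keeping these errors in their $h_n$-weighted form on $\partial\Tn$ and combining the $\bL^2(\partial\Tn)$ trace estimates with the approximation bounds \eqref{eq:pi:approximationEstimates} without discarding the explicit mesh factors. One should also verify at the outset that $\Sol\u_n\in\bH^1_{\nv 0}$, which holds by the Neumann condition in \eqref{eq:Xdecomp:discr} together with elliptic regularity on the convex polyhedron $\dom$, so that $\hdgjump{\underline{\w}}=0$ and \cref{lem:pinXd:properties} indeed apply.
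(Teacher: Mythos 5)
Your decomposition is, up to a triangle inequality, the same as the paper's own proof: the paper likewise splits $\ddiffb(P_{V_n}\u_n)$ into the volume part $\diffb(\pi_n^d\nabla\tilde v)$ and the lifting part $\bRl(P_{V_n}\u_n)$, extracts the Mach factor pointwise via $\vert\bflow\vert\le c_s\Vert c_s^{-1}\bflow\Vert_{\bL^\infty}$, and bounds the lifting through $C_{\text{dt}}$ and the computation \eqref{eq:pinXd:RBoundedness}. The only cosmetic difference is that the paper controls $\diffb(\pi_n^d\nabla\tilde v)$ directly by the local $\bH^1(\tau)$-stability of $\pi_n^d$, whereas you split off the exact term $\diffb\w$ and bound the remainder by the approximation estimate; these are equivalent, since $\vert\pi_n^d\vert_{L(\bH^1(\tau))}\le 1+C_{\text{apr}}$.

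The genuine gap is in your final paragraph, i.e., in how you plan to obtain $\DBconst=\AddDBconst(1+O(h_n^2))$. You propose to show that the interpolation and lifting errors ``enter at order $h_n^2$'' by keeping the $h_n$-weighted boundary terms explicit. This cannot succeed: $\w=\Sol\u_n=\nabla\tilde v$ is only $\bH^1$-regular uniformly in $\u_n$ (the right-hand side of \eqref{eq:Xdecomp:discr} is merely $L^2$, so elliptic regularity yields $\tilde v\in H^2$ and nothing more), hence \eqref{eq:pi:approximationEstimates} with $r=m=1$ gives only $\vert\pi_n^d\w-\w\vert_{\bH^1(\tau)}\le C_{\text{apr}}\vert\w\vert_{\bH^1(\tau)}$, with no power of $h_n$ to be gained; likewise, in the lifting bound the factor $h_\tau^{1/2}$ from $\Vert\pi_n^d\w-\w\Vert_{\bL^2(\partial\tau)}\lesssim h_\tau^{1/2}\vert\w\vert_{\bH^1(\tau)}$ is cancelled exactly by the weight $\h^{-1/2}$, leaving an $O(1)$ contribution. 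Both error terms are therefore genuinely $O(1)$ multiples of $\vert\Sol\u_n\vert_{\bH^1_{\csr}}$ --- and that is all the lemma requires, because $\AddDBconst$ is explicitly allowed to depend on $C_{\text{dt}}$, on $\pi_n^d$, and on the constant of \eqref{eq:pinXd:RBoundedness}; these $O(1)$ terms are simply absorbed there. The factor $(1+O(h_n^2))$ has an entirely different source, which your write-up skips precisely when you assert that the weighted error $\vert\pi_n^d\w-\w\vert_{\bH^1_{\csr}(\Tn)}$ ``is controlled by'' the unweighted estimates \eqref{eq:pi:approximationEstimates}: passing between the weighted seminorm $\vert\cdot\vert_{\bH^1_{\csr}}$ and the unweighted one on each element produces the local ratio $\overline{c_{s_\tau}}^2\overline{\rho_\tau}/(\underline{c_{s_\tau}}^2\underline{\rho_\tau})$, and it is the Lipschitz continuity of $c_s\rho^{1/2}$ together with $h_\tau\le h_n$ that turns this ratio into the paper's factor $C_n^L(\tau)=1+O(h_n^2)$. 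Carrying out this conversion element-locally, rather than with the global bounds $\overline{c_s}^2\overline{\rho}/(\underline{c_s}^2\underline{\rho})$, is exactly where the coefficient-robustness of the lemma is won or lost, so this step cannot be left implicit.
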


\begin{proof}
   For $\u_n \in \Xn$, we set $\v_n \coloneqq P_{V_n} \u_n$ and denote by $\tilde{v}$ the solution to \eqref{eq:Xdecomp:discr} such that $\Sol \u_n = \nabla \tilde{v}$. For an element $\tau \in \Tn$ and $\eta \in W^{1,\infty}$ we use the notation $\eta_{\tau} = \eta \vert_{\tau}$ and estimate
   \begin{equation}\label{eq:ddiffbEstimate:proof1}
      \Vert \rho^{1/2} \ddiffb \v_n \Vert_{\bL^2(\tau)} \le \Vert \rho^{1/2} \diffb \PVol{\v_n} \Vert_{\bL^2(\tau)} + \Vert \rho^{1/2} \bRl \v_n \Vert_{\bL^2(\tau)}.
   \end{equation}
   By definition, $\PVol{\v_n} = \pi_n^d \nabla \tilde{v}$, and thus we can estimate the first term by  
   \begin{align*}
      \Vert \rho^{1/2} \diffb (\pi_n^d \nabla \tilde{v}) \Vert^2_{\bL^2(\tau)} &\le \Vert c_s^{-1} \bflow \Vert^2_{\bL^\infty} \overline{c_{s_\tau}}^2 \overline{\rho_{\tau}} \vert \pi_n^d \nabla \tilde{v} \vert^2_{\bH^1(\tau)} \le \vert \pi_n^d \vert^2_{L(\bH^1(\tau))} \Vert c_s^{-1} \bflow \Vert^2_{\bL^\infty} \overline{c_{s_\tau}}^2 \overline{\rho_{\tau}} \vert \nabla \tilde{v} \vert^2_{\bH^1(\tau)} \\
      &\le \vert \pi_n^d \vert^2_{L(\bH^1(\tau))} \Vert c_s^{-1} \bflow \Vert^2_{\bL^\infty} \overline{c_{s_\tau}}^2 \overline{\rho_{\tau}}   (\underline{c^2_{s_{\tau}}} \underline{\rho_{\tau}})^{-1} \vert \nabla \tilde{v} \vert^2_{\bH^1_{c_s^2 \rho}(\tau)}\\
      &\le \vert \pi_n^d \vert^2_{L(\bH^1(\tau))} \Vert c_s^{-1} \bflow \Vert^2_{\bL^\infty}  \!\! \underbrace{\left( 1 + h_n^2 \frac{1}{\underline{c_{s_\tau}}^2 \underline{\rho_{\tau}}} (C^L_{c_s \rho^{1/2}})^2 \right)}_{\eqqcolon C_n^L(\tau)}\! \vert \nabla \tilde{v} \vert^2_{\bH^1_{c_s^2 \rho}(\tau)}\!\!, 
   \end{align*}
   where we use the Lipschitz continuity of $c_s \rho^{1/2} \in W^{1,\infty}$ with constant $C^L_{c_s \rho^{1/2}}$ in the last step. Since $h_n \to 0$ for $n \to \infty$, there exists $n_0 \in \N$ such that $C_n^L(\tau) \le 2$ for all $n > n_0$ and all $\tau \in \Tn$. 
   For the second term in \eqref{eq:ddiffbEstimate:proof1}, we use \eqref{eq:liftingBoundedness} and same argumentation as in the proof of \cref{lem:pinXd:properties} to obtain
   \begin{align*}
      \Vert \rho^{1/2} \bRl \v_n \Vert^2_{\bL^2(\tau)} &\le C_{\text{dt}}^2 \overline{\rho_{\tau}} \Vert \h^{-1/2} \hdgjump{\v_n}_{\bflow} \Vert^2_{\bL^2(\partial \tau)} \le \Vert c_s^{-1} \bflow \Vert^2_{\bL^\infty} C_{\text{dt}}^2 \overline{c_{s_\tau}}^2 \overline{\rho_{\tau}} \Vert \h^{-1/2} \hdgjump{\v_n} \Vert^2_{\bL^2(\partial \tau)} \\
	  \overset{\eqref{eq:pinXd:RBoundedness}}&{\lesssim} \Vert c_s^{-1} \bflow \Vert^2_{\bL^\infty} C_{\text{dt}}^2 C_n^L(\tau) \vert \nabla \tilde{v} \vert_{\bH^1_{c_s^2 \rho}(\tau)},
   \end{align*}
   Inserting both estimates into \eqref{eq:ddiffbEstimate:proof1} and summing over all elements $\tau \in \Tn$ yields
   \begin{equation*}
      \Vert \rho^{1/2} \ddiffb \v_n \Vert^2_{\Tn} \le \DBconst \Vert c_s^{-1} \bflow \Vert^2_{\bL^\infty} \vert \nabla \tilde{v} \vert_{\bH^1_{c_s^2\rho}}^2,
   \end{equation*}
   where $\DBconst \coloneqq \AddDBconst (1 + O(h_n^2))$ with $\AddDBconst > 0$ depending on $C_{\text{dt}}$, the projection $\pi_n$, and the constant from \eqref{eq:pinXd:RBoundedness}. To ensure that $\DBconst$, we estimate $\vert \pi_n^d \vert^2_{L(\bH^1(\tau))} \le \sup_{n > n_0} \sup_{\tau \in \Tn} \vert \pi_n^d \vert^2_{L(\bH^1(\tau))}$ and use similar arguments for the estimates coming from \eqref{eq:pinXd:RBoundedness}. Since $\v_n = P_{V_n} \u_n$ and $\Sol \u_n = \nabla \tilde{v}$, the proof is finished. 
\end{proof}

Let us stress that the constant $\DBconst > 0$ only depends locally on the coefficients and their Lipschitz constant, and is independent of the ratio 
$(\underline{c_s}^2 \underline{\rho})/(\overline{c_s}^2 \overline{\rho})$. In particular, the quadratic factor $h_n^2$ mitigates the effects of large Lipschitz constants and asymptotically, the constant tends to $\AddDBconst$ with order $h_n^2$. 

In the following lemma, we show that the decomposition \eqref{eq:Xdecomp:discr} allows us to bound the norm of the discrete divergence operator from below. 

\begin{lemma}\label[lemma]{lem:ddivEstimate}
   For any $\delta \in (0,1)$, there exist $C_{\delta} > 0$, so that
   \begin{equation}
      \begin{aligned}
         \!\!\Vert c_s \rho^{1/2} \!\ddiv \! P_{V_n} \!\u_n \Vert^2_{\Tn} \! \ge \!\left(\! (1\!-\!\delta)^2 \vert \Sol \u_n \vert^2_{\bH^1_{c_s^2 \rho}} \! \! \! \! \! - C_{\delta} \Vert \Sol \u_n \Vert^2_{\bL^2} \!\right) \! + \! \spl \check{O}_n \u_n, \!\u_n \spr_{\Xn}\!,\!\!\!
      \end{aligned}
   \end{equation}
   for all $\bu_n\in\Xn$ and $\lim_{n \to \infty} \Vert \check{O}_n \Vert_{L(\Xn)} = 0$.
\end{lemma}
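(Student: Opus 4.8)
The plan is to reduce the left-hand side to a weighted Miranda--Talenti (convexity) estimate for the potential $\tilde v$ of \eqref{eq:Xdecomp:discr}. First I would record that, since $\hdgjump{\pinXd\,\cdot}_{\nv}=0$ forces $R^l P_{V_n}\u_n=0$ (cf.~\cref{lem:pinXd:properties}), the commutation property $\div\pi_n^d=\pi_n^l\div$ gives
\begin{equation*}
   \ddiv P_{V_n}\u_n = \div(\pi_n^d\nabla\tilde v) = \pi_n^l\Delta\tilde v,\qquad \Sol\u_n=\nabla\tilde v,\ \ \tilde v\in H^2_{\ast,\text{Neu}},
\end{equation*}
with $\Vert\nabla\tilde v\Vert_{\bH^1}\lesssim\Vert\u_n\Vert_{\Xn}$ by \eqref{eq:tildevStability}. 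Hence $\Vert c_s\rho^{1/2}\ddiv P_{V_n}\u_n\Vert_{\Tn}^2=\int_\dom c_s^2\rho\,\lvert\pi_n^l\Delta\tilde v\rvert^2$, while the target seminorm is $\vert\Sol\u_n\vert^2_{\bH^1_{c_s^2\rho}}=\int_\dom c_s^2\rho\,\lvert\hess\tilde v\rvert^2$.

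The decisive \emph{discrete} ingredient is to remove the projection $\pi_n^l$ at the price of a vanishing operator. Reading \eqref{eq:Xdecomp:discr} as
\begin{equation*}
   \Delta\tilde v = \ddivpqMn\u_n - P_{L^2_0}\q\cdot\nabla\tilde v - M\nabla\tilde v,
\end{equation*}
the right-hand side splits into the \emph{broken-polynomial} part $\ddiv\u_n+\pi_n^l P_{L^2_0}\q\cdot\u_n\in\mathbb{P}^k(\Tn)$ (for the standard lifting degree $l\le k$), which is reproduced exactly by $\pi_n^l$, and the remainder $M_n\u_n-P_{L^2_0}\q\cdot\nabla\tilde v-M\nabla\tilde v$, which is bounded in $H^1$ by $\Vert\u_n\Vert_{\Xn}$ (using $\q\in\bm{W}^{1,\infty}$ and the finite-rank smoothness of $M,M_n$). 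This yields $\Vert(\Id-\pi_n^l)\Delta\tilde v\Vert_{L^2}\lesssim h_n\Vert\u_n\Vert_{\Xn}$. Expanding $\lvert\pi_n^l\Delta\tilde v\rvert^2=\lvert\Delta\tilde v\rvert^2-2\Re\big((\Delta\tilde v)\overline{(\Id-\pi_n^l)\Delta\tilde v}\big)-\lvert(\Id-\pi_n^l)\Delta\tilde v\rvert^2$ under $\int_\dom c_s^2\rho$, both corrections are $O(h_n)\Vert\u_n\Vert_{\Xn}^2$. Since $\Delta\tilde v$ is linear in $\u_n$, their sum is the diagonal of a Hermitian form, so by polarization it defines $\check O_n\in L(\Xn)$ with $\Vert\check O_n\Vert_{L(\Xn)}=O(h_n)\to0$, and $\int_\dom c_s^2\rho\,\lvert\pi_n^l\Delta\tilde v\rvert^2=\int_\dom c_s^2\rho\,\lvert\Delta\tilde v\rvert^2+\spl\check O_n\u_n,\u_n\spr_{\Xn}$.

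The analytic core is then the \emph{weighted convexity estimate} $\int_\dom c_s^2\rho\,\lvert\Delta\tilde v\rvert^2\ge(1-\delta)\int_\dom c_s^2\rho\,\lvert\hess\tilde v\rvert^2-C_\delta\Vert\nabla\tilde v\Vert_{\bL^2}^2$ for $\tilde v\in H^2_{\ast,\text{Neu}}$ on the convex domain $\dom$, exactly as at the continuous level in \cite{H23Hdiv}. Writing $\eta:=c_s^2\rho$ and integrating by parts twice, the Neumann condition $\nv\cdot\nabla\tilde v=0$ removes the $\int_{\partial\dom}\eta(\Delta\tilde v)(\nv\cdot\nabla\tilde v)$ contribution and leaves
\begin{equation*}
\begin{aligned}
   \int_\dom \eta(\Delta\tilde v)^2 = \int_\dom \eta\,\lvert\hess\tilde v\rvert^2 &+ \int_\dom (\nabla\eta)^{\!\top}\hess\tilde v\,\nabla\tilde v - \int_\dom (\Delta\tilde v)(\nabla\eta\cdot\nabla\tilde v)\\
   &- \int_{\partial\dom}\eta\sum_{i,j}(\partial_i\partial_j\tilde v)\,\nv_i\,\partial_j\tilde v .
\end{aligned}
\end{equation*}
The two interior remainders carry a factor $\nabla\eta\in\bL^\infty$ and, after a Young inequality together with $\Vert\Delta\tilde v\Vert\le\sqrt d\,\Vert\hess\tilde v\Vert$, are absorbed as $\delta\int_\dom\eta\,\lvert\hess\tilde v\rvert^2+C_\delta\Vert\nabla\tilde v\Vert_{\bL^2}^2$. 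The boundary integral equals an integral of the second fundamental form against the tangential field $\nabla\tilde v$, hence is nonpositive for convex $\dom$ (and vanishes on the flat faces of a convex polyhedron), so the last term is favorable and may be dropped in the lower bound. Combining the three steps, using $(1-\delta)\ge(1-\delta)^2$, and identifying $\int_\dom\eta\,\lvert\hess\tilde v\rvert^2=\vert\Sol\u_n\vert^2_{\bH^1_{c_s^2\rho}}$ and $\Vert\nabla\tilde v\Vert_{\bL^2}^2=\Vert\Sol\u_n\Vert_{\bL^2}^2$ gives the claim.

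I expect the main obstacle to be the second step rather than the convexity estimate: a priori the $L^2$-projection of $\Delta\tilde v$ loses an $O(1)$ share of the energy, and the lower bound survives only because $\Delta\tilde v$ is a broken polynomial up to an $H^1$ (and finite-rank) remainder — an observation that hinges on the polynomial structure of $\ddiv\u_n$ together with the regularity $\q\in\bm{W}^{1,\infty}$ and the choice $l\le k$. The weighted convexity estimate is the known analytic heart and can essentially be imported from the continuous analysis in \cite{H23Hdiv}, the only care being the sign of the boundary term on the convex polyhedral domain and the Young-type absorption of the weight-gradient terms.
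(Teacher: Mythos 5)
Your proposal is correct and follows the same three-step skeleton as the paper's proof: (i) use \cref{lem:pinXd:properties} and the commutation property $\div\pi_n^d=\pi_n^l\div$ to write $\ddiv P_{V_n}\u_n=\pi_n^l\Delta\tilde v$ with $\Sol\u_n=\nabla\tilde v$; (ii) trade the projection $\pi_n^l$ for a correction operator $\check O_n$ vanishing in $L(\Xn)$-norm; (iii) a weighted convexity estimate on the convex domain. The differences are in how (ii) and (iii) are executed. In (ii) the paper argues qualitatively: it expresses $(\Id-\pi_n^l)\Delta\tilde v$ through $(\Id-\pi_n^l)$ acting on $(P_{L^2_0}\q\cdot+M)\Sol\u_n$ and $M_n\u_n$, and invokes pointwise convergence of $\pi_n^l$ plus compactness, ``as in \cref{lem:PvIdempotent}.'' Your observation that this non-polynomial remainder is bounded in $H^1$ by $\Vert\u_n\Vert_{\Xn}$ (since $\q\in\bm{W}^{1,\infty}$ and $M,M_n$ are finite-rank with smooth ranges) upgrades this to the explicit rate $\Vert\check O_n\Vert_{L(\Xn)}=O(h_n)$ --- sharper than needed and arguably cleaner; both arguments rely on the polynomial reproduction $\pi_n^l(\ddiv\u_n+\pi_n^l P_{L^2_0}\q\cdot\u_n)=\ddiv\u_n+\pi_n^l P_{L^2_0}\q\cdot\u_n$, hence on $l\le k$. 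In (iii) the paper never integrates by parts with the weight: it applies Grisvard's inequality for $\bH^1_{\nv 0}$ vector fields to $\eta\nabla\tilde v$ with $\eta=c_s\rho^{1/2}$, expands with the product rule and the symmetry $\partial_{x_j}\v_i=\partial_{x_i}\v_j$, and uses Young twice (whence the factor $(1-\delta)^2$). Your direct weighted integration by parts with weight $c_s^2\rho$ is more transparent and even yields the stronger factor $(1-\delta)$, but it is also where the only rigor caveat sits: your boundary term $\int_{\partial\dom}\eta\,(\hess\tilde v\,\nv)\cdot\nabla\tilde v$ involves traces of second derivatives, which for $\tilde v\in H^2$ on a polyhedron are not classically defined, so ``drop the signed boundary term'' must be justified by exactly the approximation argument that Grisvard's theorem packages; the paper's route, working only with the $H^1$ field $\eta\nabla\tilde v$, never touches Hessian traces. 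Finally, a trivial slip: in the expansion of $\vert\pi_n^l\Delta\tilde v\vert^2$ the term $\vert(\Id-\pi_n^l)\Delta\tilde v\vert^2$ enters with a plus sign, not a minus; this only helps the lower bound and is immaterial.
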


\begin{proof}
   For $\u_n \in \Xn$, let $\tilde{v} \in H^2_{\ast,\text{Neu}}$ be the solution to \eqref{eq:Xdecomp:discr} such that $\Sol \u_n = \nabla \tilde{v}$. Due to the properties of $\pinXd$ discussed in  \cref{lem:pinXd:properties}, we obtain for $P_{V_n} = \pinXd \Sol$ that $\ddiv P_{V_n} \u_n = \div \PVol{P_{V_n} \u_n} = \div \pi_n^d \nabla \tilde{v}$. 

   Using the commutation property $\pi_n^l \div = \div \pi_n^d$, we calculate 
   \begin{align*}
      \div \pi_n^d \nabla \tilde{v} &= \pi_n^l \div \nabla \tilde{v} \\
      \overset{\eqref{eq:Xdecomp:discr}}&{=} \pi_n^l \left( - (P_{L^2_0} \q \cdot + M) \nabla \tilde{v} + \ddivpqMn \u_n \right) \\
      &= - (P_{L^2_0} \q \cdot + M) \nabla \tilde{v} + \ddivpqMn \u_n \\
      &\qquad + (\Id - \pi_n^l)(P_{L^2_0} \q \cdot + M) \nabla \tilde{v} + (\pi_n^l - \Id)M_n \u_n \\
      \overset{\eqref{eq:Xdecomp:discr}}&{=} \div \nabla \tilde{v} + (\Id - \pi_n^l)(P_{L^2_0} \q \cdot + M) \Sol \u_n + (\pi_n^l - \Id) M_n \u_n \\
      &=: \div \nabla \tilde{v} + \hat{O}_n \u_n.
   \end{align*}
   Thus, we have that 
   \begin{equation}\label{eq:divPvnEquals}
      \begin{aligned}
         \Vert c_s \rho^{1/2} \ddiv P_{V_n} \u_n \Vert^2_{\Tn} &= \spl c_s^2 \rho \div \pi_n^d \nabla \tilde{v}, \div \pi_n^d \nabla \tilde{v} \spr \\
         &= \spl c_s^2 \rho \div \nabla \tilde{v}, \div \nabla \tilde{v} \spr + \spl \check{O}_n \u_n, \u_n \spr_{\Xn}, 
      \end{aligned}
   \end{equation} 
   where we define the operator $\check{O}_n$ through 
   \begin{equation*}
      \begin{aligned}
         \spl \check{O}_n \u_n, \u_n' \spr_{\Xn} \coloneqq &\spl c_s^2 \rho \div \pi_n^d \nabla \tilde{v}, \hat{O}_n \u_n' \spr + \spl c_s^2 \rho \hat{O}_n \u_n, \div (\pi_n^d \nabla \tilde{v})' \spr \\
         &+ \spl c_s^2 \rho \hat{O}_n \u_n, \hat{O}_n \u_n' \spr.
      \end{aligned}
   \end{equation*}
   With similar arguments as in \cref{lem:PvIdempotent}, we can show $\lim_{n \to \infty} \Vert \hat{O}_n \Vert_{L(\Xn,L^2_0)} = 0$ and thus $\lim_{n \to \infty} \Vert \check{O}_n \Vert_{L(\Xn)} = 0$.

   In the following, we use similar techniques as in the proof of \cite[Thm.~3.5]{HH21} to show that the first term can be estimated suitably by a weighted $\bH^1$-seminorm. By assumption, $\dom$ is a convex Lipschitz polyhedron and thus, we can apply \cite[Thm.~3.1.1.2]{G11} to estimate for any $\v \in \bH^1_{\nv 0}$
   \begin{equation}\label{eq:GrisvardEstimate}
      \Vert \div (\v) \Vert^2_{L^2} \! = \! \! \! \sum_{i,j = 1}^d \spl \partial_{x_j} \v_i, \partial_{x_i} \v_j \spr_{\bL^2} \! - \! \! \int_{\partial \dom} \! \! \! \! \mathcal{B}(P_{\nv}^\perp \v, P_{\nv}^\perp \v) \mathrm{d}s \ge \! \! \! \sum_{i,j = 1}^d \spl \partial_{x_j} \v_i, \partial_{x_i} \v_j \spr_{\bL^2}.
   \end{equation}
   where $\mathcal{B}(\bm{\tau},\bm{\tau}') := -\partial_{\bm{\tau}} \nv \cdot \overline{\bm{\tau}'}$ is the second fundamental quadratic form of $\partial \dom$ applied to tangential vector fields $\bm{\tau},\bm{\tau}'$ and $\partial_{\bm{\tau}}$ is the tangential derivative. The last estimate follows since the form $\mathcal{B}$ is non-positive for convex domains \cite[Sec.~3.1.1]{G11}.
   
   For any $\eta \in W^{1,\infty}(\dom)$, the product rule gives that $\eta \div \v = \div(\eta \v) - \v \cdot \nabla \eta$ and thus we estimate with the weighted Young's inequality for any $\delta \in (0,1)$ that 
   \begin{equation}\label{eq:etaDivVFirstEst}
      \Vert \eta \div \v \Vert^2_{L^2} \ge (1-\delta) \Vert \div (\eta \v) \Vert^2_{\bL^2} + (1-\frac{1}{\delta}) \Vert \v \cdot \nabla \eta \Vert_{\bL^2}^2. 
   \end{equation}
   Since $\eta \v \in \bH^1_{\nv 0}$, we can apply \eqref{eq:GrisvardEstimate} to the first term to obtain 
   \begin{equation*}
      \begin{aligned}
         \Vert \div (\eta \v) \Vert^2_{L^2} &\ge \sum_{i,j=1}^d \spl \partial_{x_j} (\eta \v_i), \partial_{x_i} (\eta \v_j) \spr_{\bL^2} \\
         &\ge \sum_{i,j=1}^d \big( \spl \eta \partial_{x_j} \v_{i}, \eta \partial_{x_i} \v_{j} \spr_{\bL^2} + \spl \v_i \partial_{x_j} \eta, \v_j \partial_{x_i} \eta \spr_{\bL^2} + \spl \v_i \partial_{x_j} \eta, \eta \partial_{x_i} \v_j \spr_{\bL^2} \\
         &\qquad+ \spl \eta \partial_{x_j} \v_i, \v_j \partial_{x_i} \eta \spr_{\bL^2} \big). 
      \end{aligned} 
   \end{equation*} 
   Applying this estimate to $\v = \nabla \tilde{v}$, we notice that $\partial_{x_j} \v_i = \partial_{x_i} \v_j$ and consequently
   \begin{equation*}
      \begin{aligned}
         \Vert \div (\eta \v) \Vert^2_{L^2} &\ge \Vert \eta \nabla \v \Vert^2_{\bL^2} + \sum_{i,j=1}^d \big( \spl \v_i \partial_{x_j} \eta, \v_j \partial_{x_i} \eta \spr_{\bL^2} + \spl \v_i \partial_{x_j} \eta, \eta \partial_{x_i} \v_j \spr_{\bL^2} \\
         &\qquad+ \spl \eta \partial_{x_j} \v_i, \v_j \partial_{x_i} \eta \spr_{\bL^2} \big) \\
         &\ge \Vert \eta \nabla \v \Vert^2_{\bL^2} - C \left( \Vert \v \cdot \nabla \eta \Vert^2_{\bL^2} + \Vert \eta \nabla \v \Vert_{\bL^2} \Vert \v \cdot \nabla \eta \Vert_{\bL^2} \right) \\
         &\ge  (1-\delta) \Vert \eta \nabla \v \Vert^2_{\bL^2} - (C+\frac{1}{4\delta}) \Vert \v \cdot \nabla \eta \Vert^2_{\bL^2},
      \end{aligned} 
   \end{equation*}
   where we use the Cauchy-Schwarz and the weighted Young's inequality in the last two lines. Inserting this into the estimate \eqref{eq:etaDivVFirstEst} yields 
   \begin{equation*}
      \Vert \eta \div \v \Vert^2_{L^2} \ge  (1-\delta)^2 \Vert \eta \nabla \v \Vert^2_{\bL^2} + \underbrace{\left( C(\delta-1) - \frac{5}{4\delta} + \frac{5}{4} \right)}_{=: - \tilde{C}_{\delta}} \Vert \v \cdot \nabla \eta \Vert^2_{\bL^2},
   \end{equation*}
   where we note that $\tilde{C}_{\delta} \ge 0$ for all $\delta \in (0,1)$. Since $\eta \in W^{1,\infty}$ is Lipschitz continuous, $\nabla \eta$ is bounded such that 
   $\Vert \v \cdot \nabla \eta \Vert^2_{\bL^2} \le C_{\nabla \eta} \Vert \v \Vert_{\bL^2}$. Inserting $\v = \nabla \tilde{v} = \Sol \u_n$ and $\eta = c_s \rho^{1/2}$ we obtain together with \eqref{eq:divPvnEquals} that 
   \begin{equation}
      \! \! \Vert c_s \rho^{1/2} \! \ddiv P_{V_n} \! \u_n \Vert^2_{\Tn} \ge (1\!-\!\delta)^2 \vert \Sol \u_n \vert^2_{\bH^1_{c_s^2 \rho}} \!  \! \! \! \!-\! C_{\delta} \Vert \Sol \u_n \Vert^2_{\bL^2} + \spl \check{O}_n \u_n, \!\u_n \spr_{\Xn}\!,
   \end{equation}
   where $C_{\delta} \coloneqq \tilde{C}_{\delta} C_{\nabla \eta}$. Thus, the claim is proven.
\end{proof}
 
To prove our main result, we have to assume that the Mach number $\Vert c_s^{-1} \bflow \Vert_{\bL^\infty}$ of the background flow is bounded suitably. Following the presentation in \cite{HH21,HLS22H1,H23Hdiv,Thesis_vB23}, we define the matrix $\matii \coloneqq - \rho^{-1} \hess(p) + \hess(\phi)$ and denote by $\lambda_{-}(\matii) \in L^\infty$ its smallest eigenvalue\footnote{Note that $\matii$ is symmetric.}. Then, we set for $\omega \not = 0$
\begin{equation} \label{eq:def:theta}
   C_{\matii} \coloneqq \max \left\{ 0, \sup_{x \in \dom} \frac{\lambda_{-}(\matii(x))}{\gamma(x)} \right\}, \qquad \theta \coloneqq \arctan(C_{\matii}/\vert \omega \vert) \in [0,\pi/2).
\end{equation}
\changed{Note that $\theta$ depends on the physical parameters and for large frequencies $\omega$ or damping parameters $\gamma$, it is close to zero.} On the technical site, its definition allows us to estimate 
\begin{equation}\label{eq:thetaEstimate}
    \spl \rho \matii \u_{\tau}, \u_{\tau} \spr \ge - \vert \omega \vert \tan(\theta) \Vert (\gamma \rho)^{1/2} \u_{\tau} \Vert_{\bL^2} \qquad \forall \u_{\tau} \in \XT,
\end{equation}
which we will use in the proof of \cref{thm:weakTcompatibilityFulfilled}. In preparation of \cref{thm:weakTcompatibilityFulfilled}, we make the following assumption. 

\begin{assumption}\label{assumption:MachNumber}
The background flow $\bflow \in \bW^{1,\infty}$ satisfies 
   \begin{equation}\label{eq:assumption:MachNumber}
      \Vert c_s^{-1} \bflow \Vert^2_{\bL^\infty} < \frac{1}{\DBconst (1 + C_{\matii}^2 / \vert \omega \vert^2)} = (\DBconst (1 + \tan^2(\theta)))^{-1},
   \end{equation}
   where $\DBconst = 1+O(h_n^2) > 0$ is the constant appearing in \cref{lem:ddiffbEstimate}. 
\end{assumption}

\begin{remark}\label[remark]{rem:MNsmall}
   The strict inequality in \cref{assumption:MachNumber} implies that the inequality holds even for a slightly smaller r.h.s., i.e. there is $\delta_0 \in (0,1)$ so that
   \begin{equation*}
      \Vert c_s^{-1} \bflow \Vert^2_{\bL^\infty} \!<\! \frac{(1-\delta_0)^2}{\DBconst (1\!+\!(1\!+\!\delta_0)^2 \tan^2(\theta) / \vert \omega \vert^2)} \Leftrightarrow \!
      \frac{(1-\delta_0)^2}{\DBconst \Vert c_s^{-1} \bflow \Vert^2_{\bL^\infty}} \!>\! 1 + (1+\delta_0)^2 \!\tan^2\!(\theta)
   \end{equation*}
   where we made use of the definition of $\theta$. Similarly, we can bound $\tan^2(\theta)$ from below by $\kappa^{-1} \tan^2(\theta + \tau)$ with $\kappa>1$ close to $1$ for $\tau>0$ sufficiently small. To be precise, there is $\tau_0 \in (0,\pi/2-\theta)$ and $\epsilon_0 \in (0,1/2)$ so that for all $\tau \in (0, \tau_0)$ and $\epsilon \in (0, \epsilon_0)$ we have that 
   \begin{equation}\label{eq:assumption:MachNumber:3}
      \frac{ (1-\delta_0)^2}{\DBconst \Vert c_s^{-1} \bflow \Vert^2_{\bL^\infty}} -1 >  (1+\delta_0)^2 \tan^2(\theta) > 
      \tan^2(\theta + \tau) (1-\epsilon)^{-1}(1-2\epsilon)^{-1}.
   \end{equation}
   Multiplying with $(1-\epsilon)$ and rearranging the terms, we obtain that for $\epsilon \in (0,\epsilon_0)$, $\delta \in (0,\delta_0)$, and $\tau \in (0,\tau_0)$, we have that 
   \begin{equation}\label{eq:MonsterConstant}
      \! \! C_{\theta,\tau,\epsilon,\delta} \!\coloneqq\! (1 \!- \epsilon)(1 \!- \delta)^2 \! \! - \! \DBconst \Vert c_s^{-1} \bflow \Vert^2_{\bL^\infty} \! \! \left( 1\! +\! \tan^2(\theta + \tau)(1 \! -2\epsilon)^{-1}\!\! \! -\epsilon \right) \! >\! 0.
   \end{equation}
   This constant appears in the proof of \cref{thm:weakTcompatibilityFulfilled} below and its positivity is essential to obtain \stability. In \cite{HLS22H1}, where an $\bH^1$-conforming discretization of \eqref{eq:cont:weakForm} was analyzed, the following smallness assumption was assumed:
   \begin{equation}\label{eq:MnAssump:H1}
      \Vert c_s^{-1} \bflow \Vert^2_{\bL^\infty} < \beta_h^2 \frac{\underline{c_s}^2 \underline{\rho}}{\overline{c_s}^2 \overline{\rho}} (1 + \tan^2(\theta))^{-1}. 
   \end{equation}
   Here $\beta_h$ is the discrete stability constant of the divergence operator. 
   This assumption is much more restrictive than \eqref{eq:assumption:MachNumber} because it depends on the ratio $\underline{c_s}^2 \underline{\rho} / \overline{c_s}^2 \overline{\rho}$, whereas the constant $\DBconst > 0$ is independent of this ratio. In particular, the constant $\DBconst$ tends to $\AddDBconst$ asymptotically, and thus \cref{assumption:MachNumber} is close to the boundedness assumption from the continuous analysis, cf.~\cite[Thm.~3.11]{HH21}.
   
   To avoid the dependence on the ratio, we used the weighted $\bH^1$-seminorm $\vert \cdot \vert_{\bH^1_{\csr}}$ in \cref{lem:ddiffbEstimate} and \cref{lem:ddivEstimate}, and the constants $\DBconst$ can be interpreted as the continuity of the operator $\ddiffb$ with respect to $\vert \cdot \vert_{\bH^1_{\csr}}$. We compare both assumptions numerically in \cref{subsec:numex:mach}.
\end{remark}

\begin{theorem}\label{thm:weakTcompatibilityFulfilled}
   Assume that \cref{assumption:MachNumber} is satisfied. Then, there exist sequences $\seq{B}, \seq{K}$, $B_n \in L(\Xn)$, $K_n \in L(\Xn)$, $n \in \N$ such that $A_n T_n = B_n + K_n$ with $\seq{B}$ being uniformly bounded. $\seq{B}$ is \stable, $\seq{K}$ is \compactop~and there exists a bijective operator $B \in L(\Xn)$ such that $B_n$ \approximates~$B$. 
\end{theorem}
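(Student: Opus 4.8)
The plan is to evaluate the sesquilinear form associated with $A_n T_n$ on the diagonal, i.e.\ to analyze $a_n(T_n\u_n,\u_n)$ for $\u_n=\v_n+\w_n$ with $\v_n=P_{V_n}\u_n$ and $T_n\u_n=\v_n-\w_n$, and to split it into a uniformly coercive part $b_n$ (defining $B_n$) and a \compactop~remainder $k_n$ (defining $K_n$), so that $A_nT_n=B_n+K_n$. The two Hermitian pieces of $a_n$ — the principal divergence form $\langle\csr\ddiv\cdot,\ddiv\cdot\rangle_{\Tn}+s_n(\cdot,\cdot)$ and the convective form $\aconvn$ — behave well under the sign flip: since the real part of a Hermitian form at $(\v_n-\w_n,\v_n+\w_n)$ equals its value at $\v_n$ minus its value at $\w_n$, the minus sign in front of $\aconvn$ in \eqref{eq:an} turns into $+\aconvn(\w_n,\w_n)\ge 0$, and the deliberately negative sign of $s_n(\cdot,\cdot)$ turns the normal-jump penalty into the positive contribution $\alpha\langle\csr\h^{-1}\hdgjump{\u_n}_{\nv},\hdgjump{\u_n}_{\nv}\rangle_{\partial\Tn}$ (recall $\hdgjump{\v_n}_{\nv}=0$ and $R^l\v_n=0$, so $\v_n$ does not see $s_n$). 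This is exactly the mechanism announced after the definition of $s_n(\cdot,\cdot)$.

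Into $K_n$ I would collect every term that is either lower order or asymptotically negligible: the negative contribution $-\aconvn(\v_n,\v_n)$ (controlled below via \cref{lem:ddiffbEstimate}), the correction $-C_\delta\Vert\Sol\u_n\Vert_{\bL^2}^2$ and the operator $\check O_n$ from \cref{lem:ddivEstimate}, the whole remainder form $\aremn$, the finite-rank and vanishing pieces $M_n,\tilde O_n,O_n$ produced by the decomposition (cf.~\eqref{eq:divwnEquals} and the proofs of \cref{lem:PvIdempotent} and \cref{lem:ddivEstimate}), and the zeroth-order parts $(\omega+i\angvel\times)$ of $\dopd$ acting on $\v_n$. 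Each of these factors through either $\Vert\Sol\u_n\Vert_{\bL^2}$ — which is \compactop~because $\Sol\u_n\in\bH^1$ and $\bH^1\hookrightarrow\bL^2$ compactly — or through $\PVol{\u_n}$, for which \cref{lem:weakConvergence} supplies the weak-convergence compactness in the DAS sense; the operators $\check O_n,\tilde O_n,O_n$ additionally have operator norm tending to zero. This is what makes $\seq{K}$ \compactop.

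For the coercivity of $B_n$ I would test with a unimodular phase $\zeta=\ee^{i\beta}$ whose argument $\beta$ is chosen slightly larger than $\theta$, as in \cref{rem:MNsmall}, and estimate $\Re\bigl(\zeta\,b_n(\u_n,\u_n)\bigr)$ from below. The divergence form contributes $\cos\beta\,\Vert c_s\rho^{1/2}\ddiv\v_n\Vert^2\ge\cos\beta\,(1-\delta)^2\vert\Sol\u_n\vert_{\bH^1_{\csr}}^2$ by \cref{lem:ddivEstimate}, while the bad term $-\aconvn(\v_n,\v_n)$ loses at most $\DBconst\Vert c_s^{-1}\bflow\Vert_{\bL^\infty}^2\vert\Sol\u_n\vert_{\bH^1_{\csr}}^2$ by \cref{lem:ddiffbEstimate}; the phase-rotated damping term $\Re\bigl(\zeta(-i\omega)\langle\gamma\rho\PVol{\u_n},\PVol{\u_n}\rangle\bigr)=\vert\omega\vert\sin\beta\,\Vert(\gamma\rho)^{1/2}\PVol{\u_n}\Vert^2$ dominates the indefinite Hessian part via \eqref{eq:thetaEstimate} precisely because $\beta>\theta$, thereby controlling $\Vert\PVol{\u_n}\Vert_{\bL^2}^2$. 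Collecting the weighted $\bH^1$-seminorm, the $\bL^2$-volume term, the jump term, and the nonnegative convective term $\aconvn(\w_n,\w_n)$, the surviving coefficients are exactly the positive constant $C_{\theta,\tau,\epsilon,\delta}$ of \eqref{eq:MonsterConstant}, which is where \cref{assumption:MachNumber} enters. Since $\vert\Sol\u_n\vert_{\bH^1_{\csr}}$ also controls $\Vert\v_n\Vert_{\Xn}$ (Poincaré for the mean-zero potential $\tilde v$ and boundedness of $\pinXd$), $\aconvn(\w_n,\w_n)$ together with the $\bL^2$-control gives $\Vert\rho^{1/2}\ddiffb\w_n\Vert^2$, and $\ddiv\w_n$ is bounded through \eqref{eq:divwnEquals}, this yields $\Re\bigl(\zeta\,b_n(\u_n,\u_n)\bigr)\gtrsim\Vert\u_n\Vert_{\Xn}^2$ uniformly in $n$, hence $\seq{B}$ is uniformly bounded and \stable.

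Finally I would define $B\in L(\X)$ as the continuous counterpart $B:=AT-K$, which is coercive by the identical argument at the continuous level (essentially \cite[Thm.~3.5]{HH21}) and therefore bijective, and verify that $\seq{B}$ \approximates~$B$ by combining \cref{thm:AnToA}, \cref{lem:TnToT}, and the approximation of the individual compact pieces, using the uniform bounds to pass products of approximating sequences to the product. The main obstacle is the quantitative balancing in the coercivity step: the single phase $\beta$ must simultaneously keep $\cos\beta\,[(1-\delta)^2-\DBconst\Vert c_s^{-1}\bflow\Vert_{\bL^\infty}^2]$ positive (control of the gradient part, requiring the Mach bound) and make $\sin\beta$ large enough for the damping to beat the Hessian (control of the $\bL^2$-volume part), while absorbing the indefinite cross terms $\Im\langle\csr\ddiv\v_n,\ddiv\w_n\rangle_{\Tn}$, $\Im\aconvn(\v_n,\w_n)$ and the negative $-\Vert c_s\rho^{1/2}\ddiv\w_n\Vert^2$ into the coercive reservoir, all with constants independent of $n$ and of the ratio $\underline{c_s}^2\underline{\rho}/(\overline{c_s}^2\overline{\rho})$; this is precisely the role of the carefully tuned constant in \eqref{eq:MonsterConstant}.
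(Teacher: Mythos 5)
You have the right architecture, and it is essentially the paper's: the sign flip through $T_n=2P_{V_n}-\Id_{\Xn}$, the phase rotation with argument slightly beyond $\theta$, the competition between \cref{lem:ddivEstimate} and \cref{lem:ddiffbEstimate} governed by \cref{assumption:MachNumber} and the constant \eqref{eq:MonsterConstant}, and the consistency step via $B:=AT-K$ combined with \cref{thm:AnToA} and \cref{lem:TnToT}. The genuine gap is in your assignment of terms to $K_n$. You place $-\aconvn(\v_n,\v_n)$ and ``the whole remainder form $\aremn$'' into $K_n$, but neither yields a \compactop~operator sequence. For $\aremn$: its $\w$-part acts on the volume component $\w_\tau$, for which the only available compactness is the \emph{weak} $\bL^2$-convergence of \cref{lem:weakConvergence}; weak convergence is not compactness in the sense of \cref{def:DAS}, which demands $\lim_{n}\Vert p_n u - u_n\Vert_{\Xn}=0$ along subsubsequences, so a multiplication-type term such as $-i\omega\spl \gamma\rho\w_\tau,\w_\tau'\spr$ does not produce a compact sequence $\seq{K}$. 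For $-\aconvn(\v_n,\v_n)$: the map $\u_n\mapsto\ddiffb P_{V_n}\u_n$ is bounded from $\Xn$ to $\bL^2$ by \cref{lem:ddiffbEstimate} but not compact, since the elliptic solve behind $\Sol$ gains exactly the derivatives that $\nabla$ and $\ddiffb$ then consume. Worse, moving the damping out of $B_n$ destroys coercivity: in the paper's Step 2 the $\bL^2$-control of the $\w$-block comes precisely from keeping $\spl\rho(\matii+i\omega\gamma)\w_\tau,\w_\tau'\spr$ in $B_n^{(1)}$ (line \eqref{eq:anI:4}), where the phase shift $\tau$ produces the positive gap $\vert\omega\vert(\tan(\theta+\tau)-\tan(\theta))\Vert(\gamma\rho)^{1/2}\w_\tau\Vert^2_{\bL^2}$; a $B_n$ without the damping has no control of $\Vert\w_\tau\Vert_{\bL^2}$ whatsoever. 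Your own coercivity paragraph silently repairs both defects --- it estimates the convective loss and the rotated damping \emph{inside} the form you bound from below --- so the proposal is internally inconsistent: the operator you prove coercive is not the $K_n$/$B_n$ split you defined.

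With the corrected bookkeeping (compactness coming only from the finite-rank operators $M_n$ and the mean, the norm-vanishing operators $\tilde O_n$, $\check O_n$, and terms factoring through $\Sol$ so that $\bH^1\hookrightarrow\bL^2$ applies --- exactly the paper's $\KnI$ and $\KnII$), your sketch does follow the paper's Steps 1--3. One further point needs substance rather than a phrase: the negative term $-\Vert (\csr)^{1/2}(\ddiv+\q\cdot)\w_n\Vert^2_{L^2}$ produced by the sign flip, and the divergence cross terms, cannot be ``absorbed into the coercive reservoir'' by Young's inequality, because the reservoir holds only $\tfrac14\Vert c_s\rho^{1/2}\ddiv\w_n\Vert^2_{\Tn}$ and a margin $C_{\theta,\tau,\epsilon,\delta}$ that may be arbitrarily small. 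The paper neutralizes these terms structurally via \eqref{eq:divwnEquals}, which identifies $\ddiv\w_n+\pi_n^l P_{L^2_0}\q\cdot\w_n$ with the compact quantities $-M_n\w_n-\tilde O_n\u_n$, together with a discrete-commutator estimate for $(\Id-\pi_n^l P_{L^2_0})(\q\cdot\w_\tau)$; you cite \eqref{eq:divwnEquals} in passing, but it, not Young's inequality, is what carries this part of the argument. Finally, the claim that $B_n$ \approximates~$B$ reduces (as you say) to $K_n$ approximating $K$, but that verification is the bulk of the paper's Step 4 and would still need to be carried out term by term.
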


\begin{proof}
   We split the proof of the statement into four steps. In the first step, we define the sequences $\seq{B}$ and $\seq{K}$ and argue that indeed $A_n T_n = B_n + K_n$. Afterwards, we show in the second and third step that the sequence $\seq{B}$ is \stable. In the last step, we show that there exists a bijective operator $B \in L(\X)$ and a compact operator $K \in L(\X)$ such that $AT = B + K$ and $B_n$ \approximates~$B$. In the following, we denote $\colorv{\v_n} \coloneqq P_{V_n} \u_n$, $\colorw{\w_n} \coloneqq \u_n - \colorv{\v_n}$ for an element $\u_n \in \Xn$ and $\colorv{\v_n'}, \colorw{\w_n'}$ defined analogously for an element $\u_n' \in \Xn$. \\[0.8ex]
   \emph{\underline{Step 1:} Definition of $B_n$ and $K_n$.} We want to define $B_n, K_n \in L(\Xn)$ such that $A_n T_n = B_n + K_n$, where $\seq{B}$ is uniformly coercive and $\seq{K}$ is \compactseq. In particular, we define $B_n = \BnI + \BnII$, where $\BnI$ is constructed to yield the essential control of the $\Vert \cdot \Vert_{\Xn}$-norm and $\BnII$ contains the remaining terms, which we will estimate in \emph{Step 3}. We add compact terms $\KnI$ and $\KnII$ to both operators which are subtracted again through $K_n = -\KnI - \KnII$.   
   
   We will consider a splitting $a_n(\cdot,\cdot) = a_n^{(1)}(\cdot,\cdot) + a_n^{(2)}(\cdot,\cdot)$ so that terms that directly help for ($T_n$-)coercivity will be collected in $a_n^{(1)}(\cdot,\cdot)$ and the remainder will be collected in $a_n^{(2)}(\cdot,\cdot)$.
   By construction, $T_n$ swaps the sign in front of \colorw{$\w_n$} such that $\spl A_n T_n \u_n, \u_n' \spr = a_n(\colorv{\v_n} - \colorw{\w_n}, \colorv{\v_n'} + \colorw{\w_n'})$. 
   
   We recall $\q \coloneqq c_s^{-2} \rho^{-1} \nabla p$, the associated rewriting of the sesquilinear form \eqref{eq:aCont:q}, and note that we can use a corresponding split on the discrete level: $a_n(\cdot,\cdot) = a_n^{(\div + \q \cdot)}(\cdot,\cdot) +s_n(\cdot,\cdot) - a_n^{\diffb}(\cdot,\cdot) + a_n^{(r - \q \cdot)}(\cdot,\cdot)$. 
   
   We have that \colorv{$\ddiv \v_n = \div \v_{\tau}$} and thus 
   \begin{equation}\label{eq:adiv:1}
      \begin{aligned}
         \hspace*{-2cm}a_n^{(\div + \q \cdot)}&(T_n \u_n,\u_n') =  
         \inner{\csr (\ddiv + \q \cdot) 
         \u_n,
         (\ddiv + \q \cdot) 
         \u_n'
         } 
         = 
         \textstyle{\sum_{J=\colorvv{I},\colorvw{II},\colorwv{III},\colorww{IV}}} (\ref{eq:adiv:1}_{\text{\tiny J\!}})
         \\
         = & \color{vv} \inner{\csr (\div + \q \cdot) \v_{\tau}, (\div + \q \cdot) \v_{\tau}'} \color{vw} + \inner{\csr (\div + \q \cdot) \v_{\tau}, (\ddiv + \q \cdot) \w_n'} \\
         & \color{wv} -\inner{\csr (\ddiv + \q \cdot) \w_n, (\div + \q \cdot) \v_{\tau}'} \color{ww}- \inner{\csr (\ddiv + \q \cdot) \w_n, (\ddiv + \q \cdot) \w_n'} 
      \end{aligned}
   \end{equation}
   The $\div$-parts of the first term in \colorvv{$(\ref{eq:adiv:1}_{\text{\tiny I\!}})$} will directly be used in $a_n^{(1)}(\cdot,\cdot)$ to control the divergence, below. 
   To rewrite \colorww{$(\ref{eq:adiv:1}_{\text{\tiny IV\!}})$}, we want to use that $(\ddiv + \pi_n^l P_{L^2_0} \q \cdot ) \w_n \stackrel{\eqref{eq:divwnEquals}}{=} -M_n \w_n - \tilde{O}_n \u_n$ for both arguments, so we shift in the terms involving the projection $\pi_n^l P_{L^2_0}$.   
   Then, we obtain\footnotemark \ that
   \begin{align}
      \colorww{(\ref{eq:adiv:1}_{\text{\tiny IV\!}})} =  &- \!\inner{\csr (M_n \w_{n} \!+\! \tilde{O}_n \u_n), M_n \w_n' \!+\! \tilde{O}_n \u_n'} \! \color{ww} - \! \inner{\csr \ddiv \w_n, (\Id \!- \pi_n^l P_{L^2_0}) \q \!\cdot\! \w_n'} \!\!\!\!\!\!\!\!\! \nonumber\\
      & \color{ww}- \inner{\csr (\Id-\pi_n^l P_{L^2_0} )\q \cdot \w_n, \ddiv \w_n'} \! + \! \inner{\csr \pi_n^l P_{L^2_0} \q \cdot \w_n,  \pi_n^l P_{L^2_0} \q \cdot \w_n'}  \!\!\!\!\!\!\!\!\! \label{eq:adiv:2} \\
	  & \color{ww} - \! \inner{\csr \q \otimes \q \cdot \w_{\tau},  \w_{\tau}'}  \!\!\!\!\!\!\!\!\! \nonumber.
   \end{align}
   
   \footnotetext{We use that $\spl a+b,a'+b' \spr = \spl a +c,a'+c' \spr + \spl a,b'-c' \spr + \spl b-c,a' \spr + \spl b,b' \spr - \spl c,c' \spr$ with $a = \ddiv \w_n$, $b = \q \cdot \w_n$, and $c = \pi_n^l P_{L^2_0} \q \cdot \w_n$; $a',b',c'$ analogously with $\w_n'$.}
   
   We apply the same argumentation to the mixed terms \colorvw{$(\ref{eq:adiv:1}_{\text{\tiny II\!}})$} and \colorwv{$(\ref{eq:adiv:1}_{\text{\tiny III\!}})$}, and note that the terms $\inner{\csr \q \otimes \q \cdot \z_{\tau},  \z_{\tau}'}$, $\z_{\tau} \in \{\v_{\tau}, \w_{\tau} \}$, $\z_{\tau}' \in \{\v_{\tau}',\w_{\tau}'\}$, cancel out with the $\q$-terms in $a_n^{(r - \q \cdot)}(T_n \u_n,\u_n')$. Thus, these terms do not appear in the following, and we are left with $\aremn(\cdot,\cdot)$ instead of $a_n^{(r - \q \cdot)}(\cdot,\cdot)$.

   For $a_n^{(1)}(\cdot,\cdot)$, we will only use the fourth term of \eqref{eq:adiv:2} and shift the remaining terms into $\anII(\cdot,\cdot)$. Hence, we define
   \begin{subequations}
      \begin{align}
         \tag{$\theequation{}|a_n^{(1)}$}\label{eq:anI}
         \anI(&\u_n,\u_n') \coloneqq  \\ 
         \tag{$\theequation{}a|a_n^{(1)}$}
            &\color{vv}\spl c_s^2 \rho \div \v_{\tau}, \div \v_{\tau}' \spr \color{ww}+ \spl c^2_s \rho \pi_n^l P_{L^2_0} \q \cdot \w_{\tau}, \pi_n^l P_{L^2_0} \q \cdot \w_{\tau}' \spr - s_n(\w_n,\w_n') \label{eq:anI:1} \\
            \tag{$\theequation{}b|a_n^{(1)}$}
            &\color{vv}- \spl \rho i \ddiffb \v_n, i \ddiffb \v_n' \spr \color{ww}+ \spl \rho \dopdSmall \w_n, \dopdSmall \w_n' \spr \label{eq:anI:2}\\
            \tag{$\theequation{}c|a_n^{(1)}$}
            &\color{wv}+ \spl \rho \dopdSmall \w_n, i \ddiffb \v_n' \spr 
            \color{vw}- \spl \rho i \ddiffb \v_n, \dopd \w_n' \spr \label{eq:anI:3}\\
            \tag{$\theequation{}d|a_n^{(1)}$}
            &\color{ww}+ \spl \rho (\matii + i \omega \gamma) \w_{\tau}, \w_{\tau}' \spr. \label{eq:anI:4}
      \end{align}
   \end{subequations}
   The $\div$-terms of \colorvv{$(\ref{eq:adiv:1}_{\text{\tiny I\!}})$} and the fourth term in \eqref{eq:adiv:2} from $a_n^{(\div + \q \cdot)}(T_n \u_n, \u_n')$ together with 
   $s_n(T_n \u_n, \u_n')$ form line \eqref{eq:anI:1}, where we note that $s_n(T_n \u_n, \u_n') = \colorww{ - s_n(\w_n,\w_n')}$ since $\hdgjump{P_{V_n} \u_n}_{\nv} = 0$ for all $\u_n \in \Xn$ by construction of $P_{V_n}$. 
   The terms in the lines \eqref{eq:anI:2} and \eqref{eq:anI:3} arise naturally from a selection of terms from $\aconvn(T_n \u_n, \u_n')$. By definition of $\matii = - \rho^{-1} \hess(p) + \hess(\phi)$, we can write $\aremn(\u_n,\u_n') = - \spl \rho(\matii + i \omega \gamma) \u_n, \u_n' \spr$, which is where the line \eqref{eq:anI:4} originates from. 
   
   Below, in \emph{Step 2}, we want to estimate these terms from below by applying \cref{lem:ddiffbEstimate,lem:ddivEstimate}. To this end, we add suitable compact terms to $B_n^{(1)}$ which we simultaneously subtract from $K_n$. For $C_1 > 0$ to be specified later on, we set
   \begin{align}
      \tag{$\theequation{}|K_n^{(1)}$}
	  \!\!\!\!\!\!\spl \KnI \u_n,\u_n' \spr_{\Xn} \!\!\coloneqq\! \colorvv{\spl \v_{\tau},\! \v_{\tau}' \spr} \! + \! C_1 \spl \Sol \u_n, \!\Sol \u_n' \spr \! + \! \colorww{\spl \csr M_n \w_{n},\! M_n \w_{n}' \spr}\!  + \! \spl \csr \tilde{O}_n \u_n,\! \tilde{O}_n \u_n' \spr.\!\!\!\!\!\!\!\!\! \label{eq:KnI}
   \end{align}
   Then, we define $\inner{\BnI \u_n, \u_n'}_{\Xn} \coloneqq \anI(\u_n,\u_n') + \spl \KnI \u_n,\u_n' \spr_{\Xn}$.
   To account for the remaining terms in $a_n(T_n \u_n, \u_n')$, 
   we define 
   \begin{equation}
      \tag{$\theequation{}|a_n^{(2)}$}
      \anII (\u_n, \u_n') \coloneqq a_n(T_n \u_n, \u_n') - \anI(\u_n,\u_n').
   \end{equation}
   To treat the terms in $\anII (\u_n, \u_n')$ we add suitable compact terms which together with $\anII (\u_n, \u_n')$ can be bounded from below. To this end, we  define, for $C_2 > 0$ to be specified later on,
   \begin{subequations}\label{eq:KnII}
      \begin{align}
         \tag{$\theequation{}a|K_n^{(2)}$}
         \spl \KnII &\u_n,\u_n'\spr_{\Xn} \coloneqq C_2 \big(\colorvv{\spl \v_{\tau}, \v_{\tau}'\spr} + \spl \Sol \u_n, \Sol \u_n' \spr + \spl \csr \tilde{O}_n \u_n, \tilde{O}_n \u_n' \spr \\
         \tag{$\theequation{}b|K_n^{(2)}$}
         & + \colorww{\spl \csr M_n \w_{n}, M_n \w_{n}' \spr} + \colorww{\spl \text{mean}(\q \cdot \w_{\tau}), \text{mean}(\q \cdot \w_{\tau}') \spr} \big)
      \end{align}
   \end{subequations}
   where $\text{mean}$ denotes the mean value operator $\frac{1}{\dom}\int_{\dom} \cdot d \mathbf{x}$,
   and set $\inner{\BnII \u_n, \u_n'}_{\Xn} \coloneqq \anII(\u_n, \u_n') + \spl \KnII \u_n, \u_n' \spr_{\Xn}$,  
   which we analyze in \emph{Step 3}. 

   With $K_n \coloneqq - \KnI - \KnII$ and $B_n \coloneqq \BnI + \BnII$, we then obtain that $A_n T_n = B_n + K_n$. The explicit expressions for the operators $\BnI$ and especially the lenghty one for $\BnII$ are written out in \cref{appendix:operators}.
   We note that the uniform boundedness of $B_n$, $n \in \N$, follows straightforwardly. Furthermore, it can be shown that the sequence $\seq{K}$ is indeed \compactop~with the same argumentation as in \cite[Lem.~17]{H23Hdiv}. In particular, we note that $\lim_{n \to \infty} \Vert \tilde{O}_n \Vert_{L(\Xn,L^2)} = 0$, that the operators $M_n$ and $\text{mean}(\cdot)$ give rise to compact terms, and that the construction of $\Sol$ allows us to use the compact embedding $\bH^1 \hookrightarrow \bL^2$. \\[0.8ex]
   \emph{\underline{Step 2:} Uniform coercivity of $\BnI$}. 
   First of all, we show that there exists an index $n_0 > 0$ such that $\BnI$ is uniformly coercive for all $n > n_0$.
   
   Let $\u_n \in \Xn$ be arbitrary and $\delta_0, \epsilon_0 \in (0,1)$, $\tau_0 \in (0,\pi/2-\theta)$ be such that the constant $C_{\theta,\tau,\epsilon,\delta}$ defined by \eqref{eq:MonsterConstant} is positive for all $\delta \in (0,\delta_0)$, $\epsilon \in (0,\epsilon_0)$ and $\tau \in (0,\tau_0)$. We recall that this is possible due to \cref{assumption:MachNumber}, as detailed in \cref{rem:MNsmall}. We further recall that by definition of $\theta$, the estimate \eqref{eq:thetaEstimate} holds. 

   Targeting at coercivity of $B_n^{(1)}$ in the sense of \eqref{eq:coercivity:C},
   we set $\xi \coloneqq  e^{-i (\theta + \tau) \text{sgn} \omega}$, $\vert \xi \vert =1$, so that $\Re( \xi (a + i b)) / \cos(\theta + \tau) = a + \text{sgn} (\omega) \tan(\theta + \tau) b$ for $a,b \in \mathbb{R}$. Using \eqref{eq:anI} and \eqref{eq:KnI}, we note that \eqref{eq:anI:3} becomes purely imaginary for $(\v_n',\w_n') = (\v_n,\w_n)$ and obtain
   \begin{align}
     \Re & \left( \xi \spl \BnI \u_n, \u_n \spr_{\Xn} \right) / \cos (\theta + \tau) \nonumber 
      \\
      &\hspace*{-0.35cm}
      = \colorv{\Vert c_s \rho^{1/2} \div \v_{\tau} \Vert^2_{L^2}} - \colorv{\Vert \rho^{1/2} \ddiffb \v_n \Vert^2_{\bL^2}} + \colorv{\Vert \v_{\tau} \Vert_{\bL^2}} + C_1 \Vert \Sol \u_n \Vert_{\bL^2}^2 
      \nonumber 
      + \colorw{\Vert c_s \rho^{1/2} M_n \w_n \Vert_{L^2}^2} \hspace*{-2cm}\\ 
      &+ \Vert c_s \rho^{1/2} \tilde{O}_n \u_n \Vert_{L^2}^2 + \colorw{\Vert c_s \rho^{1/2} \pi_n^l P_{L^2_0} \q \cdot \w_\tau \Vert^2_{L^2}} \nonumber 
      + \colorw{\Vert \rho^{1/2} \dopd \w_n \Vert^2_{\bL^2}} \hspace*{-2cm} \\ 
      &+ \colorvw{2 \tan(\theta+\tau) \text{sgn}(\omega) \Im \left( \spl \rho \dopd \w_n, i \ddiffb \v_n \spr \right)} - \colorw{s_n(\w_n,\w_n)} \hspace*{-2cm}\nonumber \\      
      & + \colorw{\spl \rho \matii \w_{\tau}, \w_{\tau} \spr_{\bL^2}}  \nonumber 
      + \colorw{\vert \omega \vert \tan(\theta+\tau)  \Vert (\gamma \rho)^{1/2} \w_{\tau} \Vert_{\bL^2}} \geq \dots \nonumber \\
      \intertext{
         Regrouping and applying a weighted Young's inequality ($2ab \leq (1-2\epsilon)^{-1} a^2 + (1-2\epsilon) b^2$)  on the mixed term (involving $\colorw{\w_n}$ and $\colorv{\v_n}$) and using \eqref{eq:thetaEstimate} yields
      }
      &\hspace*{-0.35cm} \dots \!\ge \color{v}\Vert c_s \rho^{1/2} \div \v_{\tau} \Vert^2_{L^2} - \Vert \rho^{1/2} \ddiffb \v_n \Vert^2_{\bL^2} (1\!+\!\color{black}\underbrace{ \color{v} \tan^2(\theta\!+\!\tau)(1\!-\!2\epsilon)^{-1}}_{\eqqcolon C_{\theta,\tau,\epsilon}}\color{v}) \!+\! \colorv{\Vert \v_{\tau} \Vert_{\bL^2}^2} \!\!\! \nonumber &&\markI
      \\[-3.5ex] \nonumber
      & + C_1 \Vert \Sol \u_n \Vert_{\bL^2}^2  &&\markII \\
      &+ \colorw{\Vert c_s \rho^{1/2} M_n \w_n \Vert_{L^2}^2} + \Vert c_s \rho^{1/2} \tilde{O}_n \u_n \Vert_{L^2}^2 + \colorw{\Vert c_s \rho^{1/2} \pi_n^l P_{L^2_0} \q \cdot \w_{\tau} \Vert^2_{L^2}} && \markIII 
      \nonumber
      \\ 
      & \hspace*{-0.19cm}
      \left.\begin{array}{ll}
            + \colorw{2 \epsilon \Vert \rho^{1/2} \dopd \w_n \Vert^2_{\bL^2}} \\[0.2cm]
            + \colorw{\vert \omega \vert \left( \tan(\theta + \tau) - \tan(\theta) \right) \Vert (\gamma \rho)^{1/2} \w_{\tau} \Vert_{\bL^2} - s_n(\w_n,\w_n)}
        \end{array} \hspace*{1cm}\right\}
        && \markIV \nonumber
   \end{align}
   $\markI$ and $\markII$ allow us to control $\colorv{\v_n}$ while $\markIII$ and $\markIV$ are responsible for $\colorw{\w_n}$. 
   We start with estimating $\markI$ from below. Splitting off an $\epsilon$-scaled $\X_n$-norm of $\colorv{\v_n}$ (note that $\colorv{\jnorm{\v_n}{}=0}$), and using \cref{lem:ddiffbEstimate,lem:ddivEstimate} and \eqref{eq:MonsterConstant} for the remainder, we obtain \vspace*{-0.5cm}
   \begin{align*}
      \markI\! &=\!  \colorv{\epsilon \Vert \v_n \Vert_{\X_n}^2\! \!+\! (1\!-\epsilon) \Vert c_s \rho^{1/2} \!\div \v_{\tau} \Vert^2_{L^2}}
      \colorv{- (1 \!+\! C_{\theta,\tau,\epsilon} \!-\! \epsilon) \Vert \rho^{1/2} \ddiffb \v_n \Vert^2_{\bL^2}
      \!+\!\!}\overbrace{\color{v}(1\!-\!\epsilon) \Vert \v_{\tau} \!\Vert_{\bL^2}^2}^{\geq 0}      
      \!\!\\
      &\ge\! \colorv{\epsilon \Vert \v_n \Vert_{\X_n}^2} + \underbrace{C_{\theta,\tau,\epsilon,\delta}  \vert \Sol \u_n \vert^2_{\bH^1_{c_s^2 \rho}}}_{\geq 0} + (1-\epsilon) \left(  \spl \check{O}_n \u_n, \u_n \spr -  C_{\delta} \Vert \Sol \u_n \Vert_{\bL^2}^2 \right) 
   \end{align*}
   ~\\[-5ex]
   We then have 
   \begin{align*}
      \markI\!+\!\markII \ge & \colorv{\epsilon \Vert \v_n \Vert_{\X_n}^2} 
      + \left(C_1 \!-\! (1\!-\!\epsilon)C_{\delta}\right) \! \Vert \Sol \u_n \Vert^2_{\bL^2}
	  - (1-\epsilon) \Vert \check{O}_n \Vert_{L(\Xn)} \Vert \u_n \Vert_{\Xn}^2.
   \end{align*}
   Furthermore, recalling \eqref{eq:divwnEquals}, i.e. $\colorw{(\ddiv + \pi_n^l P_{L^2_0} \q \cdot ) \w_n} = \colorw{-M_n \w_{n}} - \tilde{O}_n \u_n$, we see
  \begin{equation*}
   \begin{aligned}
      \markIII &= \colorw{\Vert c_s \rho^{1/2} M_n \w_{n} \Vert_{L^2}^2 + \Vert c_s \rho^{1/2} \tilde{O}_n \u_n \Vert_{L^2}^2 + \Vert c_s \rho^{1/2} \pi_n^l P_{L^2_0} \q \cdot \w_{\tau} \Vert_{L^2}^2}
       \\
      &
      \ge \frac14 \colorw{\Vert c_s \rho^{1/2} \ddiv \w_n \Vert_{\Tn}^2}.
   \end{aligned}
  \end{equation*}
  For the first term in $\markIV$ we 
  obtain from a weighted Young's inequality (as in \cite{HH21})
$$
\colorw{2 \epsilon \Vert \rho^{1/2} \dopd \w_n \Vert^2_{\bL^2}}
\geq 
\colorw{\epsilon \Vert \rho^{1/2} \ddiffb \w_n \Vert^2_{\bL^2}} - \colorw{C \epsilon
\Vert \rho^{1/2} \w_\tau \Vert^2_{\bL^2}}
$$
for $C\in\mathbb{R}$ only depending on $\omega$ and $\Omega$. For sufficiently small $\epsilon$ we can dominate the latter part by the second line of $\markIV$. Exploiting $\colorw{-s_n(\w_n,\w_n) \geq  \alpha \jnorm{\w_n}{2}}$ and choosing $\epsilon$ small enough (compared to $\alpha$, $\frac14$ and especially $\tau$), we get control of the $\Xn$-norm of $\colorw{\v_n}$:
$$
\markIII + \markIV \geq \epsilon \colorw{\Vert \w_n \Vert^2_{\Xn}}.
$$

Combining all estimates and $\Vert \u_n \Vert_{\X_n}^2 \leq 2 \Vert \v_n \Vert_{\X_n}^2 + 2 \Vert \w_n \Vert_{\X_n}^2$, we obtain 
  \begin{equation*}
   \begin{aligned}
      &\Re \left( \xi \spl \BnI \u_n, \u_n \spr_{\Xn} \right) / \cos(\theta + \tau)\ge \markI + \markII + \markIII + \markIV \\
      &\ge \frac{\epsilon}{2} \Vert \u_n \Vert^2_{\Xn} + \left( C_1 - (1-\epsilon)C_{\delta} \right) \Vert \Sol \u_n \Vert^2_{\bL^2} - (1-\epsilon) \Vert \check{O}_n \Vert_{L(\Xn)} \Vert \u_n \Vert_{\Xn}^2.
   \end{aligned}
  \end{equation*}
  Since $\lim_{n \to \infty} \Vert \check{O}_n \Vert_{L(\Xn)} = 0$, we can choose $n_1 > n_0$ large enough such that $(1-\epsilon) \Vert \check{O}_n \Vert_{L(\Xn)} < \epsilon/4$ and thus with $C_1 > (1-\epsilon)C_{\delta}$
  \begin{equation*}
   \begin{aligned}
      \Re \left( \xi \spl \BnI \u_n, \u_n \spr_{\Xn} \right)  / \cos(\theta + \tau)
      \ge \frac{\epsilon}{4} \Vert \u_n \Vert^2_{\Xn} 
   \end{aligned}
  \end{equation*}
  for $n > n_1$. 
  Thus, $\BnI$ is uniformly coercive (in the sense of \eqref{eq:coercivity:C}) for all $n > n_1$. \\[0.8ex]
  \emph{\underline{Step 3:} Uniform coercivity of $B_n$}. We have shown that $\BnI$ is uniformly coercive in \emph{Step 2}, so it remains to show that $B_n$ (after addition of $\BnII$) is uniformly coercive as well.
  To this end, we want to derive a bound for $\BnII$ of the form
  \begin{align}
   \Re ( \xi \spl \BnII \u_n, \!\u_n \spr_{\Xn}) /\! \cos(\theta + \tau) \!\ge\! -(\epsilon/8 \!+\! \zeta_{1,n}) \Vert \u_n \Vert^2_{\Xn} \!+ (C_2 \!-  \zeta_2) \vert \u_n \vert_{\KnII}^2 \!
   \label{BnII:lowerbound}
  \end{align}
  for $(\zeta_{1,n})_{n\in \mathbb{N}}$ and $\zeta_{1,n}, \zeta_2 \in \mathbb{R}_+$ with the semi-norm 
  \begin{align*}
   \vert \u_n \vert_{\KnII}^2 & \! \! \coloneqq  \! \colorv{\Vert \v_{\tau} \Vert^2_{\bL^2}}   \! \! + \! \Vert \Sol \u_n \Vert^2_{\bL^2}  \!+ \! \Vert c_s \rho^{1/2} \tilde{O}_n \u_n \Vert^2_{L^2}  \!+ \! \colorw{\Vert c_s \rho^{1/2} M_n  \! \w_{n} \Vert^2_{L^2}}  \!+ \! \colorw{\Vert \text{mean}(\q  \! \cdot  \! \w_{\tau}) \Vert^2_{L^2}} \\
   &= C_2^{-1} \KnII(\u_n,\u_n).
  \end{align*}
  We will show that $\zeta_{1,n} \to 0$ for $n \to \infty$ so that $- (\epsilon/8 + \zeta_{1,n})  \Vert \u_n \Vert^2_{\Xn}$ can be dominated by the $\BnI$-contribution for sufficiently large $n$. The bound \eqref{BnII:lowerbound} allows us to choose $C_2$ sufficiently large to compensate for the $- \zeta_2 \vert \u_n \vert_{\KnII}^2$ term and obtain the uniform coercivity of $B_n$.

  To prove \eqref{BnII:lowerbound}, it suffices to show the boundedness of $\anII$ in the following form
  \begin{align}
   | \spl \anII \u_n, \u_n \spr_{\Xn} | \le \eta_{1,n} \Vert \u_n \Vert^2_{\Xn} + \eta_2 \Vert \u_n \Vert_{\Xn} \vert \u_n \vert_{\KnII}. 
   \label{eq:anII:upperbound:I}
   \intertext{
      for $(\eta_{1,n})_{n \in \mathbb{N}} \to 0$ for $n \to \infty$ and $\eta_2$ bounded. Then, a weighted Young's inequality of the form $\eta_2 a b \leq \frac12 \frac{\epsilon}{4 \cos(\theta+\tau)} a^2 + \frac12 \frac{4 \cos(\theta+\tau) \eta_2^2}{\epsilon} b^2$ yields 
   }
      | \spl \anII \u_n, \u_n \spr_{\Xn} | \le \frac{\epsilon/8 + \zeta_{1,n}}{\cos(\theta+\tau)} \Vert \u_n \Vert^2_{\Xn} + 
      \frac{\zeta_2}{\cos(\theta+\tau)} \vert \u_n \vert_{\KnII}^2. 
      \label{eq:anII:upperbound}
      \end{align}
   with $\zeta_{1,n} = \eta_{1,n} \cos(\theta+\tau)$ and $\zeta_2 = 2/\epsilon \cos^2(\theta + \tau) \eta_2^2$ which implies \eqref{BnII:lowerbound}.
   It hence remains to show \eqref{eq:anII:upperbound:I}.

   $\anII(\cdot,\cdot)$ effectively contains all the terms of $a_n(\cdot,\cdot)$ that are not considered in $\anI(\cdot,\cdot)$. 
   Most terms are of the form that they pair a term that can be bounded by $\vert \cdot \vert_{\KnII}$ with another term that can be bounded by $\vert \cdot \vert_{\Xn}$. Those terms are thence directly suitable for \eqref{eq:anII:upperbound} and in the following, we only discuss the terms that do not match this pattern; for completeness we state the full expression $\anII(\cdot,\cdot)$ in \cref{appendix:operators}.

   The terms  of \eqref{eq:anII:upperbound} that do not match the pattern described above stem from contributions of $\colorw{\ddiv \w_n + \q \cdot \w_n}$. 
   We then shift in $\colorw{\pi_n^l P_{L^2_0} (\q \cdot \w_n)}$, cf.~\eqref{eq:adiv:2}, and make use of \eqref{eq:divwnEquals} to bound $\colorw{\ddiv \w_n + \pi_n^l P_{L^2_0} \q \cdot \w_n}$ by $\vert \u_n \vert_{\KnII}$. 
   The remaining terms are then of the form 
   $\spl \colorw{c_s^2 \rho (\Id - \pi_n^l  P_{L^2_0} ) (\q \cdot \w_{\tau})}, \ddiv \z_n \spr$ for $\z_n \in \{\colorv{\v_n'}, \colorw{\w_n'}\}$.
   To exploit the approximation properties of $\pi_n^l$ here, we first shift in another $\text{mean} (\q \cdot \w_n)$ term (which is itself bounded by constant times $\Vert \u_n \Vert_{\KnII}$) and only have to deal with the following expression where we abbreviate $\Pi^\ast = \Id - \text{mean} - \pi_n^l  P_{L^2_0}$: 
   \begin{align*}
   &\vert \spl \colorw{c_s^2 \rho \Pi^\ast (\q \cdot \w_{\tau})}, \ddiv \z_n \spr \vert
   = \vert \spl \colorw{\q \cdot \w_{\tau}}, \Pi^\ast (c_s^2 \rho \ddiv \z_n) \spr \vert \\
   &\le \Vert \q \Vert_{\bL^\infty} \colorw{\Vert \w_\tau \Vert_{\mathbf{L}^2}} \Vert \Pi^\ast (c_s^2 \rho \ddiv \z_n) \Vert_{L^2} \lesssim \Vert \u_n \Vert_{\Xn} \Vert \Pi^\ast (c_s^2 \rho \ddiv \z_n) \Vert_{L^2}, 
  \end{align*}
  We estimate the last term using a discrete commutator technique \cite{B99} to obtain
  \begin{align*}
   \Vert & \Pi^\ast (\csr  \ddiv \z_n )\Vert_{L^2}^2 \!=\!\! \sum_{\tau \in \Tn} \Vert \Pi^\ast (c_s^2 \rho \ddiv \z_n) \Vert^2_{L^2(\tau)}
   \!=\!\! \sum_{\tau \in \Tn} \Vert \Pi^\ast ((c_s^2 \rho \!-\! c_{\tau}) \ddiv \z_n) \Vert^2_{L^2(\tau)} \\
   & \le \sum_{\tau \in \Tn} \Vert c_s^2 \rho - c_{\tau} \Vert_{L^\infty(\tau)}^2 \Vert \ddiv \z_n \Vert_{L^2(\tau)}^2 
   \le (C^L_{c_s^2 \rho})^2 h_n^2 \Vert \z_n \Vert^2_{\Xn} \lesssim h_n^2 \Vert \u_n \Vert^2_{\Xn}.
  \end{align*}
  Here, we used that $\Pi^\ast r = 0$ for any piecewise polynomial $r$ of degree $k-1$
  and hence $\Pi^\ast c_\tau \ddiv \z_n = 0$ for any piecewise constant $c_\tau$.

  This reveals that all terms in $\anII$ are suitable for \eqref{eq:anII:upperbound:I} and hence \eqref{BnII:lowerbound} holds. 
  Together with  \emph{Step 2}, $C_2$ and $n$ sufficiently large we hence obtain 
  \begin{align}
   \Re ( \xi \spl B_n \u_n, \!\u_n \spr_{\Xn}) \!\ge\! 3\epsilon/16 \cos(\theta + \tau) \Vert \u_n \Vert^2_{\Xn} 
   \label{Bn:coercive}.
  \end{align}
  \emph{\underline{Step 4:} Asymptotic consistency of $B_n$}. In the last step,  we want to show that there exists a bijective operator $B \in L(\X)$ such that $B_n$ \approximates~$B$. 
  
  For $\u, \u' \in \X$, we define
   \begin{subequations}\label{eq:K}
      \begin{align}
         \spl K \u, \u' \spr_{\X} \coloneqq &-(1+C_2) \colorv{\spl \v, \v' \spr} - (C_1 + C_2) \colorv{\spl \v, \v' \spr} \label{eq:K:a} \tag{$K$-a} \\
         &- (1+C_2) \colorw{\spl \csr M \w, M \w' \spr} - C_2 \colorw{\spl \text{mean}(\q \cdot \w), \text{mean}(\q \cdot \w') \label{eq:K:b} \tag{$K$-b} \spr}
      \end{align}
   \end{subequations}
   and set $B \coloneqq AT - K$. Since $M$ is of finite rank and $\v = P_V \u \in \bH^1$, the compactness of the operator $K$ follows from the compact embedding $\bH^1 \hookrightarrow \bL^2$. Rewriting the operator $B$ with similar arguments as in \emph{Step 1}, cf.~\eqref{eq:B}, we can show that $B$ is coercive using the same arguments as in \emph{Step 2} and \emph{Step 3}. Thus, $B$ is bijective. It remains to show that $B_n$ \approximates~$B$, that is $\lim_{n \to \infty} \Vert (B_n p_n - p_n B) \u \Vert_{\Xn} = 0$. 
   
   To this end, we note that it suffices to show that $K_n$ \approximates~$K$, since we can use that $B_n = A_n T_n - K_n$, $B = AT-K$ and estimate 
   \begin{align*}
      \Vert &(B_n p_n - p_n B) \u \Vert_{\Xn} \le \Vert (K_n p_n - p_n K) \u \Vert_{\Xn} + \Vert (A_n T_n p_n - p_n A T ) \u \Vert_{\Xn} \\
      &\le \Vert (K_n p_n - p_n K) \u \Vert_{\Xn} + \Vert (A_n p_n - p_n A) T \u \Vert_{\Xn} 
      + \Vert A_n \Vert_{L(\Xn)} \Vert (T_n p_n - p_n T)  \u \Vert_{\X}.
   \end{align*}
   By \cref{thm:AnToA} and \cref{lem:TnToT}, $A_n$ \approximates~$A$ and $T_n$ \approximates~$T$ so the last two terms converge to zero. Thus, it indeed suffices to show that $K_n$ \approximates~$K$ to conclude that $B_n$ \approximates~$B$. 
   
   Similar to the proof of \cref{thm:AnToA}, we choose $\u_n ' \in \Xn$, $n \in \N$, $\Vert \u_n' \Vert_{\Xn} = 1$, such that $\Vert (K_n p_n - p_n K) \u \Vert_{\Xn} \le \vert \spl (K_n p_n - p_n K) \u, \u_n' \spr_{\Xn} \vert + 1/n$. For an arbitrary subsequence $\N' \subset \N$, we can choose $\u' \in \X$ and a subsequence $\N'' \subset \N$ such that $\u_n' \overset{\bL^2}{\rightharpoonup} \u'$, $\csr \ddiv \u_n' \overset{L^2}{\rightharpoonup} \csr \div \u'$ and $\rho \ddiffb \u_n' \overset{\bL^2}{\rightharpoonup} \rho \diffb \u'$ due to \cref{lem:weakConvergence}. 
   
   We compute that 
   \begin{align*}
      \inner{p_n K \u, \u_n'}_{\Xn} \overset{\eqref{eq:def:pn}}&{=} \inner{\div K \u, \csr \ddiv \u_n'} + \inner{K \u, \u_n'} + \inner{\diffb K \u, \rho \ddiffb \u_n'}\\
      \overset{n \in \N''}&{\to} \innercsr{\div K \u, \u'} + \inner{K \u, \u'} + \innerr{\diffb K \u, \diffb \u'} = \inner{K \u, \u'}_{\X}.
   \end{align*}

   Thus, we have to show that $\lim_{n \in \N''} \spl K_n p_n \u, \u_n' \spr_{\Xn} = - \spl K \u, \u' \spr_{\X}$. We recall that $K_n \coloneqq - \KnI - \KnII$ and therefore
   \begin{subequations}
      \begin{align}
         \inner{K_n p_n &\u, \u_n'}_{\Xn}\nonumber \\
         =  &-(1 + C_2) \inner{(\colorv{P_{V_n} p_n \u})_{\tau},\v_{\tau}'} - (C_1 + C_2) \inner{\Sol p_n \u, \Sol \u_n'} \label{eq:Knpnu:a} \tag{$K_n$-a} \\
         &-(1 + C_2) \inner{\csr \tilde{O}_n p_n \u, \tilde{O}_n \u_n'} - (1 + C_2) \inner{\csr M_n (\Id_{\Xn} - P_{V_n}) p_n \u, \colorw{\w_{n}'}} \label{eq:Knpnu:b} \tag{$K_n$-b} \\
         &-(1 + C_2) \inner{\text{mean}(\q \cdot (\Id_{\Xn} - P_{V_n})\PVol{p_n \u}), \colorw{\text{mean}(\q \cdot \w_{\tau}')}}. \label{eq:Knpnu:c} \tag{$K_n$-c}
       \end{align}
   \end{subequations} 
   In the following, we show that \eqref{eq:Knpnu:a} converges to $\eqref{eq:K:a}$ and $\eqref{eq:Knpnu:b} + \eqref{eq:Knpnu:c}$ converges to $\eqref{eq:K:b}$.

   \emph{Step 4a: Convergence of \eqref{eq:Knpnu:a}.} To show that $\eqref{eq:Knpnu:a}$ converges to $\eqref{eq:K:a}$, we exploit the convergence of $p_n \u$.
   Using the approximation properties of $\pi_n^d$ and the same argumentation as in \cref{lem:TnToT}, cf. especially \eqref{eq:PVSnpn:conv}, we have
   \begin{align*}
         \vert &\colorv{\spl \v, \v_{\tau}' \spr} - \spl \PVol{P_{V_n} p_n \u}, \colorv{\v_{\tau}'} \spr \vert = \vert (\spl P_{V} - \pi_n^d \Sol p_n) \u, \colorv{\v_{\tau}'} \spr \vert  \\
         &\lesssim \vert \spl (P_V - \pi_n^d P_{V}) \u, \colorv{\v_{\tau}'} \spr \vert  + \vert \spl \pi_n^d (P_{V} - \Sol p_n )\u, \colorv{\v_{\tau}'}\spr \vert \\
         &\lesssim h_{n} \Vert P_V \u \Vert_{\bH^1} + d_n(\u,p_n \u) + \Vert (\Id_{L^2_0} - \pi_n^l) P_{L^2_0} \q \cdot \u \Vert_{L^2} \overset{n \in \N''}{\to} 0, \\
      \text{and  }~ \vert &\spl \colorv{\v}, \Sol \u_n' \spr - \spl \Sol p_n \u, \Sol \u_n' \spr \vert \!=\! \vert \spl (P_V - \Sol p_n) \u, \Sol \u_n' \spr \vert 
      \\
      &\!\lesssim\! \Vert (P_{V} - \Sol p_n) \u \Vert_{\bL^2} 
      \lesssim d_n(\u,p_n \u) + \Vert ( \Id_{L^2_0} - \pi_n^l) P_{L^2_0} \q \cdot \u \Vert_{L^2}  \overset{n \in \N''}{\to} 0,
   \end{align*}
   where we used \cref{lem:dnpnuToZero} and the pointwise convergence of $\pi_n^l$ to $\Id_{L^2_0}$. Hence,
   \begin{equation*}
      \lim_{n \in \N''} \eqref{eq:Knpnu:a} = \lim_{n \in \N''} \left( -(1 + C_2) \colorv{\inner{\v, \v_{\tau}'}} - (C_1 + C_2) \inner{\colorv{\v},\Sol \u_n'} \right).
   \end{equation*}
   It remains to show that the right-hand side converges to $\eqref{eq:K:a}$, i.e. 
   $\inner{\colorv{\v},\Sol \u_n'} \overset{n \in \N''}{\to} \colorv{\inner{\v,\v'}}$. Let $S := \nabla ( (\div + P_{L^2_0}\q \cdot + M) \nabla)^{-1}$ such that
   \begin{align*}
      \spl &\colorv{\v}, \Sol \u_n' \spr - \colorv{\spl \v, \v' \spr}=  \spl \colorv{\v}, \Sol \u_n' - P_V \u' \spr \\
      &\!=\!  \spl \colorv{\v}, S (\ddivpqMn \u_n' - \divPqM \u') \spr \\
      &\!=\!  \spl S^\ast \colorv{\v}, \ddiv \u_n' \!-\! \div \u' \spr \!+\! \spl S^\ast \colorv{\v}, \pi_n^l P_{L^2_0} \q \cdot \PVol{\u_n'} \!-\! P_{L^2_0} \q \cdot \u' \spr 
      \!+\! \spl S^\ast \colorv{\v}, M_n \u_n' \!-\! M \u' \spr.
   \end{align*}
   By choice of the subsequence $\N'' \subset \N$ and \cref{lem:weakConvergence}, we have that $\csr \ddiv \u_n' \overset{L^2}{\rightharpoonup} \csr \div \u'$ (and hence $\ddiv \u_n' \overset{L^2}{\rightharpoonup} \div \u'$) and therefore $M_n \u_n' \to M \u'$ by construction of $M_n$ and $M$. Therefore, the first and the last term converge to zero as $S^\ast \colorv{\v} \in L^2$. 
   Furthermore, we have that 
   \begin{equation*}
      \begin{aligned}
         &\spl S^\ast \colorv{\v}, \pi_n^l P_{L^2_0} \q \cdot \PVol{\u_n'} - P_{L^2_0} \q \cdot \u' \spr \\
         = &\spl S^\ast \colorv{\v}, (\pi_n^l - \operatorname{id}) P_{L^2_0} \q \cdot \u \spr + \spl S^\ast \colorv{\v}, \pi_n^l P_{L^2_0} \q \cdot (\PVol{\u_n'} - \u') \spr \\
      \end{aligned}
   \end{equation*} 
   The first term converges to zero due to the pointwise convergence of $\pi_n^l$ to $\Id$. For the second term, we first notice that $\pi_n^l$ is uniformly bounded and by \cref{lem:weakConvergence} $\PVol{\u_n'} \overset{\bL^2}{\rightharpoonup} \u'$. Therefore $P_{L^2_0} \q \cdot \u_n' \overset{\bL^2}{\rightharpoonup} P_{L^2_0} \q \cdot \u'$ because the compact operator $P_{L^2_0} \q \cdot$ maps weakly convergent sequences onto weakly convergent sequences. We conclude $\inner{\colorv{\v},\Sol \u_n'} \overset{n \in \N''}{\to} \colorv{\inner{\v,\v'}}$ and hence $\lim_{n \in \N''} \eqref{eq:Knpnu:a} = \eqref{eq:K:a}$. 

   \emph{Step 4b: Convergence of \eqref{eq:Knpnu:b} \& \eqref{eq:Knpnu:c} to \eqref{eq:K:b}.}
   For \eqref{eq:Knpnu:b}, we first note that $\lim_{n \to \infty} \Vert \tilde{O}_n \Vert_{L(\Xn,L^2_0)} = 0$ due to the arguments in \cref{lem:PvIdempotent} so that we only have to consider the second term. 
   With similar arguments as in the proof of \cref{lem:TnToT} we have (recall that $\colorw{\w} = (\Id - P_{V})\u$)
   \begin{align*}
      \vert &\colorw{\spl M \w, M_n \w_{n}' \spr} - \spl M_n (\Id - P_{V_n})(p_n \u), M_n \colorw{\w_{n}'} \spr \vert \\
      &\lesssim \Vert \div (\u - P_V \u) - \ddiv (p_n \u - P_{V_n} p_n \u) \Vert_{L^2} \lesssim d_n(\u,p_n \u) + d_n(P_V \u, P_{V_n} p_n \u), 
   \end{align*}
   where the first term converges to zero due to \cref{lem:dnpnuToZero} and the second term converges to zero with the same argumentation as in the proof of \cref{lem:TnToT}, cf.~\eqref{eq:PVuPVnpnuToZero}. Applying \cref{lem:weakConvergence} yields $\lim_{n \in \N''} \eqref{eq:Knpnu:b} = \lim_{n \in \N''} (1 + C_2) \colorw{\inner{M \w, M_n \w_n'}} = (1 + C_2) \colorw{\inner{M \w, M \w'}}$.

   Finally, we consider \eqref{eq:Knpnu:c} and calculate that
   \begin{align*}
      \vert &\colorw{\spl \text{mean} (\q \cdot \w), \text{mean}(\q \cdot \w_{\tau}') \spr} - \spl \text{mean} (\q \cdot \PVol{(\Id-P_{V_n})(p_n \u)}), \colorw{\text{mean}(\q \cdot \w_{\tau}')} \spr \vert \\
      &\lesssim \Vert \u - P_{V} \u - \PVol{p_n \u - P_{V_n} p_n \u} \Vert_{\bL^2} \\
      &\lesssim \Vert p_n P_V \u - \PVol{ P_{V_n} p_n \u} \Vert_{\bL^2} + d_n(\u,p_n \u) + d_n (P_V \u, p_n P_{V_n} \u) \overset{n \in \N''}{\to} 0, 
   \end{align*}
   where we again apply \cref{lem:dnpnuToZero} and \cref{lem:TnToT}.

   Altogether, we obtain that $\lim_{n \in \N''} \eqref{eq:Knpnu:a} = \eqref{eq:K:a}$ and $\lim_{n \in \N''} (\eqref{eq:Knpnu:b} + \eqref{eq:Knpnu:c}) = \eqref{eq:K:b}$ and thus $K_n$ \approximates~$K$. Due to the argumentation above, we conclude that $B_n$ \approximates~$B$ which finishes the proof.
   
   To summarize, in \emph{Step 1} we have defined the decomposition $A_n T_n = B_n + K_n$, where the sequence $\seq{K}$ is compact. In \emph{Step 2} and \emph{Step 3}, we have shown that the sequence $\seq{B}$ is uniformly coercive and therefore \stable. Finally, in \emph{Step 4}, we have shown that there exists a bijective operator $B \in L(\X)$ such that $B_n$ \approximates~B.
   \end{proof}

\subsection{Convergence estimates}\label{subsec:CA:convergenceEstimates}
To conclude the analysis of the discrete problem, we show that the sequence of discrete solutions $\seq{\u}$ converges to the solution of the continuous problem. Further, if we assume additional regularity for the continuous solution, we obtain convergence with optimal order. 

\begin{theorem}\label{thm:convergence}
   Assume that \cref{assumption:MachNumber} holds. For $\bff \in \bL^2$, let $\u \in \X$ be the solution to \eqref{eq:cont:weakForm}. Then, there exists an index $n_0 > 0$ such that for all $n > n_0$ the problem \eqref{eq:discr:weakForm} has a unique solution $\u_n \in \Xn$ and $\lim_{n \to \infty} d_n(\u,\u_n) = 0$. 
\end{theorem}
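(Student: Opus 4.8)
The plan is to assemble the machinery developed above and feed it into the abstract convergence result \cref{lem:RegularStable}. Since \cref{assumption:MachNumber} holds, \cref{thm:weakTcompatibilityFulfilled} provides sequences $\seq{B}$, $\seq{K}$ and operators $B,T \in L(\X)$ with $A_n T_n = B_n + K_n$, where $\seq{T}$ is stable (\cref{lem:TnStable}) and approximates $T$ (\cref{lem:TnToT}), $\seq{B}$ is uniformly bounded with uniformly bounded inverses and approximates the bijective operator $B$, and $\seq{K}$ is \compactop. These are precisely the hypotheses of \cref{thm:weakTcompatibility}, so the sequence $\seq{A}$ is \regular. Combined with the fact, established in \cref{subsec:method:DAS}, that $(\Xn,p_n,A_n)$ is a \DAS~of $(\X,A)$, this furnishes all the input needed for the abstract theory.

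Next I would record that the continuous operator $A$ is bijective: the identity $AT = B + K$ from \emph{Step 4} of \cref{thm:weakTcompatibilityFulfilled} shows that $A$ is \weaklyTcoercive, hence Fredholm of index zero, while injectivity under the Mach-number bound is guaranteed by the continuous analysis \cite[Thm.~3.11]{HH21}; thus $\u$ is the unique solution of \eqref{eq:cont:weakForm}. With $A$ bijective and $\seq{A}$ \regular, \cref{lem:RegularStable} yields that $\seq{A}$ is \stable, i.e.\ there are $n_0,C>0$ with $A_n$ invertible and $\Vert A_n^{-1}\Vert_{L(\Xn)} \le C$ for all $n > n_0$. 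In particular, for $n > n_0$ the discrete problem \eqref{eq:discr:weakForm} is uniquely solvable, which is the first assertion.

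For the convergence I would write \eqref{eq:cont:weakForm} and \eqref{eq:discr:weakForm} in operator form $A\u = f$ and $A_n \u_n = f_n$, where $f \in \X$ and $f_n \in \Xn$ are the Riesz representatives of $\u' \mapsto \spl \bff, \u' \spr$ and $\u_n' \mapsto \spl \bff, \PVol{\u_n'} \spr$, respectively. The remaining hypothesis of \cref{lem:RegularStable} is the asymptotic consistency $\lim_{n \to \infty} \Vert p_n f - f_n \Vert_{\Xn} = 0$ of the right-hand sides, which I expect to be the main point requiring care. I would establish it by the same weak-compactness argument as in the proof of \cref{thm:AnToA}: choosing near-maximizing test functions $\u_n'$ with $\Vert \u_n'\Vert_{\Xn} = 1$, passing to a subsequence along which \cref{lem:weakConvergence} applies, and invoking the definition \eqref{eq:def:pn} of $p_n$, one finds $\spl p_n f, \u_n'\spr_{\Xn} \to \spl f, \u'\spr_{\X} = \spl \bff, \u'\spr$ while $\spl f_n, \u_n'\spr_{\Xn} = \spl \bff, \PVol{\u_n'}\spr \to \spl \bff, \u'\spr$, so that the difference vanishes in the limit; the subsequence principle then transfers this to the full sequence. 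This route sidesteps any regularity question for $f$, which in the weighted divergence/directional-derivative inner product need not possess a trace.

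Finally, \cref{lem:RegularStable} delivers that $\seq{\u}$ \converges~to $\u$, i.e.\ $\Vert p_n \u - \u_n\Vert_{\Xn} \to 0$. To convert this into the claimed $d_n(\u,\u_n) \to 0$, I would invoke the triangle inequality \eqref{eq:dn:triangle} in the form $d_n(\u,\u_n) \le d_n(\u, p_n \u) + \Vert p_n \u - \u_n\Vert_{\Xn}$ and note that $d_n(\u, p_n \u) \to 0$ by \cref{lem:dnpnuToZero}. The principal obstacle throughout is the right-hand side consistency, but it is resolved entirely by the compactness result \cref{lem:weakConvergence}; every other step is a direct application of the results already proven.
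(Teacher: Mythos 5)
Your proposal is correct and follows essentially the same route as the paper: regularity of $\seq{A}$ via \cref{thm:weakTcompatibility} (fed by \cref{thm:weakTcompatibilityFulfilled}, \cref{lem:TnStable}, \cref{lem:TnToT}), right-hand-side consistency via the weak-compactness argument of \cref{lem:weakConvergence} with near-maximizing unit test functions and the subsequence principle, then \cref{lem:RegularStable} followed by the triangle inequality \eqref{eq:dn:triangle}. The only (harmless) deviations are that you invoke \cref{lem:dnpnuToZero} directly for $d_n(\u,p_n\u)\to 0$ — which is in fact cleaner than the paper's detour through \cref{lem:pnBoundedBypinX} and \cref{lem:dnupinXdtoZero}, whose hypotheses require $\u \in \bH^1_{\nv 0}$ — and that you spell out the bijectivity of $A$, which the paper leaves implicit.
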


\begin{proof}
   In \cref{thm:AnToA}, \cref{lem:TnToT}, and  \cref{thm:weakTcompatibilityFulfilled}, we have shown that the operators $A$, $\seq{A}$, $T$, and $\seq{T}$ fulfill the necessary conditions for the application of \cref{thm:weakTcompatibility} to conclude that the sequence $\seq{A}$ is \regular. To be able to apply \cref{lem:RegularStable}, which yields \stability~and \convergence, we still have to show that the continuous right-hand side converges to the discrete right-hand side in the sense of discrete approximation schemes. Let $\bm{g} \in \X$ be such that $\spl \bm{g}, \u \spr_{\X} = \spl \bff, \u \spr_{\bL^2}$ for all $\u \in \X$ and $\bm{g}_n \in \Xn$ be such that $\spl \bm{g}_n, \u_n \spr_{\Xn} = \spl \bff, \u_n \spr_{\bL^2}$ for all $\u_n \in \Xn$. Take $\u_n' \in \Xn$, $\Vert \u_n' \Vert_{\Xn} = 1$ such that $\Vert p_n \bm{g} - \bm{g}_n \Vert_{\Xn} \le \vert \spl p_n \bm{g} - \bm{g}_n, \u_n' \spr \vert + 1/n$ and for an arbitrary subsequence $\N' \subset \N$ choose $\N'' \subset \N'$ according to \cref{lem:weakConvergence}. Then, we have that with \eqref{eq:def:pn}
   \begin{align*}
      &\inner{p_n \bm{g} - \bm{g}_n, \u_n'}_{\Xn} = \inner{p_n \bm{g}, \u_n'}_{\Xn} - \inner{\bff, \u_n'}_{\bL^2} \\
      = &\innercsr{\div \bm{g}, \ddiv \u_n'} \! - \!\inner{\bm{g}, \u_n'} \!+ \!\innerr{ \diffb \bm{g}, \ddiffb \u_n'} \!- \!\inner{\bff, \u_n'}_{\bL^2} \!\! \overset{n \in \N''}{\to} \!\! \inner{\bm{g}, \u'}_{\X} \! - \!\inner{ \bff, \u'}_{\bL^2}\!\! = \! 0. 
   \end{align*}
   Thus, we can apply \cref{lem:RegularStable} to conclude that the sequence $\seq{A}$ is \stable, i.e.~there exists an index $n_0$ such that $A_n^{-1}$ exists and is bounded for all $n > n_0$ and problem \eqref{eq:discr:weakForm} has a unique solution for all $n > n_0$. Furthermore, it holds that $\lim_{n \to \infty} \Vert p_n \u - \u_n \Vert_{\Xn} = 0$. We estimate with \cref{lem:pnBoundedBypinX}
   \begin{equation}\label{eq:convergenceThm:estimate}
         d_n(\u,\u_n) \le d_n(\u,p_n \u) + \Vert p_n \u - \u_n \Vert_{\Xn} \le d_n(\u,\pinX \u) + \Vert p_n \u - \u_n \Vert_{\Xn},
   \end{equation}
   and apply \cref{lem:dnupinXdtoZero} to conclude that $\lim_{n \to \infty} d_n(\u,p_n \u) = 0$. 
\end{proof}

\begin{theorem}\label{thm:convEstimates}
   Let the assumptions from \cref{thm:convergence} be satisfied. If additionally $\u \in \X \cap \bH^{2+s}$, $s > 0$, $\rho \in W^{1+s,\infty}$, and $\bflow \in \bW^{1+s,\infty}$, then there exists a constant $C > 0$ independent of $n$ such that
   \begin{equation*}
      d_n(\u,\u_n) \le C h_n^{\min\{s,k,l\}}  \Vert \u \Vert_{\bH^{2+s}}.
   \end{equation*}
\end{theorem}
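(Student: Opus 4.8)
The plan is to combine the \stable ity already delivered by the weak T-compatibility machinery with a standard best-approximation/consistency split, the only nonstandard features being that consistency must be measured through the nonconforming distance $d_n(\cdot,\cdot)$ and that the directional-derivative lifting enters on the side of the discrete \emph{test} function. By \cref{thm:convergence} (through \cref{lem:RegularStable} and \cref{thm:weakTcompatibilityFulfilled}) the sequence $\seq{A}$ is \stable, so there are $n_0$ and $C>0$ with $\Vert A_n^{-1}\Vert_{L(\Xn)}\le C$ for $n>n_0$. Starting from the split \eqref{eq:convergenceThm:estimate}, $d_n(\u,\u_n)\le d_n(\u,\pinX\u)+\Vert p_n\u-\u_n\Vert_{\Xn}$, the first term is bounded by \cref{lem:pinXRates} (applied with regularity index $1+s$, capped by $k$) together with \cref{lem:pnBoundedBypinX}, giving $d_n(\u,\pinX\u)\lesssim h_n^{\min\{1+s,k\}}\Vert\u\Vert_{\bH^{2+s}}$. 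Everything therefore reduces to estimating the discrete error $e_n:=p_n\u-\u_n\in\Xn$.

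For $e_n$ I would invoke stability in the form $\Vert e_n\Vert_{\Xn}\le C\Vert A_n e_n\Vert_{\Xn}=C\sup_{\Vert\u_n'\Vert_{\Xn}=1}\vert a_n(p_n\u,\u_n')-\spl\bff,\u_n'\spr\vert$, using $A_n\u_n=f_n$ and $\spl A_n\cdot,\cdot\spr_{\Xn}=a_n(\cdot,\cdot)$. Since $\u\in\bH^{2+s}\subset\bH^1$ has a well-defined trace, the pair $\underline{\u}=(\u,\tr\u)$ is admissible in $a_n$, and I split the residual as
\[
a_n(p_n\u,\u_n')-\spl\bff,\u_n'\spr
=\underbrace{a_n(p_n\u-\underline{\u},\u_n')}_{(A)}
+\underbrace{a_n(\underline{\u},\u_n')-\spl\bff,\u_n'\spr}_{(B)}.
\]
Term $(A)$ is an approximation term: inspecting \eqref{eq:an} slot by slot, $a_n$ is bounded against $\u_n'$ by the constituent broken norms of its first argument, so $\vert (A)\vert\lesssim d_n(\u,p_n\u)\,\Vert\u_n'\Vert_{\Xn}\lesssim h_n^{\min\{1+s,k\}}\Vert\u\Vert_{\bH^{2+s}}$. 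The decisive observation for $(B)$ is that $\hdgjump{\underline{\u}}=0$, whence $\ddiffb\underline{\u}=\diffb\u$, $\ddiv\underline{\u}=\div\u$ and $s_n(\underline{\u},\cdot)=0$: the exact solution produces no lifting or penalty error by itself.

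Term $(B)$ is the genuine consistency error and is where the lifting degree $l$ enters. I would evaluate $a_n(\underline{\u},\u_n')$ by element-wise integration by parts. The volume contributions reassemble the strong form of \eqref{eq:Galbrun}, which the smooth $\u$ satisfies, and cancel against $\spl\bff,\u_n'\spr$. The combined divergence-plus-stabilization block is consistent: by the SIP rewriting recalled after \eqref{eq:an}, for $\hdgjump{\underline{\u}}_{\nv}=0$ (and $R^l\underline{\u}=0$) it reduces to the standard expression $\innercsr{\div\u,\div\u_\tau'}_{\Tn}-\innercsr{\div\u,\hdgjump{\u_n'}_{\nv}}_{\partial\Tn}$, whose exact normal flux tested against the single-valued facet part of $\u_n'$ telescopes over $\partial\Tn$. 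The \emph{only} surviving inconsistency comes from the directional-derivative lifting $\bRl\u_n'$ on the test function, which (unlike the divergence part) carries no compensating stabilization. Using \eqref{eq:VecLift}, a term such as $i\innerr{\opd\u,\bRl\u_n'}_{\Tn}$ rewrites, because $\bRl\u_n'\in[\IP^l(\Tn)]^d$, as $-i\innerr{\hdgjump{\u_n'}_{\bflow},\Pi^l(\opd\u)}_{\partial\Tn}$ with $\Pi^l$ the $\rho$-weighted $\bL^2$-projection onto $[\IP^l(\Tn)]^d$; combined with the boundary terms $i\innerr{(\bflow\cdot\nv)\opd\u,\u_\tau'}_{\partial\Tn}$ from the integration by parts it leaves contributions governed by the projection error $\opd\u-\Pi^l(\opd\u)$. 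Estimating this error on the skeleton through \eqref{eq:pi:approximationEstimates} gives $h_n^{\min\{l+1,1+s\}-1/2}$, while the discrete trace/inverse inequality costs an extra $h_n^{-1/2}$ when bounding $\u_\tau'$ and the jumps by $\Vert\u_n'\Vert_{\Xn}$; their product is $h_n^{\min\{l,s\}}$. Here the added regularity $\rho\in W^{1+s,\infty}$, $\bflow\in\bW^{1+s,\infty}$ is exactly what secures $\opd\u,\;\csr\div\u\in\bH^{1+s}$ so that the projection estimate applies.

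Collecting $(A)$, $(B)$ and the best-approximation bound yields $d_n(\u,\u_n)\lesssim\bigl(h_n^{\min\{1+s,k\}}+h_n^{\min\{s,l\}}\bigr)\Vert\u\Vert_{\bH^{2+s}}\lesssim h_n^{\min\{s,k,l\}}\Vert\u\Vert_{\bH^{2+s}}$, as claimed. I expect the main obstacle to be the careful bookkeeping of $(B)$: verifying that all consistent pieces (volume residual, symmetric flux, penalty) cancel, isolating precisely the convective lifting-on-the-test-function coupling, and tracking the single power $h_n^{-1/2}$ lost to the inverse trace inequality, which is what degrades the optimal order $\min\{1+s,k\}$ down to $\min\{s,k,l\}$. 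For the $H(\div)$-conforming variant of \cref{rem:HdivHDG}, where $R^l=0$ and $s_n=0$, the divergence side contributes nothing and the same argument applies verbatim.
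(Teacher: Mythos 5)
Your overall architecture is the same as the paper's: the split $d_n(\u,\u_n)\le d_n(\u,\pinX \u)+\Vert p_n\u-\u_n\Vert_{\Xn}$, the use of stability of $\seq{A}$ to reduce the second term to the residual $\sup_{\u_n'}\vert a_n(p_n\u,\u_n')-\spl\bff,\u_n'\spr\vert$, the bound $\vert(A)\vert\lesssim d_n(\u,p_n\u)$, and the identification of the convective lifting on the \emph{test} function as the source of the reduced order $\min\{l,s\}$ all match the paper's proof. The gap is in your estimate of $(B)$. After integration by parts and \eqref{eq:VecLift}, the surviving skeleton term is $\spl \rho(\Id-\Pi^l)(\opd\u),\hdgjump{\u_n'}_{\bflow}\spr_{\partial\Tn}$, and you propose to close the argument by Cauchy--Schwarz, paying $h_n^{-1/2}$ to bound ``$\u_\tau'$ and the jumps'' by $\Vert\u_n'\Vert_{\Xn}$. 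For $\u_\tau'$ this is a legitimate discrete trace inequality, but for the jump it fails: $\Vert\hdgjump{\u_n'}_{\bflow}\Vert_{\bL^2(\partial\Tn)}$ is \emph{not} bounded by $h_n^{-1/2}\Vert\u_n'\Vert_{\Xn}$ (nor by any power of $h_n$). Inspecting \eqref{def:scalarProductXn}, the facet unknown $\u_F'$ enters the discrete norm only through the normal jump $\hdgjump{\u_n'}_{\nv}$ and through the lifting $\bRl\u_n'$, i.e.\ only through finitely many moments of $\rho(\bflow\cdot\nv)(\u_\tau'-\u_F')$ against traces of $[\mathbb{P}^l(\Tn)]^d$; its tangential part is not controlled in $\bL^2(\partial\Tn)$. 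Concretely, take $\u_n'=(0,\u_F')$ with $\u_F'$ tangential (so $\hdgjump{\u_n'}_{\nv}=0$, $R^l\u_n'=0$) and supported on one interior facet $F$: if $l\le k-1$, dimension counting ($k+1$ tangential degrees of freedom versus $l+1$ moment constraints, since traces of $[\mathbb{P}^l(\tau)]^d$ on $F$ span only $[\mathbb{P}^l(F)]^d$) produces $\u_F'\neq 0$ with $\bRl\u_n'=0$, hence $\Vert\u_n'\Vert_{\Xn}=0$ while $\hdgjump{\u_n'}_{\bflow}\neq 0$; and even for $l=k$ the constant in any such inequality degenerates on facets where $\bflow\cdot\nv$ changes sign, since $\bflow\cdot\nv$ is not a polynomial. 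Note the same Cauchy--Schwarz argument \emph{is} fine for the divergence/SIP block, precisely because the normal jump is part of the $\Xn$-norm; the problem is exclusively the $\bflow$-weighted jump.

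The paper avoids ever estimating this jump, and that is the one idea your proof is missing: instead of the elementwise $L^2$-projection $\Pi^l$ it takes $\bm{\psi}_n\in[\mathbb{P}^l(\Tn)]^d$ to be an $H^1$-conforming, hence single-valued, piecewise polynomial approximation of $\opd\u$ (and likewise $\psi_n,\tilde{\psi}_n$ for $c_s^2\rho\div\u$ and $\nabla p\cdot\u$). Then the two skeleton terms cancel \emph{exactly}: the term $\spl\bm{\psi}_n,\rho\hdgjump{\u_n'}_{\bflow}\spr_{\partial\Tn}$ produced by integration by parts (single-valuedness of $\bm{\psi}_n$ makes the $\u_F'$-contribution telescope to zero) is annihilated by $\spl\bm{\psi}_n,\rho\bRl\u_n'\spr_{\Tn}$ via \eqref{eq:VecLift}, which is applicable precisely because $\bm{\psi}_n$ has degree $\le l$. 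What remains are volume terms, $\spl\rho\diffb(\opd\u-\bm{\psi}_n),\u_\tau'\spr_{\Tn}$ and $\spl\opd\u-\bm{\psi}_n,\rho\ddiffb\u_n'\spr_{\Tn}$, bounded by $\Vert\rho\opd\u-\bm{\psi}_n\Vert_{\bH^1}\Vert\u_n'\Vert_{\Xn}\lesssim h_n^{\min\{l,s\}}\Vert\u\Vert_{\bH^{2+s}}\Vert\u_n'\Vert_{\Xn}$. Your argument could be repaired along exactly these lines (split the jump into its $\u_\tau'$- and $\u_F'$-parts, treat $\u_\tau'$ by the trace inequality as you do, and for the $\u_F'$-part reintroduce a continuous degree-$l$ approximant to exploit telescoping together with \eqref{eq:VecLift}), but that repair \emph{is} the paper's cancellation mechanism; as written, your consistency estimate does not go through.
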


\begin{proof}
   To show the convergence rate, we continue to estimate \eqref{eq:convergenceThm:estimate}. For the first term,  \cref{lem:pinXRates} yields that $d_n(\u,\pinX \u) \lesssim h_n^{\min\{1+s,k\}}$. For the second term, we note that
   \begin{align*}
     \Vert p_n \u - \u_n \Vert_{\Xn} \le ( \sup_{n > n_0} \Vert A_n^{-1}  \Vert_{L(\Xn)}) \Vert A_n(p_n \u - \u_n) \Vert_{\Xn}
   \end{align*}
   and compute with similar arguments as in \cref{thm:AnToA} that 
   \begin{align*}
      \Vert A_n &(p_n \u - \u_n) \Vert_{\Xn} = \sup_{\u_n' \in \Xn  \Vert \u_n' \Vert_{\Xn} = 1} \vert a_n(p_n \u -  \u_n, \u_n') \vert \\
      &\le C d_n(\u,p_n \u) + \sup_{\u_n' \in \Xn  \Vert \u_n' \Vert_{\Xn} = 1} \vert \spl c_s^2 \rho \div \u, \ddiv \u_n' \spr \\
      &\quad - \spl \rho \opd \u, \dopd \u_n' \spr \\
      &\quad + \spl \div \u, \nabla p \cdot \u_n' \spr + \spl \nabla p \cdot \u, \ddiv \u_n' \spr + \spl (\hess(p) - \rho \hess(\phi)) \u, \u_n' \spr \\
      &\quad - i \omega \spl \gamma \rho \u, \u_n' \spr - \spl \bff, \u_n' \spr \vert. 
   \end{align*}
   For the first term, we again use the estimates from \cref{lem:pnBoundedBypinX} and \cref{lem:pinXRates}. For the remainder, we want to integrate by parts and use the fact that $\u$ solves \eqref{eq:Galbrun}. This requires that $\u \in \bH^2$ is regular enough, because a right hand-side $\bff \in \bL^2$ only grants $-\nabla(\csr \div \u) + \rho \diffb \diffb \u \in \bL^2$ which however does not imply that $-\nabla(\csr \div \u) \in \bL^2$ or $\rho \diffb \diffb \u \in \bL^2$.
   
   Let $\bm{\psi}_n \in [\mathbb{P}^l(\Tn)]^d$ be a suitable $H^1$-projection of $\opd \u$, for example as in \cite{EG16}. Then, we have that 
   \begin{equation*}
      \spl \opd \u, \rho \ddiffb \u_n' \spr = \spl \bm{\psi}_n, \rho \ddiffb \u_n' \spr + \spl \opd \u - \bm{\psi}_n, \rho \ddiffb \u_n' \spr, 
   \end{equation*}
   and similar to \eqref{eq:weakConvergence:partialInt} we calculate with the definition \eqref{eq:VecLift} and $\div(\rho \bflow) = 0$ that  
   \begin{equation*}
      \begin{aligned}
         \spl \bm{\psi_n}, &\rho \ddiffb \u_n' \spr = \spl \bm{\psi}_n, \rho \diffb \u_\tau' \spr_{\Tn} + \spl \bm{\psi}_n, \rho \bRl \u_n' \spr_{\Tn} \\
         &= - \spl \rho \diffb \bm{\psi}_n, \u_\tau' \spr_{\Tn} + \spl \bm{\psi}_n, \rho \hdgjump{\u_n'}_{\bflow} \spr_{\partial \Tn} + \spl \bm{\psi}_n, \rho \bRl \u_n' \spr_{\Tn} \\
         &= - \spl \rho \diffb \opd \u, \u_\tau' \spr_{\Tn} + \spl  \rho \diffb  (\opd \u - \bm{\psi}_n), \u_\tau' \spr_{\Tn}. 
      \end{aligned}
   \end{equation*}
    With similar techniques, cf.~\cite[Thm.~6.26]{Thesis_vB23}, we obtain for $\psi_n, \tilde{\psi}_n \in \mathbb{P}^l(\Tn)$ being suitable $H^1$-projections of $c_s^2 \rho \div \u$ and $\nabla p \cdot \u$ that 
   \begin{align*}
      \spl c_s^2 \rho \div \u, \ddiv \u_n' \spr = &- \spl \nabla(c_s^2 \rho \div \u), \u_n' \spr + \spl \nabla(c_s^2 \rho \div \u - \psi_n), \u_n' \spr \\
      &+ \spl c_s^2 \rho \div \u - \psi_n, \ddiv \u_n' \spr, \\ 
      \spl \nabla p \cdot \u, \ddiv \u_n' \spr = &- \spl \nabla(\nabla p \cdot \u), \u_n' \spr + \spl \nabla(\nabla p \cdot \u - \tilde{\psi}_n), \u_n' \spr \\
      &+ \spl \nabla p \cdot \u - \tilde{\psi}_n, \ddiv \u_n' \spr.
   \end{align*}
   Altogether, we obtain that 
   \begin{align*}
      \sup_{\u_n' \in \Xn  \Vert \u_n' \Vert_{\Xn} = 1} &\vert \spl c_s^2 \rho \div \u, \ddiv \u_n' \spr - \spl \rho \opd \u, \dopd \u_n' \spr \\
      &\ + \spl \div \u, \nabla p \cdot \u_n' \spr \! + \! \spl \nabla p \cdot \u, \ddiv \u_n' \spr \! + \! \spl (\hess(p) - \rho \hess(\phi)) \u, \u_n' \spr \\
      &\ - i \omega \spl \gamma \rho \u, \u_n' \spr - \spl \bff, \u_n' \spr \vert \\
      &\lesssim \Vert \rho \opd \u - \bm{\psi_n} \Vert_{\bH^1} + \Vert c_s^2 \rho \div \u - \psi_n \Vert_{H^1} \\
      &\  + \Vert \nabla p \cdot \u - \tilde{\psi}_n \Vert_{H^1} \\
      &\lesssim h^{\min\{ l,s\}}. 
   \end{align*}
   Combining all estimates, we obtain the desired result.
\end{proof}

\begin{remark}[The case $l = k-1$]\label[remark]{rem:CA:improvement}
   While the convergence estimate in \cref{thm:convEstimates} suggest the choice $l = k$ for the degree of the lifting operator $\bRl$, the numerical experiments in \cref{subsec:numex:conv} suggest that the choice $l = k-1$ (together with a reduced facet space $\XF = \mathbb{P}^{k-1}(\Fn)$ and $\XT = \mathbb{BDM}^k(\Tn)$) might be sufficient to obtain optimal convergence rates. Thus, in certain cases, it might be possible to improve the results presented in \cref{thm:convEstimates} to obtain the estimate $d_n(\u,\u_n) \lesssim h_n^{\min\{s,k,l+1\}}$. 
\end{remark}

\section{Numerical Experiments}\label{sec:numerics}
In this section, we study the discretization of Galbrun's equation with HDG methods numerically. After some preliminary discussions in \cref{subsec:numex:prelim} and implementational aspects in \cref{subsec:numex:compasp}, we study the convergence of different HDG methods towards an exact solution in \cref{subsec:numex:conv}. The main goal is to verify the convergence rates obtained in \cref{thm:convEstimates} numerically. In \cref{subsec:numex:mach}, we investigate the influence of the Mach number on the discretization error. Afterwards, we compare the proposed discretization of the convection term through lifting operators with a naive SIP discretization in \cref{subsec:numex:SIP}. To conclude, we consider numerical examples with physically relevant coefficients from the Sun in \cref{subsec:numex:sun}. For the implementation we use the finite element software \texttt{NGSolve} \cite{Sch97,Sch14}. Replication data is available in \cite{HLvB25}.

\subsection{Preliminaries}\label{subsec:numex:prelim}
In the numerical examples presented below, we want to compare different HDG discretizations. The most natural choices are a \emph{fully nonconforming HDG} method or an \emph{$H(\div)$-conforming HDG} method, cf.~\cref{rem:HdivHDG}. 
However, choosing polynomials of order $k$ for the facet unknowns might not be optimal in terms of computational efficiency. 
For different problems, optimal convergence rates have been obtained with facet unknowns of only order $k-1$, i.e. one order reduced compared to the volume unknowns. 
This can be achieved by involving only the $L^2$-projection on polynomials of degree $k-1$ for the hybrid DG jump operator on the facets. For a SIP discretization this has been achieved through the \emph{projected jumps} modification in \cite{LS16}. 

For our proposed discretizations, reducing the lifting order to $l=k-1$ implicitly includes a projection of the facet jumps onto the desired space as well and we can therefore also reduce the order of the facet unknowns to $k-1$ without further modifications. In the following, we call the simultaneous reduction of the facet and lifting degree to $k-1$ a \emph{reduced} method, e.g.~a \emph{reduced fully non-conforming HDG} and a \emph{reduced $H(\div)$-conforming HDG} method.
To obtain this improved efficiency also for the normal component, we also consider an \emph{optimized\footnotemark HDG} method. Here the finite element space, which we denote by $\mathbb{BDM}_{k}^{-}$ as discussed in \cref{rem:HdivHDG}, has so-called \emph{relaxed $H(\div)$-conformity}. In this case, we set $s_n(\cdot,\cdot) = 0$ to avoid an additional penalty on the highest order normal jump. 
\footnotetext{with respect to the computational complexity; we stress that these methods do not yield optimal convergence rates with \cref{thm:convEstimates}.}

An overview of the different discretizations together with their associated costs is given in \cref{table:discretizations}. The analysis from \cref{subsec:CA:convergenceEstimates} yields optimal converges rates for the fully-nonconforming and the $H(\div)$-conforming HDG method, but not for the reduced methods where we choose the lifting degree $l = k-1$.  

\begin{table}[!htbp]
   \begin{tabular}{lcccccc}
      \toprule
      HDG method & \multicolumn{3}{c}{discrete spaces} & \multicolumn{3}{c}{associated costs} \\%
      \cmidrule[0.4pt](r{0.125em}){1-1}%
      \cmidrule[0.4pt](lr{0.25em}){2-4}%
      \cmidrule[0.4pt](lr{0.25em}){5-7}%
      & $\XT$ & $\XF$ & lifting & ndofs & ncdofs& nze \\%
      \cmidrule[0.4pt](r{0.125em}){1-1}%
      \cmidrule[0.4pt](lr{0.125em}){2-2}%
      \cmidrule[0.4pt](lr{0.125em}){3-3}%
      \cmidrule[0.4pt](lr{0.125em}){4-4}%
      \cmidrule[0.4pt](lr{0.125em}){5-5}%
      \cmidrule[0.4pt](lr{0.125em}){6-6}%
      \cmidrule[0.4pt](lr{0.125em}){7-7}%
      full & $[\mathbb{P}^k(\Tn)]^d$ & $[\mathbb{P}^k(\Fn)]^d$ & $[\mathbb{P}^k(\Tn)]^d$ & 124 & 20 & 784 \\
      red.~full& $[\mathbb{P}^k(\Tn)]^d$ & $[\mathbb{P}^{k-1}(\Fn)]^d$ & $[\mathbb{P}^{k-1}(\Tn)]^d$ & 74 & 10 & \textbf{196} \\
      $H(\div)$& $\mathbb{BDM}^k(\Tn)$ & $[\mathbb{P}^{k,\text{tang}}(\Fn)]^d$ & $[\mathbb{P}^k(\Tn)]^d$ & 88 &  20 & 784\\
      red.~$H(\div)$& $\mathbb{BDM}^k(\Tn)$ & $[\mathbb{P}^{k-1,\text{tang}}(\Fn)]^d$ & $[\mathbb{P}^{k-1}(\Tn)]^d$ & 51 &  15 & 441 \\
      optimized & $\mathbb{BDM}^{-}_k(\Tn)$ & $[\mathbb{P}^{k-1,\text{tang}}(\Fn)]^d$ & $[\mathbb{P}^{k-1}(\Tn)]^d$ & 56 & 10 & \textbf{196}\\
      \bottomrule
   \end{tabular}
   \caption{We compare different HDG methods in terms of the associated computational costs measured through the number of degrees of freedom (ndofs), the number of coupling degrees of freedom (ncdofs), and the number of non-zero entries in the system matrices (nze) for polynomial degree $k = 1$ and a mesh with $6$ elements. The \emph{red. full} and the \text{optimzed} method have significantly fewer nzes than the \emph{full HDG} method. This reduction becomes less pronounced for higher polynomial degree.}
   \label{table:discretizations}
\end{table}

 For the experiments carried out below, we consider background flows of the following form
\begin{equation}\label{eq:numex:BFlows}
   \bflow_{\eta} \coloneqq \eta c_{\bflow} \begin{pmatrix}
      -y \\ x
   \end{pmatrix},
\end{equation}
where $\eta \in W^{1,\infty}$ is chosen in the specific experiment and the parameter $c_{\bflow} \in \mathbb{R}$ controls the Mach number of the flow. For all experiments, we restrict ourselves to the case where the gravitational potential $\phi$ is constant.

\subsection{Computational aspects}\label{subsec:numex:compasp}
In the next two remarks, we briefly address computational aspects of the implementation of the lifting.
\begin{remark}[Implementation of the lifting operator]\label[remark]{rem:numex:liftingImpl}
   In practice, we implement the lifting operator $\bRl$ through a mixed formulation adding an auxiliary equation and variable. 
   For all $\u_n, \u_n' \in \Xn$, we need to form
   \begin{equation}\label{eq:implementation:ddiffb}
      \begin{aligned}
         \spl \rho \ddiffb \u_n, \ddiffb \u_n' \spr_{\Tn} = &\spl \rho \diffb \u_n, \diffb \u_n' \spr_{\Tn} + \spl \rho \bRl \u_n, \diffb \u_n' \spr_{\Tn} + \spl \rho \diffb \u_n, \bRl \u_n' \spr_{\Tn} \\ 
         &+ \spl \rho \bRl \u_n, \bRl \u_n' \spr_{\Tn}.
      \end{aligned}
   \end{equation}
   For the mixed terms, we obtain by definition \eqref{eq:VecLift} that
   \begin{equation}
      \spl \rho \bRl \u_n, \diffb \u_n' \spr_{\Tn} \!+\! \spl \rho\diffb \u_n, \bRl \u_n' \spr_{\Tn} \!=\! - \spl\rho \hdgjump{\u_n}_{\bflow}, \diffb \u_n' \spr_{\partial \Tn} \!- \spl \rho \diffb \u_n, \hdgjump{\u_n'}_{\bflow} \spr_{\partial \Tn}\!.\! 
   \end{equation}
   For the remaining term, we introduce an auxiliary variable $\bm{r} = \bRl \u_n \in [\mathbb{P}^l(\Tn)]^d$ with the defining that $\bm{r}$ fulfills $\spl \rho \bm{r}, \bm{s} \spr_{\Tn} = - \spl \rho \hdgjump{\u_n}_{\bflow}, \bm{s} \spr_{\partial \Tn}$ for all $\bm{s} \in [\mathbb{P}^l(\Tn)]^d$. Then, we have   
   \begin{equation}
      \spl \rho \bRl \u_n, \bRl \u_n' \spr_{\Tn} = \spl \rho \bm{r}, \bRl \u_n' \spr_{\Tn} = \overline{\spl \rho \bRl \u_n', \bm{r} \spr_{\Tn}} = - \spl \rho \bm{r},  \hdgjump{\u_n'}_{\bflow} \spr_{\partial \Tn}, 
   \end{equation} 
   Thus, we can implement the term \eqref{eq:implementation:ddiffb} through a mixed formulation with the auxiliary variable $\bm{r}$ (and corresponding test function $\bm{s}$).
   As discussed in \cref{subsec:method:discretization} the scalar lifting operator $R^l$ is only introduced for notational convenience \emph{in the analysis} and by definition of the stabilization term $s_n$, we are considering a SIP method for the diffusion term. Thus, we do not have to implement the scalar lifting operator $R^l$ explicitly.
\end{remark}

\begin{remark}[Computational costs associated with the lifting operator]\label[remark]{rem:numex:LiftingCosts}
   In a DG setting, the implementation of lifting operators is usually associated with higher computational costs in the resulting linear systems because the lifting operator introduces new, less local couplings compared to classical SIP operators. 
   In contrast, the support of the HDG-lifting operator is local since the volume unknowns only couple through the facet unknowns indirectly.
   Note that especially $\bm{r}$ and $\bm{s}$ in \cref{rem:numex:liftingImpl} only occur locally on each element and can be eliminated locally.
   Thus, in an HDG setting, the implementation of the lifting operators leads to similar computational costs than the implementation of a corresponding SIP variant. To visualize the associated computational costs, we consider the sparsity pattern of the respective system matrices in \cref{fig:sparsitypattern}.
   \begin{figure}[!htbp]
      \begin{center}
         \begin{tikzpicture}[scale=0.89]  
            \node (A1) at (-6,0) { \includegraphics[width=0.21\textwidth,keepaspectratio]{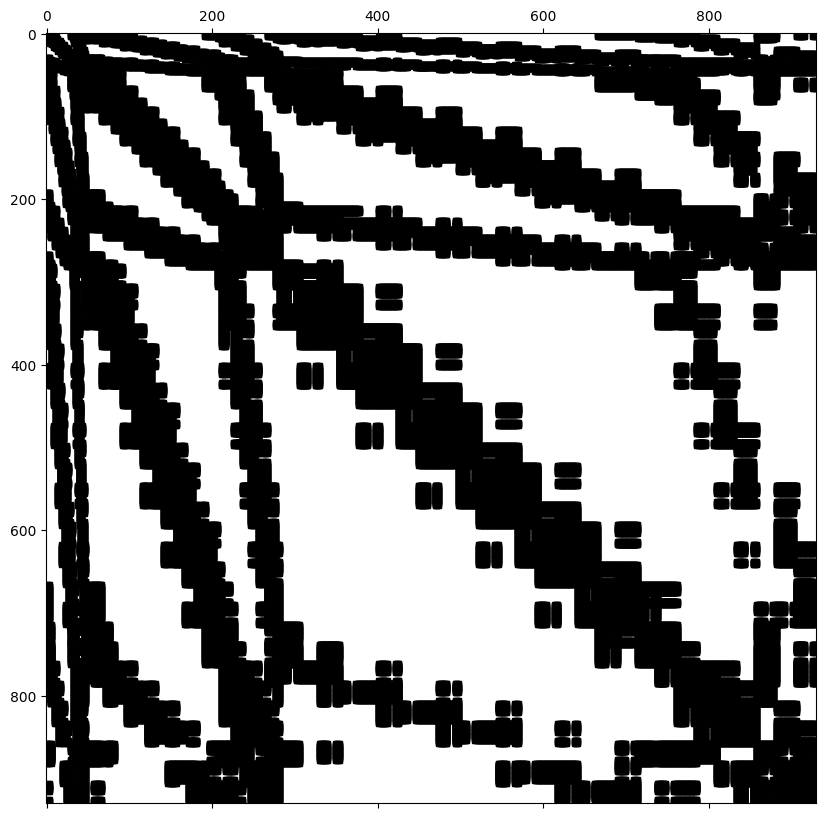}};
            \node (B1)at (-2,0) { \includegraphics[width=0.21\textwidth,keepaspectratio]{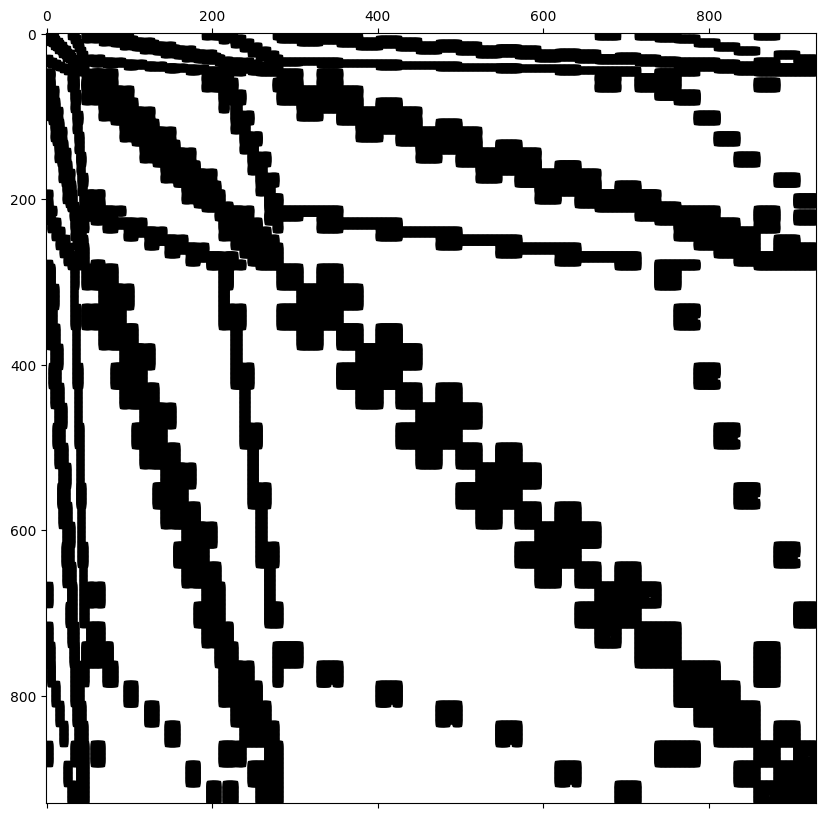}};
            \node (C1) at (2,0) { \includegraphics[width=0.21\textwidth,keepaspectratio]{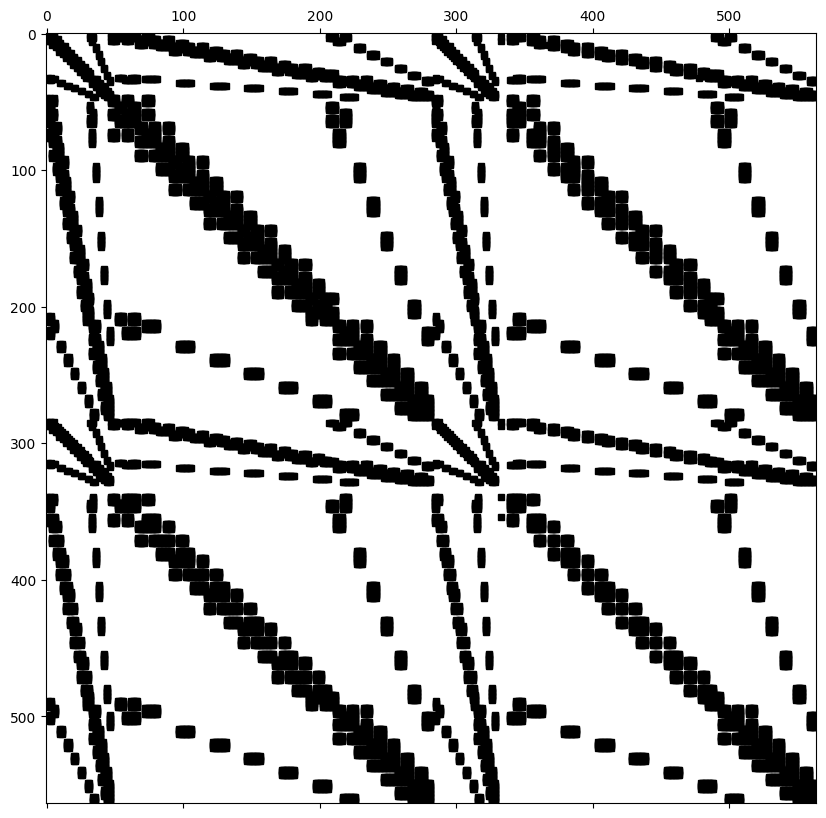}};
            \node (D1) at (6,0) { \includegraphics[width=0.21\textwidth,keepaspectratio]{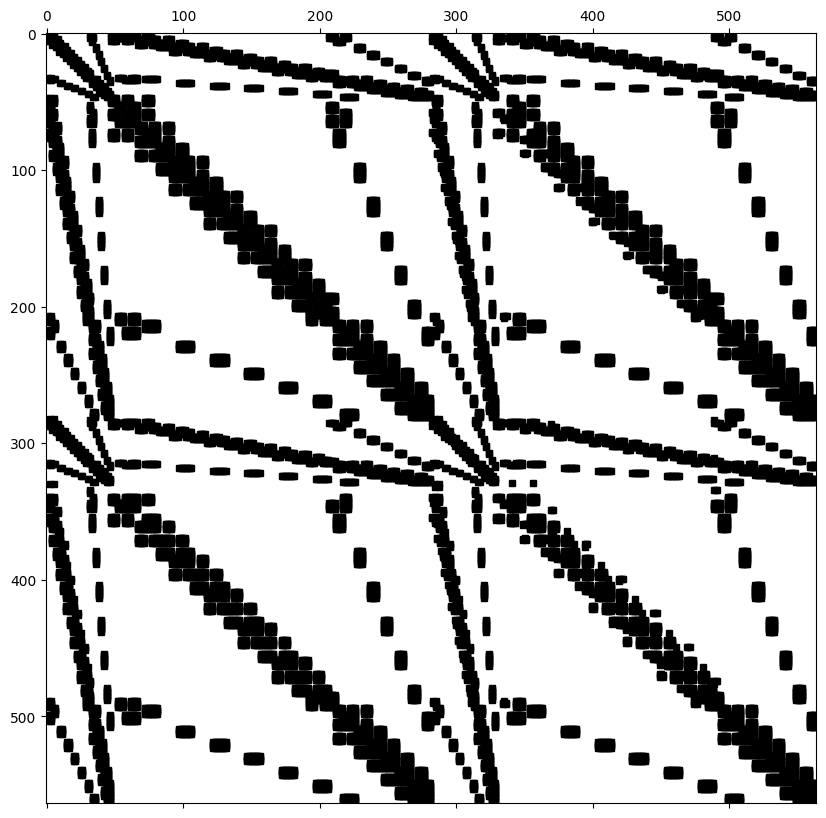}};
            \node[below=0.15cm of A1] (A) {\footnotesize $H(\div)$-DG+Lift};
            \node[below =0.1cm of A] {\footnotesize $\texttt{nzes:}\ 235,036$};
            \node[below=0.15cm of B1] (B) {\footnotesize $H(\div)$-DG+SIP};
            \node[below =0.1cm of B] {\footnotesize $\texttt{nzes:}\ 133,652$};
            \node[below=0.15cm of C1] (C) {\footnotesize $H(\div)$-HDG+Lift};
            \node[below =0.1cm of C] {\footnotesize $\texttt{nzes:}\ 30,096$};
            \node[below=0.15cm of D1] (D) {\footnotesize $H(\div)$-HDG+SIP};
            \node[below =0.1cm of D] {\footnotesize $\texttt{nzes:}\ 30,096$};
        \end{tikzpicture}
      \end{center}
      \caption{We compare the sparsity pattern of the stiffness matrix obtained with the following four methods: $H(\div)$-conforming DG with lifting stabilization  (left), $H(\div)$-conforming DG with SIP (middle-left), $H(\div)$-conforming HDG with lifting stabilization (middle-right), and $H(\div)$-conforming HDG with SIP (right). For the HDG methods, we use static condensation and for the DG method with the lifting operator, we apply the Schur complement to eliminate the unknowns associated with the lifting operator. In the HDG setting, both methods lead to the same number of non-zero entries ($\texttt{nzes}$) (even though the couplings differ slightly), whereas in the DG setting, the lifting operator almost doubles the number of non-zero entries.
      For the computations, we chose a mesh with $27$ elements and the polynomial degree $k = 5$.}
      \label{fig:sparsitypattern}      
   \end{figure}
\end{remark}

\subsection{Convergence studies}\label{subsec:numex:conv}
We consider the unit disk $\dom = \{ x \in \mathbb{R}^2 : \Vert x \Vert < 1 \}$ and choose the parameters 
\begin{subequations}\label{eq:Numex:parameters1}
   \begin{alignat}{3}
      \rho &= \sqrt{10/\pi} \exp(-10(x^2+y^2)), \quad &&c_s^2 = 1.44 + 0.25\rho, \quad &&\omega = 0.78 \times 2\pi, \\
      \gamma &= 0.1, &&\Omega =(0,0), && p =1.44\rho + 0.08\rho^2. 
   \end{alignat} 
\end{subequations}
While these parameters are artificially chosen, the density and the sound speed are modeled to behave similarly (though less extreme) than the respective parameters in the Sun. We consider the background flow $\bflow_{c_s}$ given by \eqref{eq:numex:BFlows} and note that $\div (\rho \bflow_{c_s}) = 0$, $\bflow_{c_s} \cdot \nv = 0$ and $\Vert c_s^{-1} \bflow_{c_s} \Vert_{\bL^\infty} = c_{\bflow}$. The source term $\bff$ is calculated such that the exact solution is given by
\begin{equation}\label{eq:num:refsol}
   \u_{\text{ex}} = \frac{1}{\rho} \sin(x^2+y^2) \sin((x^2+y^2)-1) \begin{pmatrix}
      +(1+i)g \\ -(1+i)g
   \end{pmatrix},
\end{equation}
where $g = \sqrt{\alpha/\pi} \exp(-\alpha(x^2+y^2))$, $\alpha = \log(10^9)$, is a Gaussian. 

In \cref{fig:ConvergenceStudy:RefSol}, we compare the discretization error in the $\Vert \cdot \Vert_{\X(\Tn)}$-norm of the methods from \cref{table:discretizations}. As expected by 
\cref{thm:convEstimates}, the fully non-conforming and $H(\div)$-conforming HDG methods converge with optimal order. Additionally, the reduced methods converge with optimal order as well, even though these cases are not covered by \cref{thm:convEstimates}.
\changed{The absolute error of the optimized HDG method is larger than the error of the other methods and the order of convergences seems to be reduced, however the degrees of freedom are reduced significantly\footnotemark.}
For the fully non-conforming methods, we observe that the stabilization parameter $\alpha = 100$ seems to be more robust than $\alpha = 10$, where we observe a longer preasymptotic phase.
Overall, the numerical results confirm the theoretical results from \cref{thm:convEstimates}, and suggest that the dependence of the convergence order on the lifting degree $l$ might be improved in certain cases, cf.~\cref{rem:CA:improvement}.

\footnotetext{For $k = 4$ on the finest mesh level, we have the following number of non-zero matrix entries for the system matrix: $\sim 5 \cdot 10^6$ (full HDG \& $H(\div)$-conforming HDG), $\sim 4\cdot 10^6$ (red. $H(\div)$-conforming HDG), $\sim 3 \cdot 10^6$ (optimized HDG).}


 \begin{figure}[!htbp]
   \begin{center}\hspace*{-0.3cm}
       \begin{tikzpicture}[scale=0.78, spy using outlines={circle, magnification=4, size=1cm, connect spies}]
           \begin{groupplot}[%
               group style={%
               group size=2 by 1,
               horizontal sep=0cm,
               vertical sep=0.1cm,
               },
           ymajorgrids=true,
           grid style=dashed,
           ymin=1e-7,ymax=2e1,
           ]       
           \nextgroupplot[width=9cm,height=7cm,domain=0:4,xmode=linear,ymode=log, 
                          xlabel={refinement level $L$}, 
                          title={$k = 3$}, 
                          ylabel={$\Vert \u_{\text{ex}} - \u_n \Vert_{\X(\Tn)}$}, 
                          cycle list name=ConvPlot, 
               xtick={0,1,2,3,4}, minor tick style = {white}, legend style={legend columns=6, draw=none,nodes={scale=.75}}, 
               legend to name=named
           ]
            \addplot+[discard if not={order}{3}, discard if not={method}{HdivHDG},line width=1pt] table [x=L, y=wxerror, col sep=comma] {data/ConvergenceStudy.csv};
            \addplot+[discard if not={order}{3}, discard if not={method}{redHdivHDG},line width=1pt] table [x=L, y=wxerror, col sep=comma] {data/ConvergenceStudy.csv};
            \addplot+[discard if not={order}{3}, discard if not={method}{fullHDG},discard if not={lamb}{-100},line width=1pt] table [x=L, y=wxerror, col sep=comma] {data/ConvergenceStudy.csv};
            \addplot+[discard if not={order}{3}, discard if not={method}{redfullHDG},discard if not={lamb}{-100},line width=1pt] table [x=L, y=wxerror, col sep=comma] {data/ConvergenceStudy.csv};
            \addplot+[discard if not={order}{3}, discard if not={method}{optHDG},line width=1pt] table [x=L, y=wxerror, col sep=comma] {data/ConvergenceStudy.csv};

            \addplot+[discard if not={order}{3}, discard if not={method}{fullHDG},discard if not={lamb}{-10},line width=1pt,dashed] table [x=L, y=wxerror, col sep=comma] {data/ConvergenceStudy.csv};
            \addplot+[discard if not={order}{3}, discard if not={method}{redfullHDG},discard if not={lamb}{-10},line width=1pt,dashed] table [x=L, y=wxerror, col sep=comma] {data/ConvergenceStudy.csv};

           \addplot[gray, dashed, domain=0:4] {(7.5*(1/2^(3))^(x+0.5))};
           \addplot[gray, dashed, domain=0:4] {(0.15*(1/2^(3))^(x+0.5))};

           
           \spy[gray,size=1.25cm,opacity=.8] on (5.30,1.23) in node [fill=white] at (6,4.5);

           \legend{$H(\div)$-HDG, red.~$H(\div)$-HDG, full HDG, red.~full HDG, opt.~HDG,,,$\mathcal{O}(h^k)$}

           \nextgroupplot[width=9cm,height=7cm,domain=0:4,xmode=linear,ymode=log, 
                          xlabel={refinement level $L$}, 
                          title={$k = 4$}, 
                          ylabel={}, 
                          cycle list name=ConvPlot, 
               xtick={0,1,2,3,4}, minor tick style = {white}, yticklabels={,,}
           ]
            \addplot+[discard if not={order}{4}, discard if not={method}{HdivHDG},line width=1pt] table [x=L, y=wxerror, col sep=comma] {data/ConvergenceStudy.csv};
            \addplot+[discard if not={order}{4}, discard if not={method}{redHdivHDG},line width=1pt] table [x=L, y=wxerror, col sep=comma] {data/ConvergenceStudy.csv};
            \addplot+[discard if not={order}{4}, discard if not={method}{fullHDG},discard if not={lamb}{-100},line width=1pt] table [x=L, y=wxerror, col sep=comma] {data/ConvergenceStudy.csv};
            \addplot+[discard if not={order}{4}, discard if not={method}{redfullHDG},discard if not={lamb}{-100},line width=1pt] table [x=L, y=wxerror, col sep=comma] {data/ConvergenceStudy.csv};
            \addplot+[discard if not={order}{4}, discard if not={method}{optHDG},line width=1pt] table [x=L, y=wxerror, col sep=comma] {data/ConvergenceStudy.csv};
            
            \addplot+[discard if not={order}{4}, discard if not={method}{fullHDG},discard if not={lamb}{-10},line width=1pt,dashed] table [x=L, y=wxerror, col sep=comma] {data/ConvergenceStudy.csv};
            \addplot+[discard if not={order}{4}, discard if not={method}{redfullHDG},discard if not={lamb}{-10},line width=1pt,dashed] table [x=L, y=wxerror, col sep=comma] {data/ConvergenceStudy.csv};
          
           \addplot[gray, dashed, domain=0:4] {(2.5*(1/2^(4))^(x+0.5))};
           \addplot[gray, dashed, domain=0:4] {(0.1*(1/2^(4))^(x+0.5))};


           \end{groupplot}
            \begin{axis}[
            ybar,
            footnotesize, 
            anchor=north east, 
            scale only axis, 
            cycle list name= barcolors,
            enlarge x limits=2.5,
            at={(2.75cm,2.15cm)}, 
            xticklabels={},
            xtick={},
            xmajorticks=false,
            ymin=4.5e4,ymax=8.7e4,
            bar width=0.2cm,
            axis line style=gray!85,
            width=2cm
            ]
               \addplot+[ybar,discard if not={order}{3}, discard if not={L}{4},discard if not={lamb}{0},,discard if not={method}{HdivHDG}] table [x=methodInd,y=ncdofs, col sep=comma] {data/ConvergenceStudy.csv};
               \addplot+[ybar,discard if not={order}{3}, discard if not={L}{4},discard if not={lamb}{0},,discard if not={method}{redHdivHDG}] table [x=methodInd,y=ncdofs, col sep=comma] {data/ConvergenceStudy.csv};
               \addplot+[ybar,discard if not={order}{3}, discard if not={L}{4},discard if not={lamb}{-100},,discard if not={method}{fullHDG}] table [x=methodInd,y=ncdofs, col sep=comma] {data/ConvergenceStudy.csv};
               \addplot+[ybar,discard if not={order}{3}, discard if not={L}{4},discard if not={lamb}{-100},,discard if not={method}{redfullHDG}] table [x=methodInd,y=ncdofs, col sep=comma] {data/ConvergenceStudy.csv};
               \addplot+[ybar,discard if not={order}{3}, discard if not={L}{4},discard if not={lamb}{0},,discard if not={method}{optHDG}] table [x=methodInd,y=ncdofs, col sep=comma] {data/ConvergenceStudy.csv};
         
            \end{axis}
            \begin{axis}[
               ybar,
               footnotesize, 
               anchor=north east, 
               scale only axis, 
               cycle list name= barcolors,
               enlarge x limits=2.5,
               at={(10.15cm,2.15cm)}, 
               xticklabels={},
               xtick={},
               xmajorticks=false,
               ymin=0.45e5,ymax=1.15e5,
               bar width=0.2cm,
               axis line style=gray!85,
               width=2cm
               ]
                  \addplot+[ybar,discard if not={order}{4}, discard if not={L}{4},discard if not={lamb}{0},,discard if not={method}{HdivHDG}] table [x=methodInd,y=ncdofs, col sep=comma] {data/ConvergenceStudy.csv};
                  \addplot+[ybar,discard if not={order}{4}, discard if not={L}{4},discard if not={lamb}{0},,discard if not={method}{redHdivHDG}] table [x=methodInd,y=ncdofs, col sep=comma] {data/ConvergenceStudy.csv};
                  \addplot+[ybar,discard if not={order}{4}, discard if not={L}{4},discard if not={lamb}{-100},,discard if not={method}{fullHDG}] table [x=methodInd,y=ncdofs, col sep=comma] {data/ConvergenceStudy.csv};
                  \addplot+[ybar,discard if not={order}{4}, discard if not={L}{4},discard if not={lamb}{-100},,discard if not={method}{redfullHDG}] table [x=methodInd,y=ncdofs, col sep=comma] {data/ConvergenceStudy.csv};
                  \addplot+[ybar,discard if not={order}{4}, discard if not={L}{4},discard if not={lamb}{0},,discard if not={method}{optHDG}] table [x=methodInd,y=ncdofs, col sep=comma] {data/ConvergenceStudy.csv};
            
               \end{axis}

       \end{tikzpicture}
       \pgfplotslegendfromname{named}
   \end{center}\vspace*{-0.1cm}
   \caption{Convergence of the methods listed in \cref{table:discretizations} against \eqref{eq:num:refsol} for polynomial degrees $k = 3$ and $k = 4$ with Mach number $\Vert c_s^{-1} \bflow_{c_s} \Vert^2_{\bL^\infty} = 0.25$. For the fully non-conforming methods, we consider the choices $\alpha \in \{10 k^2,100 k^2\}$ (dashed, solid). The error is measured in the $\X(\Tn)$-norm. In the embedded bar charts we show the number of coupled degrees of freedom for each method at the last refinement level $L=4$.}
   \label{fig:ConvergenceStudy:RefSol}
 \end{figure}

\subsection{Mach number robustness}\label{subsec:numex:mach}
As formalized in \cref{assumption:MachNumber}, the stability of the method depends on the Mach number of the background flow. Here, we want to study the influence of the Mach number on the error of the discretization. In particular, we want to compare the methods considered in this manuscript with the $\bH^1$-conforming discretization introduced in \cite{HLS22H1}, and therefore the assumptions on the Mach number from \eqref{eq:assumption:MachNumber} and \eqref{eq:MnAssump:H1}.

We consider the parameters given by \eqref{eq:Numex:parameters1}, but choose the right-hand side independent of the background flow: 
\begin{equation}
   \bff(x,y) \coloneqq \frac{1}{2} \sqrt{55/\pi} \exp(-55((x-0.35)^2 + (y-0.35)^2)) \begin{pmatrix}
      1 \\ 0
   \end{pmatrix}.
\end{equation}

To measure the discretization error, we calculate a reference solution on a fine mesh ($h \sim 0.5^5$) with high polynomial degree ($k = 7$) with the $H(\div)$-conforming HDG method, cf.~\cref{table:discretizations}. Then, we solve the problem on a coarser mesh ($h \sim 0.5^4$) with polynomial degree $k = 5$ using the $H(\div)$-conforming HDG and the $\bH^1$-conforming discretization. \changed{With the considered parameters, we have that $\theta = 0$\footnote{\changed{One of the eigenvalues of $\matii$ is negative on $\dom$ such that $C_{\matii} = 0$. The replication data provides a script that calculates the eigenvalues of the matrix symbolically.}} such that \eqref{eq:assumption:MachNumber} essentially only depends on the stability constant $\DBconst$.} To compare the resulting discretization error with the approximation quality of the discrete space, we calculate the best-approximation of the reference solution with respect to the $\X(\Tn)$-inner product, i.e.~we compute $\Pi^{\X_n} \u_{\text{ref}} \in \Xn$ such that $\spl \Pi^{\X_n} \u_{\text{ref}}, \v_n \spr_{\X(\Tn)} = \spl \u, \v_n \spr_{\X(\Tn)}$ for all $\v_n \in \Xn$. 

The results for three different background flows $\bflow_{\eta}$, $\eta \in \{1,c_s,c_s/\rho\}$, modeled by \eqref{eq:numex:BFlows} are displayed in \cref{fig:MNRobustness}. For the flows $\bflow_1$ and $\bflow_{c_s}$, we observe that the discretization error of the $H(\div)$-conforming HDG method is close to the best-approximation error until $\Vert c_s^{-1} \bflow \Vert_{\bL^\infty} \approx 1.0$. In contrast, the discretization error of the $\bH^1$-conforming method starts to deviate from the best-approximation error as soon as the Mach number approaches $0.3$. \changed{Curiously, both errors seem to increase for lower Mach numbers, but decreasing the mach number further does not lead to a blowup of the error\footnotemark. Since both methods behave similarly, this might be an artifact of the specific example, in particular, since we do not observe this behavior for the background flow $\bflow_{c_s/\rho}$.} Altogether, the $H(\div)$-conforming method seems to be more robust with respect to the increasing Mach number than the $\bH^1$-conforming method in those examples.

\footnotetext{\changed{We consider only the background flow $\bflow_{1}$: For the $H^1$-conforming method, the error is around $1.9\cdot 10^{-4}$ for $\Vert c_s^{-1} \bflow \Vert_{\bL^\infty} = 1\cdot 10^{-8}$ and $5.1\cdot 10^{-4}$ for $\Vert c_s^{-1} \bflow \Vert_{\bL^\infty} = 0.05$, whereas for the $H(\div)$-conforming method, the errors are $8.5\cdot 10^{-4}$ and $2.5\cdot 10^{-3}$.}}

For the background flow $\bflow_{c_s/\rho}$, the methods produce much more similar results and in particular, the error does not increase drastically once the Mach number exceeds $1.0$.

 \begin{figure}[!htbp]
   \begin{center}\hspace*{-0.3cm}
      \begin{tikzpicture}[scale=0.65]
          \begin{groupplot}[%
              group style={%
              group size=3 by 1,
              horizontal sep=0cm,
              vertical sep=2.5cm,
              },
          ymajorgrids=true,
          grid style=dashed,
          ymin=6e-6, ymax=5e-2,
          ]       
          \nextgroupplot[width=9cm,height=7cm,domain=0:4,xmode=linear,ymode=log, 
                         xlabel={Mach number $\Vert c_s^{-1} \bflow \Vert_{\bL^\infty}$}, 
                         title={$\bflow_{1}$}, 
                         ylabel={$\Vert \u_{\text{ref}} - \u_n \Vert_{\X(\Tn)}$}, 
                         cycle list name=MachNum, 
              minor tick style = {white}, legend style={legend columns=5, draw=none,nodes={scale=.8}}, 
              legend to name=named
          ]

          \addplot+[discard if not={order}{5}, discard if not={method}{H1}, discard if not={flow}{0},discard if not={L}{3},line width=1pt] table [x=Mach, y=wxerror, col sep=comma] {data/MNRobustness.csv};
          \addplot+[discard if not={order}{5}, discard if not={method}{H1}, discard if not={flow}{0},discard if not={L}{3},line width=1pt,dashed] table [x=Mach, y=wX-best, col sep=comma] {data/MNRobustness.csv};
          \addplot+[discard if not={order}{5}, discard if not={method}{HDivHDG}, discard if not={flow}{0},discard if not={L}{3},line width=1pt] table [x=Mach, y=wxerror, col sep=comma] {data/MNRobustness.csv};
          \addplot+[discard if not={order}{5}, discard if not={method}{HDivHDG}, discard if not={flow}{0},discard if not={L}{3},line width=1pt,dashed] table [x=Mach, y=wX-best, col sep=comma] {data/MNRobustness.csv};

           \draw[dashed] (1,1e-8) -- (1,1e0);

          \legend{$\bH^1$,$\X(\Tn)$-best ($\bH^1$), $H(\div)$-HDG, $\X(\Tn)$-best ($H(\div)$-HDG)}

         \nextgroupplot[width=9cm,height=7cm,domain=0:4,xmode=linear,ymode=log, 
           xlabel={Mach number $\Vert c_s^{-1} \bflow \Vert_{\bL^\infty}$}, 
           title={$\bflow_{c_s}$}, 
           ylabel={}, 
           cycle list name=MachNum, 
            minor tick style = {white}, 
            yticklabels={,,}
            ]

            \addplot+[discard if not={order}{5}, discard if not={method}{H1}, discard if not={flow}{1},discard if not={L}{3},line width=1pt] table [x=Mach, y=wxerror, col sep=comma] {data/MNRobustness.csv};
            \addplot+[discard if not={order}{5}, discard if not={method}{H1}, discard if not={flow}{1},discard if not={L}{3},line width=1pt,dashed] table [x=Mach, y=wX-best, col sep=comma] {data/MNRobustness.csv};
            \addplot+[discard if not={order}{5}, discard if not={method}{HDivHDG}, discard if not={flow}{1},discard if not={L}{3},line width=1pt] table [x=Mach, y=wxerror, col sep=comma] {data/MNRobustness.csv};
            \addplot+[discard if not={order}{5}, discard if not={method}{HDivHDG}, discard if not={flow}{1},discard if not={L}{3},line width=1pt,dashed] table [x=Mach, y=wX-best, col sep=comma] {data/MNRobustness.csv};

            \draw[dashed] (1,1e-8) -- (1,1e0);

            \nextgroupplot[width=9cm,height=7cm,domain=0:4,xmode=linear,ymode=log, 
            xlabel={Mach number $\Vert c_s^{-1} \bflow \Vert_{\bL^\infty}$}, 
            title={$\bflow_{c_s/\rho}$}, 
            ylabel={}, 
            cycle list name=MachNum, 
             minor tick style = {white}, 
             yticklabels={,,}
             ]
 
             \addplot+[discard if not={order}{5}, discard if not={method}{H1}, discard if not={flow}{2},discard if not={L}{3},line width=1pt] table [x=Mach, y=wxerror, col sep=comma] {data/MNRobustness.csv};
             \addplot+[discard if not={order}{5}, discard if not={method}{H1}, discard if not={flow}{2},discard if not={L}{3},line width=1pt,dashed] table [x=Mach, y=wX-best, col sep=comma] {data/MNRobustness.csv};
             \addplot+[discard if not={order}{5}, discard if not={method}{HDivHDG}, discard if not={flow}{2},discard if not={L}{3},line width=1pt] table [x=Mach, y=wxerror, col sep=comma] {data/MNRobustness.csv};
             \addplot+[discard if not={order}{5}, discard if not={method}{HDivHDG}, discard if not={flow}{2},discard if not={L}{3},line width=1pt,dashed] table [x=Mach, y=wX-best, col sep=comma] {data/MNRobustness.csv};
 
             \draw[dashed] (1,1e-8) -- (1,1e0);

          \end{groupplot}

      \end{tikzpicture}
      \pgfplotslegendfromname{named}
  \end{center}\vspace*{-0.1cm}
  \caption{$\X(\Tn)$-error of the $\bH^1$-conforming discretization, the $H(\div)$-conforming HDG discretization and the respective best-approximation error for the flows $\bflow_{\eta}$, $\eta \in \{1,c_s,c_s/\rho\}$ modeled after \eqref{eq:numex:BFlows}. We use the coefficient $c_{\bflow}$ to vary the Mach number in between $0.05$ and $1.25$.}
  \label{fig:MNRobustness}
 \end{figure}

\subsection{Comparison with SIP}\label{subsec:numex:SIP}
The lifting operator $\bRl$ allows us to stabilize the directional derivative $\diffb$ without balancing a stabilization parameter against the Mach number of the background flow $\bflow$. In this section, we compare the proposed method with a SIP version to assess its practical relevance. To avoid balancing two stabilization parameters against each other, we only consider the $H(\div)$-conforming HDG method. For the SIP version, we simply replace the term 
\begin{equation*}
   - \spl \rho \dopd \u_n, \dopd \u_n' \spr_{\Tn}
\end{equation*}
by
\begin{align*}
   - &\spl \rho \opd \u_{\tau}, \opd \u_{\tau}' \spr_{\Tn} - i \spl \rho \opd \u_{\tau}, \hdgjump{\u_n'}_{\bflow} \spr_{\partial \Tn} \\
   &- i \spl \hdgjump{\u_n}_{\bflow}, \rho \opd \u_{\tau}' \spr_{\partial \Tn} + \spl \rho \lambda \h^{-1} \hdgjump{\u_n}_{\bflow}, \hdgjump{\u_n'}_{\bflow} \spr_{\partial \Tn},
\end{align*}
where $\lambda > 0$ is a stabilization parameter that has to be chosen sufficiently large to ensure stability. We choose the same examples as considered in \cref{subsec:numex:conv}, where the parameters are given by \eqref{eq:Numex:parameters1} and the reference solution by \eqref{eq:num:refsol}. In \cref{fig:LiftvsSIP}, we compare the discretization error of the lifting stabilized method with the SIP method for stabilization parameters $\lambda \in \{1k^2,10k^2,100k^2\}$ and polynomial degree $k = 5$. We choose the background flow $\bflow_{c_s}$ and consider the Mach numbers $\Vert c_s^{-1} \bflow \Vert_{\bL^\infty} \in \{0.01,0.45\}$. 

The lifting stabilized discretization seems to be more stable and the error is (significantly) smaller. In particular, the choice of a suitable SIP stabilization parameter $\lambda$ seems to depend on the Mach number. It's also worth mentioning that the condition number of the system matrix grows with the stabilization parameter $\lambda$ in the SIP version, which is not the case for the lifting stabilized version. 
Altogether, we conclude that the lifting stabilized version is more robust and reliable than the SIP version, in particular because the computational costs are not significantly higher, cf.~\cref{rem:numex:LiftingCosts}.

\begin{figure}[!htbp]
   \begin{center}
      \begin{tikzpicture}[scale=0.74]
         \begin{groupplot}[%
             group style={%
             group size=2 by 1,
             horizontal sep=0cm,
             vertical sep=0.1cm,
             },
         ymajorgrids=true,
         grid style=dashed,
         ymin=5e-8, ymax=1.2e1,
         ]      

         \nextgroupplot[width=9cm,height=7cm,domain=0:4,xmode=linear,ymode=log, 
                        xlabel={refinement level}, 
                        title={$\Vert c_s^{-1} \bflow_{c_s} \Vert_{\bL^\infty} =0.01$}, 
                        ylabel={$\Vert \u_{\text{ex}} - \u_n \Vert_{\X(\Tn)}$}, 
                        cycle list name=LiftvsSIP, 
             minor tick style = {white}, legend style={legend columns=5, draw=none,nodes={scale=.8}}, 
             legend to name=named
         ]
         \addplot+[discard if not={method}{HdivHDG},discard if not={mn}{0.01},line width=1pt] table [x=L, y=wxerror, col sep=comma] {data/CompareWithSIP.csv};
         \addplot+[discard if not={method}{HdivSIPHDG},discard if not={mn}{0.01},discard if not={lambda}{1.0},line width=1pt] table [x=L, y=wxerror, col sep=comma] {data/CompareWithSIP.csv};
         \addplot+[discard if not={method}{HdivSIPHDG},discard if not={mn}{0.01},discard if not={lambda}{10.0},line width=1pt] table [x=L, y=wxerror, col sep=comma] {data/CompareWithSIP.csv};
         \addplot+[discard if not={method}{HdivSIPHDG},discard if not={mn}{0.01},discard if not={lambda}{100.0},line width=1pt] table [x=L, y=wxerror, col sep=comma] {data/CompareWithSIP.csv};
        
         \addplot[gray, dashed, domain=0:4] {(14*(1/2^(5))^(x+0.5))};

         \legend{lifting, SIP ($\lambda = 1$), SIP ($\lambda = 10$), SIP ($\lambda = 100$), $\mathcal{O}(h^5)$}

         \nextgroupplot[width=9cm,height=7cm,domain=0:4,xmode=linear,ymode=log, 
         xlabel={refinement level}, 
         title={$\Vert c_s^{-1} \bflow_{c_s} \Vert_{\bL^\infty} =0.45$}, 
         ylabel={}, 
         yticklabels={,,},
         cycle list name=LiftvsSIP, 
         minor tick style = {white}, 
         ]
         \addplot+[discard if not={method}{HdivHDG},discard if not={mn}{0.45},line width=1pt] table [x=L, y=wxerror, col sep=comma] {data/CompareWithSIP.csv};
         \addplot+[discard if not={method}{HdivSIPHDG},discard if not={mn}{0.45},discard if not={lambda}{1.0},line width=1pt] table [x=L, y=wxerror, col sep=comma] {data/CompareWithSIP.csv};
         \addplot+[discard if not={method}{HdivSIPHDG},discard if not={mn}{0.45},discard if not={lambda}{10.0},line width=1pt] table [x=L, y=wxerror, col sep=comma] {data/CompareWithSIP.csv};
         \addplot+[discard if not={method}{HdivSIPHDG},discard if not={mn}{0.45},discard if not={lambda}{100.0},line width=1pt] table [x=L, y=wxerror, col sep=comma] {data/CompareWithSIP.csv};
        
         \addplot[gray, dashed, domain=0:4] {(5*(1/2^(5))^(x+0.5))};
         \end{groupplot}

     \end{tikzpicture}
     \pgfplotslegendfromname{named}
 \end{center}\vspace*{-0.1cm}
 \caption{Discretization error measured in the $\Vert \cdot \Vert_{\X(\Tn)}$-norm for the lifting stabilized $H(\div)$-conforming method and the $H(\div)$-conforming SIP variant with $\lambda \in \{1k^2,10k^2,100k^2\}$ for polynomial degree $k = 5$ and Mach numbers $\Vert c_s^{-1} \bflow_{c_s} \Vert_{\bL^\infty} \in \{0.01,0.45\}$. The choice of a suitable penalty parameters $\lambda$ seems to depend on the Mach number and the error of the lifting stabilized method is smaller.}
 \label{fig:LiftvsSIP}
\end{figure}

\subsection{Sun parameters}\label{subsec:numex:sun}
Finally, let us consider a numerical example using the density, sound speed and pressure provided by the \texttt{modelS} \cite{modelS} for the Sun. Due to the extreme variation of these coefficients towards the boundary of the domain, we use special meshes that are finer towards the boundary but more coarse in the interior, see \cref{fig:SunCoefs}. In addition to the parameters given by the \texttt{modelS}, we follow \cite{CD18} and set 
\begin{equation*}
   \omega = 0.003 \cdot 2\pi \cdot R_{\circ}, \quad \gamma = \omega / 100, \quad \Omega = (0,0),
\end{equation*}
where $R_{\circ} \approx 1.0007126$ is the radius of the sun. We choose the right-hand side 
\begin{equation*}
   \bff = 10^7 \begin{pmatrix}
      g \\ 0
   \end{pmatrix}, 
\end{equation*}
where $g(x,y) = \sqrt{(\log(10^6)/0.1^2)/\pi} \exp((-\log(10^6)/0.1^2)((x-0.5)^2+(y-0.5)^2))$ is a Gaussian. We consider the case of a uniform and a non-uniform background flow. Specifically, we consider the flows $\bflow_{1/R_{\circ}}$ and $\bflow_{c_s/R_{\circ}}$ which are of the form \eqref{eq:numex:BFlows} such that we can use the parameter $c_{\bflow}$ to control the Mach number. We note that in both cases, we have that $\div(\rho \bflow) = 0$.

\begin{figure}[!htbp]       
   \centering
   \begin{tikzpicture}[scale=0.9]
    \pgfplotsset{
        scale only axis,
        xmin=-0.1, xmax=1.1,
        y axis style/.style={
            yticklabel style=#1,
            ylabel style=#1,
            y axis line style=#1,
            ytick style=#1
       },
       legend pos = south west,
       width=5cm,
    }
    
    \begin{axis}[
      axis y line*=left,
      y axis style=blue!75!black,
      xlabel={solar radius},
      ylabel={sound speed [m/sec]},
        xmode=linear,
        ymode=log,
        xtick={0,0.25,0.5,0.75,1},
    ]
    \addplot[
    color=blue!75!black,
    mark=none,
    thick,
    ] table[col sep=comma,x=radius,y=soundspeed]{data/modelS.csv}; \label{plot_cs}
    \end{axis}
    
    \begin{axis}[
      axis y line*=right, 
      axis x line=none,
      ytick={100000,1,0.00001},
        ymax=10e5,ymin=0,
      y axis style=red!75!black,
      ylabel={density [kg/m$^3$]},
      ymode=log,
    ]
    \addlegendimage{/pgfplots/refstyle=plot_cs}\addlegendentry{$c_s$}
   \addplot[
    color=red!75!black,
    mark=none,
    thick,
    ] table[col sep=comma,x=radius,y=density]{data/modelS.csv}; \addlegendentry{$\rho$}
    \end{axis}
    \node at (-4,2) {\includegraphics[width=4.5cm,keepaspectratio]{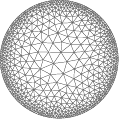}};
\end{tikzpicture}
\caption{The density and sound speed provided by the \texttt{modelS} (right) and an example mesh adapted to these coefficients (left).}
\label{fig:SunCoefs}
\vspace*{-0.2cm}
\end{figure}

To improve the computational efficiency, we use the \emph{optimized HDG} method as described in \cref{table:discretizations}. In \cref{fig:SunEx1,fig:SunEx2}, we display the real part of the $x$-components of the computed solutions for the two backgrounds flows. For the background flow $\bflow_{1/R_{\circ}}$, the computed solution seems to be stable, even when the Mach number exceed $1.0$. In contrast, we observe instabilities in the solutions computed for the second background flow $\bflow_{c_s/R_{\circ}}$ once the Mach number grows large, which is however still in agreement with the results from \cref{thm:weakTcompatibilityFulfilled}.

\begin{figure}[!htbp]
   \centering
   \begin{tikzpicture}
      \node at (-3,3) {\includegraphics[width=5cm,keepaspectratio]{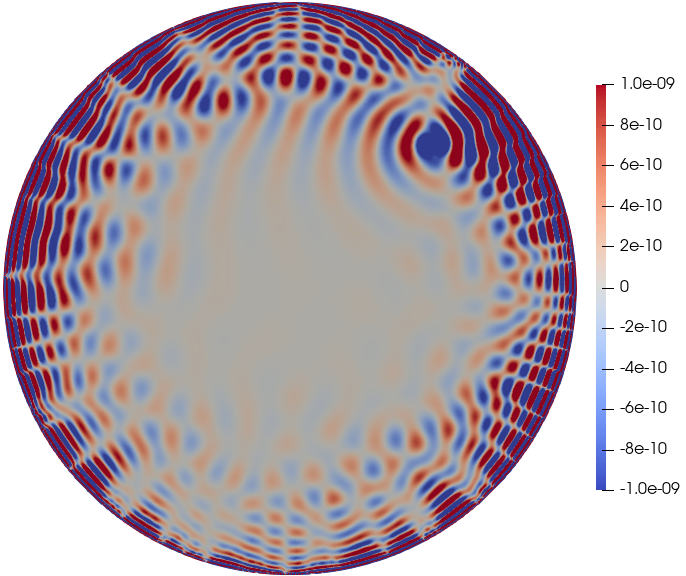}};
      \node at (3,3) {\includegraphics[width=5cm,keepaspectratio]{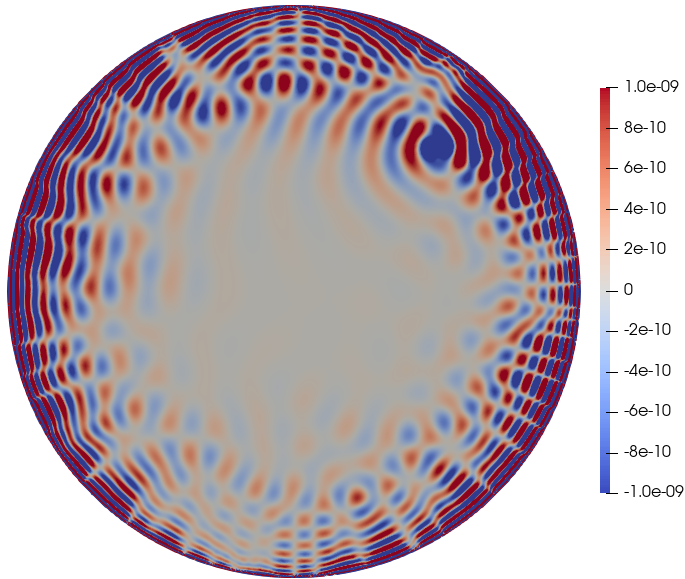}};
      \node at (-3,-2.5) {\includegraphics[width=5cm,keepaspectratio]{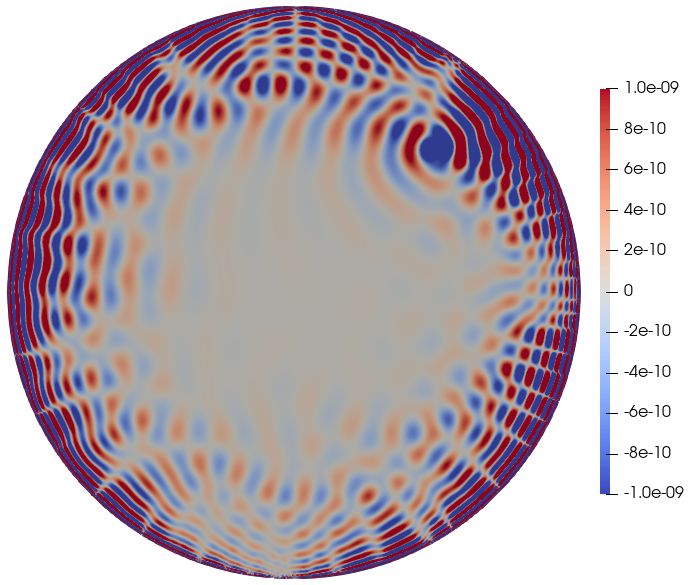}};
      \node at (3,-2.5) {\includegraphics[width=5cm,keepaspectratio]{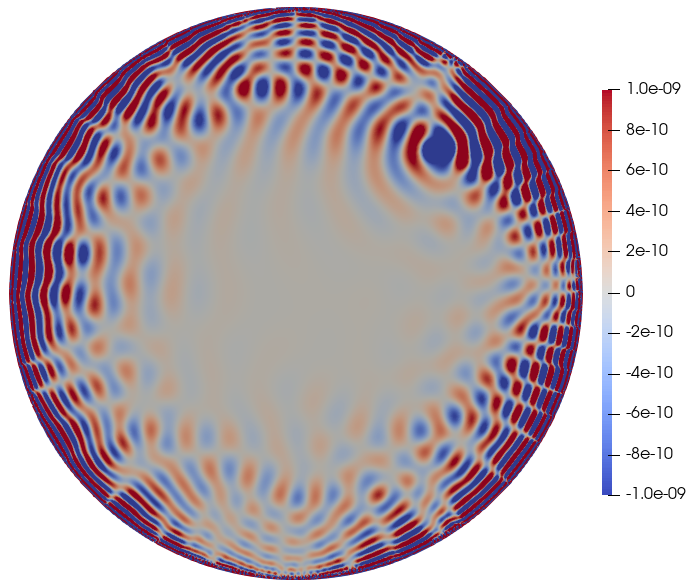}};
      \node at (-3,0.5) {\footnotesize $\Vert c_s^{-1} \bflow_{1/R_{\circ}} \Vert^2_{\bL^\infty} \approx 0.05$};
      \node at (3,0.5) {\footnotesize $\Vert c_s^{-1} \bflow_{1/R_{\circ}} \Vert^2_{\bL^\infty} \approx 0.50$};
      \node at (-3,-5.0) {\footnotesize $\Vert c_s^{-1} \bflow_{1/R_{\circ}} \Vert^2_{\bL^\infty} \approx 1.00$};
      \node at (3,-5.0) {\footnotesize $\Vert c_s^{-1} \bflow_{1/R_{\circ}} \Vert^2_{\bL^\infty} \approx 1.50$};
   \end{tikzpicture}
   \vspace*{-0.2cm}
   \caption{Real part of the $x$-component of the solution computed with the \emph{optimized HDG} method for polynomial degree $k = 7$ for the flow $\bflow_{1/R_{\circ}}$. We consider the Mach numbers $\Vert c_s^{-1} \bflow_{1/R_{\circ}} \Vert^2_{\bL^\infty} \in \{0.05,0.5,1.0,1.5\}$.}
   \label{fig:SunEx1}
   \vspace*{-0.2cm}
\end{figure}

\begin{figure}[!htbp]
   \centering
   \begin{tikzpicture}
      \node at (-3,3) {\includegraphics[width=5cm,keepaspectratio]{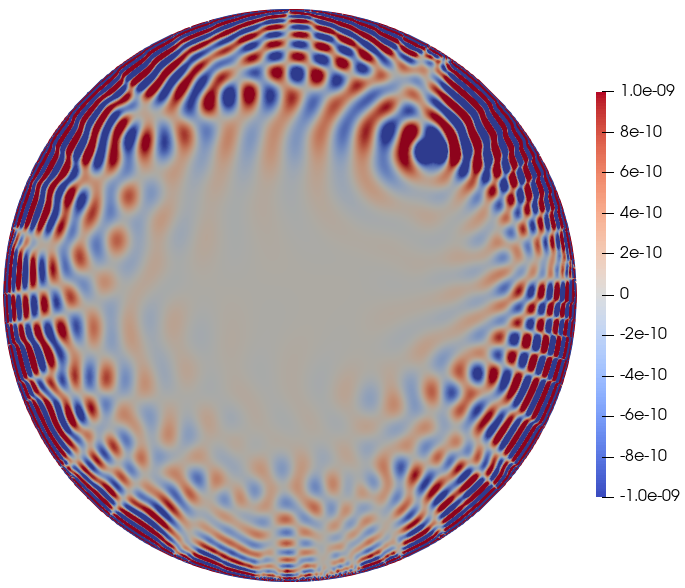}};
      \node at (3,3) {\includegraphics[width=5cm,keepaspectratio]{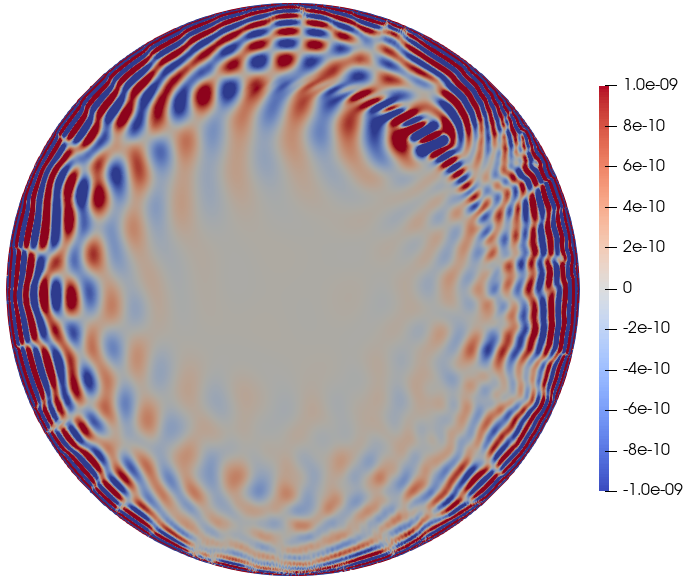}};
      \node at (-3,-2.5) {\includegraphics[width=5cm,keepaspectratio]{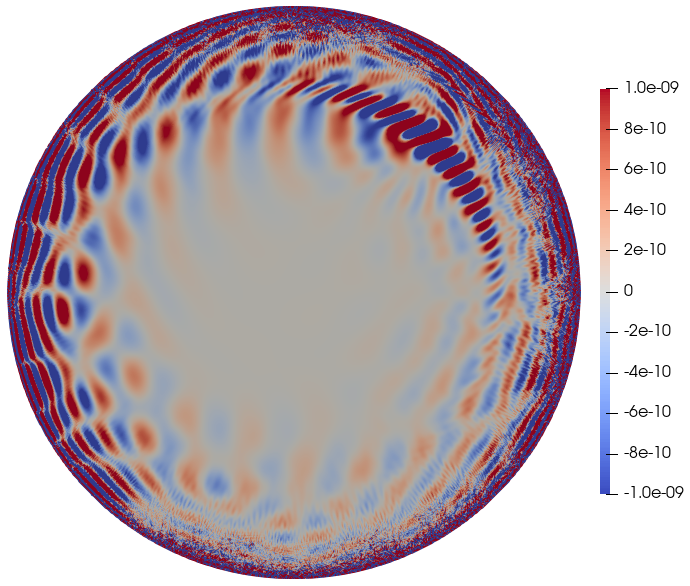}};
      \node at (3,-2.5) {\includegraphics[width=5cm,keepaspectratio]{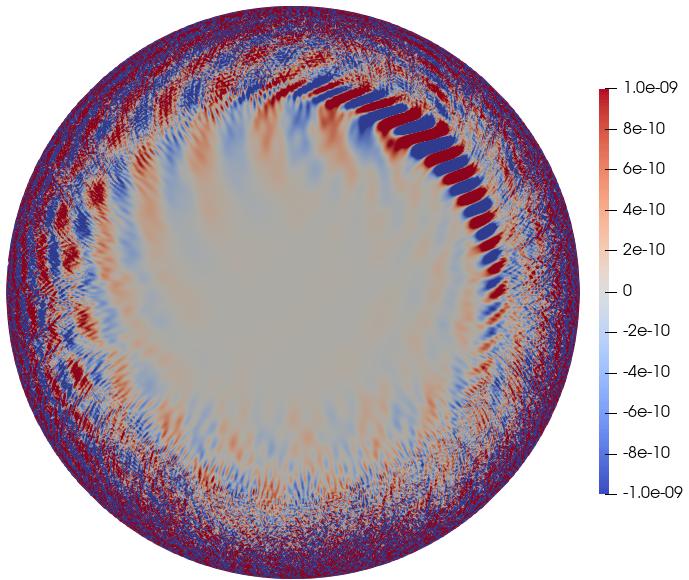}};
      \node at (-3,0.5) {\footnotesize $\Vert c_s^{-1} \bflow_{c_s/R_{\circ}} \Vert^2_{\bL^\infty} \approx 0.05$};
      \node at (3,0.5) {\footnotesize $\Vert c_s^{-1} \bflow_{c_s/R_{\circ}} \Vert^2_{\bL^\infty} \approx 0.50$};
      \node at (-3,-5.0) {\footnotesize $\Vert c_s^{-1} \bflow_{c_s/R_{\circ}} \Vert^2_{\bL^\infty} \approx 1.00$};
      \node at (3,-5.0) {\footnotesize $\Vert c_s^{-1} \bflow_{c_s/R_{\circ}} \Vert^2_{\bL^\infty} \approx 1.50$};
   \end{tikzpicture}
   \vspace*{-0.2cm}
   \caption{Real part of the $x$-component of the solution computed with the \emph{optimized HDG} method for polynomial degree $k = 7$ for the flow $\bflow_{c_s/R_{\circ}}$. We consider the Mach numbers $\Vert c_s^{-1} \bflow_{c_s/R_{\circ}} \Vert^2_{\bL^\infty} \in \{0.05,0.5,1.0,1.5\}$.}
   \label{fig:SunEx2}
   \vspace*{-0.2cm}
\end{figure}

\section*{Acknowledgements}
The first author acknowledges funding from Deutsche Forschungsgemeinschaft (DFG, German Research Foundation), projects 541433971 and 258734477 – SFB 1173 and that part of this work was conducted at the Johann Radon Institute for Computational and Applied Mathematics.
The second and third authors acknowledge funding from DFG SFB 1456 project 432680300.

\bibliographystyle{amsplain}
\bibliography{main}

@article{HLP2021,
  title = {Learned infinite elements},
  author = {T. Hohage and C. Lehrenfeld and J. Preu\ss},
  year = {2021},
  journal = {SIAM J. Sci. Comput.},
  volume = {43},
  number = {5},
  pages = {A3552-A3579},
  doi = {10.1137/20M1381757}
}

@article{HLS22H1,
    author = {Halla, M.  and Lehrenfeld, C. and Stocker, P.},
    title = {{A new T-compatibility condition and its application to the discretization of the damped time-harmonic Galbrun’s equation}},
    journal = {IMA Journal of Numerical Analysis},
    pages = {draf071},
    year = {2025},
    month = {08}
}

@misc{H23Hdiv,
      title={Convergence analysis of nonconform ${H}(\operatorname{div})$-finite elements for the damped time-harmonic {G}albrun's equation}, 
      author={Halla, M.},
      year={arXiv:2306.03496, 2023},
      eprint={2306.03496},
      archivePrefix={arXiv},
      primaryClass={math.NA}
}

@article{HH21,
  author = {Halla, M.  and Hohage, T.},
  year = {2021},
  month = {07},
  pages = {4068-4095},
  title = {On the {W}ell-posedness of the {D}amped {T}ime-harmonic {G}albrun {E}quation and the {E}quations of {S}tellar {O}scillations},
  volume = {53},
  journal = {SIAM Journal on Mathematical Analysis},
  doi = {10.1137/20M1348558}
}

@mastersthesis{Thesis_vB23,
  author = {{van Beeck}, T.},
  title = {{On stable discontinuous Galerkin discretizations for Galbrun’s equation}},
  school = {University of G{\"o}ttingen},
  year = {2023},
  doi = {10.25625/KGHQWV},
  url = {https://doi.org/10.25625/KGHQWV/F6PCXK},
  note = {doi:10.25625/KGHQWV/F6PCXK},
  month = {December}
}

@article{LLS18,
  title = {Hybrid {D}iscontinuous {Galerkin} methods with relaxed {H(div)}-conformity for incompressible flows. Part I},
  author = {Lederer, P. L. and Lehrenfeld, C. and Sch{\"o}berl, J.},
  journal = {SIAM J. Numer. Anal.},
  year = {2018},
  volume = {56},
  issue = {4},
  pages = {2070--2094},
  doi = {10.1137/17M1138078},
  notes = { NAM preprint 2017-05},
  url = {https://arxiv.org/abs/1707.02782},
  pdf = {https://arxiv.org/pdf/1707.02782.pdf}
}

@article{LLS19,
  title = {Hybrid {D}iscontinuous {Galerkin} methods with relaxed {H(div)}-conformity for incompressible flows. Part II},
  author = {Lederer, P. L. and Lehrenfeld, C. and Sch{\"o}berl, J.},
  journal = {ESAIM: M2AN},
  year = {2019},
  volume = {53},
  pages = {503--522},
  doi = {10.1051/m2an/2018054},
  notes = { NAM preprint 2017-05},
  url = {https://arxiv.org/abs/1707.02782},
  pdf = {https://arxiv.org/pdf/1707.02782.pdf}
}

@article{LS16,
  title = {High order exactly divergence-free {H}ybrid {D}iscontinuous {Galerkin} {M}ethods for unsteady incompressible flows},
  author = {C. Lehrenfeld and J. Schöberl},
  journal = {Computer Methods in Applied Mechanics and Engineering },
  volume = {307},
  number = {},
  pages = {339 -- 361},
  year = {2016},
  note = {},
  issn = {0045-7825},
  doi = {http://dx.doi.org/10.1016/j.cma.2016.04.025}
}

@article{Sch97,
	Author = {Sch{\"o}berl, J.},
	Da = {1997/07/01},
	Date-Added = {2023-11-05 17:11:29 +0100},
	Date-Modified = {2023-11-05 17:11:29 +0100},
	Doi = {10.1007/s007910050004},
	Id = {Sch{\"o}berl1997},
	Isbn = {1432-9360},
	Journal = {Computing and Visualization in Science},
	Number = {1},
	Pages = {41--52},
	Title = {NETGEN An advancing front 2D/3D-mesh generator based on abstract rules},
	Ty = {JOUR},
	Url = {https://doi.org/10.1007/s007910050004},
	Volume = {1},
	Year = {1997},
	Bdsk-Url-1 = {https://doi.org/10.1007/s007910050004}
}

@article{Sch14,
  title={C++ 11 Implementation of Finite Elements in {NGS}olve},
  author={Sch{\"o}berl, J.},
  year={2014},
  publisher={Institute of Analysis and Scientific Computing, TU Wien}
}

@article{BDM87,
author={Brezzi, F. and Douglas, J. and Dur{\'a}n, R. and Fortin, M.},
title={Mixed finite elements for second order elliptic problems in three variables},
journal={Numerische Mathematik},
year={1987},
month={Mar},
day={01},
volume={51},
number={2},
pages={237-250},
issn={0945-3245},
doi={10.1007/BF01396752},
url={https://doi.org/10.1007/BF01396752}
}

@book{Galbrun31,
  author          = {H. Galbrun},
  publisher         = {Gauthier-Villars, Paris},
  title           = {Propagation d'une onde sonore dans l'atmosphre et th\'eorie des zones de silence},
  year            = {1931}
}

@article{90yrsGalbrun,
author = {Maeder, M. and Gabard, G. and Marburg, S.},
year = {2020},
month = {10},
pages = {2050017},
title = {90 Years of {G}albrun’s {E}quation: {A}n {U}nusual {F}ormulation for {A}eroacoustics and {H}ydroacoustics in {T}erms of the {L}agrangian {D}isplacement},
volume = {28},
journal = {Journal of Theoretical and Computational Acoustics},
doi = {10.1142/S2591728520500176}
}

@article{BO09,
  author={Buffa, A. and Ortner, C.},
  journal={IMA Journal of Numerical Analysis}, 
  title={Compact embeddings of broken {S}obolev spaces and applications}, 
  year={2009},
  volume={29},
  number={4},
  pages={827-855},
  doi={10.1093/imanum/drn038}
}

@article{HB21,
  title = {On the well-posedness of {G}albrun's equation},
  journal = {Journal de Mathématiques Pures et Appliquées},
  volume = {150},
  pages = {112-133},
  year = {2021},
  issn = {0021-7824},
  doi = {https://doi.org/10.1016/j.matpur.2021.04.004},
  url = {https://www.sciencedirect.com/science/article/pii/S0021782421000593},
  author = {H\"{a}gg, L. and Berggren, M. },
}

@mastersthesis{Leh10,
  author = {C. Lehrenfeld},
  title = {{Hybrid Discontinuous Galerkin Methods for Incompressible Flow Problems}},
  school = {RWTH Aachen},
  year = {2010},
  doi = {10.25625/O4VBYH},
  url = {https://data.goettingen-research-online.de/file.xhtml?persistentId=doi:10.25625/O4VBYH/LNLFCJ&version=1.0},
  month = {May},
  note = {doi:10.25625/O4VBYH},
}

@article{CGL09,
  title={Unified hybridization of discontinuous {G}alerkin, mixed, and continuous {G}alerkin methods for second order elliptic problems},
  author={Cockburn, B. and Gopalakrishnan, J. and Lazarov, R.},
  journal={SIAM Journal on Numerical Analysis},
  volume={47},
  number={2},
  pages={1319--1365},
  year={2009},
  publisher={SIAM}
}

@article{KCR23,
  title={Convergence to weak solutions of a space-time hybridized discontinuous {G}alerkin method for the incompressible {N}avier-{S}tokes equations},
  author={Kirk, K. and Cesmelioglu, A. and Rhebergen, S.},
  journal={Mathematics of Computation},
  volume={92},
  number={339},
  pages={147--174},
  year={2023}
}

@article{BR97,
title = {A {H}igh-{O}rder {A}curate {D}iscontinuous {F}inite {E}lement {M}ethod for the {N}umerical {S}olution of the {C}ompressible {N}avier–{S}tokes {E}quations},
journal = {Journal of Computational Physics},
volume = {131},
number = {2},
pages = {267-279},
year = {1997},
issn = {0021-9991},
doi = {https://doi.org/10.1006/jcph.1996.5572},
url = {https://www.sciencedirect.com/science/article/pii/S0021999196955722},
author = {F. Bassi and S. Rebay},
}

@article{BMMPR00,
author = {Brezzi, F. and Manzini, G. and Marini, D. and Pietra, P. and Russo, A.},
title = {{Discontinuous Galerkin approximations for elliptic problems}},
journal = {Numerical Methods for Partial Differential Equations},
volume = {16},
number = {4},
pages = {365-378},
keywords = {discontinuous finite elements, diffusion equations},
doi = {https://doi.org/10.1002/1098-2426(200007)16:4<365::AID-NUM2>3.0.CO;2-Y},
url = {https://onlinelibrary.wiley.com/doi/abs/10.1002/1098-2426%28200007%2916%3A4%3C365%3A%3AAID-NUM2%3E3.0.CO%3B2-Y},
year = {2000}
}

@book{GGK90,
place={Basel},
title={{Classes of linear operators. vol. I}}, 
publisher={Birkhauser}, 
author={Gohberg, I. and Goldberg, S. and Kaashoek, M. A.},
year={1990}}

@article{B99,
  title={The discrete commutator property of approximation spaces},
  author={Bertoluzza, S.},
  journal={Comptes Rendus de l'Acad{\'e}mie des Sciences-Series I-Mathematics},
  volume={329},
  number={12},
  pages={1097--1102},
  year={1999},
  publisher={Elsevier}
}

@article{AHLS22,
  title = {{Robust finite element discretizations for a simplified Galbrun's equation}},
  author = {Alemán, T. and Halla, M.  and Lehrenfeld, C. and Stocker, P.},
  journal = {ECCOMAS 2022},
  year = {2022},
  eprint = {2205.15650},
  url = {https://www.scipedia.com/public/Aleman_et_al_2022a},
  pdf = {https://arxiv.org/pdf/2205.15650.pdf},
  code = {https://doi.org/10.25625/9VR9WT},
  doi = {10.23967/eccomas.2022.206},
  note = {doi:10.23967/eccomas.2022.206},
  keywords = {Galbrun's equation, FEM, discontinuous Galerkin, Helmholtz decomposition}
}

@article{modelS,
author = {J. Christensen-Dalsgaard  and W. Däppen  and S. V. Ajukov  and E. R. Anderson  and H. M. Antia  and S. Basu  and V. A. Baturin  and G. Berthomieu  and B. Chaboyer  and S. M. Chitre  and A. N. Cox  and P. Demarque  and J. Donatowicz  and W. A. Dziembowski  and M. Gabriel  and D. O. Gough  and D. B. Guenther  and J. A. Guzik  and J. W. Harvey  and F. Hill  and G. Houdek  and C. A. Iglesias  and A. G. Kosovichev  and J. W. Leibacher  and P. Morel  and C. R. Proffitt  and J. Provost  and J. Reiter  and E. J. Rhodes  and F. J. Rogers  and I. W. Roxburgh  and M. J. Thompson  and R. K. Ulrich },
title = {{The Current State of Solar Modeling}},
journal = {Science},
volume = {272},
number = {5266},
pages = {1286-1292},
year = {1996},
doi = {10.1126/science.272.5266.1286},
URL = {https://www.science.org/doi/abs/10.1126/science.272.5266.1286},
}

@article{EG16,
url = {https://doi.org/10.1515/cmam-2015-0034},
title = {{Mollification in Strongly Lipschitz Domains with Application to Continuous and Discrete De Rham Complexes}},
author = {A. Ern and J. L. Guermond},
pages = {51--75},
volume = {16},
number = {1},
journal = {Computational Methods in Applied Mathematics},
doi = {doi:10.1515/cmam-2015-0034},
year = {2016}
}

@book{EG_FE1,
author = {Ern, A. and Guermond, J. L.},
year = {2021},
pages = {},
title = {{Finite Elements I: Approximation and Interpolation}},
isbn = {978-3-030-56340-0},
doi = {10.1007/978-3-030-56341-7},
publisher={Springer}
}

@book{EG_FE2,
  title={{Finite Elements II: Galerkin Approximation, Elliptic and Mixed PDEs}},
  author={Ern, A. and Guermond, J. L.},
  year={2021},
  pages={},
  isbn = {978-3-030-56924-2},
  publisher={Springer}
}

@article{Halla21,
	title = {{Galerkin approximation of holomorphic eigenvalue problems: weak T-coercivity and T-compatibility}},
	volume = {148},
	url = {https://doi.org/10.1007/s00211-021-01205-8},
	doi = {10.1007/s00211-021-01205-8},
	pages = {387--407},
	number = {2},
	journal = {Numerische Mathematik},
	author = {Halla, M.},
	year = {2021},
}

@article{Cia12,
	title = {{T-coercivity: Application to the discretization of Helmholtz-like problems}},
	volume = {64},
	doi = {10.1016/j.camwa.2012.02.034},
	pages = {22--34},
	journal = {Computers \& Mathematics with Applications},
	author = {Ciarlet, P.},
	year = {2012},
}

@article{BCZ10,
	title = {{Time harmonic wave diffraction problems in materials with sign-shifting coefficients}},
	volume = {234},
	issn = {0377-0427},
	url = {https://www.sciencedirect.com/science/article/pii/S0377042709005196},
	doi = {https://doi.org/10.1016/j.cam.2009.08.041},
	pages = {1912--1919},
	number = {6},
	journal = {Journal of Computational and Applied Mathematics},
	author = {Bonnet-Ben Dhia, A. S. and Ciarlet, P. and Zwölf, C. M.},
	year = {2010},
}

@article{BCS02,
	title = {{Boundary element methods for Maxwell's equations on non-smooth domains}},
	volume = {92},
	doi = {10.1007/s002110100372},
	pages = {679--710},
	journal = {Numerische Mathematik},
	author = {Buffa, A. and Costabel, M.  and Schwab, C.},
	year = {2002},
}

@article{CC13,
	title = {{T-coercivity and continuous Galerkin methods: application to transmission problems with sign changing coefficients}},
	volume = {124},
	issn = {0945-3245},
	doi = {10.1007/s00211-012-0510-8},
	pages = {1--29},
	number = {1},
	journal = {Numerische Mathematik},
	author = {Chesnel, L. and Ciarlet, P.},
  year={2013}
}

@article{DCC12,
	title = {{\textit{{T}}-coercivity for scalar interface problems between dielectrics and metamaterials}},
	volume = {46},
	issn = {0764-583X, 1290-3841},
	doi = {10.1051/m2an/2012006},
	number = {6},
	journal = {ESAIM: Mathematical Modelling and Numerical Analysis},
	author = {Bonnet-Ben Dhia, A. S. and Chesnel, L. and Ciarlet, P.},
	year = {2012},
	pages = {1363--1387},
}

@article{DCC14,
	title = {T-{Coercivity} for the {Maxwell} {Problem} with {Sign}-{Changing} {Coefficients}},
	volume = {39},
	issn = {0360-5302, 1532-4133},
	doi = {10.1080/03605302.2014.892128},
	number = {6},
	journal = {Communications in Partial Differential Equations},
	author = {Bonnet-Ben Dhia, A. S. and Chesnel, L. and Ciarlet, P.},
	year = {2014},
	pages = {1007--1031},
}

@misc{vBZ24,
	title={{An adaptive mesh refinement strategy to ensure quasi-optimality of finite element methods for self-adjoint Helmholtz problems}}, 
	doi = {10.48550/arXiv.2403.06266},
	number = {{arXiv}:2403.06266},
	publisher = {{arXiv}},
	author = {van Beeck, T. and Zerbinati, U.},
	year = {{arXiv}:2403.06266, 2024},
	eprinttype = {arxiv},
  primaryClass={math.NA}
}

@misc{HH024,
	title = {{A new numerical method for scalar eigenvalue problems in heterogeneous, dispersive, sign-changing materials}},
	doi = {10.48550/arXiv.2401.16368},
	number = {{arXiv}:2401.16368},
	publisher = {{arXiv}},
	author = {Halla, M.  and Hohage, T.  and Oberender, F. },
	year = {{arXiv}:2401.16368, 2024},
	eprinttype = {arxiv},
}

@article{HN15,
	title = {{Convergence of infinite element methods for scalar waveguide problems}},
	volume = {55},
	issn = {1572-9125},
	doi = {10.1007/s10543-014-0525-x},
	language = {en},
	number = {1},
	journal = {BIT Numerical Mathematics},
	author = {Hohage, T. and Nannen, L.},
	month = mar,
	year = {2015},
}

@book{G11,
  title={{Elliptic problems in nonsmooth domains}},
  author={Grisvard, P.},
  year={2011},
  publisher={SIAM}
}

@article{Stummel_I,
  title={{Diskrete Konvergenz linearer Operatoren. I}},
  author={Stummel, F.},
  journal={Mathematische Annalen},
  year={1970},
  volume={190},
  pages={45-92},
}

@book{Vainikko,
author         = {Vainikko, G.},
publisher      = {B. G. Teubner Verlag},
title          = {{Funktionalanalysis der Diskretisierungsmethoden}},
year           = {1976}
}

@techreport{CD18,
  TITLE = {{Solving time-harmonic Galbrun's equation with an arbitrary flow. Application to Helioseismology}},
  AUTHOR = {Chabassier, J. and Durufl{\'e}, M.},
  URL = {https://inria.hal.science/hal-01833043},
  TYPE = {Research Report},
  NUMBER = {RR-9192},
  INSTITUTION = {{INRIA Bordeaux}},
  YEAR = {2018},
  MONTH = Jul,
  HAL_ID = {hal-01833043},
  HAL_VERSION = {v1},
}

@article{GBS10,
author = {Gizon, L. and Birch, A. and Spruit, H.},
year = {2010},
month = {01},
title = {{Local Helioseismology: Three Dimensional Imaging of the Solar Interior}},
volume = {48},
journal = {Annual Review of Astronomy and Astrophysics},
doi = {10.1146/annurev-astro-082708-101722}
}

@article{BH24,
doi = {10.1088/1361-6420/ad2b9a},
year = {2024},
month = {mar},
publisher = {IOP Publishing},
volume = {40},
number = {4},
pages = {045016},
author = {Müller, B. and Hohage, T. and Fournier, D. and Gizon, L.},
title = {{Quantitative passive imaging by iterative holography: the example of helioseismic holography}},
journal = {Inverse Problems},
}

@article{LB97,
doi = {10.1086/304445},
year = {1997},
month = {aug},
publisher = {},
volume = {485},
number = {2},
pages = {895},
author = {Lindsey, C. and Braun, D. C.},
title = {{Helioseismic Holography}},
journal = {The Astrophysical Journal},
}

@book{SBH19,
  title={{Variational techniques for elliptic partial differential equations: Theoretical tools and advanced applications}},
  author={Sayas, F. J. and Brown, T. S. and Hassell, M. E.},
  year={2019},
  publisher={CRC Press}
}

@data{HLvB25,
author = {Halla, M. and Lehrenfeld, C. and van Beeck, T.},
publisher = {GRO.data},
title = {{Replication Data for: Hybrid Discontinuous Galerkin Discretizations for the damped time-harmonic Galbrun’s equation}},
year = {2025},
version = {DRAFT VERSION},
doi = {10.25625/0DQ9JQ},
url = {https://doi.org/10.25625/0DQ9JQ}
}

@misc{FvBZ25,
      title={{Analysis and numerical analysis of the Helmholtz-Korteweg equation}}, 
      author={Farrell, P. E. and van Beeck, T. and Zerbinati, U.},
      year={{arXiv}:2503.10771, 2025},
      eprint={2503.10771},
      archivePrefix={arXiv},
      primaryClass={math.NA},
      url={https://arxiv.org/abs/2503.10771}, 
}

@article{SV85,
  title={{Norm estimates for a maximal right inverse of the divergence operator in spaces of piecewise polynomials}},
  journal={ESAIM: Mathematical Modelling and Numerical Analysis},
  author = {L. R. Scott and M. Vogelius},
  volume={19},
  number={1},
  pages={111--143},
  year={1985},
  publisher={EDP Sciences}
}

@article{BE08,
  title={{Discontinuous Galerkin approximation with discrete variational principle for the nonlinear Laplacian}},
  author={Burman, E. and Ern, A.},
  journal={Comptes Rendus. Math{\'e}matique},
  volume={346},
  number={17-18},
  pages={1013--1016},
  year={2008}
}

@article{DPE10,
  title={{Discrete functional analysis tools for discontinuous Galerkin methods with application to the incompressible Navier--Stokes equations}},
  author={Di Pietro, D. A. and Ern, A.},
  journal={Mathematics of Computation},
  volume={79},
  number={271},
  pages={1303--1330},
  year={2010}
}

@article{DPEL14,
  title={{An arbitrary-order and compact-stencil discretization of diffusion on general meshes based on local reconstruction operators}},
  author={Di Pietro, D. A. and Ern, A. and Lemaire, S.},
  journal={Computational Methods in Applied Mathematics},
  volume={14},
  number={4},
  pages={461--472},
  year={2014}
}

@article{DPE15,
  title={{A hybrid high-order locking-free method for linear elasticity on general meshes}},
  author={Di Pietro, D. A. and Ern, A.},
  journal={Computer Methods in Applied Mechanics and Engineering},
  volume={283},
  pages={1--21},
  year={2015},
  publisher={Elsevier}
}

@book {CEP20,
    author = {Cicuttin, M. and Ern, A. and Pignet, N.},
    title = {{Hybrid high-order methods---a primer with applications to solid mechanics}},
    series = {SpringerBriefs in Mathematics},
    publisher = {Springer, Cham},
      year = {2021},
     pages = {viii+136},
}

@article{BRT23,
  title={{Construction and analysis of a HDG solution for the total-flux formulation of the convected Helmholtz equation}},
  author={Barucq, H. and Rouxelin, N. and Tordeux, S.},
  journal={Mathematics of Computation},
  volume={92},
  number={343},
  pages={2097--2131},
  year={2023}
}

@article{BFFGP20,
author = {Barucq, H. and Faucher, F.  and Fournier, D. and Gizon, L. and Pham, H.},
title = {{Efficient and Accurate Algorithm for the Full Modal Green's Kernel of the Scalar Wave Equation in Helioseismology}},
journal = {SIAM Journal on Applied Mathematics},
volume = {80},
number = {6},
pages = {2657-2683},
year = {2020},
doi = {10.1137/20M1336709}
}

@article{BFFGP21,
title = {{Outgoing modal solutions for Galbrun's equation in helioseismology}},
journal = {Journal of Differential Equations},
volume = {286},
pages = {494-530},
year = {2021},
issn = {0022-0396},
doi = {https://doi.org/10.1016/j.jde.2021.03.031},
author = {H. Barucq and F. Faucher and D. Fournier and L. Gizon and H. Pham},
}

@article{PFFBG24,
title = {{Assembling algorithm for Green's tensors and absorbing boundary conditions for Galbrun's equation in radial symmetry}},
journal = {Journal of Computational Physics},
volume = {519},
pages = {113444},
year = {2024},
issn = {0021-9991},
doi = {https://doi.org/10.1016/j.jcp.2024.113444},
author = {H. Pham and F. Faucher and D. Fournier and H. Barucq and L. Gizon},
}

@article{BMMPP12,
  title={{Time-harmonic acoustic scattering in a complex flow: a full coupling between acoustics and hydrodynamics}},
  author={Bonnet-Ben Dhia, A. S. and Mercier, J. F. and Millot, F. and Pernet, S. and Peynaud, E.},
  journal={Communications in Computational Physics},
  volume={11},
  number={2},
  pages={555--572},
  year={2012},
  publisher={Cambridge University Press}
}

@article{FHPG24,
	author = {{Fournier, D.} and {Hohage, T.} and {Preuss, J.} and {Gizon, L.}},
	title = {{Learned infinite elements for helioseismology - Learning transparent boundary conditions for the solar atmosphere}},
	DOI= "10.1051/0004-6361/202449611",
	url= "https://doi.org/10.1051/0004-6361/202449611",
	journal = {A\&A},
	year = 2024,
	volume = 690,
	pages = "A86",
}

@article{KirbySherwinCockburn2012,
	author = {{Kirby, R.M.} and {Sherwin, S.J.} and {Cockburn, B.}},
	title = {{To CG or to HDG}},
	DOI= "10.1007/s10915-011-9501-7",
	url= "https://doi.org/10.1007/s10915-011-9501-7",
	journal = {J. Sci. Comp.},
	year = 2012,
	volume = 51,
	pages = "183--212",
}

@article{AA2017,
  title={{Computational helioseismology in the frequency domain: acoustic waves in axisymmetric solar models with flows}},
  author={Gizon, L. and Barucq, H. and Durufl{\'e}, M. and Hanson, C. S. and Legu{\`e}be, M. and Birch, A. C. and Chabassier, J. and Fournier, D. and Hohage, T. and Papini, E.},
  journal={Astronomy \& Astrophysics},
  volume={600},
  pages={A35},
  year={2017},
  publisher={EDP Sciences}
}

\appendix
\allowdisplaybreaks 
\section{Supplementary material to \cref{thm:weakTcompatibilityFulfilled}}\label{appendix:operators}
This section contains the operators defined in the proof of \cref{thm:weakTcompatibilityFulfilled}. In \emph{Step 1}, we defined the operator $\BnI$ as the sum of $\anI$ and $\KnI$, i.e. 

\begin{subequations}\label{eq:appendix:BnI}
   \begin{align}
      \spl \BnI &\u_n, \u_n' \spr_{\Xn} \coloneqq \nonumber \\ 
         &\spl c_s^2 \rho \div \v_{\tau}, \div \v_{\tau}' \spr + \spl c^2_s \rho \pi_n^l P_{L^2_0} \q \cdot \w_{\tau}, \pi_n^l P_{L^2_0} \q \cdot \w_{\tau}' \spr - s_n(\w_n,\w_n') \\
         &- \spl \rho i D^n_{\bflow} \v_n, i D^n_{\bflow} \v_n' \spr + \spl \rho \dopdSmall \w_n, \dopdSmall \w_n' \spr \\
         &+ \spl \rho \dopdSmall \w_n, i D^n_{\bflow} \v_n' \spr - \spl \rho i D^n_{\bflow} \v_n, \dopd \w_n' \spr \\
         &+ \spl \v_{\tau}, \v_{\tau}' \spr \! + \! C_1 \spl \Sol \u_n, \Sol \u_n' \spr \! + \! \spl \csr M_n \w_{n}, M_n \w_{n}' \spr \!  + \! \spl \csr \tilde{O}_n \u_n, \tilde{O}_n \u_n' \spr \\
         &+ \spl \rho (\matii + i \omega \gamma) \w_{\tau}, \w_{\tau}' \spr. 
   \end{align}
\end{subequations} 
and the operator $\BnII$ as the sum of $\anII$ and $\KnII$.
With $\anII$ as follows
\begin{subequations}
   \begin{align*}
      \anII(&\u_n,\u_n') \coloneqq a_n(T_n \u_n , \u_n') - \anI(\u_n,\u_n') \\
       & = \anII(\v_n,\v_n') + \anII(\v_n,\w_n') + \anII(\w_n,\v_n') + \anII(\w_n,\w_n')\nonumber \text{ with} \\[-4ex]
      \intertext{}
      \anII(\v_n,\v_n') = &\spl c_s^2 \rho \q \cdot \v_{\tau}, \div \v_{\tau}' \spr + \spl c_s^2 \rho \div \v_{\tau}, \q \cdot \v_{\tau}' \spr + \inner{\csr \q \! \cdot \! \v_{\tau}, \q \! \cdot \! \v_{\tau}'} \\
      &- \spl \rho (\omega+i\Omega \times) \v_{\tau}, (\omega+i\Omega \times) \v_{\tau}' \spr  - \spl \rho (\matii + i \omega \gamma) \v_{\tau}, \v_{\tau}' \spr \\
      &- \spl \rho (\omega+i\Omega \times) \v_{\tau}, i \ddiffb \v_n' \spr - \spl \rho i \ddiffb \v_n, (\omega+i\Omega \times) \v_{\tau}' \spr \\[-4ex]
      \intertext{}
      \anII(\v_n,\w_n')=&-  \! \spl \rho  (\omega \! + \! i \Omega \times) \v_{\tau}, (\omega \! + \! i \ddiffb \! + \! i \Omega \times) \w_n' \spr  \! -  \! \spl \rho (\matii + \! i \omega \gamma) \v_{\tau}, \w_{\tau}' \spr + \inner{\csr \q \! \cdot \! \v_{\tau}, \q \! \cdot \! \w_{\tau}'} \\
      &- \! \spl c_s^2 \rho (\div + \pi_n^l P_{L^2_0} \q \cdot\! ) \v_{\tau},\! M_n \w_{n}' \! \! \! + \! \tilde{O}_n \u_n' \spr \! + \! \spl c_s^2 \rho (\Id - \pi_n^l  P_{L^2_0} ) (\q \cdot \v_{\tau}\! ), \ddiv \w_{n}' \spr \\
      &+ \inner{\csr \div \v_{\tau}, (\Id - \pi_n^l P_{L^2_0}) \q \cdot \w_n'} - \spl c_s^2 \rho \pi_n^l P_{L^2_0} (\q \cdot \v_{\tau}), \pi_n^l P_{L^2_0} (\q \cdot \w_{\tau}') \spr \\[-4ex]
      \intertext{}
      \anII(\w_n,\v_n')=& \spl \rho (\omega  \! +  \! i \ddiffb  \! + \! i \Omega \times) \w_n, (\omega  \! +  \! i \Omega \times) \v_{\tau}' \spr + \spl \rho (\matii  \! +  \! i\omega \gamma)\w_{\tau}, \v_{\tau}' \spr - \inner{\csr \q \! \cdot \! \w_{\tau}, \q \! \cdot \! \v_{\tau}'}  \\
      &+ \spl c_s^2 \rho (M_n \! \w_{n} \! +\! \tilde{O}_n \u_n), (\div + \pi_n^l P_{L^2_0} \q \cdot) \v_{\tau}' \spr \! -\! \spl c_s^2 \rho \ddiv \w_{n}, (\Id - \pi_n^l P_{L^2_0} ) (\q \cdot \v_{\tau}') \spr\\
      &- \inner{\csr (\Id-\pi_n^l P_{L^2_0})(\q \cdot \w_{\tau}), \div \v_{\tau}'} + \spl c_s^2 \rho \pi_n^l P_{L^2_0} (\q \cdot \w_{\tau}), \pi_n^l P_{L^2_0} (\q \cdot \v_{\tau}') \spr \\[-4ex]
      \intertext{}
      \anII(\w_n,\w_n')= &- \spl c_s^2 \rho (\Id-\pi_n^l  P_{L^2_0} )(\q \cdot \w_{\tau}), \ddiv \w_{n}' \spr 
      - \spl c_s^2 \rho \ddiv \w_{n}, (\Id  - \pi_n^l P_{L^2_0} )(\q \cdot \w_{\tau}') \spr \\
      &- \spl c_s^2 \rho (M_n \w_{n} + \tilde{O}_n \u_n), M_n \w_{n}' + \tilde{O}_n \u_n' \spr  
   \end{align*}
  \end{subequations}
we obtain
\begin{align*}
       \spl &\BnII \u_n, \u_n' \spr_{\Xn} \coloneqq  \\
       &C_2 \big(  \spl \v_{\tau}, \v_{\tau}' \spr + \spl \Sol \u_n, \Sol \u_n' \spr + \spl \csr \tilde{O}_n \u_n, \tilde{O}_n \u_n' \spr  \\
       & + \spl \csr M_n \w_{n}, M_n \w_{n}' \spr + \spl \text{mean}(\q \cdot \! \w_{\tau}), \text{mean}(\q \cdot \! \w_{\tau}') \spr \big) \\[-3ex]
       \intertext{}
       & + \spl c_s^2 \rho \q \cdot \! \v_{\tau}, \div \v_{\tau}' \spr + \spl c_s^2 \rho \div \v_{\tau}, \q \cdot \! \v_{\tau}' \spr + \inner{\csr \q \! \cdot \! \v_{\tau}, \q \! \cdot \! \v_{\tau}'}  \\
       &- \spl \rho (\omega+i\Omega \times) \v_{\tau}, (\omega+i\Omega \times) \v_{\tau}' \spr  - \spl \rho (\matii + i \omega \gamma) \v_{\tau}, \v_{\tau}' \spr \\
       &- \spl \rho (\omega+i\Omega \times) \v_{\tau}, i \ddiffb \v_n' \spr - \spl \rho i \ddiffb \v_n, (\omega+i\Omega \times) \v_{\tau}' \spr \\[-3ex]
       \intertext{}
       &- \! \spl \rho  (\omega \! +\! i \Omega \times) \v_{\tau}, (\omega \!+ \!i \ddiffb + i \Omega \times) \w_n' \spr \!-\! \spl \rho (\matii \!+\! i \omega \gamma) \v_{\tau}, \w_{\tau}' \spr  \!+\! \inner{\csr \q \! \cdot \! \v_{\tau}, \q \! \cdot \! \w_{\tau}'} \\
       &- \! \spl c_s^2 \rho (\div + \pi_n^l P_{L^2_0} \q \cdot \! ) \v_{\tau},\! M_n \w_{n}' \! \! \! + \! \tilde{O}_n \u_n' \spr \! + \! \spl c_s^2 \rho (\Id - \pi_n^l  P_{L^2_0} ) (\q \cdot \! \v_{\tau}\! ), \ddiv \w_{n}' \spr \\
       &+ \inner{\csr \div \v_{\tau}, (\Id - \pi_n^l P_{L^2_0}) \q \cdot \! \w_n'} - \spl c_s^2 \rho \pi_n^l P_{L^2_0} (\q \cdot \! \v_{\tau}), \pi_n^l P_{L^2_0} (\q \cdot \! \w_{\tau}') \spr\\[-3ex]
       \intertext{}
       &+ \spl \rho (\omega \! + \! i \ddiffb + i \Omega \times) \w_n,\! (\omega \!+\! i \Omega \times) \v_{\tau}' \spr \!+ \!\spl \rho (\matii \!+\! i\omega \gamma)\w_{\tau}, \v_{\tau}' \spr  \! - \! \inner{\csr \q \! \cdot \! \w_{\tau}, \q \! \cdot \! \v_{\tau}'} \\
       &+ \spl c_s^2 \rho (M_n \! \w_{n} \! +\! \tilde{O}_n \u_n), (\div + \pi_n^l P_{L^2_0} \q \cdot \!) \v_{\tau}' \spr \! -\! \spl c_s^2 \rho \ddiv \w_{n}, (\Id - \pi_n^l P_{L^2_0} ) (\q \cdot \! \v_{\tau}') \spr\\
       &- \inner{\csr (\Id-\pi_n^l P_{L^2_0})(\q \cdot \! \w_{\tau}), \div \v_{\tau}'} + \spl c_s^2 \rho \pi_n^l P_{L^2_0} (\q \cdot \! \w_{\tau}), \pi_n^l P_{L^2_0} (\q \cdot \! \v_{\tau}') \spr \\[-3ex]
       \intertext{}
       &- \spl c_s^2 \rho (\Id-\pi_n^l  P_{L^2_0} )(\q \cdot \! \w_{\tau}), \ddiv \w_{n}' \spr 
       - \spl c_s^2 \rho \ddiv \w_{n}, (\Id  - \pi_n^l P_{L^2_0} )(\q \cdot \! \w_{\tau}') \spr \\
       &- \spl c_s^2 \rho (M_n \w_{n} + \tilde{O}_n \u_n), M_n \w_{n}' + \tilde{O}_n \u_n' \spr.  
\end{align*}
The terms added with $\KnI$ and $\KnII$ are subtracted through the operator $K_n$, which is given by 
\begin{subequations}
   \begin{align}
         \spl K_n &\u_n, \u_n' \spr_{\Xn} \coloneqq \nonumber \\
         &- (1+C_2) \spl \v_{\tau}, \v_{\tau}' \spr - (C_1 + C_2) \spl \Sol \u_n, \Sol \u_n' \spr \label{eq:Kn:a}\\
         &- (1+C_2) \spl \csr M_n \w_{n}, M_n \w_{n}' \spr - C_2 \spl \text{mean}(\q \cdot \w_{\tau}), \text{mean}(\q \cdot \w_{\tau}') \spr \label{eq:Kn:b}\\ 
         &- (1+C_2) \spl \csr \tilde{O}_n \u_n, \tilde{O}_n \u_n' \spr. \label{eq:Kn:c}
   \end{align}
\end{subequations}
In \emph{Step 4}, we defined the compact operator $K$ in \eqref{eq:K:a}-\eqref{eq:K:b} and set $B \coloneqq AT - K$, i.e. 
\begin{subequations}\label{eq:B}
   \begin{align}
      \spl B \u&, \u' \spr_{\X} \coloneqq \nonumber \\
      &\spl c_s^2 \rho \div \v, \div \v' \spr - \spl \rho i \diffb \v, i \diffb \v' \spr + \spl c_s^2 \rho P_{L^2_0}(\q \cdot \w), P_{L^2_0}(\q \cdot \w') \spr \\
      &- \spl \rho i \diffb \v, (\omega + i\diffb + i\Omega \times) \w' \spr + \spl \rho (\omega + i \diffb + i \Omega \times) \w, i \diffb \v' \spr \\
      &+ \spl \rho (\omega + i \diffb + i \Omega \times) \w, (\omega + i \diffb + i \Omega \times) \w' \spr + \spl \rho (i \omega \gamma + \matii) \w, \w' \spr \\
      &+ \spl \v, \v' \spr + C_1 \spl \v, \v' \spr + \spl \csr M \w, M\w' \spr \\[.5cm]
      &+ C_2 \Big(  \spl \v, \v' \spr + \spl \v, \v' \spr + \spl \csr M \w, M \w' \spr + \spl \text{mean}(\q \cdot \w), \text{mean}(\q \cdot \w') \spr \Big)\\[.5cm]
      &+ \spl c_s^2 \rho \q \cdot \v, \div \v' \spr + \spl c_s^2 \rho \div \v, \q \cdot \v' \spr - \spl \rho (\omega + i \Omega \times) \v, (\omega + i \Omega \times) \v' \spr \\ 
      &- \spl \rho (\omega + i\Omega \times) \v, i \diffb \v' \spr - \spl \rho i \diffb \v, (\omega + i \Omega \times) \v' \spr - i \omega \spl \gamma \rho \v, \v' \spr - \spl \rho \matii \v, \v' \spr \\[.5cm] 
      &- \spl \rho \matii \v, \w' \spr - i \omega \spl \gamma \rho \v, \w' \spr - \spl c_s^2 \rho P_{L^2_0} (\q \cdot \v), P_{L^2_0} (\q \cdot \w') \spr \\
      &- \spl \rho (\omega + i \Omega \times) \v, (\omega + i \diffb + i \Omega \times) \w' \spr - \spl c_s^2 \rho (\div + P_{L^2_0} \q \cdot) \v, M \w' \spr \\ 
      &+ \spl c_s^2 \rho \text{mean}(\q \cdot \v), \div \w' \spr \\[.5cm]
      &+ \spl  \rho \matii \w, \v' \spr + i \omega \spl \gamma \rho \w, \v' \spr + \spl c_s^2 \rho P_{L^2_0} (\q \cdot \w), P_{L^2_0} (\q \cdot \v') \spr \\
      &+ \spl \rho (\omega + i \diffb + i \Omega \times) \w, (\omega + i \Omega \times) \v' \spr + \spl c_s^2 \rho M \w, (\div + P_{L^2_0} \q \cdot) \v' \spr \\
      &- \spl c_s^2 \rho \div \w, \text{mean}(\q \cdot \v') \spr \\[.5cm]
      &- \spl c_s^2 \rho \text{mean}(\q \cdot \w), \div \w' \spr - \spl c_s^2 \rho \div \w, \text{mean}(\q \cdot \w') \spr - \spl c_s^2 \rho M \w, M \w' \spr.
   \end{align}
\end{subequations}

\end{document}